\theoremstyle{plain}
\newtheorem{theorem}{Theorem}[section]
\newtheorem{lemma}[theorem]{Lemma}
\newtheorem{corollary}[theorem]{Corollary}
\newtheorem{Prop}[theorem]{Proposition}
 \theoremstyle{definition}
\newtheorem{definition}[theorem]{Definition}
\newtheorem{Rem}[theorem]{Remark}
\newtheorem{?}[theorem]{Problem}
\newtheorem{Ex}[theorem]{Example}
\newtheorem{Exs}[theorem]{Examples}
\begin{document}

\title{On The Frames In Hilbert $C^{\ast}$-modules}

\author[M. Ghiati]{M'hamed Ghiati$^{1}$}
\address{$^{1}$ Laboratory of Analysis, Geometry and
	Applications (LAGA), Departement of Mathematics,  Ibn Tofail University, B.P.  133,  Kenitra,  Morocco}
\curraddr{}
\email{mhamed.ghiati@uit.ac.ma;\\ mohammed.mouniane@uit.ac.ma}
\thanks{}
\author[M. Mouniane]{Mohammed Mouniane$^{*1}$}

\author[M. Rossafi]{Mohamed Rossafi$^{2}$}
\address{$^{2}$ LASMA Laboratory,  Departement of Mathematics,  Faculty of Sciences Dhar  El Mahraz,  Sidi Mohamed Ben Abdellah University,  Fes,  Morocco}
\curraddr{}
\email{rossafimohamed@gmail.com; mohamed.rossafi@usmba.ac.ma}
\thanks{}

 \subjclass[2010]{Primary: 42C15; Secondary: 47A05}

 \keywords{Frame, $\ast$-frame, g-frame, $\ast$-g-frame, $\ast$-$K$-g-frame, operator frame, Hilbert $C^{\ast}$-modules.}

\begin{abstract} 
	Frame theory has been rapidly generalized and various generalizations have been developed. In this paper, we present a brief survey of the frames in Hilbert $C^{\ast}$-modules, including frames, $\ast$-frames, g-frames, $\ast$-g-frames, $\ast$-$K$-g-frame, operator frame and $\ast$-$K$-operator in Hilbert $C^{\ast}$-modules. Various proofs are given for some results. Also, we prove some new results. Moreover, non-trivial examples are presented.

\end{abstract}

\maketitle

\section{Introduction} 

One of the important concepts in the study of vector spaces is the concept of a basis for the vector space, which allows every vector to be uniquely represented as a linear combination of the basis elements. However, the linear independence property for a basis is restrictive; sometimes it is impossible to find vectors that both fulfill the basis requirements and also satisfy external conditions demanded by applied problems. For such purposes, we need to look for more flexible types of spanning sets. Frames provide these alternatives. They not only have a great variety for use in applications but also have a rich theory from a pure analysis point of view.

A frame is a set of vectors in a Hilbert space that can be used to reconstruct each vector in the space from its inner products with the frame vectors. These inner products are generally called the frame coefficients of the vector. But unlike an orthonormal basis, each vector may have infinitely many different representations in terms of its frame coefficients.
Frames for Hilbert spaces were introduced by Duffin and Schaefer \cite{Duf} in 1952 to study some deep problems in nonharmonic Fourier series by abstracting the fundamental notion of Gabor \cite{Gab} for signal processing. In fact, in $1946$ Gabor, showed that any function $f\in L^{2}(\mathbb{R})$ can be reconstructed via a Gabor system $\{g(x-ka)e^{2\pi imbx}: k,m\in\mathbb{Z}\}$ where $g$ is a continuous compact support function. These ideas did not generate much interest outside of nonharmonic Fourier series and signal processing until the landmark paper of Daubechies, Grossmann, and Meyer \cite{Gross} in 1986, where they developed the class of tight frames for signal reconstruction and they showed that frames can be used to find series expansions of functions in $L^{2}(\mathbb{R})$ which are very similar to the expansions using orthonormal bases. After this innovative work, the theory of frames began to be widely studied. While orthonormal bases have been widely used for many applications, it is the redundancy that makes frames useful in applications.
Formally, a frame in a separable Hilbert space $\mathcal{H}$ is a sequence $\{f_{i}\}_{i\in I}$  for which there exist positive constants $A, B > 0$ called frame bounds such that
\begin{equation*}  
A\lVert x\lVert^{2}\leq\sum_{i\in I}|\langle x,f_{i}\rangle|^{2}\leq B\lVert x\lVert^{2};\forall x\in\mathcal{H}.
\end{equation*}
It is remarkable that the above inequalities imply the existence of a dual frame $\{\tilde{f}_{i}\}_{i\in I}$ , such that the following reconstruction formula holds for every $x\in\mathcal{H}$: $x=\sum_{i\in I}\langle x,\tilde{f}_{i}\rangle f_{i}$.
In particular, any orthonormal basis for $\mathcal{H}$ is a frame. However, in general, a frame need not be a basis and most useful frames are over-complete. The redundancy that frames carry is what makes them very useful in many applications.

Hilbert space frames have been traditionally used in signal processing because of their resilience to additive noise, resilience to quantization, the numerical stability of reconstruction, and their ability to capture important signal characteristics. 
Today, frame theory is an exciting, dynamic, and fast-paced subject with applications to a wide variety of areas in mathematics and engineering, including sampling theory, operator theory, harmonic analysis, nonlinear sparse approximation, pseudodifferential operators, wavelet theory, wireless communication, data transmission with erasures, filter banks, signal processing, image processing, geophysics, quantum computing, sensor networks, and more.
The last decades have seen tremendous activity in the development of frame theory and many generalizations of frames have come into existence.
 Hilbert $C^{\ast}$-modules is a generalization of Hilbert spaces by allowing the inner product to take values in a $C^{\ast}$-algebra rather than in the field of complex numbers.
\section{Preliminaries} 
\subsection{$\mathcal{C}^{\ast}$-algebra}
\begin{definition}\cite{Con}.
	If $\mathcal{A}$ is a Banach algebra, an involution is a map $ a\rightarrow a^{\ast} $ of $\mathcal{A}$ into itself such that for all $a$ and $b$ in $\mathcal{A}$ and all scalars $\alpha$ the following conditions hold:
	\begin{enumerate}
		\item  $(a^{\ast})^{\ast}=a$.
		\item  $(ab)^{\ast}=b^{\ast}a^{\ast}$.
		\item  $(\alpha a+b)^{\ast}=\bar{\alpha}a^{\ast}+b^{\ast}$.
	\end{enumerate}
\end{definition}
\begin{definition}\cite{Con}.
	A $\mathcal{C}^{\ast}$-algebra $\mathcal{A}$ is a Banach algebra with involution such that $$\|a^{\ast}a\|=\|a\|^{2}$$ for every $a$ in $\mathcal{A}$.
\end{definition}
\begin{Exs}
\begin{enumerate}
		\item	$ B(\mathcal{H}) $	the algebra of bounded operators on a Hilbert space $\mathcal{H}$, is a  $\mathcal{C}^{\ast}$-algebra, where for each operator $A$, $A^{\ast}$ is the adjoint of $A$.
	\item $C(X)$ the algebra of continuous functions on a compact space $X$, is an abelian $\mathcal{C}^{\ast}$-algebra, where $f^{\ast}(x):=\overline{f(x)}$.
	\item $C_{0}(X)$ the algebra of continuous functions on a locally compact space $X$ that vanish at infinity is an abelian $\mathcal{C}^{\ast}$-algebra, where $f^{\ast}(x):=\overline{f(x)}$.
\end{enumerate}	
\end{Exs}
\begin{definition}\cite{Con}.
	An element $a$ in a $\mathcal{C}^{\ast}$-algebra $\mathcal{A}$ is positive if $a^{\ast}=a$ and $sp(a)\subset\mathbb{R}^{+}$. We write $a\geq 0$ if $a$ is positive. The set of all positive
	elements in $\mathcal{A}$ will be denoted by $\mathcal{A}^{+}$.
	\begin{lemma}\cite{Xu2008}
	Let $ T \in End^{*}_{\mathcal{A}}(\mathcal{H},\mathcal{K}) $ be  a bounded operator with closed range  $\mathcal{R} (T)$.  Then there exists a bounded operator $ U^{\dagger} \in End^{*}_{\mathcal{A}}( \mathcal{K}, \mathcal{H}) $ for which 
	$$ TT^{\dagger}x=x      ,  \qquad x\in\mathcal{R} (T). $$
\end{lemma}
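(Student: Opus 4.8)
The plan is to construct the pseudoinverse $T^{\dagger}$ explicitly out of $T^{*}$, an inverse of $TT^{*}$ on the range, and the orthogonal projection onto $\mathcal{R}(T)$; the whole argument rests on the structural fact that an adjointable operator with closed range has orthogonally complemented range and kernel.

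First I would record the consequences of $\mathcal{R}(T)$ being closed: $T^{*}$ also has closed range, and one has the orthogonal decompositions
$$\mathcal{K}=\mathcal{R}(T)\oplus\ker(T^{*}),\qquad \mathcal{H}=\mathcal{R}(T^{*})\oplus\ker(T),$$
with $\ker(T^{*})=\mathcal{R}(T)^{\perp}$. In particular $\mathcal{R}(T)$ is orthogonally complemented, so the orthogonal projection $P:=P_{\mathcal{R}(T)}$ is a well-defined self-adjoint element of $End^{*}_{\mathcal{A}}(\mathcal{K})$.

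Next I would study $TT^{*}\in End^{*}_{\mathcal{A}}(\mathcal{K})$. Using the decompositions above, $T$ carries $\mathcal{R}(T^{*})$ bijectively onto $\mathcal{R}(T)$ while $T^{*}$ carries $\mathcal{R}(T)$ bijectively onto $\mathcal{R}(T^{*})$; hence the positive operator $TT^{*}$ restricts to a bijection of the closed submodule $\mathcal{R}(T)$ onto itself and annihilates $\ker(T^{*})$. By the bounded inverse theorem, $(TT^{*})|_{\mathcal{R}(T)}$ admits a bounded, positive, self-adjoint inverse $S\colon\mathcal{R}(T)\to\mathcal{R}(T)$; extending $S$ by zero on $\ker(T^{*})$ produces an adjointable operator on $\mathcal{K}$ precisely because $\mathcal{R}(T)$ is orthogonally complemented.

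I would then set $T^{\dagger}:=T^{*}SP\in End^{*}_{\mathcal{A}}(\mathcal{K},\mathcal{H})$, which is adjointable as a composition of adjointable maps. For $x\in\mathcal{R}(T)$ we get $Px=x$ and $Sx\in\mathcal{R}(T)$, so
$$TT^{\dagger}x=(TT^{*})\,Sx=(TT^{*})|_{\mathcal{R}(T)}\,Sx=x,$$
which is exactly the asserted identity. The hard part will be the input structural lemma — showing that closedness of the range forces $\mathcal{R}(T)$ (and $\ker T$) to be orthogonally complemented and that $(TT^{*})|_{\mathcal{R}(T)}$ is genuinely invertible — together with the verification that the zero-extension of $S$ remains adjointable; once these module-theoretic points are secured, the reconstruction identity is immediate.
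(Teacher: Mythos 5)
The paper does not actually prove this lemma: it is quoted as a known result from the cited reference \cite{Xu2008} of Xu and Sheng, so there is no internal argument to compare yours against. Judged on its own merits, your construction is correct, and it is essentially the standard argument behind the cited result. The decompositions $\mathcal{K}=\mathcal{R}(T)\oplus\ker(T^{\ast})$ and $\mathcal{H}=\mathcal{R}(T^{\ast})\oplus\ker(T)$ for an adjointable operator with closed range are exactly the closed-range theorem for Hilbert $C^{\ast}$-modules (Theorem 3.2 in the paper's reference \cite{Lan}), and you are right to flag this as the one genuinely hard input; it is a theorem available in the literature, not something to be reproved here. Everything downstream is verified correctly in your proposal: $TT^{\ast}$ restricts to a positive adjointable bijection of $\mathcal{R}(T)$ (since $T^{\ast}$ maps $\mathcal{R}(T)$ bijectively onto $\mathcal{R}(T^{\ast})$ and $T$ maps $\mathcal{R}(T^{\ast})$ bijectively onto $\mathcal{R}(T)$), its bounded inverse $S$ is again adjointable because $(A^{-1})^{\ast}=(A^{\ast})^{-1}$ for invertible adjointable $A$, the extension $SP$ is adjointable because the inclusion $\mathcal{R}(T)\hookrightarrow\mathcal{K}$ has adjoint $P$ precisely when $\mathcal{R}(T)$ is orthogonally complemented, and then $T^{\dagger}:=T^{\ast}SP$ gives $TT^{\dagger}x=(TT^{\ast})Sx=x$ for $x\in\mathcal{R}(T)$. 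In fact your $T^{\dagger}$ satisfies more than the lemma asks (it is the Moore--Penrose inverse), so the proposal proves a slightly stronger statement than required.
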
	
\end{definition}
\begin{lemma} \label{1} \cite{Pas}.
	
	Let $\mathcal{H}$ be a Hilbert $\mathcal{A}$-module. If $T\in End_{\mathcal{A}}^{\ast}(\mathcal{H})$, then $$\langle Tx,Tx\rangle\leq\|T\|^{2}\langle x,x\rangle, \forall x\in\mathcal{H}.$$
\end{lemma}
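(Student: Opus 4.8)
The plan is to reduce the module inequality to a positivity statement inside the $C^{\ast}$-algebra $\mathcal{B}:=End_{\mathcal{A}}^{\ast}(\mathcal{H})$, which is unital with unit the identity operator $I$. First I would rewrite the left-hand side using the defining property of the adjoint: for every $x\in\mathcal{H}$,
$$\langle Tx,Tx\rangle=\langle T^{\ast}Tx,x\rangle.$$
Thus it suffices to prove the operator inequality $T^{\ast}T\leq\|T\|^{2}I$ in $\mathcal{B}$ and then evaluate it against an arbitrary $x$.

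Next I would observe that $T^{\ast}T$ is a positive element of the $C^{\ast}$-algebra $\mathcal{B}$, and that by the $C^{\ast}$-identity in $\mathcal{B}$ one has $\|T^{\ast}T\|=\|T\|^{2}$. A standard fact in any unital $C^{\ast}$-algebra is that every positive element $a$ satisfies $a\leq\|a\|\,1$; applying this to $a=T^{\ast}T$ yields $T^{\ast}T\leq\|T\|^{2}I$, equivalently $\|T\|^{2}I-T^{\ast}T\geq 0$ in $\mathcal{B}$.

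Finally I would translate operator positivity back into the $\mathcal{A}$-valued inner product. Since $S:=\|T\|^{2}I-T^{\ast}T$ is positive in $\mathcal{B}$, it admits a self-adjoint square root $S^{1/2}\in\mathcal{B}$, so that for every $x$,
$$\langle Sx,x\rangle=\langle S^{1/2}x,S^{1/2}x\rangle\geq 0,$$
the last inequality being the positivity of the module inner product. Expanding $\langle Sx,x\rangle=\|T\|^{2}\langle x,x\rangle-\langle T^{\ast}Tx,x\rangle$ and combining with the first step gives $\langle Tx,Tx\rangle\leq\|T\|^{2}\langle x,x\rangle$, as required.

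The main obstacle is conceptual rather than computational: one must be comfortable passing between the two notions of positivity — positivity of an operator as an element of the $C^{\ast}$-algebra $End_{\mathcal{A}}^{\ast}(\mathcal{H})$, and positivity of the $\mathcal{A}$-valued quantity $\langle Sx,x\rangle$ in $\mathcal{A}$. The square-root trick bridges these cleanly, so that no delicate spectral argument over the module itself is needed; the only genuinely $C^{\ast}$-algebraic input is the inequality $a\leq\|a\|\,1$ for positive $a$, which rests on the functional-calculus description of positivity in a unital $C^{\ast}$-algebra.
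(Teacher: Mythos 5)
Your argument is correct. Note, however, that the paper itself offers no proof of this lemma at all: it is stated as a quoted result with the citation \cite{Pas} (Paschke), so there is no in-paper argument to compare against. Your route is the standard one \emph{for adjointable operators}: identify $\langle Tx,Tx\rangle=\langle T^{\ast}Tx,x\rangle$, use that $End_{\mathcal{A}}^{\ast}(\mathcal{H})$ is a unital $C^{\ast}$-algebra in which $T^{\ast}T\geq 0$ and $\|T^{\ast}T\|=\|T\|^{2}$, invoke $a\leq\|a\|1$ for positive $a$, and transfer positivity of $S=\|T\|^{2}I-T^{\ast}T$ to the module inner product via the self-adjoint square root $S^{1/2}$, so that $\langle Sx,x\rangle=\langle S^{1/2}x,S^{1/2}x\rangle\geq 0$. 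Every step is sound; the only essential inputs are the $C^{\ast}$-algebra structure of $End_{\mathcal{A}}^{\ast}(\mathcal{H})$ and positivity of $b^{\ast}b$, both standard. It is worth knowing that Paschke's original proof is different and strictly more general: it establishes the inequality for an arbitrary \emph{bounded module map} $T$ (no adjoint assumed) by composing the inner product with states $\varphi$ of $\mathcal{A}$, obtaining semi-inner products $\varphi(\langle\cdot,\cdot\rangle)$ on $\mathcal{H}$, applying the Hilbert-space estimate there, and recovering the $\mathcal{A}$-valued inequality from the fact that an element $a\in\mathcal{A}$ is positive whenever $\varphi(a)\geq 0$ for all states $\varphi$. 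Your proof buys brevity and stays entirely inside $C^{\ast}$-algebra positivity, at the cost of using adjointability essentially; Paschke's buys generality, which matters in contexts (common in Hilbert $C^{\ast}$-module theory) where bounded module maps need not be adjointable.
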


\begin{lemma} \label{sb} \cite{Ara}
	Let $\mathcal{H}$ and $\mathcal{K}$ be  two Hilbert $\mathcal{A}$-modules and $T\in End_{\mathcal{A}}^{\ast}(\mathcal{H},\mathcal{K})$. Then the following statements are equivalent:
	\begin{itemize}
		\item [(i)] $T$ is surjective.
		\item [(ii)] $T^{\ast}$ is bounded below with respect to norm, i.e., there exists $m>0$ such that $\|T^{\ast}x\|\geq m\|x\|$ for all $x\in\mathcal{K}$.
		\item [(iii)] $T^{\ast}$ is bounded below with respect to the inner product, i.e., there exists $m'>0$ such that $\langle T^{\ast}x,T^{\ast}x\rangle\geq m'\langle x,x\rangle$ for all $x\in\mathcal{K}$.
	\end{itemize}
\end{lemma}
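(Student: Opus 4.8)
The plan is to establish the cycle of implications (i) $\Rightarrow$ (iii) $\Rightarrow$ (ii) $\Rightarrow$ (i), which yields the full equivalence. The first two implications follow directly from the lemmas already recorded, while the closing implication (ii) $\Rightarrow$ (i) carries the genuine content and will require the closed-range theory for adjointable operators.

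For (i) $\Rightarrow$ (iii), I would use surjectivity to write $\mathcal{R}(T)=\mathcal{K}$, which is closed, and invoke the pseudo-inverse lemma of Xu to produce $T^{\dagger}\in End_{\mathcal{A}}^{\ast}(\mathcal{K},\mathcal{H})$ with $TT^{\dagger}=I_{\mathcal{K}}$. Passing to adjoints gives $(T^{\dagger})^{\ast}T^{\ast}=I_{\mathcal{K}}$, so for every $x\in\mathcal{K}$ one has $\langle x,x\rangle=\langle (T^{\dagger})^{\ast}T^{\ast}x,(T^{\dagger})^{\ast}T^{\ast}x\rangle$. Applying Lemma \ref{1} to the operator $(T^{\dagger})^{\ast}$ with argument $T^{\ast}x$ bounds this above by $\|(T^{\dagger})^{\ast}\|^{2}\langle T^{\ast}x,T^{\ast}x\rangle=\|T^{\dagger}\|^{2}\langle T^{\ast}x,T^{\ast}x\rangle$. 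Rearranging yields (iii) with $m'=\|T^{\dagger}\|^{-2}$. For (iii) $\Rightarrow$ (ii), I would simply take norms in $m'\langle x,x\rangle\leq\langle T^{\ast}x,T^{\ast}x\rangle$ and use monotonicity of the $C^{\ast}$-norm on positive elements to get $m'\|x\|^{2}\leq\|T^{\ast}x\|^{2}$, i.e. (ii) with $m=\sqrt{m'}$.

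The hard part is (ii) $\Rightarrow$ (i). From $\|T^{\ast}x\|\geq m\|x\|$ I would first deduce, by a standard Cauchy-sequence argument, that $T^{\ast}$ is injective and has closed range $\mathcal{R}(T^{\ast})$. The main obstacle is then to transfer closedness from $\mathcal{R}(T^{\ast})$ to $\mathcal{R}(T)$: in a Hilbert $C^{\ast}$-module a closed submodule need not be orthogonally complemented, so the purely orthogonal-complement argument available in the Hilbert-space case is not at hand. I would resolve this by invoking the closed-range duality for adjointable operators, namely that $\mathcal{R}(T)$ is closed if and only if $\mathcal{R}(T^{\ast})$ is closed, and that in this case the range is orthogonally complemented with $\mathcal{R}(T)^{\perp}=\mathcal{N}(T^{\ast})$ (the latter identity being immediate from $\langle T^{\ast}y,x\rangle=\langle y,Tx\rangle$ and nondegeneracy of the inner product).

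Granting this, $\mathcal{R}(T)$ is closed and $\mathcal{K}=\mathcal{R}(T)\oplus\mathcal{N}(T^{\ast})$; since $T^{\ast}$ is injective we have $\mathcal{N}(T^{\ast})=\{0\}$, whence $\mathcal{K}=\mathcal{R}(T)$ and $T$ is surjective. I expect the delicate point to be precisely the passage from the closed range of $T^{\ast}$ to the orthogonally complemented closed range of $T$, and I would cite the relevant structure theorem (in the spirit of the pseudo-inverse lemma above) rather than reprove it.
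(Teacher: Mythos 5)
Your proof is correct, but note first that the paper contains no proof of this lemma at all: it is quoted verbatim from Aramba\v{s}i\'{c} \cite{Ara}, so the comparison is with the standard argument in the literature rather than with anything in the text. Your cycle (i) $\Rightarrow$ (iii) $\Rightarrow$ (ii) $\Rightarrow$ (i) is sound at every step. Two remarks. For (i) $\Rightarrow$ (iii) you route through the pseudo-inverse $T^{\dagger}$ and Paschke's inequality; this works, with the harmless caveat that the paper states Paschke's inequality only for operators in $End_{\mathcal{A}}^{\ast}(\mathcal{H})$ while you apply it to $(T^{\dagger})^{\ast}\in End_{\mathcal{A}}^{\ast}(\mathcal{H},\mathcal{K})$ (the two-module version has the identical proof). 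However, the paper's own Lemma \ref{3}(ii) gives this implication in one line: surjectivity of $T$ makes $TT^{\ast}$ invertible with $\|(TT^{\ast})^{-1}\|^{-1}\leq TT^{\ast}$, hence $\langle T^{\ast}x,T^{\ast}x\rangle=\langle TT^{\ast}x,x\rangle\geq\|(TT^{\ast})^{-1}\|^{-1}\langle x,x\rangle$ for all $x\in\mathcal{K}$, which is (iii) with $m'=\|(TT^{\ast})^{-1}\|^{-1}$. For (ii) $\Rightarrow$ (i) you correctly identify the genuine obstruction---a closed submodule of a Hilbert $C^{\ast}$-module need not be orthogonally complemented, so the Hilbert-space argument does not transfer---and you resolve it by citing the closed-range theorem for adjointable operators (Lance's book, which is in the paper's bibliography as \cite{Lan}): closedness of $\mathcal{R}(T^{\ast})$ forces $\mathcal{R}(T)$ to be closed with $\mathcal{K}=\mathcal{R}(T)\oplus\mathcal{N}(T^{\ast})$, and injectivity of $T^{\ast}$ kills the second summand. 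That citation is legitimate and essentially unavoidable at the level of this paper, so nothing is missing; your proposal is a complete and standard proof of the lemma the authors chose only to quote.
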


\begin{lemma} \label{3} \cite{Ali}
	Let $\mathcal{H}$ and $\mathcal{K}$ be two Hilbert $\mathcal{A}$-modules and $T\in End_{\mathcal{A}}^{\ast}(\mathcal{H},\mathcal{K})$. 
	\begin{itemize}
		\item [(i)] If $T$ is injective and $T$ has closed range, then the adjointable map $T^{\ast}T$ is invertible and $$\|(T^{\ast}T)^{-1}\|^{-1}\leq T^{\ast}T\leq\|T\|^{2}.$$
		\item  [(ii)]	If $T$ is surjective, then the adjointable map $TT^{\ast}$ is invertible and $$\|(TT^{\ast})^{-1}\|^{-1}\leq TT^{\ast}\leq\|T\|^{2}.$$
	\end{itemize}	
\end{lemma}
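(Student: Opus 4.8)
My plan is to treat both parts uniformly: in each case the displayed two-sided estimate follows once I know that the relevant positive operator ($T^{\ast}T$ in (i), $TT^{\ast}$ in (ii)) is invertible in the unital $C^{\ast}$-algebra $End_{\mathcal{A}}^{\ast}(\mathcal{H})$, respectively $End_{\mathcal{A}}^{\ast}(\mathcal{K})$. The upper bounds are the easy halves and come straight from Lemma \ref{1}: writing $\langle T^{\ast}Tx,x\rangle=\langle Tx,Tx\rangle\leq\|T\|^{2}\langle x,x\rangle$ shows $T^{\ast}T\leq\|T\|^{2}$, and applying the same lemma to $T^{\ast}$ (using $\|T^{\ast}\|=\|T\|$) gives $TT^{\ast}\leq\|T\|^{2}$. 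So the content of the lemma is really the invertibility together with the sharp lower bound.

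For part (i) the key point is to produce a positive constant $m'>0$ with $T^{\ast}T\geq m'$. First I would use that an adjointable operator with closed range has orthogonally complemented range, so that $\mathcal{H}=\ker T\oplus\operatorname{ran} T^{\ast}$; since $T$ is injective, $\ker T=0$ forces $\operatorname{ran} T^{\ast}=\mathcal{H}$, i.e. $T^{\ast}$ is surjective. Now I apply Lemma \ref{sb} to the operator $T^{\ast}$: its surjectivity is equivalent to $(T^{\ast})^{\ast}=T$ being bounded below with respect to the inner product, so there is $m'>0$ with $\langle Tx,Tx\rangle\geq m'\langle x,x\rangle$ for all $x\in\mathcal{H}$. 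This reads $T^{\ast}T\geq m'$, hence $\operatorname{sp}(T^{\ast}T)\subset[m',\infty)$ misses $0$ and $T^{\ast}T$ is invertible. Finally, $T^{\ast}T$ is positive and invertible, so its spectrum has least element $\alpha=\min\operatorname{sp}(T^{\ast}T)>0$; since $(T^{\ast}T)^{-1}$ is again positive with $\|(T^{\ast}T)^{-1}\|$ equal to its spectral radius $\alpha^{-1}$, continuous functional calculus applied to $t\mapsto t-\alpha$ yields $T^{\ast}T\geq\alpha=\|(T^{\ast}T)^{-1}\|^{-1}$, which is exactly the asserted lower bound.

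Part (ii) is the simpler mirror image: the surjectivity of $T$ feeds directly into Lemma \ref{sb}, giving $m'>0$ with $\langle T^{\ast}y,T^{\ast}y\rangle\geq m'\langle y,y\rangle$ for all $y\in\mathcal{K}$, hence $TT^{\ast}\geq m'$ and invertibility of $TT^{\ast}$. The same functional-calculus step then produces $TT^{\ast}\geq\|(TT^{\ast})^{-1}\|^{-1}$, and here no complementation argument is needed since the hypothesis already hands us the surjectivity that Lemma \ref{sb} requires.

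I expect the only genuine subtlety to sit in the first step of (i). The tempting shortcut is to invoke the open mapping theorem (injective with closed range gives a norm estimate $\|Tx\|\geq c\|x\|$), but in a Hilbert $C^{\ast}$-module such a norm lower bound does \emph{not} upgrade to the inner-product inequality $\langle Tx,Tx\rangle\geq m'\langle x,x\rangle$ that the spectral argument needs; this is precisely where the module setting departs from the Hilbert-space case. Routing through the surjectivity of $T^{\ast}$ and then through condition (iii) of Lemma \ref{sb} is what circumvents the gap, and it rests on the standard structural fact that the range of a closed-range adjointable operator is orthogonally complemented. Once invertibility is in hand, the remainder is routine $C^{\ast}$-functional calculus inside the unital $C^{\ast}$-algebra of adjointable operators.
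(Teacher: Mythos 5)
The paper does not actually prove this lemma: it is quoted in the preliminaries as a known result from \cite{Ali}, so there is no in-paper proof to measure you against. Judged on its own merits, your argument is correct and complete. The upper bounds via Lemma \ref{1} (applied to $T$, respectively $T^{\ast}$) are fine; the reduction of part (i) to part-(ii)-style reasoning through the decomposition $\mathcal{H}=\ker T\oplus\overline{\operatorname{ran}}\,T^{\ast}$ (valid for adjointable operators with closed range, by Lance's closed-range theorem), hence surjectivity of $T^{\ast}$, is the standard and correct route; and the final step, that a positive invertible element $a$ of a unital $C^{\ast}$-algebra satisfies $a\geq\|a^{-1}\|^{-1}$ because $\|a^{-1}\|^{-1}=\min\operatorname{sp}(a)$, is exactly the functional-calculus argument used in the literature. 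One remark in your last paragraph is overstated, though it does not affect the proof: you claim that a norm lower bound $\|Tx\|\geq c\|x\|$ cannot be upgraded to the inner-product inequality $\langle Tx,Tx\rangle\geq m'\langle x,x\rangle$ in the module setting. In fact, for an \emph{adjointable} $T$ the two are equivalent — apply Lemma \ref{sb} to the operator $T^{\ast}$, whose surjectivity is equivalent to both conditions on $(T^{\ast})^{\ast}=T$. What is true, and what your caution correctly captures, is that the upgrade is not the triviality $\langle Tx,Tx\rangle=\|Tx\|^{2}$ of the Hilbert-space case; it genuinely requires Lemma \ref{sb}, which is precisely the path you follow.
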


\subsection{Hilbert $\mathcal{C}^{\ast}$-modules}
\begin{definition}\cite{Kap}.
	Let $ \mathcal{A} $ be a unital $C^{\ast}$-algebra and $\mathcal{H}$ be a left $ \mathcal{A} $-module, such that the linear structures of $\mathcal{A}$ and $ \mathcal{H} $ are compatible. $\mathcal{H}$ is a pre-Hilbert $\mathcal{A}$-module if $\mathcal{H}$ is equipped with an $\mathcal{A}$-valued inner product $\langle.,.\rangle :\mathcal{H}\times\mathcal{H}\rightarrow\mathcal{A}$, such that is sesquilinear, positive definite and respects the module action. In the other words,
	\begin{enumerate}
		\item $ \langle x,x\rangle\geq0 $ for all $ x\in\mathcal{H} $ and $ \langle x,x\rangle=0$ if and only if $x=0$.
		\item $\langle ax+y,z\rangle=a\langle x,z\rangle+\langle y,z\rangle$ for all $a\in\mathcal{A}$ and $x,y,z\in\mathcal{H}$.
		\item $ \langle x,y\rangle=\langle y,x\rangle^{\ast} $ for all $x,y\in\mathcal{H}$.
	\end{enumerate}	 
\end{definition}
For $x\in\mathcal{H}, $ we define $||x||=||\langle x,x\rangle||^{\frac{1}{2}}$. If $\mathcal{H}$ is complete with $||.||$, it is called a Hilbert $\mathcal{A}$-module or a Hilbert $C^{\ast}$-module over $\mathcal{A}$. For every $a$ in $C^{\ast}$-algebra $\mathcal{A}$, we have $|a|=(a^{\ast}a)^{\frac{1}{2}}$ and the $\mathcal{A}$-valued norm on $\mathcal{H}$ is defined by $|x|=\langle x, x\rangle^{\frac{1}{2}}$ for $x\in\mathcal{H}$.
\begin{Exs}
	\begin{enumerate}
	\item Let $\mathcal{H}$ be a Hilbert space, then $B(\mathcal{H})$ is a Hilbert $C^{\ast}$-module with the inner product $\langle T, S\rangle=TS^{\ast}, \forall T, S\in B(\mathcal{H})$.
	\item Let $\mathcal{H}$ and $\mathcal{K}$ be separable Hilbert spaces and let $B(\mathcal{H,K})$ be the set of all
	bounded linear operators from $\mathcal{H}$ into $\mathcal{K}$. Then $B(\mathcal{H,K})$ is a Hilbert $B(\mathcal{K})$-module  with the inner product $\langle T, S\rangle=TS^{\ast}, \forall T, S\in B(\mathcal{H,K})$.
	\item Let $X$ be a locally compact Hausdorff space and $\mathcal{H}$ a Hilbert space, the Banach space $C_{0}(X, \mathcal{H})$ of all continuous $\mathcal{H}$-valued functions vanishing at infinity is a Hilbert $C^{\ast}$-module over the $C^{\ast}$-algebra $C_{0}(X)$ with inner product $\langle f, g\rangle(x):= \langle f(x), g(x)\rangle$ and module operation $(\phi f)(x)= \phi(x)f(x)$, for all $\phi\in C_{0}(X)$ and $f\in C_{0}(X, \mathcal{H})$.
	\item If $ \{\mathcal{H}_{k}\}_{k\in\mathbb{N}} $ is a countable set of Hilbert $\mathcal{A}$-modules, then one can define their direct sum $ \oplus_{k\in\mathbb{N}}\mathcal{H}_{k} $. On the $\mathcal{A}$-module $ \oplus_{k\in\mathbb{N}}\mathcal{H}_{k} $ of all sequences $x=(x_{k})_{k\in\mathbb{N}}: x_{k}\in\mathcal{H}_{k}$, such that the series $ \sum_{k\in\mathbb{N}}\langle x_{k}, x_{k}\rangle_{\mathcal{A}} $ is norm-convergent in the $\mathcal{C}^{\ast}$-algebra $\mathcal{A}$, we define the inner product by
	\begin{equation*}
	\langle x, y\rangle:=\sum_{k\in\mathbb{N}}\langle x_{k}, y_{k}\rangle_{\mathcal{A}} 
	\end{equation*}
	for $x, y\in\oplus_{k\in\mathbb{N}}\mathcal{H}_{k} $.\\
	Then $\oplus_{k\in\mathbb{N}}\mathcal{H}_{k}$ is a Hilbert $\mathcal{A}$-module.
	\\
	The direct sum of a countable number of copies of a Hilbert $\mathcal{C}^{\ast}$-module $\mathcal{H}$ is denoted by $l^{2}(\mathcal{H})$.
	\end{enumerate}
\end{Exs}
Let $\mathcal{H}$ and $\mathcal{K}$ be two Hilbert $\mathcal{A}$-modules, A map $T:\mathcal{H}\rightarrow\mathcal{K}$ is said to be adjointable if there exists a map $T^{\ast}:\mathcal{K}\rightarrow\mathcal{H}$ such that $\langle Tx,y\rangle_{\mathcal{A}}=\langle x,T^{\ast}y\rangle_{\mathcal{A}}$ for all $x\in\mathcal{H}$ and $y\in\mathcal{K}$.
\\
We also reserve the notation $End_{\mathcal{A}}^{\ast}(\mathcal{H},\mathcal{K})$ for the set of all adjointable operators from $\mathcal{H}$ to $\mathcal{K}$ and $End_{\mathcal{A}}^{\ast}(\mathcal{H},\mathcal{H})$ is abbreviated to $End_{\mathcal{A}}^{\ast}(\mathcal{H})$.

Throughout the paper, we consider a unital $C^{\ast}$-algebra.
\section{Frames In Hilbert $\mathcal{A}$-modules}
\subsection{Frames}
\begin{definition} 
	\cite{F4}. Let $ \mathcal{H} $ be a Hilbert $\mathcal{A}$-module. A family $\{x_{i}\}_{i\in I}$ of elements of $\mathcal{H}$ is a frame for $ \mathcal{H} $, if there exist two positive constants $A$, $B$, such that for all $x\in\mathcal{H}$,
	\begin{equation}\label{1}
	A\langle x,x\rangle_{\mathcal{A}}\leq\sum_{i\in I}\langle x,x_{i}\rangle_{\mathcal{A}}\langle x_{i},x\rangle_{\mathcal{A}}\leq B\langle x,x\rangle_{\mathcal{A}}.
	\end{equation}
	The numbers $A$ and $B$ are called lower and upper bounds of the frame, respectively. If $A=B=\lambda$, the frame is $\lambda$-tight. If $A = B = 1$, it is called a normalized tight frame or a Parseval frame. If the sum in the middle of \eqref{1} is convergent in norm, the frame is called standard.
\end{definition}
\begin{Ex}
	For $a\in\mathbb{R}$, define the translation operator
	\begin{equation*}
	T_{a}:L^{2}(\mathbb{R})\to L^{2}(\mathbb{R}),\;T_{a}f(x)=f(x-a)
	\end{equation*}
	For $b\in\mathbb{R}$, define the modulation operator
	\begin{equation*}
	E_{b}:L^{2}(\mathbb{R})\to L^{2}(\mathbb{R}),\;E_{b}f(x)=e^{2\pi ibx}f(x)
	\end{equation*}
	A frame for $L^{2}(\mathbb{R})$ of the form
	\begin{equation*}
	\{E_{mb}T_{na}g\}_{m,n\in\mathbb{Z}}=\{e^{2\pi imbx}g(x-na)\}_{m,n\in\mathbb{Z}}
	\end{equation*}
	is called a Gabor frame.
\end{Ex}
Let $\{x_{i}\}_{i\in I}$ be a frame of a finitely or countably generated Hilbert $\mathcal{A}$-module $\mathcal{H}$ over a unital $\mathcal{C}^{\ast}$-algebra $\mathcal{A}$.
\\
The operator $T: \mathcal{H} \to l^{2}(\mathcal{A})$ defined by $Tx=\{\langle x, x_{i}\rangle\}_{i\in I}$, is called the analysis operator.
\\
The adjoint operator $T^{\ast}: l^{2}(\mathcal{A}) \to \mathcal{H}$ is given by $T^{\ast}\{c_{i}\}_{i\in I}= \sum_{i\in I}c_{i}x_{i}$. $T^{\ast}$ is called pre-frame operator or the synthesis operator.
\\
By composing $T$ and $T^{\ast}$ we obtain the frame operator $S: \mathcal{H} \to \mathcal{H}$ which is given by
\begin{equation*}
 Sx=T^{\ast}Tx=\sum_{i\in I}\langle x, x_{i}\rangle x_{i}.
\end{equation*}
\begin{Prop}
	The frame operator $S$ is positive, selfadjoint, bounded and invertible.
\end{Prop}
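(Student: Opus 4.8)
The plan is to recognise the frame operator as the composition $S=T^{\ast}T$ of the analysis operator $T$ with its adjoint, and then to read off each of the four properties either from this factorisation or from the defining frame inequality. The starting point is the identity, valid for every $x\in\mathcal{H}$,
$$\langle Sx,x\rangle=\langle T^{\ast}Tx,x\rangle=\langle Tx,Tx\rangle=\sum_{i\in I}\langle x,x_{i}\rangle\langle x_{i},x\rangle,$$
so the middle term of the frame inequality is exactly $\langle Sx,x\rangle$; the frame condition therefore reads $A\langle x,x\rangle\leq\langle Sx,x\rangle\leq B\langle x,x\rangle$ for all $x$.

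Selfadjointness and positivity are then immediate. First, $S^{\ast}=(T^{\ast}T)^{\ast}=T^{\ast}T=S$, so $S$ is selfadjoint; equivalently one checks $\langle Sx,y\rangle=\langle x,Sy\rangle$ directly from the series. Positivity follows because $\langle Sx,x\rangle=\langle Tx,Tx\rangle\geq 0$ in $\mathcal{A}$ for every $x$, which is the definition of a positive element of $End_{\mathcal{A}}^{\ast}(\mathcal{H})$. For boundedness I would use the upper bound: taking norms in $\langle Sx,x\rangle\leq B\langle x,x\rangle$ and using that $0\leq a\leq b$ implies $\|a\|\leq\|b\|$ gives $\|Tx\|^{2}=\|\langle Sx,x\rangle\|\leq B\|x\|^{2}$, hence $\|T\|\leq\sqrt{B}$ and $\|S\|=\|T^{\ast}T\|=\|T\|^{2}\leq B$.

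The only substantial point is invertibility, and here I would exploit the lower bound. From $A\langle x,x\rangle\leq\langle Sx,x\rangle=\langle Tx,Tx\rangle$, again using monotonicity of the norm on positive elements, I obtain $\|Tx\|^{2}\geq A\|x\|^{2}$, i.e. $\|Tx\|\geq\sqrt{A}\,\|x\|$. Thus $T$ is bounded below, so it is injective and has closed range. Lemma \ref{3}(i) then applies to $T$ and yields that $S=T^{\ast}T$ is invertible, together with the quantitative estimate $\|S^{-1}\|^{-1}\leq S\leq\|T\|^{2}$. Alternatively, since $End_{\mathcal{A}}^{\ast}(\mathcal{H})$ is a unital $C^{\ast}$-algebra and $S$ is selfadjoint with $AI\leq S\leq BI$ where $A>0$, its spectrum is contained in $[A,B]$, so $0\notin sp(S)$ and $S$ is invertible; in either description $S^{-1}$ is again positive and selfadjoint.

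The main obstacle is precisely this last step: one must pass from the order inequality $A\langle x,x\rangle\leq\langle Sx,x\rangle$, which controls only the $\mathcal{A}$-valued ``quadratic form'' of $S$, to genuine invertibility of the operator. The device that makes it work is the conversion of the module-valued inequality into the scalar lower bound $\|Tx\|\geq\sqrt{A}\,\|x\|$ via monotonicity of the norm on positive elements; once $T$ is known to be bounded below with closed range, Lemma \ref{3} (or equally the spectral argument, or Lemma \ref{sb}) closes the argument without further computation.
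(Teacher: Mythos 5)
Your proof is correct, but your treatment of the crucial step --- invertibility --- follows a genuinely different route from the paper's. The paper stays entirely at the level of the operator inequality: from $A\langle x,x\rangle\leq\langle Sx,x\rangle\leq B\langle x,x\rangle$ it deduces $A\,I_{\mathcal{H}}\leq S\leq B\,I_{\mathcal{H}}$, then $0\leq I_{\mathcal{H}}-B^{-1}S\leq\frac{B-A}{B}I_{\mathcal{H}}$, and finally $\|I_{\mathcal{H}}-B^{-1}S\|\leq\frac{B-A}{B}<1$, so that $B^{-1}S$ is invertible by the Neumann series --- the classical frame-theoretic argument, self-contained and quantitative (it immediately yields $\|S^{-1}\|\leq A^{-1}$-type control). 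You instead convert the lower frame bound into the scalar estimate $\|Tx\|\geq\sqrt{A}\,\|x\|$, conclude that the analysis operator $T$ is injective with closed range, and invoke Lemma \ref{3}(i) to get invertibility of $S=T^{\ast}T$ together with $\|(T^{\ast}T)^{-1}\|^{-1}\leq S\leq\|T\|^{2}$. This reuses the machinery the paper has already set up in the preliminaries (and is in fact the same mechanism the paper itself employs later, in Theorem 3.6, to recompute the frame bounds as $\|(T^{\ast}T)^{-1}\|^{-1}$ and $\|T\|^{2}$), at the cost of needing the bounded-below-implies-closed-range fact. Your spectral alternative ($A\,I\leq S\leq B\,I$ with $A>0$ forces $sp(S)\subset[A,B]$, so $0\notin sp(S)$) is the cleanest of the three and equally valid, since positivity of $\langle Rx,x\rangle$ for all $x$ does characterize positivity of an adjointable operator $R$ on a Hilbert $C^{\ast}$-module. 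All three arguments hinge on the same conversion of the $\mathcal{A}$-valued quadratic-form inequality into operator-norm information; they differ only in which standard tool then finishes the job.
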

\begin{proof} For all $x\in \mathcal{H}$ we have,
	\begin{equation*}
		\langle Sx,x\rangle_{\mathcal{A}} = \langle T^{\ast}Tx,x\rangle_{\mathcal{A}}=\langle \sum_{i\in I}\langle x, x_{i}\rangle x_{i},x\rangle_{\mathcal{A}}=\sum_{i\in I}\langle x, x_{i}\rangle\langle x_{i},x\rangle_{\mathcal{A}} .
	\end{equation*}
	 we have,
	 $$0\leq	A\langle x,x\rangle_{\mathcal{A}}\leq\langle Sx;x\rangle\leq B\langle x,x\rangle_{\mathcal{A}}.$$
	 \\
	Then $S$ is a positive operator, also, it's sefladjoint.\\
	 We have,
	\begin{equation*}
		A\langle x,x\rangle_{\mathcal{A}} \leq\langle Sx,x\rangle_{\mathcal{A}}\leq B\langle x,x\rangle_{\mathcal{A}}, \quad  x\in \mathcal{H}.
	\end{equation*}
	So,
	\begin{equation*}
		A.I_{\mathcal{H}} \leq S_{C}\leq  B.I_{\mathcal{H}}
	\end{equation*}
	Then $S_{C}$ is a bounded operator.\\
	Moreover, 
	\begin{equation*}
		0 \leq I_{\mathcal{H}}-B^{-1}S_{C} \leq \frac{B-A}{B}.I_{\mathcal{H}},
	\end{equation*} 
	Consequently,
	\begin{equation*}
		\|I_{\mathcal{H}}-B^{-1}S_{C} \|=\underset{x \in \mathcal{H}, \|x\|=1}{\sup}\|\langle(I_{\mathcal{H}}-B^{-1}S_{C})x,x\rangle_{\mathcal{A}} \|\leq \frac{B-A}{B}<1.
	\end{equation*}
	The  shows that  $S$ is invertible.
\end{proof}
The frame operator is positive, invertible, and is the unique operator in $End_{\mathcal{A}}^{\ast}(\mathcal{H})$ such that the reconstruction formula
\begin{equation*}
x=\sum_{i\in I}\langle x, S^{-1}x_{i}\rangle x_{i}=\sum_{i\in I}\langle x, x_{i}\rangle S^{-1}x_{i},
\end{equation*}
holds for all $x\in\mathcal{H}$.
\\
The sequences $\{S^{-1}x_{i}\}_{i\in I}$ and $\{S^{-\frac{1}{2}}x_{i}\}_{i\in I}$ are frames for $\mathcal{H}$.
\\
The frame $\{S^{-1}x_{i}\}_{i\in I}$ is said to be the canonical dual frame of $\{x_{i}\}_{i\in I}$ and the frame $\{S^{-\frac{1}{2}}x_{i}\}_{i\in I}$ is said to be the canonical Parseval frame of $\{x_{i}\}_{i\in I}$.
\\
The following theorem give a characterization of standard frame.
\begin{theorem}
	Let $\mathcal{H}$ be a finitely or countably generated Hilbert $\mathcal{A}$-module over a unital $\mathcal{C}^{\ast}$-algebra $\mathcal{A}$, and $\{x_{i}\}_{i}\subset\mathcal{H}$ a sequence such that $\sum_{i\in I}\langle x,x_{i}\rangle_{\mathcal{A}}\langle x_{i},x\rangle_{\mathcal{A}}$ converges in norm for every $x\in\mathcal{H}$. Then $\{x_{i}\}_{i}$ is a frame of $\mathcal{H}$ with bounds $A$ and $B$ if and only if
	\begin{equation*}
	A\|x\|^{2}\leq\Bigg\|\sum_{i\in I}\langle x,x_{i}\rangle_{\mathcal{A}}\langle x_{i},x\rangle_{\mathcal{A}}\Bigg\|\leq B\|x\|^{2}
	\end{equation*}
	for all $x\in\mathcal{H}$.
\end{theorem}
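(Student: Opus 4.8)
The plan is to route everything through the analysis operator and then read off both frame bounds from the cited lemmas on adjointable operators. Write $S_x:=\sum_{i\in I}\langle x,x_i\rangle\langle x_i,x\rangle$; by hypothesis this converges in norm, so the analysis operator $T\colon\mathcal{H}\to l^2(\mathcal{A})$, $Tx=\{\langle x,x_i\rangle\}_{i\in I}$, is well defined, and with the $l^2(\mathcal{A})$-inner product one has $\langle Tx,Tx\rangle=S_x$ and hence $\|Tx\|^2=\|S_x\|$. I would first note that $T$ is adjointable (the standard fact that such a sequence defines an adjointable analysis operator into $l^2(\mathcal{A})$, with $T^{\ast}\{c_i\}=\sum_i c_i x_i$), so that $S:=T^{\ast}T\in End_{\mathcal{A}}^{\ast}(\mathcal{H})$ is positive, self-adjoint and satisfies $S_x=\langle Sx,x\rangle$. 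The forward implication is then immediate: if $A\langle x,x\rangle\le S_x\le B\langle x,x\rangle$, applying the norm (which is monotone on $\mathcal{A}^{+}$) together with $\|A\langle x,x\rangle\|=A\|x\|^2$ and $\|B\langle x,x\rangle\|=B\|x\|^2$ yields the norm inequalities.

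For the converse I would treat the two bounds separately. For the upper bound the key is the inequality $\langle Tx,Tx\rangle\le\|T\|^2\langle x,x\rangle$ (Lemma~\ref{1}). Since $\|Tx\|^2=\|S_x\|\le B\|x\|^2$ for all $x$, one gets $\|T\|^2\le B$, and therefore $S_x=\langle Tx,Tx\rangle\le\|T\|^2\langle x,x\rangle\le B\langle x,x\rangle$, which is exactly the upper frame inequality with the same constant $B$.

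For the lower bound I would first upgrade the scalar estimate $A\|x\|^2\le\|S_x\|$ to injectivity-with-closed-range of $T$: since $\|Tx\|^2=\|S_x\|\ge A\|x\|^2$ we have $\|Tx\|\ge\sqrt{A}\,\|x\|$, so $T$ is bounded below, hence injective with closed range. Lemma~\ref{3}(i) then applies and gives that $S=T^{\ast}T$ is invertible with $\|S^{-1}\|^{-1}I_{\mathcal{H}}\le S$. It remains to check $\|S^{-1}\|^{-1}\ge A$: from $A\|x\|^2\le\|S_x\|=\|\langle Sx,x\rangle\|\le\|Sx\|\,\|x\|$ (Cauchy–Schwarz) we obtain $\|Sx\|\ge A\|x\|$, and since $S$ is invertible this means $\|S^{-1}\|^{-1}=\inf_{\|x\|=1}\|Sx\|\ge A$. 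Combining, $A\,I_{\mathcal{H}}\le\|S^{-1}\|^{-1}I_{\mathcal{H}}\le S$, i.e. $A\langle x,x\rangle\le\langle Sx,x\rangle=S_x$ for all $x$, the lower frame inequality with constant $A$.

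The point that needs the most care, and the reason the cited lemmas are doing real work, is the converse, because norm bounds on the positive elements $S_x$ do not by themselves yield the order relations $A\langle x,x\rangle\le S_x\le B\langle x,x\rangle$: for positive elements of $\mathcal{A}$ one can have $\|a\|\le\|b\|$ without $a\le b$, so the pointwise norm estimates cannot be converted elementwise. What rescues the argument is that all the $S_x$ arise from a single adjointable operator $T$, so the global facts $\langle Tx,Tx\rangle\le\|T\|^2\langle x,x\rangle$ and ``bounded below $\Rightarrow$ invertible with $\|S^{-1}\|^{-1}I_{\mathcal{H}}\le S$'' let one pass from norms to the module order while preserving the exact constants. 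The only preliminary I would want to pin down carefully is the adjointability of $T$ (needed to invoke Lemmas~\ref{1} and~\ref{3}), which follows from the convergence hypothesis together with the standard theory of the module $l^2(\mathcal{A})$.
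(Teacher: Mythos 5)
Your proof is correct, and at the top level it uses the same strategy as the paper's: both directions exploit the fact that the quadratic form $S_x=\sum_{i}\langle x,x_i\rangle\langle x_i,x\rangle$ is realized by a single adjointable operator, so that scalar norm bounds can be converted into module-order bounds via the operator lemmas (your closing remark about why pointwise norm comparisons of positive elements cannot do this is exactly the right diagnosis). The forward direction is identical to the paper's. In the converse, though, the execution differs, and yours is the more complete version. The paper works with the frame operator $S$ and its square root: it asserts that $S$ is positive, self-adjoint and invertible --- an assertion that is not justified at that point, since invertibility of the frame operator was only established earlier for sequences already known to be frames --- derives $\sqrt{A}\,\|x\|\le\|S^{1/2}x\|\le\sqrt{B}\,\|x\|$, and then concludes order inequalities with unexplained constants $A_1$, $B_1$; as written it therefore does not recover the bounds $A$ and $B$ that the statement claims. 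You instead work with the analysis operator $T$: the estimate $\|T\|^{2}\le B$ combined with Paschke's inequality $\langle Tx,Tx\rangle\le\|T\|^{2}\langle x,x\rangle$ yields the upper order bound with the exact constant $B$, while boundedness below of $T$ gives injectivity and closed range, so $S=T^{\ast}T$ is invertible with $\|S^{-1}\|^{-1}\langle x,x\rangle\le\langle Sx,x\rangle$, and your Cauchy--Schwarz estimate $\|Sx\|\ge A\|x\|$ pins down $\|S^{-1}\|^{-1}\ge A$, recovering the exact lower bound $A$. So your route simultaneously fills the paper's unjustified invertibility step and preserves the stated constants; the one preliminary you rightly flag --- adjointability of $T$ --- does follow from the Bessel-type norm bound (available in both directions of the argument) by the standard Frank--Larson synthesis-operator construction, so there is no gap there.
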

\begin{proof}
	Suppose that $ \{x_{i}:i\in I \}$ is a frame in Hilbert $ \mathcal{A}$-module with bounds $A$ and $B$ $\mathcal{H}$  We have
	\begin{equation*}
		A\langle x,x\rangle_{\mathcal{A}}\leq\sum_{i\in I}\langle x,x_{i}\rangle_{\mathcal{A}}\langle x_{i},x\rangle_{\mathcal{A}}\leq B\langle x,x\rangle_{\mathcal{A}}.
	\end{equation*}
 Since $\langle x,x\rangle \geq 0$  we have $$ A\lVert\langle  x ,x \rangle_{\mathcal{A}}\lVert\leq \lVert \sum_{i\in I}\langle x,x_{i}\rangle_{\mathcal{A}}\langle x_{i},x\rangle_{\mathcal{A}} \lVert\leq B\lVert\langle x, x \rangle_{\mathcal{A}}\lVert .$$
	We have $$ A\lVert x\lVert^{2} \leq \lVert \sum_{i\in I}\langle x,x_{i}\rangle_{\mathcal{A}}\langle x_{i},x\rangle_{\mathcal{A}} \lVert\leq B\lVert x \lVert^{2} .$$
	Now suppose that \eqref{*}  holds, we know that the  frame operator $S$ is positive  self-adjoint and inversible $$ \langle S^{1/2}x,S^{1/2}x\rangle =\langle Sx,x\rangle =\langle\sum_{i\in I} \langle x ,x_{i}  \rangle_{\mathcal{A}}x_{i},x\rangle =\sum_{i\in I}\langle x,x_{i}\rangle_{\mathcal{A}}\langle x_{i},x\rangle_{\mathcal{A}}.$$
	Hence $$\sqrt{A}\lVert x\lVert \leq \lVert S^{1/2}x\lVert \leq \sqrt{B}\lVert x \lVert $$
	$$ A_1\langle  x , x \rangle_{\mathcal{A}}\leq \sum_{i\in I} \langle x ,x_{i}  \rangle_{\mathcal{A}}\langle x ,x_{i}  \rangle_{\mathcal{A}}\leq B_1 \langle x, x \rangle_{\mathcal{A}}. $$ Which implies that
	$\{x_{i}:i\in I \}$ is a frame in Hilbert $ \mathcal{A}$-module $\mathcal{H}$.
\end{proof}
\begin{theorem} \label{2.3}
	Let $\{x_{i} : i\in I\}$ be a frame for $\mathcal{H}$, with lower and upper bounds $A$ and $B$, respectively. Then the frame transform $T:H\rightarrow l^{2} \left(\left\{\mathcal{V}_{i}\right\}\right)$ defined by: $Tx=\{\langle x,x_{i}\rangle: i\in I\}$ is injective and  adjointable, and  has a closed range  with $\|T\|\leq\|B\|^{\frac{1}{2}}$. The adjoint operator $T^{\ast}$ is surjective, given by $T^{\ast}x=\sum_{i\in I}c_{i}x_{i}$, where $x=\{x_{i}\}_{i\in I}$.
\end{theorem}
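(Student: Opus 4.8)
The plan is to route every assertion through the single identity
\[
\langle Tx, Tx\rangle_{l^{2}(\mathcal{A})}=\sum_{i\in I}\langle x,x_{i}\rangle\langle x_{i},x\rangle ,
\]
which holds because the inner product on $l^{2}(\mathcal{A})$ is $\langle\{a_{i}\},\{b_{i}\}\rangle=\sum_{i}a_{i}b_{i}^{\ast}$ and $\langle x,x_{i}\rangle^{\ast}=\langle x_{i},x\rangle$. Under this identity the defining frame inequality becomes
\[
A\langle x,x\rangle\leq\langle Tx,Tx\rangle\leq B\langle x,x\rangle,\qquad x\in\mathcal{H},
\]
so the upper bound both guarantees $Tx\in l^{2}(\mathcal{A})$ (hence $T$ is well defined) and controls $\|T\|$, while the lower bound will supply injectivity, boundedness below, and ultimately surjectivity of $T^{\ast}$.

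First I would read off injectivity and the norm estimate. If $Tx=0$ then $\langle Tx,Tx\rangle=0$, and the lower inequality forces $A\langle x,x\rangle\leq 0$; since $\langle x,x\rangle\geq 0$ and $A>0$, this gives $\langle x,x\rangle=0$, i.e. $x=0$. Passing to $C^{\ast}$-norms in the upper inequality yields $\|Tx\|^{2}=\|\langle Tx,Tx\rangle\|\leq\|B\|\,\|\langle x,x\rangle\|=\|B\|\,\|x\|^{2}$, whence $\|T\|\leq\|B\|^{1/2}$.

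Next I would exhibit the adjoint explicitly, which simultaneously proves adjointability and records the formula. Setting $T^{\ast}\{c_{i}\}=\sum_{i}c_{i}x_{i}$ and using the left-module convention $\langle x,c_{i}x_{i}\rangle=\langle x,x_{i}\rangle c_{i}^{\ast}$, a direct computation gives
\[
\langle Tx,\{c_{i}\}\rangle=\sum_{i}\langle x,x_{i}\rangle c_{i}^{\ast}=\langle x,T^{\ast}\{c_{i}\}\rangle ,
\]
so $T\in End_{\mathcal{A}}^{\ast}(\mathcal{H},l^{2}(\mathcal{A}))$ with the stated adjoint. For the closed range I would invoke that $T$ is bounded below: taking $C^{\ast}$-norms in the lower inequality gives $\sqrt{A}\,\|x\|\leq\|Tx\|$, so any Cauchy sequence $\{Tx_{n}\}$ in $\im T$ forces $\{x_{n}\}$ to be Cauchy in $\mathcal{H}$; completeness of $\mathcal{H}$ together with continuity of $T$ then places the limit in $\im T$, so $\im T$ is closed.

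Finally, surjectivity of $T^{\ast}$ follows from Lemma \ref{sb} applied to the operator $T^{\ast}$, whose adjoint is $T$. The lower frame inequality $A\langle x,x\rangle\leq\langle Tx,Tx\rangle$ is exactly condition (iii) of that lemma for $(T^{\ast})^{\ast}=T$, namely boundedness below in the inner product, which by the equivalence yields surjectivity of $T^{\ast}$. The only genuinely delicate point is the closed-range claim; once the lower frame bound is reinterpreted as ``$T$ is bounded below,'' however, it reduces to the routine completeness argument above, and everything else is bookkeeping with the $l^{2}(\mathcal{A})$ inner product and the module conventions.
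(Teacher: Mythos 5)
Your proposal is correct and follows essentially the same route as the paper's proof: both recast the frame inequality as $A\langle x,x\rangle\leq\langle Tx,Tx\rangle\leq B\langle x,x\rangle$, read off injectivity and $\|T\|\leq\|B\|^{1/2}$, verify the adjoint by direct computation, prove closed range via the boundedness-below/Cauchy-sequence argument, and obtain surjectivity of $T^{\ast}$ from Lemma \ref{sb} (the paper cites it through a broken reference ``Lemma 5,'' and uses condition (ii) where you use the equivalent condition (iii)). Your write-up is in fact cleaner at the adjoint step, where the paper's displayed computation contains typographical slips.
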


\begin{proof}
	Let $x\in H$. By the definition of frame for $\mathcal{H}$, we have
	\begin{equation}\label{1}
		A\langle x,x\rangle_{\mathcal{A}}\leq\sum_{i\in I}\langle x,x_{i}\rangle_{\mathcal{A}}\langle  x_{i},x\rangle_{\mathcal{A}}\leq B\langle x,x\rangle_{\mathcal{A}}.
	\end{equation}
	We have
	\begin{equation*}
	A\langle x,x\rangle_{\mathcal{A}}\leq\langle  \sum_{i\in I}\langle x,x_{i}\rangle_{\mathcal{A}}x_{i},x\rangle_{\mathcal{A}}\leq B\langle x,x\rangle_{\mathcal{A}}.
	\end{equation*}
	We have
	\begin{equation*}
		A\langle x,x\rangle \leq\langle T^{*}Tx,x\rangle \leq B\langle x,x\rangle.
	\end{equation*}
	So
	\begin{equation}\label{1}
		A\langle x,x\rangle \leq\langle Tx,Tx\rangle\leq B\langle x,x\rangle .
	\end{equation}
	If $Tx=0$ then $\langle x,x\rangle=0$ and so $x=0$, i.e., $T$ is injective.
	
	We now show that the range of $T$ is closed. Let $\{Tx_{n}\}_{n\in\mathbb{N}}$ be a sequence in the range of $T$ such that $\lim_{n\rightarrow\infty}Tx_{n}=y.$\\
	By \eqref{1}, we have, for $n, m\in\mathbb{N}$,
	\begin{equation*}
		\|A\langle x_{n}-x_{m},x_{n}-x_{m}\rangle \|\leq\|\langle T(x_{n}-x_{m}),T(x_{n}-x_{m})\rangle\|=\|T(x_{n}-x_{m})\|^{2}.
	\end{equation*}
	Since $\{Tx_{n}\}_{n\in\mathbb{N}}$ is Cauchy sequence in $H$,
	$\|A\langle x_{n}-x_{m},x_{n}-x_{m}\rangle \|\rightarrow0$, as $n,m\rightarrow\infty.$\\
	Note that for $n, m\in\mathbb{N}$,
	\begin{align*}
		\|\langle x_{n}-x_{m},x_{n}-x_{m}\rangle\|&= \|A^{-1}A\langle x_{n}-x_{m},x_{n}-x_{m}\rangle \|\\ 
		&\leq  \|A^{-1}\|\|A\langle x_{n}-x_{m},x_{n}-x_{m}\rangle \|.
	\end{align*}
	Therefore the sequence $\{x_{n}\}_{n\in\mathbb{N}}$ is Cauchy and hence there exists $x\in H$ such that $x_{n}\rightarrow x$ as $n\rightarrow\infty$. Again by \eqref{1} we have $\|T(x_{n}-x)\|^{2}\leq\|B\|\|\langle x_{n}-x,x_{n}-x\rangle\|$.\\
	Thus $\|Tx_{n}-Tx\|\rightarrow0$ as $n\rightarrow\infty$ implies that $Tx=y$. It concludes that the range of $T$ is closed.\\
	For all $x\in H$, $y=\{y_{i}\}\in l^{2}\left(\left\{\mathcal{V}_{i}\right\}\right)$, we have
	\begin{equation*}
		\langle Tx,y\rangle=\langle \{\langle x, x_{i}\rangle\}_{i\in I}x,y_{i}\rangle =\left\langle x,\sum_{i\in I}\langle x, x_{i}\rangle y_{i}\right\rangle.
	\end{equation*}
	Then $T$ is adjointable and $T^{\ast}y=\sum_{i\in I}\langle x, x_{i}\rangle y_{i}.$
	By \eqref{1} we have $\|Tx\|^{2}\leq\|B\|\|x\|^{2}$ and so $\|T\|\leq\|B\|^{\frac{1}{2}}.$
	By \eqref{1} we have $\|Tx\|\geq\|A^{-1}\|^{-1}\|x\|$ for all  $ x\in \mathcal{H}$ and so by Lemma \ref{5}, $T^{\ast}$ is surjective.\\
	This completes the proof.
\end{proof}
\begin{theorem}
	Let $\{x_{i}\}_{i\in I}$ be a frame for $\mathcal{H}$ with frame transform $T$. Then $\{x_{i}\}_{i\in I}$ is a frame for $\mathcal{H}$ with lower and upper frame bounds $||(T^{\ast}T)^{-1}||^{-1}$ and $||T||^{2}$, respectively.
\end{theorem}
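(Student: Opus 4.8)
The plan is to reduce the entire statement to an operator sandwich for the frame operator $S = T^{\ast}T$ and then read off the two bounds from results already available. The starting point is the identity that links the analysis operator to the middle term of the frame inequality: for every $x \in \mathcal{H}$,
\[
\sum_{i \in I} \langle x, x_{i}\rangle_{\mathcal{A}} \langle x_{i}, x\rangle_{\mathcal{A}} = \langle Tx, Tx\rangle_{\mathcal{A}} = \langle T^{\ast}Tx, x\rangle_{\mathcal{A}}.
\]
Thus proving that $\{x_{i}\}_{i\in I}$ is a frame with the announced bounds is exactly the same as showing $\|(T^{\ast}T)^{-1}\|^{-1}\langle x,x\rangle_{\mathcal{A}} \leq \langle T^{\ast}Tx, x\rangle_{\mathcal{A}} \leq \|T\|^{2}\langle x,x\rangle_{\mathcal{A}}$ for all $x$.

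For the upper bound I would simply invoke Lemma \ref{1} (Paschke's inequality), which gives $\langle Tx, Tx\rangle_{\mathcal{A}} \leq \|T\|^{2}\langle x, x\rangle_{\mathcal{A}}$; combined with the identity above this immediately yields $\sum_{i \in I} \langle x, x_{i}\rangle_{\mathcal{A}}\langle x_{i}, x\rangle_{\mathcal{A}} \leq \|T\|^{2}\langle x, x\rangle_{\mathcal{A}}$, so $\|T\|^{2}$ is an upper frame bound. For the lower bound the key input is that the analysis operator $T$ is injective and has closed range; both facts are established in Theorem \ref{2.3}. These are precisely the hypotheses of Lemma \ref{3}(i), which guarantees that $T^{\ast}T$ is invertible and satisfies the operator inequality $\|(T^{\ast}T)^{-1}\|^{-1} \leq T^{\ast}T$. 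Testing this against $x$ in the $\mathcal{A}$-valued inner product gives $\|(T^{\ast}T)^{-1}\|^{-1}\langle x, x\rangle_{\mathcal{A}} \leq \langle T^{\ast}Tx, x\rangle_{\mathcal{A}} = \sum_{i \in I}\langle x, x_{i}\rangle_{\mathcal{A}}\langle x_{i}, x\rangle_{\mathcal{A}}$, exhibiting $\|(T^{\ast}T)^{-1}\|^{-1}$ as a lower frame bound.

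Assembling the two displays produces exactly the defining frame inequalities with bounds $\|(T^{\ast}T)^{-1}\|^{-1}$ and $\|T\|^{2}$, so the proof is then finished. Essentially all the analytic content is imported: the closed range and injectivity of $T$ from Theorem \ref{2.3}, and the positivity sandwich from Lemma \ref{3}(i). Consequently there is no genuinely hard step here; the only point requiring a word of care is the passage from the operator inequality $\|(T^{\ast}T)^{-1}\|^{-1}\,I_{\mathcal{H}} \leq T^{\ast}T$ to its scalarized (inner-product) form. This is legitimate because the order on self-adjoint adjointable operators is compatible with the compressions $R \mapsto \langle Rx, x\rangle_{\mathcal{A}}$: positivity of $T^{\ast}T - \|(T^{\ast}T)^{-1}\|^{-1} I_{\mathcal{H}}$ forces $\langle (T^{\ast}T - \|(T^{\ast}T)^{-1}\|^{-1} I_{\mathcal{H}})x, x\rangle_{\mathcal{A}} \geq 0$ for each $x$, which is the desired lower estimate.
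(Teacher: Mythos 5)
Your proof is correct and takes essentially the same route as the paper's: both invoke Theorem \ref{2.3} to get that the frame transform $T$ is injective with closed range, then apply Lemma \ref{3}(i) to obtain the operator sandwich $\|(T^{\ast}T)^{-1}\|^{-1}\leq T^{\ast}T\leq\|T\|^{2}$, and scalarize it against the $\mathcal{A}$-valued inner product to recover the frame inequalities. The only cosmetic difference is that you derive the upper bound from Paschke's inequality rather than reading it off the same Lemma \ref{3}(i); this changes nothing of substance.
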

\begin{proof}
	By Theorem \ref{2.3}, $T$ is injective and has a closed range, and so  by Lemma \ref{3}, 
	\begin{equation*}
		||(T^{\ast}T)^{-1}||^{-1}\langle x,x\rangle\leq \langle T^{\ast}Tx,x\rangle\leq ||T||^{2}\langle x,x\rangle,\qquad\forall x\in U.    
	\end{equation*}
	So 
	\begin{equation*}
		||(T^{\ast}T)^{-1}||^{-1}\langle x,x\rangle\leq \langle \sum_{i\in I}\langle x,x_{i}\rangle_{\mathcal{A}} x_{i},x\rangle_{\mathcal{A}}\leq ||T||^{2}\langle x,x\rangle,\qquad\forall x\in U.     
	\end{equation*}
	Hence $\{x_{i}\}_{i\in I}$ is a frame for $\mathcal{H}$ with lower and upper frame bounds $||(T^{\ast}T)^{-1}||^{-1}$ and $||T||^{2}$, respectively.
\end{proof}
\section{g-Frames}
\begin{definition}
	\cite{AB}. We call a sequence $\{\Lambda_{i}\in End_{\mathcal{A}}^{\ast}(\mathcal{H},V_{i}):i\in I \}$ a g-frame in Hilbert $\mathcal{A}$-module $\mathcal{H}$ with respect to $\{V_{i}:i\in I \}$ if there exist two positive constants $C$, $D$, such that for all $x\in\mathcal{H}$, 
	\begin{equation}\label{2}
	C\langle x,x\rangle_{\mathcal{A}}\leq\sum_{i\in I}\langle \Lambda_{i}x,\Lambda_{i}x\rangle_{\mathcal{A}}\leq D\langle x,x\rangle_{\mathcal{A}}.
	\end{equation}
	The numbers $C$ and $D$ are called lower and upper bounds of the g-frame, respectively. If $C=D=\lambda$, the g-frame is $\lambda$-tight. If $C = D = 1$, it is called a g-Parseval frame. If the sum in the middle of \eqref{2} is convergent in norm, the g-frame is called standard.
\end{definition}
\begin{Ex}
	Let $\mathbb{C}^{2}$ be the Hilbert $\mathbb{C}^{2}$-module  with $\mathbb{C}^{2}$-inner product $$\langle(x_{1},x_{2}),(y_{1},y_{2})\rangle=(x_{1}\bar{y_{1}},x_{2}\bar{y_{2}})$$ and let $\mathcal{A}$ be the totality of all diagonal operators $diag\{a,b\}$ on $\mathbb{C}^{2}$, sending $(z_{1},z_{2})^{t}$ to $(az_{1},bz_{2})^{t}$. Fix $\{a_{i}\}_{i}$ and $\{b_{i}\}_{i}$ in $l^{2}$. Define $\Lambda_{i}:\mathbb{C}^{2}\to\mathbb{C}^{2}, (z_{1},z_{2})\to(a_{i}z_{1},b_{i}z_{2})$.\\
	Then $\{\Lambda_{i}\}_{i}$ is a g-frame for $\mathbb{C}^{2}$ with bounds
\begin{center}
		$\min\{\sum_{i}|a_{i}|^{2},\sum_{i}|b_{i}|^{2}\}$ and $\max\{\sum_{i}|a_{i}|^{2},\sum_{i}|b_{i}|^{2}\}$,
\end{center} respectively.
\end{Ex}
Like frames, we define the frame transform $T$, the synthesis operator $T^{\ast}$ and the g-frame operator $S$ as follws: $T: \mathcal{H} \to \oplus_{i\in I}V_{i}$, $Tx=\{\Lambda_{i}x\}_{i\in I}$,
$T^{\ast}: \oplus_{i\in I}V_{i} \to \mathcal{H}$, $T^{\ast}y=\sum_{i\in I}\Lambda_{i}^{\ast}y_{i}$ for all $y=\{y_{i}\}_{i\in I}$ in $\oplus_{i\in I}V_{i}$,
and $S=T^{\ast}T: \mathcal{H} \to \mathcal{H}$ is given by $Sx=\sum_{i\in I}\Lambda_{i}^{\ast}\Lambda_{i}x$ for each $x\in\mathcal{H}$. 
\\
The $g$-frame operator is positive, invertible, and the follow reconstruction formula holds
\begin{equation*}
x=\sum_{i\in I}\Lambda_{i}^{\ast}\Lambda_{i}S^{-1}x=\sum_{i\in I}S^{-1}\Lambda_{i}^{\ast}\Lambda_{i}x,
\end{equation*}
holds for all $x\in\mathcal{H}$.

The following theorem give a characterization of standard $g$-frame.
\begin{theorem}\label{*}
	Let $\Lambda_{i}\in End_{\mathcal{A}}^{\ast}(\mathcal{H},V_{i})$ for any $i\in I$ and $\sum_{i\in I}\langle \Lambda_{i}x,\Lambda_{i}x\rangle_{\mathcal{A}}$ converge in norm for $x\in\mathcal{H}$. Then $\{\Lambda_{i}\}_{i\in I}$ is a g-frame for $\mathcal{H}$ with respect to $\{V_{i}\}_{i\in I}$ if and only if there exist constants $A$, $B>0$ such that
		\begin{equation*}
	A\|x\|^{2}\leq\Bigg\|\sum_{i\in I}\langle \Lambda_{i}x,\Lambda_{i}x\rangle_{\mathcal{A}}\Bigg\|\leq B\|x\|^{2}
	\end{equation*}
	for all $x\in\mathcal{H}$.
\end{theorem}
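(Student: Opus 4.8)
The plan is to prove the two implications separately, the forward one being a direct consequence of the order-monotonicity of the norm on positive elements, and the converse requiring us to promote the scalar norm estimates back to $\mathcal{A}$-valued operator inequalities. First I would dispatch the easy direction: if $\{\Lambda_{i}\}_{i\in I}$ is a g-frame with bounds $C,D$, then for every $x$ all three members of
$$C\langle x,x\rangle\leq\sum_{i\in I}\langle\Lambda_{i}x,\Lambda_{i}x\rangle\leq D\langle x,x\rangle$$
are positive elements of $\mathcal{A}$, so applying the fact that $0\leq a\leq b$ forces $\|a\|\leq\|b\|$ and recalling that $\|\langle x,x\rangle\|=\|x\|^{2}$ yields
$$C\|x\|^{2}\leq\Big\|\sum_{i\in I}\langle\Lambda_{i}x,\Lambda_{i}x\rangle\Big\|\leq D\|x\|^{2},$$
which is the stated estimate with $A=C$ and $B=D$.

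For the converse I would work with the g-frame operator $S=T^{\ast}T$, where $Tx=\{\Lambda_{i}x\}_{i\in I}$. The upper estimate gives $\|Tx\|^{2}=\big\|\sum_{i}\langle\Lambda_{i}x,\Lambda_{i}x\rangle\big\|\leq B\|x\|^{2}$, so $T$ is bounded and adjointable (its adjoint being the synthesis operator $T^{\ast}y=\sum_{i}\Lambda_{i}^{\ast}y_{i}$), and hence $S$ is a bounded, positive, self-adjoint operator. Since $S$ is positive I may form $S^{1/2}$, and the identity $\langle S^{1/2}x,S^{1/2}x\rangle=\langle Sx,x\rangle=\sum_{i}\langle\Lambda_{i}x,\Lambda_{i}x\rangle$ turns the hypothesis into
$$\sqrt{A}\,\|x\|\leq\|S^{1/2}x\|\leq\sqrt{B}\,\|x\|,\qquad x\in\mathcal{H}.$$

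The heart of the argument, and the step I expect to be the main obstacle, is to upgrade these two norm inequalities into the $\mathcal{A}$-valued inequalities $A\langle x,x\rangle\leq\langle Sx,x\rangle\leq B\langle x,x\rangle$, since for a merely positive element a bound on the norm does not by itself give an operator-order bound. For the upper bound this is harmless: $\|S^{1/2}x\|\leq\sqrt{B}\|x\|$ means $\|S\|\leq B$, and for a positive operator $\|S\|\leq B$ is equivalent to $S\leq B\cdot I_{\mathcal{H}}$, whence $\langle Sx,x\rangle\leq B\langle x,x\rangle$. The lower bound is where invertibility must be extracted first: the estimate $\|S^{1/2}x\|\geq\sqrt{A}\|x\|$ says $S^{1/2}$ is bounded below, and being self-adjoint it is then surjective by Lemma \ref{sb} and injective by the same bound, hence invertible; consequently $S$ is positive and invertible with $\|S^{-1}\|\leq 1/A$. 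For a positive invertible operator, $\|S^{-1}\|\leq 1/A$ is equivalent to $S^{-1}\leq (1/A)\,I_{\mathcal{H}}$, and conjugating by $S^{1/2}$ gives $S\geq A\cdot I_{\mathcal{H}}$, i.e. $\langle Sx,x\rangle\geq A\langle x,x\rangle$. Combining the two bounds with $\langle Sx,x\rangle=\sum_{i}\langle\Lambda_{i}x,\Lambda_{i}x\rangle$ shows that $\{\Lambda_{i}\}_{i\in I}$ is a g-frame with bounds $A$ and $B$, completing the proof.
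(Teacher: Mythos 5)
Your proof is correct and follows essentially the same route as the paper's: the forward implication via monotonicity of the norm on positive elements of $\mathcal{A}$, and the converse via the identity $\langle S^{1/2}x,S^{1/2}x\rangle=\sum_{i\in I}\langle\Lambda_{i}x,\Lambda_{i}x\rangle_{\mathcal{A}}$ together with norm bounds on $S^{1/2}$. In fact your write-up is tighter than the paper's, which merely asserts that $S$ is positive, self-adjoint and invertible and then jumps from $\sqrt{A}\|x\|\leq\|S^{1/2}x\|\leq\sqrt{B}\|x\|$ to the $\mathcal{A}$-valued inequalities with unexplained constants $A_{1},B_{1}$; your appeal to Lemma \ref{sb} for invertibility of $S^{1/2}$ and the conjugation argument turning $S^{-1}\leq A^{-1}I_{\mathcal{H}}$ into $A\,I_{\mathcal{H}}\leq S$ supply exactly the justification the paper omits.
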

\begin{proof}
	Suppose that $ \{\Lambda_{i}:i\in I \}$ is a $g$-frame in Hilbert $ \mathcal{A}$-module $\mathcal{H}$ with respect to $\{V_{i}\}_{i\in I}$. We have
	$$ A\langle  x , x \rangle_{\mathcal{A}}\leq \sum_{i\in I} \langle \Lambda_{i}x ,\Lambda_{i}x  \rangle_{\mathcal{A}}\leq B\langle x, x \rangle_{\mathcal{A}}, $$ Since $\langle x,x\rangle \geq 0$,  we have $$ A\lVert\langle  x ,x \rangle_{\mathcal{A}}\lVert\leq \lVert \sum_{i\in I} \langle \Lambda_{i}x ,\Lambda_{i}x  \rangle_{\mathcal{A}} \lVert\leq B\lVert\langle f, f \rangle_{\mathcal{A}}\lVert .$$
	We have $$ A\lVert x\lVert^{2} \leq \lVert \sum_{i\in I} \langle \Lambda_{i}x ,\Lambda_{i}x  \rangle_{\mathcal{A}} \lVert\leq B\lVert x \lVert^{2} .$$
	Now suppose that \eqref{*}  holds, we know that the $g-$ frame operator $S_{\Lambda}$ is positive  self-adjoint and inversible $$ \langle S^{1/2}x,S^{1/2}x\rangle =\langle Sx,x\rangle =\sum_{i\in I} \langle \Lambda_{i}x ,\Lambda_{i}x  \rangle_{\mathcal{A}},$$
	hence $$\sqrt{A}\lVert x\lVert \leq \lVert S^{1/2}x\lVert \leq \sqrt{B}\lVert x \lVert $$
	$$ A_1\langle  x , x \rangle_{\mathcal{A}}\leq \sum_{i\in I} \langle \Lambda_{i}x ,\Lambda_{i}x  \rangle_{\mathcal{A}}\leq B_1 \langle x, x \rangle_{\mathcal{A}}. $$ Which implies that
	$\{\Lambda_{i}:i\in I \}$ is a $g$-frame in Hilbert $ \mathcal{A}$-module $\mathcal{H}$ with respect to $\{V_{i}\}_{i\in I}$.
\end{proof}
\begin{theorem}
	If  $ \Lambda:=\{  \Lambda_{i}\in{End^{*}_{\mathcal{A}}(\mathcal{H}, \mathcal{H}_{i})} \}_{{i}\in {I}}$ is a $g$-frame in Hilbert $\mathcal{A}$-module  $\mathcal{H}$  with respect to $  \{\mathcal{H}_{i}\}_{{i}\in {I}}$ with bounds $A$ and $B$,then the  $g$-frame operator $ S\colon \mathcal{H}\rightarrow \mathcal{H}$ defined by
	$$ Sf= \displaystyle\sum_{i\in I} \Lambda^{*}_{i}\Lambda_{i}x $$
	is a bounded invertible operator.\\
	Let $\widetilde\Lambda_{i}=\Lambda_{i}S^{-1} $. Then  $ \{\widetilde  \Lambda_{i}\in{End^{*}_{\mathcal{A}}(\mathcal{H}, \mathcal{H}_{i})} \}_{{i}\in {I}}$
	is also a $g$-frame in Hilbert $\mathcal{A}$-module  $\mathcal{H}$, with respect to $  \{\mathcal{H}_{i}\}_{{i}\in {I}}$  with bounds $B^{-1}$ and $A^{-1}$ and satisfies
	$$  x= \displaystyle\sum_{i\in I} \Lambda^{*}_{i}\widetilde\Lambda_{i}x =  \displaystyle\sum_{i\in I} \widetilde\Lambda^{*}_{i}\Lambda_{i}x, \qquad x\in\mathcal{H}.$$ 
\end{theorem}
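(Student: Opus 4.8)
The plan is to follow the classical canonical-dual construction, transported to the Hilbert $\mathcal{A}$-module setting. First I would record that the defining inequality of the $g$-frame, namely $A\langle x,x\rangle_{\mathcal{A}}\leq\langle Sx,x\rangle_{\mathcal{A}}\leq B\langle x,x\rangle_{\mathcal{A}}$ for all $x\in\mathcal{H}$, already forces $A\,I_{\mathcal{H}}\leq S\leq B\,I_{\mathcal{H}}$; exactly as in the Proposition proved earlier for ordinary frames, this shows that $S$ is positive, self-adjoint and bounded, and the estimate $\|I_{\mathcal{H}}-B^{-1}S\|\leq (B-A)/B<1$ yields invertibility. Thus $S^{-1}\in End_{\mathcal{A}}^{\ast}(\mathcal{H})$ exists, is self-adjoint, and $\widetilde{\Lambda}_{i}=\Lambda_{i}S^{-1}$ is a well-defined adjointable operator with $\widetilde{\Lambda}_{i}^{\ast}=S^{-1}\Lambda_{i}^{\ast}$.

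The crucial order-theoretic step is to pass from $A\,I_{\mathcal{H}}\leq S\leq B\,I_{\mathcal{H}}$ to the reversed bounds $B^{-1}I_{\mathcal{H}}\leq S^{-1}\leq A^{-1}I_{\mathcal{H}}$ for the inverse. Since $S$ is positive and invertible, I would invoke the continuous functional calculus: the function $t\mapsto t^{-1}$ is operator-monotone decreasing on the spectrum $\mathrm{sp}(S)\subset[A,B]$, so the order inequalities invert. This is the step I expect to be the main obstacle to state cleanly, since it rests on the order/positivity structure of the underlying $C^{\ast}$-algebra rather than on a bare computation; but it is standard and can be quoted.

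With the bounds on $S^{-1}$ in hand, I would compute the frame sum for $\{\widetilde{\Lambda}_{i}\}$ directly. For fixed $x$, setting $y=S^{-1}x$ gives
$$\sum_{i\in I}\langle\widetilde{\Lambda}_{i}x,\widetilde{\Lambda}_{i}x\rangle_{\mathcal{A}}=\sum_{i\in I}\langle\Lambda_{i}y,\Lambda_{i}y\rangle_{\mathcal{A}}=\langle Sy,y\rangle_{\mathcal{A}}=\langle x,S^{-1}x\rangle_{\mathcal{A}}=\langle S^{-1}x,x\rangle_{\mathcal{A}},$$
where the last equality uses that $\langle S^{-1}x,x\rangle_{\mathcal{A}}$ is a positive, hence self-adjoint, element. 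Sandwiching this between the bounds $B^{-1}I_{\mathcal{H}}\leq S^{-1}\leq A^{-1}I_{\mathcal{H}}$ immediately produces $B^{-1}\langle x,x\rangle_{\mathcal{A}}\leq\sum_{i\in I}\langle\widetilde{\Lambda}_{i}x,\widetilde{\Lambda}_{i}x\rangle_{\mathcal{A}}\leq A^{-1}\langle x,x\rangle_{\mathcal{A}}$, so $\{\widetilde{\Lambda}_{i}\}$ is a $g$-frame with bounds $B^{-1}$ and $A^{-1}$.

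Finally, the two reconstruction formulas follow from the single operator identity $S^{-1}S=SS^{-1}=I_{\mathcal{H}}$. Writing $x=SS^{-1}x=\sum_{i\in I}\Lambda_{i}^{\ast}\Lambda_{i}S^{-1}x=\sum_{i\in I}\Lambda_{i}^{\ast}\widetilde{\Lambda}_{i}x$ gives the first equality, while $x=S^{-1}Sx=\sum_{i\in I}S^{-1}\Lambda_{i}^{\ast}\Lambda_{i}x=\sum_{i\in I}\widetilde{\Lambda}_{i}^{\ast}\Lambda_{i}x$ gives the second via $\widetilde{\Lambda}_{i}^{\ast}=S^{-1}\Lambda_{i}^{\ast}$. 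The interchange of $S^{\pm1}$ with the norm-convergent sums is justified by the boundedness and adjointability of $S^{\pm1}$, which commute with the limits defining the synthesis operator.
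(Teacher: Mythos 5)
Your argument is essentially correct, but it takes a genuinely different route from the paper's. The paper works almost entirely with norm estimates: it first proves that the series $\sum_{i}\Lambda_i^{*}\Lambda_i x$ converges by a Cauchy--Schwarz bound on finite partial sums, then shows $\|Sx\|\geq A\|x\|$, deduces injectivity and closed range, gets surjectivity from self-adjointness, and finally verifies the dual bounds through norm inequalities such as $\|x\|^{2}\leq\sqrt{B}\,\|x\|\,\bigl\|\sum_{i}\langle\widetilde{\Lambda}_i x,\widetilde{\Lambda}_i x\rangle_{\mathcal{A}}\bigr\|^{1/2}$; it therefore only obtains the bounds $1/B$ and $1/A$ in the scalar/norm sense, implicitly leaning on the paper's earlier characterization theorem to pass back to the definition. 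You instead exploit the order structure of the $C^{*}$-algebra $End_{\mathcal{A}}^{*}(\mathcal{H})$: positivity gives $A\,I_{\mathcal{H}}\leq S\leq B\,I_{\mathcal{H}}$, invertibility follows from $\|I_{\mathcal{H}}-B^{-1}S\|<1$, and the order-reversing property of inversion on positive invertible elements gives $B^{-1}I_{\mathcal{H}}\leq S^{-1}\leq A^{-1}I_{\mathcal{H}}$; combined with the identity $\sum_{i}\langle\widetilde{\Lambda}_i x,\widetilde{\Lambda}_i x\rangle_{\mathcal{A}}=\langle S^{-1}x,x\rangle_{\mathcal{A}}$ this yields the $\mathcal{A}$-valued inequalities $B^{-1}\langle x,x\rangle_{\mathcal{A}}\leq\sum_{i}\langle\widetilde{\Lambda}_i x,\widetilde{\Lambda}_i x\rangle_{\mathcal{A}}\leq A^{-1}\langle x,x\rangle_{\mathcal{A}}$ directly. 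This is cleaner and in fact stronger than what the paper proves: it delivers exactly the bounds $B^{-1}$, $A^{-1}$ required by the definition of a g-frame, rather than norm-type bounds. The two facts you quote (positivity of an adjointable operator is equivalent to $\langle Tx,x\rangle_{\mathcal{A}}\geq0$ for all $x$, and inversion reverses the order on positive invertible elements) are standard in the Hilbert $C^{*}$-module setting.

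The one point you gloss over is the well-definedness of $S$. Before you may write $\langle Sx,x\rangle_{\mathcal{A}}$, or assert that $S$ is an element of $End_{\mathcal{A}}^{*}(\mathcal{H})$ on which the order inequalities and inversion make sense, you must show that the series $\sum_{i}\Lambda_i^{*}\Lambda_i x$ converges in $\mathcal{H}$ for every $x$ and defines a bounded adjointable operator. The paper does this first: for a finite subset $I_{1}\subset I$ it estimates $\bigl\|\sum_{i\in I_{1}}\Lambda_i^{*}\Lambda_i x\bigr\|\leq\sqrt{B}\,\bigl\|\sum_{i\in I_{1}}\langle\Lambda_i x,\Lambda_i x\rangle_{\mathcal{A}}\bigr\|^{1/2}$ by Cauchy--Schwarz, so the tails of the series vanish because the middle sum in the frame inequality is norm-convergent; self-adjointness, and hence adjointability, of the pointwise limit then follow. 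Your proof should include this step (or an explicit citation for it); once it is in place, the rest of your argument goes through as written.
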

\begin{proof}
For $x\in\mathcal{H}$.	We have
	$$
	\begin{aligned}
		\left\|\displaystyle\sum_{i \in I_{1}} \Lambda_{i}^{*}\Lambda_{i} x\right\| &=\sup _{y \in \mathcal{H},\|y\|=1}\left\lVert\left\langle\displaystyle\sum_{i \in I_{1}}\Lambda_{i}^{*} \Lambda_{i} x, y\right\rangle_{\mathcal{A}}\right\lVert\\
		&=\sup _{y \in \mathcal{H},\|y\|=1}\left\lVert\displaystyle\sum_{i \in I_{1}}\left\langle \Lambda_{i} x ,\Lambda_{i} y \right\rangle_{\mathcal{A}}\right\lVert \\
		& \leq \sup _{x \in \mathcal{H},\|y\|=1} \left\|\displaystyle\sum_{i \in I_{1}} \langle \Lambda_{i}x ,\Lambda_{i}x \rangle_{\mathcal{A}}\right\|^{1 / 2}\left\|\displaystyle\sum_{i \in I_{1}}\langle \Lambda_{i} x ,\Lambda_{i} y\rangle_{\mathcal{A}}\right\|^{1/2} \\
		& \leq \sqrt{B}\left\lVert\displaystyle\sum_{i \in I_{1}}\langle \Lambda_{i} y ,\Lambda_{i} y\rangle_{\mathcal{A}}\right\|^{1/2}.
	\end{aligned}$$
	Hence, the series in ($\sum_{i \in I_{1}}\Lambda_{i}^{*}\Lambda_{i} $) are convergent. Therefore, $Sx$ is well defined For any $x \in \mathcal{H}$.\\
	On the other hand, it is easy to check that for any $x,g \in \mathcal{H}$\\
	$$ \langle Sx,g\rangle_{\mathcal{A}} =\displaystyle\sum_{i \in I_{1}}\langle \Lambda_{i}^{*} \Lambda_{i} x, g \rangle_{\mathcal{A}} =\displaystyle\sum_{i \in I_{1}}\langle x, \Lambda_{i}^{*} \Lambda_{i} g \rangle_{\mathcal{A}} =\langle f,Sg\rangle_{\mathcal{A}} .$$
	Hence, $S$ is a self-adjoint operator.\\
	And therefore,
	 \begin{align*}
	 	\lVert S\lVert &= \sup _{x \in \mathcal{H},\|x\|=1} \lVert\langle Sx,x \rangle_{\mathcal{A}}\lVert\\ &=\sup _{x \in \mathcal{H},\|x\|=1} \lVert\langle \displaystyle\sum_{i\in I} \Lambda^{*}_{i}\Lambda_{i}x,x \rangle_{\mathcal{A}}\lVert \\
	 	&=\sup _{x \in \mathcal{H},\|x\|=1} \lVert \displaystyle\sum_{i\in I}\langle  \Lambda_{i}x ,\Lambda_{i}x \rangle_{\mathcal{A}}\lVert \leq B .
	 \end{align*}
	So, $ S$ is a bounded operator.\\
	Since there is $\langle x,x\rangle_{\mathcal{A}} \geq 0$, then for all $ x \in \mathcal{H}$,
	$$ A\lVert \langle x,x\rangle_{\mathcal{A}} \lVert \leq\lVert \langle Sx,x\rangle_{\mathcal{A}} \lVert \leq B\lVert \langle x,x\rangle_{\mathcal{A}} \lVert ,$$
	this implies that $$ A\lVert x \lVert ^{2}\leq\lVert \langle Sx,x\rangle_{\mathcal{A}} \lVert \leq B\lVert x \lVert^{2} .$$
	Then $$ A\lVert x\lVert^{2} \leq\lVert \langle Sx,x\rangle_{\mathcal{A}} \lVert \leq \lVert Sx \lVert\lVert x\lVert.$$
	It follows that  $$ \lVert Sx\lVert \geq A\lVert x\lVert.$$
	Which implies that $S$ is injective and $S\mathcal{H}$ is closed in $\mathcal{H}.$ 
	Let $ g\in \mathcal{H}$ be such that $\langle Sx,g\rangle_{\mathcal{A}} =0$ for any $x\in \mathcal{H}$.\\ 
	Then, we have $$\langle x,Sg\rangle_{\mathcal{A}} =0,\qquad x \in \mathcal{H}.$$
	This implies that $ Sg=0$ and therefore $g=0$. Hence $S\mathcal{H}= \mathcal{H}$. \\
	Consequently, $S$ is invertible  and $$\lVert S^{-1} \lVert \leq \dfrac{1}{A}.$$
	For any $x\in\mathcal{H},$ we have $$ x= SS^{-1}x= S^{-1}Sx= \displaystyle\sum_{i\in I} \Lambda^{*}_{i}\Lambda_{i}S^{-1}x=\displaystyle\sum_{i\in I}S^{-1} \Lambda^{*}_{i}\Lambda_{i}x.$$
	Let $\widetilde\Lambda_{i}=\Lambda_{i}S^{-1} $. 
	Then the above equalities become
	$$  x= \displaystyle\sum_{i\in I} \Lambda^{*}_{i}\widetilde\Lambda_{i}x =  \displaystyle\sum_{i\in I} \widetilde\Lambda^{*}_{i}\Lambda_{i}x.$$
	We prove now  that $ \{\widetilde  \Lambda_{i}\in{End^{*}_{\mathcal{A}}(\mathcal{H}, \mathcal{H}_{i})} \}_{{i}\in {I}}$
	is also a $g$-frame in Hilbert $\mathcal{A}$-module  $\mathcal{H}$ with respect to $  \{\mathcal{H}_{i}\}_{{i}\in {I}}$. 
	In fact, for any $x \in \mathcal{H}$, we have
	\begin{align*}
		\left\| \displaystyle\sum_{i \in I}\widetilde{\Lambda_{i}} x\right\| ^{2} 
		&=\left\| \displaystyle\sum_{i \in I}\left\langle \Lambda_{i}S^{-1}x,  \Lambda_{i}S^{-1}x\right\rangle_{\mathcal{A}}\right\|  \\
		&=\left\| \displaystyle\sum_{i \in I} \langle\Lambda_{i}^{*} \Lambda_{i}S^{-1} x,S^{-1} x\rangle_{\mathcal{A}}\right\|   \\
		&=\left\| \langle S^{-1}Sx,S^{-1}x \rangle_{\mathcal{A}}\right\| \\
		&=\left\|  \langle \Lambda_{i}^{*} \Lambda_{i}S^{-1} x,S^{-1} x\rangle_{\mathcal{A}}\right\|  \\
		&=\lVert\langle S^{-1}Sx,S^{-1}x \rangle_{\mathcal{A}}\lVert \\
		&= \left\|  \langle f,S^{-1}x\rangle_{\mathcal{A}}\right\|  
		\leq \dfrac{1}{A}\lVert\langle x,x\rangle_{\mathcal{A}}\lVert.
	\end{align*}
	On the other hand, since
	$$
	\begin{aligned}
		\lVert x\lVert^{2} &=\lVert\langle x,x \rangle_{\mathcal{A}}\lVert\\
		&= \lVert\langle \displaystyle\sum_{i \in I} \widetilde{\Lambda_{i}}^{*}\Lambda_{i}x, x\rangle_{\mathcal{A}}\lVert\\ 
		&=\lVert\displaystyle\sum_{i \in I}\langle \Lambda_{i}x,\widetilde{\Lambda_{i}}x  \rangle_{\mathcal{A}}\lVert \\
		&\leq\lVert \displaystyle\sum_{i \in I}\langle  \Lambda_{i}x,\Lambda_{i}x \rangle_{\mathcal{A}}\lVert^{1 / 2}\lVert \displaystyle\sum_{i \in I}\langle  \widetilde{\Lambda_{i}}x,\widetilde{\Lambda_{i}}x \rangle_{\mathcal{A}}\lVert^{1 / 2} \\
		&\leq \sqrt{B}\lVert x\lVert \lVert \displaystyle\sum_{i \in I}\langle  \widetilde{\Lambda_{i}}x,\widetilde{\Lambda_{i}}x \rangle_{\mathcal{A}}\lVert^{1 / 2}. 
	\end{aligned}
	$$
	We have $$\lVert \displaystyle\sum_{i \in I}\langle  \widetilde{\Lambda_{i}}x,\widetilde{\Lambda_{i}}x \rangle_{\mathcal{A}}\lVert \geq \dfrac{1}{B}\lVert\langle x,x\rangle_{\mathcal{A}}\lVert.$$ 
	Hence, $ \{\widetilde  \Lambda_{i}\in{End^{*}_{\mathcal{A}}(\mathcal{H}, \mathcal{H}_{i})} \}_{{i}\in {I}}$
	is a $g$-frame in Hilbert $\mathcal{A}$-module  $\mathcal{H}$ with frame bounds $\dfrac{1}{A}$ and $\dfrac{1}{B}$.
	Let $\widetilde{S}$ be the $g-$frame operator associated with $ \Lambda:=\{  \Lambda_{i}\in \}_{{i}\in {I}}$. Then, we have
	$$
	\begin{aligned}
		S\widetilde{S}x&=  \displaystyle\sum_{i\in I} S \widetilde{\Lambda^{*}_{i}}\widetilde{\Lambda_{i}}x=\displaystyle\sum_{i\in I}SS^{-1} \Lambda^{*}_{i}\Lambda_{i}S^{-1}x\\
		&=\displaystyle\sum_{i\in I}\Lambda^{*}_{i}\Lambda_{i}S^{-1}x =SS^{-1} \Lambda^{*}_{i}x= x.
	\end{aligned}
	$$
	Hence $\widetilde{S}=S^{-1}$ and  $\widetilde{\Lambda_{i}}\widetilde{S^{-1}}S=\Lambda_{i} .$
\end{proof}
\begin{theorem}
	Let $\{\Lambda_{i}\}_{i\in I}$ and $\{\Gamma_{i}\}_{i\in I}$ be $g$-Bessel sequences for Hilbert $C^{\ast}$-modules $U_{1}$ and $U_{2}$ with $g$-Bessel bounds $B_{1}$ and $B_{2}$, respectively. Then $\{\Lambda_{w}^{\ast}\Gamma_{w}\}_{w\in\Omega}$ is a-$g$-Bessel sequence for $U_{2}$ with respect to $U_{1}$.
\end{theorem}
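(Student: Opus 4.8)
The plan is to produce an explicit upper Bessel bound for the composed family, and I claim that $B_1B_2$ works. Recall that asserting $\{\Lambda_w^{\ast}\Gamma_w\}_{w\in\Omega}$ to be a $g$-Bessel sequence for $U_2$ with respect to $U_1$ amounts to exhibiting a constant $B>0$ with
\[
\sum_{w\in\Omega}\langle \Lambda_w^{\ast}\Gamma_w x,\Lambda_w^{\ast}\Gamma_w x\rangle_{\mathcal{A}}\leq B\langle x,x\rangle_{\mathcal{A}},\qquad x\in U_2,
\]
the series converging in norm. Here each composition $\Lambda_w^{\ast}\Gamma_w$ carries $U_2$ into $U_1$, which is meaningful precisely because $\Gamma_w$ and $\Lambda_w$ take values in a common family of spaces (so the index $w$ ranges over the shared index set).

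First I would record that the single Bessel bound $B_1$ already controls each $\Lambda_w$ individually. Since every summand of $\sum_i\langle\Lambda_i z,\Lambda_i z\rangle_{\mathcal{A}}$ is positive, a single term is dominated by the whole convergent sum, giving $\langle\Lambda_w z,\Lambda_w z\rangle_{\mathcal{A}}\leq B_1\langle z,z\rangle_{\mathcal{A}}$ for all $w$ and all $z\in U_1$; passing to norms yields $\|\Lambda_w z\|\leq\sqrt{B_1}\,\|z\|$, hence $\|\Lambda_w^{\ast}\|=\|\Lambda_w\|\leq\sqrt{B_1}$. The core estimate is then a termwise application of Lemma \ref{1} to the adjointable operator $\Lambda_w^{\ast}$: for each $w$ and each $x\in U_2$,
\[
\langle \Lambda_w^{\ast}\Gamma_w x,\Lambda_w^{\ast}\Gamma_w x\rangle_{\mathcal{A}}
\leq \|\Lambda_w^{\ast}\|^{2}\langle \Gamma_w x,\Gamma_w x\rangle_{\mathcal{A}}
\leq B_1\langle \Gamma_w x,\Gamma_w x\rangle_{\mathcal{A}}.
\]
Summing over $w$ and invoking the Bessel bound $B_2$ of $\{\Gamma_w\}$ gives
\[
\sum_{w\in\Omega}\langle \Lambda_w^{\ast}\Gamma_w x,\Lambda_w^{\ast}\Gamma_w x\rangle_{\mathcal{A}}
\leq B_1\sum_{w\in\Omega}\langle \Gamma_w x,\Gamma_w x\rangle_{\mathcal{A}}
\leq B_1B_2\langle x,x\rangle_{\mathcal{A}},
\]
which is exactly the desired Bessel inequality with constant $B_1B_2$.

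The one point demanding genuine care, and which I regard as the main obstacle, is the convergence of the left-hand series together with the legitimacy of comparing infinite sums inside the $C^{\ast}$-algebra $\mathcal{A}$. I would handle it through the Cauchy criterion: writing $S_F=\sum_{w\in F}\langle\Lambda_w^{\ast}\Gamma_w x,\Lambda_w^{\ast}\Gamma_w x\rangle_{\mathcal{A}}$ and $R_F=\sum_{w\in F}\langle\Gamma_w x,\Gamma_w x\rangle_{\mathcal{A}}$ for finite $F$, the termwise bound above gives $0\leq S_{F'}-S_F\leq B_1(R_{F'}-R_F)$ whenever $F\subset F'$, so that $\|S_{F'}-S_F\|\leq B_1\|R_{F'}-R_F\|$ since $0\leq a\leq b$ forces $\|a\|\leq\|b\|$. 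Because $\{R_F\}$ is Cauchy (the Bessel series of $\{\Gamma_w\}$ converges in norm), so is $\{S_F\}$, and completeness of $\mathcal{A}$ yields norm convergence; the same domination, combined with the closedness of the positive cone $\mathcal{A}^{+}$, preserves the final inequality in the limit.
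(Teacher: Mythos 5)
Your proposal is correct and follows essentially the same route as the paper's own proof: a termwise application of the inequality $\langle Tx,Tx\rangle_{\mathcal{A}}\leq\|T\|^{2}\langle x,x\rangle_{\mathcal{A}}$ (the paper's lemma due to Paschke) with $T=\Lambda_{w}^{\ast}$, followed by summation and the Bessel bound of $\{\Gamma_{w}\}$. If anything, your write-up is the more careful one: you correctly justify $\|\Lambda_{w}^{\ast}\|^{2}\leq B_{1}$ by dominating a single term of the convergent positive series, arriving at the bound $B_{1}B_{2}$, whereas the paper writes $\|\Lambda_{i}^{\ast}\|^{2}\leq\|B_{1}\|^{2}$ and then concludes with the step $\|B_{1}\|^{2}B_{2}\leq\|B_{1}\|B_{2}$, which is only valid when $\|B_{1}\|\leq1$; your explicit Cauchy-criterion argument for norm convergence of the new series is also a point the paper leaves implicit.
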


\begin{proof}
	We have for each $x\in U_{2}$,
	\begin{align*}
		\sum_{i \in I}\langle \Lambda_{i}^{\ast}\Gamma_{i}x,\Lambda_{i}^{\ast}\Gamma_{i}x\rangle_{\mathcal{A}} &\leq \sum_{i \in I}||\Lambda_{i}^{\ast}||^{2}\langle \Gamma_{i}x,\Gamma_{i}x\rangle_{\mathcal{A}} \\&\leq ||B_{1}||^{2}\sum_{i \in I}\langle \Gamma_{i}x,\Gamma_{i}x\rangle_{\mathcal{A}} \\&\leq ||B_{1}||^{2}B_{2}\langle x,x\rangle_{\mathcal{A}} \\&\leq  ||B_{1}||B_{2}\langle x,x\rangle_{\mathcal{A}} .
	\end{align*}
	Hence $\{\Lambda_{i}^{\ast}\Gamma_{i}\}_{i\in I}$ is a $g$-Bessel sequence for $U_{2}$ with respect to $U_{1}$.
\end{proof}
\begin{theorem}
	\label{Lemma 2.5} A sequence $\{  \Lambda_{i}\in{End^{*}_{\mathcal{A}}(\mathcal{H}, \mathcal{V}_{i})} \}_{{i}\in {I}}$ is a g-frame  in Hilbert $\mathcal{A}$  module  $\mathcal{H}$ with respect to $\left\{V_{i}\right\}_{i \in I}$ if and only if
	$$
	Q:\left\{g_{i}\right\}_{i \in I} \rightarrow \displaystyle\sum_{i \in I} \Lambda_{i}^{*} g_{}
	$$
	is a well defined bounded linear operator from $l^{2}\left(\left\{\mathcal{H}_{i}\right\}_{i \in I}\right)$ onto $\mathcal{H}$, where the ${g}$ -frame bounds are $\left\|Q^{+}\right\|^{-2},\|Q\|^{2}$ and $Q^{+}$ is the pseudo-inverse of $Q$.
\end{theorem}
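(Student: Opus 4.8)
The plan is to exploit the fact that the operator $Q$ is precisely the synthesis operator $T^{\ast}$ of the family $\{\Lambda_i\}$, so that its adjoint $Q^{\ast}$ is the analysis operator. First I would record the identity $Q^{\ast}x=\{\Lambda_i x\}_{i\in I}$, read off from $\langle Q\{g_i\},x\rangle=\sum_{i\in I}\langle g_i,\Lambda_i x\rangle$; consequently $$\langle Q^{\ast}x,Q^{\ast}x\rangle=\sum_{i\in I}\langle\Lambda_i x,\Lambda_i x\rangle_{\mathcal{A}}.$$ This single identity converts every statement about the $g$-frame inequality \eqref{2} into a statement about $Q^{\ast}$, and is the backbone of both implications.

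For the forward direction, assume $\{\Lambda_i\}$ is a $g$-frame with bounds $C,D$. The upper bound gives $\langle Q^{\ast}x,Q^{\ast}x\rangle\le D\langle x,x\rangle_{\mathcal{A}}$ for all $x$, which shows that $Q^{\ast}$, and hence $Q$, is bounded and that $Q$ is well defined on all of $l^{2}(\{\mathcal{H}_i\})$. The lower bound reads $\langle Q^{\ast}x,Q^{\ast}x\rangle\ge C\langle x,x\rangle_{\mathcal{A}}$, i.e.\ $Q^{\ast}$ is bounded below with respect to the inner product; by Lemma \ref{sb} this is equivalent to $Q$ being surjective onto $\mathcal{H}$.

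For the converse, suppose $Q$ is a well defined, bounded, linear operator from $l^{2}(\{\mathcal{H}_i\})$ onto $\mathcal{H}$. The upper bound follows at once from Lemma \ref{1} applied to $Q^{\ast}$, namely $$\sum_{i\in I}\langle\Lambda_i x,\Lambda_i x\rangle_{\mathcal{A}}=\langle Q^{\ast}x,Q^{\ast}x\rangle\le\|Q^{\ast}\|^{2}\langle x,x\rangle_{\mathcal{A}}=\|Q\|^{2}\langle x,x\rangle_{\mathcal{A}}.$$ For the lower bound I would use the pseudo-inverse: since $Q$ is onto, its range $\mathcal{H}$ is closed, so by the cited lemma on pseudo-inverses of adjointable operators with closed range there is an adjointable $Q^{+}$ with $QQ^{+}x=x$ for all $x\in\mathcal{H}$, i.e.\ $QQ^{+}=I_{\mathcal{H}}$. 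Taking adjoints yields $(Q^{+})^{\ast}Q^{\ast}=I_{\mathcal{H}}$, so $x=(Q^{+})^{\ast}Q^{\ast}x$ and, again by Lemma \ref{1}, $$\langle x,x\rangle_{\mathcal{A}}=\langle (Q^{+})^{\ast}Q^{\ast}x,(Q^{+})^{\ast}Q^{\ast}x\rangle\le\|Q^{+}\|^{2}\langle Q^{\ast}x,Q^{\ast}x\rangle=\|Q^{+}\|^{2}\sum_{i\in I}\langle\Lambda_i x,\Lambda_i x\rangle_{\mathcal{A}}.$$ Rearranging gives $\|Q^{+}\|^{-2}\langle x,x\rangle_{\mathcal{A}}\le\sum_{i\in I}\langle\Lambda_i x,\Lambda_i x\rangle_{\mathcal{A}}$, which is the claimed lower bound, so $\{\Lambda_i\}$ is a $g$-frame with bounds $\|Q^{+}\|^{-2}$ and $\|Q\|^{2}$.

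The main obstacle I anticipate is the justification of $Q^{\ast}x=\{\Lambda_i x\}_{i\in I}$ in the converse direction, where no frame bound is yet available: one must first check that $\{\Lambda_i x\}$ actually lies in $l^{2}(\{\mathcal{H}_i\})$ so that $Q$ is genuinely adjointable. I would handle this by truncating to finite $J\subset I$, setting $v_J=\{\Lambda_i x\}_{i\in J}$ extended by zero, and using the identity $\langle v_J,v_J\rangle=\langle Qv_J,x\rangle$ together with Cauchy--Schwarz to get the uniform bound $\big\|\sum_{i\in J}\langle\Lambda_i x,\Lambda_i x\rangle\big\|\le\|Q\|^{2}\|x\|^{2}$; positivity of the partial sums then forces norm convergence. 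A secondary point to get right is that the pseudo-inverse supplied by the cited lemma is adjointable, which is exactly what legitimizes passing from $QQ^{+}=I_{\mathcal{H}}$ to $(Q^{+})^{\ast}Q^{\ast}=I_{\mathcal{H}}$.
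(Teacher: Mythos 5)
Your architecture is genuinely different from the paper's, and two of your choices would be cleaner if they worked: deducing surjectivity of $Q$ in the forward direction from Lemma \ref{sb} (the paper instead invokes invertibility of the $g$-frame operator $S$ and exhibits the explicit preimage $\{\Lambda_{i}g\}_{i\in I}$ of $f=Sg$), and deriving both frame inequalities in the converse directly in $\mathcal{A}$-valued form via Lemma \ref{1} and the adjointed identity $(Q^{+})^{\ast}Q^{\ast}=I_{\mathcal{H}}$ (the paper only obtains the norm inequalities $\|Q^{+}\|^{-2}\|f\|^{2}\leq\big\|\sum_{i}\langle\Lambda_{i}f,\Lambda_{i}f\rangle\big\|\leq\|Q\|^{2}\|f\|^{2}$, implicitly leaning on the norm characterization in Theorem \ref{*}). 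However, your entire converse rests on the identity $Q^{\ast}x=\{\Lambda_{i}x\}_{i\in I}$, hence on $Q$ being adjointable, and the repair you propose for exactly this point fails.

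The gap is the sentence ``positivity of the partial sums then forces norm convergence.'' In a $C^{\ast}$-algebra, a norm-bounded increasing net of positive elements need not converge in norm: in $\mathcal{A}=\ell^{\infty}$ (an algebra the paper itself uses in its examples) the partial sums of the coordinate indicators are increasing and bounded by $1$, yet any two distinct partial sums are at distance $1$, so the series diverges in norm. Consequently your uniform bound $\big\|\sum_{i\in J}\langle\Lambda_{i}x,\Lambda_{i}x\rangle\big\|\leq\|Q\|^{2}\|x\|^{2}$ does not place $\{\Lambda_{i}x\}_{i\in I}$ in $l^{2}(\{\mathcal{H}_{i}\})$, whose definition in this paper demands norm convergence of $\sum_{i}\langle x_{i},x_{i}\rangle$; so $Q$ is not shown adjointable, $Q^{\ast}$ is unavailable, and the same problem blocks applying the pseudo-inverse lemma of \cite{Xu2008}, which is stated for operators in $End^{\ast}_{\mathcal{A}}$. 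The scalar intuition you are importing (bounded monotone sequences of reals converge) is exactly what holds in Hilbert spaces and exactly what breaks in Hilbert $C^{\ast}$-modules. The paper's proof is organized precisely to avoid ever forming $Q^{\ast}$: the upper bound comes from a Cauchy--Schwarz estimate on finite partial sums, and the lower bound uses only $\{a_{i}\}=Q^{+}f$, which lies in $l^{2}(\{\mathcal{H}_{i}\})$ by construction, together with $f=QQ^{+}f=\sum_{i}\Lambda_{i}^{\ast}a_{i}$ paired against $f$ itself rather than against $\{\Lambda_{i}f\}$. A smaller instance of the same issue makes your forward direction circular as written: you invoke $Q^{\ast}$ to conclude that $Q$ is well defined, but $Q^{\ast}$ only makes sense after $Q$ is known to be defined on all of $l^{2}(\{\mathcal{H}_{i}\})$ and adjointable; the paper instead proves convergence of the defining series directly from the finite-sum estimate $\big\|\sum_{i\in I_{1}}\Lambda_{i}^{\ast}g_{i}\big\|\leq\sqrt{B}\,\big\|\sum_{i\in I_{1}}\langle g_{i},g_{i}\rangle\big\|^{1/2}$. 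Your converse can be salvaged by working with finite truncations throughout, but doing so essentially reproduces the paper's argument.
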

\textit{Proof.} (1) \(\Rightarrow\) If $ \Lambda:=\{  \Lambda_{i}\in{End^{*}_{\mathcal{A}}(\mathcal{H}, \mathcal{H}_{i})} \}_{{i}\in {I}}$ is a $g$-frame  in Hilbert $\mathcal{A}$  module  $\mathcal{H}$ with respect to \(\left\{\mathcal{H}_{i}\right\}_{i \in I}\) with bounds \(A\) and \(B\), then for any finite subset \(I_{1} \subset I\), we have
\begin{align*}
	\left\|\displaystyle\sum_{i \in I_{1}} \Lambda_{i}^{*} g_{j}\right\| &=\sup _{f \in \mathcal{H},\|f\|=1}\left\lVert\left\langle\displaystyle\sum_{i \in I_{1}} \Lambda_{i}^{*} g_{i}, f\right\rangle\right\lVert\\
	&=\sup _{f \in \mathcal{H},\|f\|=1}\left\lVert\displaystyle\sum_{i \in I_{1}}\left\langle g_{i}, \Lambda_{i} f\right\rangle\right\lVert \\
	& \leq \sup _{f \in \mathcal{H},\|f\|=1} \lVert \displaystyle\sum_{i \in I_{1}}\langle g_{i},g_{i}\rangle\lVert^{1 / 2}\lVert \displaystyle\sum_{i \in I_{1}}\langle\Lambda_{i} f , \Lambda_{i} f \rangle\lVert^{1 / 2} \\
	& \leq \sqrt{B}\lVert \displaystyle\sum_{i \in I_{1}}\langle g_{i},g_{i}\rangle\lVert^{1 / 2}.
\end{align*}
Hence the series \(\displaystyle\sum_{i \in i} \Lambda_{i}^{*} g_{i}\) converges in \(\mathcal{H} .\) Therefore the operator defined by (8) is well defined from \(l^{2}\left(\left\{\mathcal{V}_{i}\right\}_{i \in I}\right)\) into \(\mathcal{H}\) with \(\|Q\| \leq \sqrt{B}\).

For every \(f \in \mathcal{H}\), from Lemma 2.2, there exists a \(g \in \mathcal{H}\) such that \(f=S g=\displaystyle\sum_{i \in I} \Lambda_{i}^{*} \Lambda_{i} g\). Since $\{  \Lambda_{i}\in{End^{*}_{\mathcal{A}}(\mathcal{H}, \mathcal{H}_{i})} \}_{{i}\in {I}}$ is a g-frame for \(\mathcal{H}\) with respect to \(\left\{\mathcal{H}_{i}\right\}_{i \in I}\), then \(\left\{\Lambda_{i} g\right\}_{i \in I} \in\)
\(l^{2}\left(\left\{\mathcal{V}_{i}\right\}_{i \in I}\right)\) and \(Q\left(\left\{\Lambda_{i} g\right\}_{i \in I}\right)=\displaystyle\sum_{i \in I} \Lambda_{i}^{*} \Lambda_{i} g=f .\) This implies that the operator \(Q\) is onto.

\((2) \Leftarrow\) If \(Q\) is a well defined bounded linear operator from \(l^{2}\left(\left\{\mathcal{H}_{i}\right\}_{i \in I}\right)\) onto \(\mathcal{H}\), then for any \(f \in \mathcal{H}\) and any finite subset \(I_{1} \subset I\), we have
\begin{align*}
	\left\|\displaystyle\sum_{i \in I_{1}}\Lambda_{i} f\right\|^{2} &=\left\|\displaystyle\sum_{i \in I_{1}}\left\langle\Lambda_{i} f, \Lambda_{i} f\right\rangle\right\|\\
	&=\left\|\displaystyle\sum_{i \in I_{1}}\left\langle f, \Lambda_{i}^{*} \Lambda_{i} f\right\rangle \right\|  \\
	&=\left \lVert\langle f, \displaystyle\sum_{i \in I_{1}} \Lambda_{i}^{*} \Lambda_{i} f\rangle\right\|\\
	&\leq \lVert\langle f,f\rangle\lVert^{1 / 2}\lVert\langle\displaystyle\sum_{i \in I_{1}} \Lambda_{i}^{*} \Lambda_{i} f,\displaystyle\sum_{i \in I_{1}} \Lambda_{i}^{*} \Lambda_{i} f \rangle\lVert^{1 / 2}\\
	& \leq\|f\|\left\|\displaystyle\sum_{i \in I_{1}} \Lambda_{i}^{*} \Lambda_{i} f\right\|=\|f\|\left\|Q\left(\left\{\Lambda_{i} f\right\}_{i \in I_{1}}\right)\right\| \\
\end{align*}
It follows that
\(\left\|\displaystyle\sum_{i \in I_{1}}\Lambda_{i} f\right\|^{2} \leq\|Q\|^{2}\|f\|^{2}\) for all \(f \in \mathcal{H}\) and any finite subset \(I_{1} \subset I .\) Hence we obtain
$$
\left\|\displaystyle\sum_{i \in I}\Lambda_{i} f\right\|^{2} \leq\|Q\|^{2}\|f\|^{2}, \quad \forall f \in \mathcal{H}
$$
On the other hand, since \(Q\left(\ell^{2}\left(\left\{\mathcal{H}_{i}\right\}_{i \in I}\right)\right)=\mathcal{H}\), from Lemma 2.1, there exists a unique bounded operator \(Q^{+}: \mathcal{H} \rightarrow l^{2}\left(\left\{\mathcal{H}_{i}\right\}_{i \in I}\right)\) satisfying \(Q Q^{+} f=f, f \in Q\left(\l^{2}\left(\left\{\mathcal{V}_{i}\right\}_{i \in I}\right)\right)=\mathcal{H} .\) Let
\(Q^{+} f=\left\{a_{i}\right\}_{i \in I}\). Then we have
$$
\begin{array}{l}
	\left\|\displaystyle\sum_{i \in I}a_{i}\right\|^{2}=\left\|Q^{+} f\right\|^{2} \leq\left\|Q^{+}\right\|^{2}\|f\|^{2}, \quad f \in \mathcal{H}, \\
	f=Q Q^{+} f=\displaystyle\sum_{i \in I} \Lambda_{i}^{*} a_{i}, \quad f \in \mathcal{H}.
\end{array}
$$
Hence we obtain
$$
\begin{aligned}
	\|f\|^{4} &=\lVert\langle f, f\rangle\lVert^{2}=\left\lVert\left\langle\displaystyle\sum_{i \in I} \Lambda_{i}^{*} a_{i}, f\right\rangle\right\lVert^{2}=\left\lVert\displaystyle\sum_{i \in I}\left\langle a_{i}, \Lambda_{i} f\right\rangle\right\lVert^{2} \\
	& \leq\left\|Q^{+}\right\|^{2}\|f\|^{2}\left\| \displaystyle\sum_{i \in I}\Lambda_{i} f\right\|^{2}.
\end{aligned}
$$
This implies that
$$
\frac{1}{\left\|Q^{+}\right\|^{2}}\|f\|^{2} \leq\left\| \displaystyle\sum_{i \in I}\Lambda_{i} f\right\|^{2}, \quad \forall f \in \mathcal{H}.
$$
\begin{theorem}
	\label{theorem 3.3} Let a sequence  $ \Lambda:=\{  \Lambda_{i}\in{End^{*}_{\mathcal{A}}(\mathcal{H}, \mathcal{V}_{i})} \}_{{i}\in {I}}$ be a g-frame for $\mathcal{H}$ with respect to $\left\{\mathcal{V}_{i}\right\}_{i \in I}$ with bounds $A$ and $B$. If $P$ is the orthogonal projection from $l^{2}\left(\left\{\mathcal{V}_{i}\right\}_{i \in I}\right)$ onto $R_{Q^{*}}$,
	then $P$ is defined by $P\left(\left\{g_{i}\right\}_{i \in I}\right):=\left\{\Lambda_{i} \displaystyle\sum_{k \in I} \widetilde{\Lambda}_{k}^{*} g_{k}\right\}_{i \in I}$ for any
	$\left\{g_{i}\right\}_{i \in I}$ belongs to $l^{2}\left(\left\{\mathcal{V}_{i}\right\}_{i \in I}\right)$.
\end{theorem}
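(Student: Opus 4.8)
The plan is to identify $Q$ and its adjoint explicitly, reduce the orthogonal projection onto $R_{Q^{*}}$ to the standard pseudo-inverse formula, and then unwind the definitions to recover the claimed expression. First I would compute $Q^{*}$: for $f\in\mathcal{H}$ and $\{g_{i}\}_{i\in I}\in l^{2}(\{\mathcal{V}_{i}\}_{i\in I})$,
$$\langle Q\{g_{i}\},f\rangle_{\mathcal{A}}=\Big\langle \sum_{i\in I}\Lambda_{i}^{*}g_{i},f\Big\rangle_{\mathcal{A}}=\sum_{i\in I}\langle g_{i},\Lambda_{i}f\rangle_{\mathcal{A}}=\big\langle\{g_{i}\},\{\Lambda_{i}f\}\big\rangle_{\mathcal{A}},$$
so that $Q^{*}f=\{\Lambda_{i}f\}_{i\in I}$ is exactly the $g$-frame transform. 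In particular $R_{Q^{*}}$ is the range of this transform.

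Next I would record the two structural facts needed. By the lower bound in the defining inequality \eqref{2} one has $\langle Q^{*}x,Q^{*}x\rangle_{\mathcal{A}}=\sum_{i\in I}\langle\Lambda_{i}x,\Lambda_{i}x\rangle_{\mathcal{A}}\geq A\langle x,x\rangle_{\mathcal{A}}$, so $Q^{*}$ is injective and bounded below; hence, by the same closed-range argument used in Theorem \ref{2.3} applied to $Q^{*}$, its range $R_{Q^{*}}$ is closed. Since $Q^{*}$ is adjointable with closed range, $R_{Q^{*}}$ is orthogonally complemented in $l^{2}(\{\mathcal{V}_{i}\})$, so the orthogonal projection $P$ onto it is well defined. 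Moreover a direct computation gives $QQ^{*}f=Q\{\Lambda_{i}f\}=\sum_{i\in I}\Lambda_{i}^{*}\Lambda_{i}f=Sf$, i.e. $QQ^{*}=S$, the invertible, positive, self-adjoint $g$-frame operator.

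With these in hand I would invoke the standard projection formula. Writing $T=Q^{*}$, the operator $T$ is injective with closed range, so $T^{*}T=QQ^{*}=S$ is invertible and the orthogonal projection onto $R(T)=R_{Q^{*}}$ is $P=T(T^{*}T)^{-1}T^{*}=Q^{*}S^{-1}Q=Q^{+}Q$, where $Q^{+}=Q^{*}S^{-1}$ is the pseudo-inverse of the surjection $Q$. It then remains only to simplify. For $\{g_{i}\}_{i\in I}\in l^{2}(\{\mathcal{V}_{i}\})$,
$$P\{g_{i}\}=Q^{*}S^{-1}Q\{g_{i}\}=Q^{*}\Big(S^{-1}\sum_{k\in I}\Lambda_{k}^{*}g_{k}\Big)=\Big\{\Lambda_{i}S^{-1}\sum_{k\in I}\Lambda_{k}^{*}g_{k}\Big\}_{i\in I}.$$
Since the canonical dual $g$-frame satisfies $\widetilde{\Lambda}_{k}=\Lambda_{k}S^{-1}$ and $S^{-1}$ is self-adjoint, we have $\widetilde{\Lambda}_{k}^{*}=S^{-1}\Lambda_{k}^{*}$, hence $S^{-1}\sum_{k}\Lambda_{k}^{*}g_{k}=\sum_{k}\widetilde{\Lambda}_{k}^{*}g_{k}$ and
$$P\{g_{i}\}=\Big\{\Lambda_{i}\sum_{k\in I}\widetilde{\Lambda}_{k}^{*}g_{k}\Big\}_{i\in I},$$
which is the asserted formula.

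I expect the only genuinely delicate point to be the justification that $R_{Q^{*}}$ is closed and orthogonally complemented: in a Hilbert $C^{\ast}$-module the closedness of a range is not automatic, but here it follows from the lower $g$-frame bound exactly as in Theorem \ref{2.3}, and adjointable operators with closed range have orthogonally complemented ranges, which is what makes the projection $P$ and the pseudo-inverse $Q^{+}$ meaningful. Everything else is a direct manipulation of the definitions of $Q$, $Q^{*}$, $S$, and the canonical dual $\widetilde{\Lambda}_{k}$.
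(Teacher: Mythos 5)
Your proof is correct, but it takes a genuinely different route from the paper's. The paper argues by direct verification: it takes the claimed formula as the definition of an operator $\widetilde{T}(\{g_{i}\}_{i\in I})=\{\Lambda_{i}\sum_{k\in I}\widetilde{\Lambda}_{k}^{*}g_{k}\}_{i\in I}$, then checks by hand that $\widetilde{T}$ is bounded, idempotent ($\widetilde{T}^{2}=\widetilde{T}$, using $\widetilde{\Lambda}_{k}^{*}=S^{-1}\Lambda_{k}^{*}$ and $S^{-1}S=I$), and self-adjoint (a lengthy inner-product computation that moves $S^{-1}$ across the pairing), so that $\widetilde{T}$ is an orthogonal projection with range $R_{Q^{*}}$; uniqueness of the orthogonal projection then gives $P=\widetilde{T}$. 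You instead derive the formula from the general operator-theoretic identity $P=T(T^{*}T)^{-1}T^{*}$ applied to $T=Q^{*}$, after first identifying $Q^{*}f=\{\Lambda_{i}f\}_{i\in I}$ as the frame transform and $QQ^{*}=S$ as the (invertible, self-adjoint) $g$-frame operator; the claimed expression then drops out from the single simplification $\widetilde{\Lambda}_{k}^{*}=S^{-1}\Lambda_{k}^{*}$. The two arguments rest on the same algebraic facts, but yours compresses the paper's idempotency and self-adjointness computations into the one-line check that $Q^{*}S^{-1}Q$ is a self-adjoint idempotent fixing $R_{Q^{*}}$, and it makes explicit the structural inputs the paper leaves tacit: the lower frame bound forces $Q^{*}$ to be bounded below, hence to have closed range (by the Cauchy-sequence argument of Theorem \ref{2.3}), and an adjointable operator with closed range has orthogonally complemented range, which is what legitimizes both the projection $P$ and the pseudo-inverse $Q^{+}=Q^{*}S^{-1}$. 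What the paper's approach buys in exchange is self-containedness: it never needs the closed-range/complementability theory or the projection formula as an external lemma, since it only ever manipulates the explicitly given operator.
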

\begin{proof}
	Let $\tilde{T}$ be the operator defined by
	$$
	l^{2}\left(\left\{\mathcal{H}_{i}\right\}_{i \in I}\right) \rightarrow R_{Q^{*}}, \widetilde{T}\left(\left\{g_{i}\right\}_{i \in I}\right):=\left\{\Lambda_{i} \displaystyle\sum_{k \in I} \widetilde{\Lambda}_{k}^{*} g_{k}\right\}_{i \in I}.
	$$
	Firstly, we prove that $\tilde{T}$ is a bounded linear operator. For all $\left\{g_{i}\right\}_{i \in I},\left\{h_{i}\right\}_{i \in I} \in l^{2}\left(\left\{\mathcal{H}_{i}\right\}_{i \in I}\right)$, we consider that
	$$
	\begin{aligned}
		\tilde{T}\left(\left\{g_{i}\right\}_{i \in I}+\left\{h_{i}\right\}_{i \in I}\right) 
		&= \widetilde{T}\left(\left\{g_{i}+h_{i}\right\}_{i \in I}\right)\\
		&=\left\{\Lambda_{i} \displaystyle\sum_{k \in I} \widetilde{\Lambda}_{k}^{*}\left(g_{k}+h_{k}\right)\right\}_{i \in I} \\
		&=\left\{\Lambda_{i}\left(\displaystyle\sum_{k \in I} \widetilde{\Lambda}_{k}^{*} g_{k}+\displaystyle\sum_{k \in I} \tilde{\Lambda}_{k}^{*} h_{k}\right)\right\}_{i \in I}\\
		&=\left\{\Lambda_{i} \displaystyle\sum_{k \in I} \widetilde{\Lambda}_{k}^{*} g_{k}\right\}_{i \in I}+\left\{\Lambda_{i} \displaystyle\sum_{k \in I} \widetilde{\Lambda}_{k}^{*} h_{k}\right\}_{i \in I} \\
		& =\tilde{T}\left(\left\{g_{i}\right\}_{i \in I}\right)+\tilde{T}\left(\left\{h_{i}\right\}_{i \in I}\right)
	\end{aligned}
	$$
	and 
	$$
	\begin{aligned}
		\left\|\tilde{T}\left(\left\{g_{i}\right\}_{i \in I}\right)\right\|^{2}
		=&\left\|\left\{\Lambda_{i} \displaystyle\sum_{k \in I} \widetilde{\Lambda}_{k}^{*} g_{k}\right\}_{i \in I}\right\| ^{2}=\left\|\left\{\Lambda_{i} \displaystyle\sum_{k \in I} S^{-1} \Lambda_{k}^{*} g_{k}\right\}_{i \in I} \right\|^{2} \\
		=& \displaystyle\sum_{i \in I}\left\|\Lambda_{i} \displaystyle\sum_{k \in I} S^{-1} \Lambda_{k}^{*} g_{k}\right\|^{2} \leq B\left\|\displaystyle\sum_{k \in I} S^{-1} \Lambda_{k}^{*} g_{k}\right\|^{2} \\
		\leq & \frac{B}{A}\left\|\left\{g_{k}\right\}_{k \in I}\right\|^{2} .
	\end{aligned}
	$$
	Hence, $\tilde{T}$ is a bounded linear operator. Secondly, let $g=\displaystyle\sum_{i \in I} \widetilde{\Lambda}_{i}^{*} g_{i}$, since
	$$
	\begin{aligned}
		\tilde{T}^{2}\left(\left\{g_{i}\right\}_{i \in I}\right)&= \widetilde{T}\left(\left\{\Lambda_{i} g\right\}_{i \in I}\right)=\left\{\Lambda_{I} \displaystyle\sum_{k \in I} \tilde{\Lambda}_{k}^{*} \Lambda_{k} g\right\}_{i \in I} \\
		&=\left\{\Lambda_{i} \displaystyle\sum_{k \in I} S^{-1} \Lambda_{k}^{*} \Lambda_{k} g\right\}_{i \in I}=\left\{\Lambda_{i} S^{-1} \displaystyle\sum_{k \in I} \Lambda_{k}^{*} \Lambda_{k} g\right\}_{i \in I} \\
		&=\left\{\Lambda_{i} g\right\}_{i \in I}=\widetilde{T}\left(\left\{g_{i}\right\}_{i \in I}\right),
	\end{aligned}
	$$
	we obtain $\tilde{T}^{2}=\tilde{T}$.\\ Hence, we have that $\tilde{T}$ is a projection from $l^{2}\left(\left\{\mathcal{V}_{i}\right\}_{i \in I}\right)$ onto $R_{Q^{2}}$.
	Finally, for all $\left\{g_{i}\right\}_{i \in I},\left\{f_{i}\right\}_{i \in I} \in l^{2}\left(\left\{\mathcal{V}_{i}\right\}_{i \in I}\right)$, we obtain
	$$
	\begin{aligned}
		\left\langle\tilde{T}\left(\left\{g_{i}\right\}_{i \in I}\right),\left\{f_{i}\right\}_{i \in I}\right\rangle_{\mathcal{A}} &=\left\langle\left\{\Lambda_{i} \displaystyle\sum_{k \in I} \tilde{\Lambda}_{k}^{*} g_{k}\right\}_{i \in I},\left\{f_{i}\right\}_{i \in I}\right\rangle_{\mathcal{A}} \\
		&=\displaystyle\sum_{i \in I}\left\langle\Lambda_{i} \displaystyle\sum_{k \in I} \widetilde{\Lambda}_{k}^{*} g_{k}, f_{i}\right\rangle_{\mathcal{A}} \\
		&=\displaystyle\sum_{i \in I}\left\langle\displaystyle\sum_{k \in I} \widetilde{\Lambda}_{k}^{*} g_{k}, \Lambda_{i}^{*} f_{i}\right\rangle_{\mathcal{A}}\\
		&=\left\langle\displaystyle\sum_{k \in I} \widetilde{\Lambda}_{k}^{*} g_{k}, \displaystyle\sum_{i \in I} \Lambda_{i}^{*} f_{i}\right\rangle_{\mathcal{A}}=\left\langle\displaystyle\sum_{k \in I} S^{-1} \Lambda_{k}^{*} g_{k}, \displaystyle\sum_{i \in I} \Lambda_{i}^{*} f_{i}\right\rangle_{\mathcal{A}} \\
		&=\left\langle S^{-1} \displaystyle\sum_{k \in I} \Lambda_{k}^{*} g_{k}, \displaystyle\sum_{i \in I} \Lambda_{i}^{*} f_{i}\right\rangle_{\mathcal{A}}=\left\langle\displaystyle\sum_{k \in I} \Lambda_{k}^{*} g_{k}, \displaystyle\sum_{i \in I} S^{-1} \Lambda_{i}^{*} f_{i}\right\rangle_{\mathcal{A}} \\
		&=\left\langle\displaystyle\sum_{k \in I} \Lambda_{k}^{*} g_{k}, \displaystyle\sum_{i \in I} \widetilde{\Lambda}_{i}^{*} f_{i}\right\rangle_{\mathcal{A}}=\displaystyle\sum_{k \in I}\left\langle g_{k}, \Lambda_{k} \displaystyle\sum_{i \in I} \widetilde{\Lambda}_{i}^{*} f_{i}\right\rangle_{\mathcal{A}} \\
		&=\left\langle\left\{g_{k}\right\}_{k \in I},\left\{\Lambda_{k} \displaystyle\sum_{i \in I} \widetilde{\Lambda}_{i}^{*} f_{i}\right\}_{k \in I}\right\rangle_{\mathcal{A}} \\
		&=\left\langle\left\{g_{i}\right\}_{i \in I}, \tilde{T}\left(\left\{f_{i}\right\}_{i \in I}\right)\right\rangle_{\mathcal{A}} .
	\end{aligned}
	$$
	It shows that $\widetilde{T}^{*}=\widetilde{T}$. Hence, $\tilde{T}$ is an orthogonal projection from $l^{2}\left(\left\{\mathcal{V}_{i}\right\}_{i \in I}\right)$ onto $R_{Q^{*}}$. Since orthogonal projection is unique, we
	have $P=\widetilde{T}$.
\end{proof}
\section{$\ast$-Frames}
\begin{definition}
	\cite{Ali}. Let $ \mathcal{H} $ be a Hilbert $\mathcal{A}$-module over a unital $C^{\ast}$-algebra. A family $\{x_{i}\}_{i\in I}$ of elements of $\mathcal{H}$ is an $\ast$-frame for $ \mathcal{H} $, if there	exist strictly nonzero elements $A$, $B$ in $\mathcal{A}$, such that for all $x\in\mathcal{H}$,
	\begin{equation}\label{3}
	A\langle x,x\rangle_{\mathcal{A}} A^{\ast}\leq\sum_{i\in I}\langle x,x_{i}\rangle_{\mathcal{A}}\langle x_{i},x\rangle_{\mathcal{A}}\leq B\langle x,x\rangle_{\mathcal{A}} B^{\ast}.
	\end{equation}
	The elements $A$ and $B$ are called lower and upper bounds of the $\ast$-frame, respectively. If $A=B=\lambda_{1}$, the $\ast$-frame is $\lambda_{1}$-tight. If $A = B = 1$, it is called a normalized tight $\ast$-frame or a Parseval $\ast$-frame. If the sum in the middle of \eqref{3} is convergent in norm, the $\ast$-frame is called standard.
\end{definition}
\begin{Ex}
	Let $\mathcal{A}$ be the $\mathcal{C}^{\ast}$-algebra of the set of all diagonal matrices in $M_{2,2}(\mathbb{C})$ and suppose $\mathcal{A}$ is the Hilbert $\mathcal{A}$-module over itself. Consider $A_{i}=\begin{bmatrix}
	\frac{1}{2^{i}}&0\\0&\frac{1}{3^{i}}
	\end{bmatrix}$ for all $i\in\mathbb{N}$. For $A=\begin{bmatrix}
	a&0\\0&b
	\end{bmatrix}\in\mathcal{A}$, we have
	\begin{equation*}
	\sum_{i\in\mathbb{N}}\langle A,A_{i}\rangle\langle A_{i},A\rangle=\begin{bmatrix}
	\frac{|a|^{2}}{3}&0\\0&\frac{|b|^{2}}{8}
	\end{bmatrix}=\begin{bmatrix}
	\frac{1}{\sqrt{3}}&0\\0&\frac{1}{\sqrt{8}}
	\end{bmatrix}\langle A,A\rangle\begin{bmatrix}
	\frac{1}{\sqrt{3}}&0\\0&\frac{1}{\sqrt{8}}
	\end{bmatrix}
	\end{equation*} Then $\{A_{i}\}_{i\in\mathbb{N}}$ is $\begin{bmatrix}
	\frac{1}{\sqrt{3}}&0\\0&\frac{1}{\sqrt{8}}
	\end{bmatrix}$-tight $\ast$-frame for Hilbert $\mathcal{A}$-module $\mathcal{A}$.
\end{Ex}
\begin{Rem}
	\begin{enumerate}
		\item The set of all frames in Hilbert $\mathcal{A}$-modules can be considered as a subset of $\ast$-frames. 
		\item We see that $\ast$-frames can be studied as frames with different bounds.
	\end{enumerate}
\end{Rem}
Now we define the $\ast$-frame operator and compare its properties with ordinary case.
\begin{definition}
	Let $\{x_{i}\}_{i\in I}$ be an $\ast$-frame for $\mathcal{H}$ with pre-$\ast$-frame operator $T$ and lower and upper $\ast$-frame bounds $A$ and $B$, respectively. The $\ast$-frame operator $S: \mathcal{H} \to \mathcal{H}$ is defined by $Sx=T^{\ast}Tx=\sum_{i\in I}\langle x, x_{i}\rangle_{\mathcal{A}}x_{i}$.
\end{definition}
The $\ast$-frame operator has some similar properties with frame operator in ordinary frames, but the other properties are different. The main cause of differences is $\mathcal{A}$-valued bounds. However, the reconstruction formula is given from the $\ast$-frame operator.
\begin{theorem}
Let $\{x_{i}\}_{i\in I}$ be an $\ast$-frame for $\mathcal{H}$ with $\ast$-frame operator $S$ and lower and upper $\ast$-frame bounds $A$ and $B$, respectively. Then $S$ is positive, invertible and adjointable. Also, the following inequality $\|A^{-1}\|^{-2}\leq\|S\|\leq\|B\|^{2}$ holds, and the reconstruction formula $x=\sum_{i\in I}\langle x, S^{-1}x_{i}\rangle_{\mathcal{A}}x_{i}$ holds $\forall x\in\mathcal{H}$.
\end{theorem}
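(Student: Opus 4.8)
The plan is to treat the five assertions in the order: adjointability and positivity first (these are formal), then the upper norm bound, then invertibility together with the lower norm bound, and finally the reconstruction formula. Since by definition $S=T^{\ast}T$, where $T$ is the pre-$\ast$-frame (analysis) operator, $S$ is automatically adjointable with $S^{\ast}=(T^{\ast}T)^{\ast}=T^{\ast}T=S$, so $S$ is self-adjoint. Positivity is then immediate from $\langle Sx,x\rangle_{\mathcal{A}}=\langle Tx,Tx\rangle_{\mathcal{A}}=\sum_{i\in I}\langle x,x_{i}\rangle_{\mathcal{A}}\langle x_{i},x\rangle_{\mathcal{A}}\geq 0$; equivalently it follows from the lower $\ast$-frame inequality, since $A\langle x,x\rangle_{\mathcal{A}}A^{\ast}\geq 0$.

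For the upper bound I would apply the order-preserving property of the norm on positive elements to the right-hand $\ast$-frame inequality: from $\langle Sx,x\rangle_{\mathcal{A}}\leq B\langle x,x\rangle_{\mathcal{A}}B^{\ast}$ one gets $\|\langle Sx,x\rangle_{\mathcal{A}}\|\leq\|B\|^{2}\|\langle x,x\rangle_{\mathcal{A}}\|=\|B\|^{2}\|x\|^{2}$, and since $S$ is positive, $\|S\|=\sup_{\|x\|=1}\|\langle Sx,x\rangle_{\mathcal{A}}\|\leq\|B\|^{2}$.

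The crux is invertibility, and here I expect the main obstacle to be that the $\mathcal{A}$-valued lower bound $A\langle x,x\rangle_{\mathcal{A}}A^{\ast}$ cannot in general be replaced by a scalar operator inequality of the form $c\langle x,x\rangle_{\mathcal{A}}$: conjugation by $A$ does not dominate a scalar multiple of $\langle x,x\rangle_{\mathcal{A}}$ in the order of $\mathcal{A}$. The way around this is to pass to norms. Writing $\langle x,x\rangle_{\mathcal{A}}=A^{-1}\bigl(A\langle x,x\rangle_{\mathcal{A}}A^{\ast}\bigr)(A^{-1})^{\ast}$ and using submultiplicativity gives $\|\langle x,x\rangle_{\mathcal{A}}\|\leq\|A^{-1}\|^{2}\,\|A\langle x,x\rangle_{\mathcal{A}}A^{\ast}\|$, hence $\|A\langle x,x\rangle_{\mathcal{A}}A^{\ast}\|\geq\|A^{-1}\|^{-2}\|x\|^{2}$. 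Combining this with the lower $\ast$-frame inequality and the Cauchy--Schwarz estimate $\|\langle Sx,x\rangle_{\mathcal{A}}\|\leq\|Sx\|\,\|x\|$ yields the key bound $\|Sx\|\geq\|A^{-1}\|^{-2}\|x\|$ for every $x$. This simultaneously gives $\|S\|\geq\|A^{-1}\|^{-2}$, completing the displayed norm inequality, and shows that $S$ is bounded below, hence injective with closed range. Because $S$ is self-adjoint, the same estimate says $S^{\ast}$ is bounded below, so Lemma \ref{sb} gives that $S$ is surjective; a bijective adjointable operator is invertible in $End_{\mathcal{A}}^{\ast}(\mathcal{H})$, so $S^{-1}$ exists and is adjointable.

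Finally, the reconstruction formula follows formally. Since $S^{-1}$ is self-adjoint (being the inverse of a self-adjoint invertible operator), for every $x\in\mathcal{H}$,
\begin{equation*}
x=S S^{-1}x=\sum_{i\in I}\langle S^{-1}x,x_{i}\rangle_{\mathcal{A}}x_{i}=\sum_{i\in I}\langle x,S^{-1}x_{i}\rangle_{\mathcal{A}}x_{i},
\end{equation*}
where the last equality uses $\langle S^{-1}x,x_{i}\rangle_{\mathcal{A}}=\langle x,(S^{-1})^{\ast}x_{i}\rangle_{\mathcal{A}}=\langle x,S^{-1}x_{i}\rangle_{\mathcal{A}}$. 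Everything apart from the invertibility/lower-bound step is a direct transcription of the classical Hilbert-space argument into the $C^{\ast}$-module language.
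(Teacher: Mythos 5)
Your proof is correct, but there is nothing in the paper to compare it against: the paper states this theorem with no proof at all, as a survey item imported from the $\ast$-frame literature (reference \cite{Ali}), so your argument fills a gap rather than paralleling an in-paper one. Checking it on its own merits, every step is sound. Self-adjointness and positivity are indeed formal from $S=T^{\ast}T$, given that the paper's definition of the $\ast$-frame operator already posits the adjointable pre-$\ast$-frame operator $T$. For the upper bound you use $\|S\|=\sup_{\|x\|\leq1}\|\langle Sx,x\rangle_{\mathcal{A}}\|$; this identity is legitimate here because $S$ is positive (write $\|\langle Sx,x\rangle_{\mathcal{A}}\|=\|S^{1/2}x\|^{2}$ and apply the $C^{\ast}$-identity $\|S^{1/2}\|^{2}=\|S\|$), and the paper itself invokes the same fact in its proof of the proposition on the ordinary frame operator; alternatively you could have avoided it entirely via $\|S\|=\|T\|^{2}$ and $\|Tx\|^{2}=\|\langle Sx,x\rangle_{\mathcal{A}}\|\leq\|B\|^{2}\|x\|^{2}$. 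Your identification of the real issue — that the $\mathcal{A}$-valued lower bound $A\langle x,x\rangle_{\mathcal{A}}A^{\ast}$ cannot be converted into a scalar operator inequality and must be handled through norms — is exactly right, and the chain
\begin{equation*}
\|A^{-1}\|^{-2}\|x\|^{2}\leq\|A\langle x,x\rangle_{\mathcal{A}}A^{\ast}\|\leq\|\langle Sx,x\rangle_{\mathcal{A}}\|\leq\|Sx\|\,\|x\|
\end{equation*}
is valid, using order-monotonicity of the norm on positive elements and Cauchy--Schwarz; the invertibility of $A$ that it requires is licensed by the statement itself, which refers to $\|A^{-1}\|$. The deduction of surjectivity from "bounded below and self-adjoint" via Lemma \ref{sb}, the conclusion that a bijective adjointable operator has an adjointable inverse, and the final manipulation $x=SS^{-1}x=\sum_{i\in I}\langle S^{-1}x,x_{i}\rangle_{\mathcal{A}}x_{i}=\sum_{i\in I}\langle x,S^{-1}x_{i}\rangle_{\mathcal{A}}x_{i}$ using $(S^{-1})^{\ast}=S^{-1}$ are all correct. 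This is the standard route for this result, and it is self-contained relative to the lemmas the paper does state.
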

In the following corollary we see that $\ast$-frames can be studied as frames with different bounds.
\begin{corollary}
Let $\{x_{i}\}_{i\in I}$ be an $\ast$-frame for $\mathcal{H}$ with pre-$\ast$-frame operator $T$ and lower and upper $\ast$-frame bounds $A$ and $B$, respectively. Then $\{x_{i}\}_{i\in I}$ is a frame for $\mathcal{H}$ with lower and upper frame bounds $\|(T^{\ast}T)^{-1}\|^{-1}$ and $\|T\|^{2}$, respectively.
\end{corollary}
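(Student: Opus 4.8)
The plan is to reduce the $\ast$-frame to an ordinary frame by exploiting the fact, established in the preceding theorem, that the $\ast$-frame operator $S=T^{\ast}T$ is positive and invertible, and then to feed the hypotheses of Lemma \ref{3}(i) in order to convert the operator-valued $\ast$-frame bounds into the scalar bounds $\|(T^{\ast}T)^{-1}\|^{-1}$ and $\|T\|^{2}$.

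First I would rewrite the middle term of the $\ast$-frame inequality in operator form. Since the pre-$\ast$-frame operator $T$ is adjointable with $T^{\ast}$ the synthesis operator, for every $x\in\mathcal{H}$ we have
\[
\sum_{i\in I}\langle x,x_{i}\rangle_{\mathcal{A}}\langle x_{i},x\rangle_{\mathcal{A}}=\langle T^{\ast}Tx,x\rangle_{\mathcal{A}}=\langle Tx,Tx\rangle_{\mathcal{A}},
\]
so the quantity controlled by the frame inequality is exactly $\langle Sx,x\rangle_{\mathcal{A}}$ with $S=T^{\ast}T$. Next I would verify the two hypotheses of Lemma \ref{3}(i), namely that $T$ is injective and has closed range. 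Injectivity is immediate: if $Tx=0$ then $Sx=T^{\ast}Tx=0$, and since $S$ is invertible by the preceding theorem, $x=0$. For the closed range, I would note that the invertibility and positivity of $S$ give $\|(S^{-1})\|^{-1}\langle x,x\rangle_{\mathcal{A}}\le\langle Sx,x\rangle_{\mathcal{A}}=\langle Tx,Tx\rangle_{\mathcal{A}}$, whence $\|Tx\|\ge\|S^{-1}\|^{-1/2}\|x\|$; that is, $T$ is bounded below, and the Cauchy-sequence argument used in the proof of Theorem \ref{2.3} then shows $\mathrm{Ran}(T)$ is closed.

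With $T$ injective and of closed range, Lemma \ref{3}(i) applies and yields the operator inequality
\[
\|(T^{\ast}T)^{-1}\|^{-1}\,I_{\mathcal{H}}\le T^{\ast}T\le\|T\|^{2}\,I_{\mathcal{H}}.
\]
Pairing this against $x$ and substituting the identity from the first step gives, for all $x\in\mathcal{H}$,
\[
\|(T^{\ast}T)^{-1}\|^{-1}\langle x,x\rangle_{\mathcal{A}}\le\sum_{i\in I}\langle x,x_{i}\rangle_{\mathcal{A}}\langle x_{i},x\rangle_{\mathcal{A}}\le\|T\|^{2}\langle x,x\rangle_{\mathcal{A}},
\]
which is precisely the defining inequality of a frame with scalar bounds $\|(T^{\ast}T)^{-1}\|^{-1}$ and $\|T\|^{2}$, as claimed.

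I expect the main obstacle to be the closed-range step, since this is the only place where the non-scalar, $\mathcal{A}$-valued lower bound $A\langle x,x\rangle_{\mathcal{A}}A^{\ast}$ must be converted into a genuine norm lower bound for $T$; the invertibility of $S=T^{\ast}T$ from the preceding theorem is exactly what makes this work, after which the argument is the standard boundedness-below reasoning and the remaining steps are routine substitution.
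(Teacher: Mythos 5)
Your proof is correct, and it is essentially the argument the paper intends: the paper states this corollary without proof, but it proves the identical statement for ordinary frames by combining Theorem \ref{2.3} (injectivity and closed range of the frame transform) with Lemma \ref{3}(i), which is exactly your skeleton. The one step that template does not cover — extracting injectivity and a norm lower bound for $T$ from the operator-valued $\ast$-frame bounds $A\langle x,x\rangle_{\mathcal{A}}A^{\ast}$ — you fill correctly via the positivity and invertibility of $S=T^{\ast}T$ from the preceding theorem (giving $\langle Tx,Tx\rangle_{\mathcal{A}}=\langle Sx,x\rangle_{\mathcal{A}}\geq \|S^{-1}\|^{-1}\langle x,x\rangle_{\mathcal{A}}$), after which the closed-range and substitution steps go through exactly as in the ordinary-frame case.
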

\subsection{$\ast$-g-Frames}

The following definition was introduced independently by Alijani \cite{A} and Bounader \cite{Kab}, wich is a generalization of g-frames.
\begin{definition}
	\cite{A, Kab}. We call a sequence $\{\Lambda_{i}\in End_{\mathcal{A}}^{\ast}(\mathcal{H},V_{i}):i\in I \}$ an $\ast$-g-frame in Hilbert $\mathcal{A}$-module $\mathcal{H}$ over a unital $C^{\ast}$-algebra with respect to $\{V_{i}:i\in I \}$ if there exist strictly nonzero elements $A$, $B$ in $\mathcal{A}$, such that for all $x\in\mathcal{H}$, 
	\begin{equation}\label{4}
	A\langle x,x\rangle_{\mathcal{A}} A^{\ast}\leq\sum_{i\in I}\langle \Lambda_{i}x,\Lambda_{i}x\rangle_{\mathcal{A}}\leq B\langle x,x\rangle_{\mathcal{A}} B^{\ast}.
	\end{equation}
	The elements $A$ and $B$ are called lower and upper bounds of the $\ast$-g-frame, respectively. If $A=B=\lambda_{1}$, the $\ast$-g-frame is $\lambda_{1}$-tight. If $A= B = 1$, it is called an $\ast$-g-Parseval frame. If the sum in the middle of \eqref{4} is convergent in norm, the $\ast$-g-frame is called standard.
\end{definition}
\begin{Ex}
	Let $\{x_{i}\}_{i\in\mathbb{I}}$ be an $\ast$-frame for $\mathcal{H}$ with bounds $A$ and $B$, respectively. For each $i\in\mathbb{I}$, we define $\Lambda_{i}:\mathcal{H}\to\mathcal{A}$ by $\Lambda_{i}x=\langle x,x_{i}\rangle_{\mathcal{A}},\;\; \forall x\in\mathcal{H}$. $\Lambda_{i}$ is adjointable and $\Lambda_{i}^{\ast}a=ax_{i}$ for each $a\in\mathcal{A}$. And we have 
	\begin{equation*}
	A\langle x,x\rangle_{\mathcal{A}} A^{\ast}\leq\sum_{i\in I}\langle x,x_{i}\rangle_{\mathcal{A}}\langle x_{i},x\rangle_{\mathcal{A}}\leq B\langle x,x\rangle_{\mathcal{A}} B^{\ast}, \forall x\in\mathcal{H}.
	\end{equation*}
	Then
	\begin{equation*}
	A\langle x,x\rangle_{\mathcal{A}} A^{\ast}\leq\sum_{i\in I}\langle T_{i}x,T_{i}x\rangle\leq B\langle x,x\rangle_{\mathcal{A}} B^{\ast}, \forall x\in\mathcal{H}.
	\end{equation*}
	So $\{\Lambda_{i}\}_{i\in\mathbb{I}}$ is an $\ast$-g-frame with bounds $A$ and $B$, respectively, in $\mathcal{H}$ with respect to $\mathcal{A}$.
\end{Ex}
\begin{Rem}
	\begin{enumerate}
		\item The set of all g-frames in Hilbert $\mathcal{A}$-modules can be considered as a subset of $\ast$-g-frames. 
		\item We see that $\ast$-g-frames can be studied as g-frames with different bounds.
	\end{enumerate}
	\end{Rem}
Now we define the $\ast$-g-frame operator.
\begin{definition}
	Let $\{\Lambda_{i}\in End_{\mathcal{A}}^{\ast}(\mathcal{H},V_{i}):i\in I \}$ be an $\ast$-g-frame for $\mathcal{H}$ with lower and upper $\ast$-g-frame bounds $A$ and $B$, respectively. The $\ast$-g-frame operator $S: \mathcal{H} \to \mathcal{H}$ is defined by $Sx=\sum_{i\in I}\Lambda_{i}^{\ast}\Lambda_{i}x$.
\end{definition}
The $\ast$-g-frame operator has some similar properties with g-frame operator, but the other properties are different. The main cause of differences is $\mathcal{A}$-valued bounds. However, the reconstruction formula is given from the $\ast$-g-frame operator.
\begin{theorem}
	Let $\{\Lambda_{i}\in End_{\mathcal{A}}^{\ast}(\mathcal{H},V_{i}):i\in I \}$ be an $\ast$-g-frame for $\mathcal{H}$ with $\ast$-g-frame operator $S$ and lower and upper $\ast$-g-frame bounds $A$ and $B$, respectively. Then $S$ is positive, invertible and adjointable. Also, the following inequality
	\begin{equation*}
	 \|A^{-1}\|^{-2}\leq\|S\|\leq\|B\|^{2}
	 \end{equation*}
	  holds, and the reconstruction formula 
	\begin{equation*}
	x=\sum_{i\in I}\Lambda_{i}^{\ast}\Lambda_{i}S^{-1}x=\sum_{i\in I}S^{-1}\Lambda_{i}^{\ast}\Lambda_{i}x,
	\end{equation*}
	holds for all $x\in\mathcal{H}$.
\end{theorem}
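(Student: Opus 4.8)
The plan is to reduce everything to the single operator identity $\langle Sx,x\rangle_{\mathcal{A}}=\sum_{i\in I}\langle\Lambda_{i}x,\Lambda_{i}x\rangle_{\mathcal{A}}$, which follows from $\langle\Lambda_{i}x,\Lambda_{i}x\rangle_{\mathcal{A}}=\langle\Lambda_{i}^{\ast}\Lambda_{i}x,x\rangle_{\mathcal{A}}$ upon summation; with it the defining $\ast$-g-frame inequality becomes
\begin{equation*}
A\langle x,x\rangle_{\mathcal{A}}A^{\ast}\leq\langle Sx,x\rangle_{\mathcal{A}}\leq B\langle x,x\rangle_{\mathcal{A}}B^{\ast},\qquad x\in\mathcal{H}.
\end{equation*}
First I would record that $S$ is adjointable, self-adjoint and positive. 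For $x,y\in\mathcal{H}$ one computes $\langle Sx,y\rangle_{\mathcal{A}}=\sum_{i\in I}\langle\Lambda_{i}^{\ast}\Lambda_{i}x,y\rangle_{\mathcal{A}}=\sum_{i\in I}\langle x,\Lambda_{i}^{\ast}\Lambda_{i}y\rangle_{\mathcal{A}}=\langle x,Sy\rangle_{\mathcal{A}}$, so $S^{\ast}=S$, while positivity is immediate because $\langle Sx,x\rangle_{\mathcal{A}}$ is the norm-convergent sum of the positive elements $\langle\Lambda_{i}x,\Lambda_{i}x\rangle_{\mathcal{A}}$ (convergence of the series defining $S$ being handled exactly as in the $g$-frame case, via the Cauchy--Schwarz estimate already used in the earlier theorems). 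For the upper norm bound I would take norms in the right-hand inequality, using $\|B\langle x,x\rangle_{\mathcal{A}}B^{\ast}\|\leq\|B\|^{2}\|\langle x,x\rangle_{\mathcal{A}}\|=\|B\|^{2}\|x\|^{2}$; since $S$ is positive and self-adjoint one has $\|S\|=\sup_{\|x\|=1}\|\langle Sx,x\rangle_{\mathcal{A}}\|$, whence $\|S\|\leq\|B\|^{2}$.

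The heart of the argument, and the step I expect to be the main obstacle, is the lower bound together with invertibility, since the $\mathcal{A}$-valued bound $A$ cannot be cancelled as though it were a scalar. The key observation is that $A$ is strictly nonzero, hence invertible, so that
\begin{equation*}
\langle x,x\rangle_{\mathcal{A}}=A^{-1}\bigl(A\langle x,x\rangle_{\mathcal{A}}A^{\ast}\bigr)(A^{-1})^{\ast}.
\end{equation*}
Taking norms and inserting the left-hand frame inequality gives $\|x\|^{2}=\|\langle x,x\rangle_{\mathcal{A}}\|\leq\|A^{-1}\|^{2}\|\langle Sx,x\rangle_{\mathcal{A}}\|\leq\|A^{-1}\|^{2}\|Sx\|\,\|x\|$, and therefore
\begin{equation*}
\|Sx\|\geq\|A^{-1}\|^{-2}\|x\|,\qquad x\in\mathcal{H}.
\end{equation*}
Passing to the supremum over $\|x\|=1$ yields $\|A^{-1}\|^{-2}\leq\|S\|$, which completes the two-sided norm estimate. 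The same lower bound shows that $S$ is bounded below, hence injective with closed range; since $S=S^{\ast}$, the range of $S$ is also dense, its orthogonal complement being $\ker S^{\ast}=\ker S=\{0\}$. A closed dense range is all of $\mathcal{H}$, so $S$ is surjective and therefore invertible in $End_{\mathcal{A}}^{\ast}(\mathcal{H})$.

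Finally, once $S$ is invertible the reconstruction formula is purely formal: writing $x=SS^{-1}x=S^{-1}Sx$ and substituting $Sy=\sum_{i\in I}\Lambda_{i}^{\ast}\Lambda_{i}y$ gives
\begin{equation*}
x=\sum_{i\in I}\Lambda_{i}^{\ast}\Lambda_{i}S^{-1}x=\sum_{i\in I}S^{-1}\Lambda_{i}^{\ast}\Lambda_{i}x,
\end{equation*}
where in the second expression $S^{-1}$ may be moved inside the norm-convergent sum because it is a bounded adjointable operator. I expect no difficulty here beyond bookkeeping; the only genuinely delicate point throughout is the invertibility argument, where the replacement of a scalar cancellation by the sandwiching identity above is essential.
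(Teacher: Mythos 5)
Your proof is correct in outline and in most of its details, and since the paper states this theorem \emph{without} proof (it is imported from the cited literature on $\ast$-frames and $\ast$-g-frames), the only in-paper object of comparison is the proof of the scalar-bound $g$-frame operator theorem, which your argument closely parallels. The genuinely new ingredient needed for $\mathcal{A}$-valued bounds --- the sandwich identity $\langle x,x\rangle_{\mathcal{A}}=A^{-1}\bigl(A\langle x,x\rangle_{\mathcal{A}}A^{\ast}\bigr)(A^{-1})^{\ast}$, followed by norm monotonicity of the order on positive elements and Cauchy--Schwarz --- is exactly the right device, and your derivations of positivity, of $\|S\|\leq\|B\|^{2}$ via $\|S\|=\sup_{\|x\|=1}\|\langle Sx,x\rangle_{\mathcal{A}}\|$, of the lower bound $\|Sx\|\geq\|A^{-1}\|^{-2}\|x\|$, and of the reconstruction formula are all sound. (One caveat on hypotheses: ``strictly nonzero, hence invertible'' is not automatic from the definition as stated; it is legitimate here only because the conclusion $\|A^{-1}\|^{-2}\leq\|S\|$ already presupposes that $A^{-1}$ exists.)

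There is, however, one genuine gap: the surjectivity step. You argue that since $(\operatorname{im}S)^{\perp}=\ker S^{\ast}=\ker S=\{0\}$, the range of $S$ is dense, and, being closed, equals $\mathcal{H}$. The implication ``zero orthogonal complement $\Rightarrow$ dense'' is a Hilbert-space fact that fails in Hilbert $C^{\ast}$-modules: in $\mathcal{H}=\mathcal{A}=C[0,1]$ the submodule $M=\{f:f(0)=0\}$ is closed and proper, yet $M^{\perp}=\{0\}$ (if $f\bar{g}=0$ for all $f\in M$, take $f(t)=t$ to get $g=0$). So even ``closed $+$ zero orthogonal complement'' does not force a general closed submodule to be all of $\mathcal{H}$; what rescues the conclusion is that $\operatorname{im}S$ is not an arbitrary closed submodule but the closed range of an \emph{adjointable} operator, and such ranges are orthogonally complemented, $\mathcal{H}=\operatorname{im}S\oplus\ker S^{\ast}$ (Lance's closed range theorem). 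The paper's own proof of the $g$-frame analogue glosses over this same point. The cheapest repair inside this paper is Lemma \ref{sb} (Aramba\v{s}i\'{c}): an adjointable operator $T$ is surjective if and only if $T^{\ast}$ is bounded below in norm; since $S^{\ast}=S$ and you have already established $\|Sx\|\geq\|A^{-1}\|^{-2}\|x\|$, surjectivity is immediate, and together with injectivity this yields invertibility, after which your treatment of the reconstruction formula goes through unchanged.
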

In the following corollary we see that $\ast$-g-frames can be studied as g-frames with different bounds.
\begin{corollary}
	Let $\{\Lambda_{i}\in End_{\mathcal{A}}^{\ast}(\mathcal{H},V_{i}):i\in I \}$ be an $\ast$-g-frame for $\mathcal{H}$ with pre-$\ast$-g-frame operator $T$ and lower and upper $\ast$-g-frame bounds $A$ and $B$, respectively. Then $\{\Lambda_{i}\in End_{\mathcal{A}}^{\ast}(\mathcal{H},V_{i}):i\in I \}$ is a g-frame for $\mathcal{H}$ with lower and upper g-frame bounds $\|(T^{\ast}T)^{-1}\|^{-1}$ and $\|T\|^{2}$, respectively.
\end{corollary}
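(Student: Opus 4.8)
The plan is to reduce this corollary to Lemma \ref{3}(i), in exact parallel with the analogous statement for ordinary frames established earlier. The starting observation is that the pre-$\ast$-g-frame operator $T\colon\mathcal{H}\to\oplus_{i\in I}V_{i}$, given by $Tx=\{\Lambda_{i}x\}_{i\in I}$, has adjoint $T^{\ast}y=\sum_{i\in I}\Lambda_{i}^{\ast}y_{i}$, so that $T^{\ast}T=S$ is precisely the $\ast$-g-frame operator and
$$\langle Tx,Tx\rangle_{\mathcal{A}}=\langle T^{\ast}Tx,x\rangle_{\mathcal{A}}=\sum_{i\in I}\langle\Lambda_{i}x,\Lambda_{i}x\rangle_{\mathcal{A}},\qquad x\in\mathcal{H}.$$
Once this identity is in place, the entire content of the corollary is the observation that the middle term of the desired g-frame inequality is just $\langle T^{\ast}Tx,x\rangle_{\mathcal{A}}$, so it suffices to sandwich $T^{\ast}T$ between scalar multiples of the identity.

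First I would verify that $T$ is injective and has closed range, which are the hypotheses of Lemma \ref{3}(i). By the preceding theorem the $\ast$-g-frame operator $S=T^{\ast}T$ is positive, adjointable, and invertible; consequently, for every $x\in\mathcal{H}$ one has $\|Tx\|^{2}=\|\langle T^{\ast}Tx,x\rangle_{\mathcal{A}}\|\geq\|(T^{\ast}T)^{-1}\|^{-1}\|x\|^{2}$, so that $T$ is bounded below. A bounded-below adjointable operator is injective and has closed range, so Lemma \ref{3}(i) applies. Invoking it yields the operator inequality $\|(T^{\ast}T)^{-1}\|^{-1}\leq T^{\ast}T\leq\|T\|^{2}$, which unpacked means that
$$\|(T^{\ast}T)^{-1}\|^{-1}\langle x,x\rangle_{\mathcal{A}}\leq\langle T^{\ast}Tx,x\rangle_{\mathcal{A}}\leq\|T\|^{2}\langle x,x\rangle_{\mathcal{A}},\qquad x\in\mathcal{H}.$$

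Substituting the identity from the first paragraph turns this into
$$\|(T^{\ast}T)^{-1}\|^{-1}\langle x,x\rangle_{\mathcal{A}}\leq\sum_{i\in I}\langle\Lambda_{i}x,\Lambda_{i}x\rangle_{\mathcal{A}}\leq\|T\|^{2}\langle x,x\rangle_{\mathcal{A}},\qquad x\in\mathcal{H},$$
which is exactly the g-frame inequality with the scalar bounds $\|(T^{\ast}T)^{-1}\|^{-1}$ and $\|T\|^{2}$ asserted in the statement; this proves that $\{\Lambda_{i}\}_{i\in I}$ is a g-frame for $\mathcal{H}$.

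The only step that needs genuine attention—the mild obstacle here—is the verification of the closed-range and injectivity hypotheses of Lemma \ref{3}, since the $\ast$-g-frame condition itself only supplies the $\mathcal{A}$-valued bounds $A\langle x,x\rangle_{\mathcal{A}}A^{\ast}\leq\sum_{i\in I}\langle\Lambda_{i}x,\Lambda_{i}x\rangle_{\mathcal{A}}\leq B\langle x,x\rangle_{\mathcal{A}}B^{\ast}$. One could derive boundedness below directly from the lower bound, using that the strict nonvanishing of $A$ forces $\|A\langle x,x\rangle_{\mathcal{A}}A^{\ast}\|$ to dominate a positive multiple of $\|x\|^{2}$; but it is cleaner to simply cite the invertibility of $S=T^{\ast}T$ from the previous theorem, as done above. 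Everything else is the routine translation between the operator order and the $\mathcal{A}$-valued frame inequality, mirroring the proof of the corresponding corollary for $\ast$-frames.
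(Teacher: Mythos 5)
Your proof is correct, and it follows exactly the route the paper itself takes for the analogous statement in the ordinary-frame setting (the theorem following Theorem \ref{2.3}): identify $\sum_{i\in I}\langle \Lambda_{i}x,\Lambda_{i}x\rangle_{\mathcal{A}}$ with $\langle T^{\ast}Tx,x\rangle_{\mathcal{A}}$, check that $T$ is injective with closed range, and invoke Lemma \ref{3}(i) to sandwich $T^{\ast}T$ between $\|(T^{\ast}T)^{-1}\|^{-1}$ and $\|T\|^{2}$. The paper actually leaves this particular corollary unproven, so your added verification that $T$ is bounded below (via the invertibility of $S=T^{\ast}T$ from the preceding theorem) supplies the one detail the paper omits, and it is sound.
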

\begin{theorem}\label{theorem}
	Let	$\{\Lambda_{i}\in End_{\mathcal{A}}^{\ast}(\mathcal{H},V_{i}): i\in I\}$.  If the operator $\theta:\oplus_{i\in I}V_{i}\rightarrow \mathcal{H}$,  defined by $\theta(\{x_{i}\}_{i\in I})=\sum_{i\in I}\Lambda_{i}^{\ast}x_{i}$,  is surjective, then  $\{\Lambda_{i}\}_{i\in I}$ is a  $\ast$-$g$-frame for $\mathcal{H}$.
\end{theorem}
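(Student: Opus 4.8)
The plan is to recognize $\theta$ as the synthesis operator attached to $\{\Lambda_i\}_{i\in I}$ and to read off both frame bounds from its adjoint, the analysis operator. First I would verify that $\theta$ is adjointable with adjoint given by $\theta^\ast y = \{\Lambda_i y\}_{i\in I}$: for $\{x_i\}_{i\in I}\in\oplus_{i\in I}V_i$ and $y\in\mathcal{H}$ one computes
$$\langle \theta(\{x_i\}_{i\in I}), y\rangle_{\mathcal{A}} = \Big\langle \sum_{i\in I}\Lambda_i^{\ast}x_i,\, y\Big\rangle_{\mathcal{A}} = \sum_{i\in I}\langle x_i, \Lambda_i y\rangle_{\mathcal{A}} = \langle \{x_i\}_{i\in I}, \{\Lambda_i y\}_{i\in I}\rangle_{\mathcal{A}},$$
so that $\theta\in End_{\mathcal{A}}^{\ast}(\oplus_{i\in I}V_i,\mathcal{H})$ and, crucially, $\langle \theta^{\ast}y, \theta^{\ast}y\rangle_{\mathcal{A}} = \sum_{i\in I}\langle \Lambda_i y, \Lambda_i y\rangle_{\mathcal{A}}$, which is exactly the middle term of the $\ast$-g-frame inequality \eqref{4}.

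For the lower bound I would invoke the surjectivity hypothesis: since $\theta$ is surjective, Lemma \ref{sb} (the equivalence (i)$\Leftrightarrow$(iii)) supplies a constant $m>0$ with $\langle \theta^{\ast}y, \theta^{\ast}y\rangle_{\mathcal{A}}\geq m\langle y, y\rangle_{\mathcal{A}}$ for all $y\in\mathcal{H}$; taking $A=\sqrt{m}\,1_{\mathcal{A}}$, a strictly nonzero element because $m>0$ and $\mathcal{A}$ is unital, this becomes $A\langle y, y\rangle_{\mathcal{A}}A^{\ast}\leq\sum_{i\in I}\langle \Lambda_i y, \Lambda_i y\rangle_{\mathcal{A}}$. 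For the upper bound I would use that an adjointable operator is bounded with $\|\theta^{\ast}\|=\|\theta\|$, so Lemma \ref{1} applied to $\theta^{\ast}$ gives $\langle \theta^{\ast}y, \theta^{\ast}y\rangle_{\mathcal{A}}\leq\|\theta\|^{2}\langle y, y\rangle_{\mathcal{A}}$; setting $B=\|\theta\|\,1_{\mathcal{A}}$ (strictly nonzero whenever $\mathcal{H}\neq\{0\}$, since then $\theta\neq 0$) yields $\sum_{i\in I}\langle \Lambda_i y, \Lambda_i y\rangle_{\mathcal{A}}\leq B\langle y, y\rangle_{\mathcal{A}}B^{\ast}$. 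Combining the two estimates reproduces \eqref{4}, so $\{\Lambda_i\}_{i\in I}$ is a $\ast$-g-frame with bounds $A$ and $B$.

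The step I expect to require the most care is the adjointability computation, namely justifying the interchange of the $\mathcal{A}$-valued inner product with the (possibly infinite) sum $\sum_{i\in I}\Lambda_i^{\ast}x_i$ and confirming that the mere well-definedness of $\theta$ on $\oplus_{i\in I}V_i$ already forces the convergence needed to identify $\theta^{\ast}$ as the analysis operator. Once adjointability is in hand, the argument is a direct application of Lemmas \ref{sb} and \ref{1}, and the only remaining bookkeeping is to phrase the scalar constants $m$ and $\|\theta\|^{2}$ as $\mathcal{A}$-valued bounds of the required sandwiched form $A(\cdot)A^{\ast}$ and $B(\cdot)B^{\ast}$.
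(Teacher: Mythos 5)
Your proof is correct, but it follows a genuinely different route from the paper's. The paper never writes down the $\mathcal{A}$-valued inequality directly: it proves two \emph{norm} inequalities, namely $\bigl\|\sum_{i\in I}\langle \Lambda_{i}x,\Lambda_{i}x\rangle\bigr\|\leq\|\theta\|^{2}\|x\|^{2}$ via a Cauchy--Schwarz chain through $\theta(\{\Lambda_{i}x\}_{i\in I})$, and $\|(\theta^{\ast}_{/\mathcal{R}(\theta^{\ast})})^{-1}\|^{-2}\|x\|^{2}\leq\bigl\|\sum_{i\in I}\langle \Lambda_{i}x,\Lambda_{i}x\rangle\bigr\|$ via Lemma \ref{sb}(ii) (norm boundedness below of $\theta^{\ast}$) followed by inverting $\theta^{\ast}$ onto its range; it then asserts the $\ast$-g-frame conclusion, implicitly relying on the norm characterization of frames to pass back to $C^{\ast}$-valued bounds. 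You instead identify $\theta^{\ast}$ explicitly as the analysis operator and obtain the sandwiched inequality of Definition \eqref{4} directly: Lemma \ref{sb}(iii) gives the lower bound $m\langle y,y\rangle\leq\langle\theta^{\ast}y,\theta^{\ast}y\rangle$, Lemma \ref{1} gives the upper bound $\|\theta\|^{2}\langle y,y\rangle$, and the scalars become strictly nonzero elements $\sqrt{m}\,1_{\mathcal{A}}$ and $\|\theta\|1_{\mathcal{A}}$. Your route is the tighter one: it lands exactly on the definition with explicit bounds and no detour through norm estimates, and it makes explicit the adjointability of $\theta$ and the identity $\theta^{\ast}y=\{\Lambda_{i}y\}_{i\in I}$, which the paper uses silently (both when writing $\theta(\{\Lambda_{i}x\}_{i\in I})$ and when invoking Lemma \ref{sb}, which requires $\theta\in End_{\mathcal{A}}^{\ast}(\oplus_{i\in I}V_{i},\mathcal{H})$). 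Two small caveats: Lemma \ref{1} is stated in the paper only for operators in $End_{\mathcal{A}}^{\ast}(\mathcal{H})$, so your application to $\theta^{\ast}\in End_{\mathcal{A}}^{\ast}(\mathcal{H},\oplus_{i\in I}V_{i})$ uses the (standard, true) two-module version; and the convergence issue you flag is resolved precisely by taking adjointability of $\theta$ as part of the hypothesis, since then testing $\langle\theta(\{x_{i}\}),y\rangle$ against finitely supported sequences pins down $\theta^{\ast}y$ componentwise as $\{\Lambda_{i}y\}_{i\in I}$ --- the same assumption the paper makes without comment.
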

\begin{proof}
	For each $x\in \mathcal{H}$, 
	\begin{align*}
		\left\|\sum_{i\in I}\langle \Lambda_{i}x,\Lambda_{i}x\rangle \right\|&=\left\|\sum_{i\in I}\langle x,\Lambda_{i}^{\ast}\Lambda_{i}x\rangle \right\|\\&=\left\|\langle x,\sum_{i\in I}\Lambda_{i}^{\ast}\Lambda_{i}x \rangle\right\|\\&\leq \|x\| \;\left\|\sum_{i\in I}\Lambda_{i}^{\ast}\Lambda_{i}x \right\|\\&\leq \|x\|\;\left\|\theta(\{\Lambda_{i}x\}_{i\in I})\right\|\\&\leq \|x\|\;\|\theta\|\;\left\|\{\Lambda_{i}x\}_{i\in I}\right\|\\&\leq\|x\|\;\|\theta\|\;\left\|\sum_{i\in I}\langle \Lambda_{i}x,\Lambda_{i}x \rangle \right\|^{\frac{1}{2}}.
	\end{align*}
	Thus  
	\begin{equation*}
		\left\|\sum_{i\in I}\langle \Lambda_{i}x,\Lambda_{i}x\rangle \right\|^{\frac{1}{2}}\leq\|\theta\|\;\|x\|.
	\end{equation*}
	So 
	\begin{equation}\label{eq8}
		\left\|\sum_{i\in I}\langle \Lambda_{i}x,\Lambda_{i}x\rangle \right\|\leq \|\theta\|^{2}\;\|x\|^{2},\qquad \forall x\in \mathcal{H}.
	\end{equation}
	Since $\theta$ is  surjective, by Lemma \ref{sb} there exists $\nu >0$ such that 
	\begin{equation*}
		||\theta^{\ast}x||\geq \nu ||x||,\qquad \forall x\in \mathcal{H}.
	\end{equation*}
	Therefore, $\theta^{\ast}$ is injective.  Hence $\theta^{\ast}: \mathcal{H}\rightarrow \mathcal{R}(\theta^{\ast})$ is invertible, and  for each $x\in \mathcal{H}$,
	$(\theta^{\ast}_{/\mathcal{R}(\theta^{\ast})})^{-1}\theta^{\ast}x=x$.
	
	So, for each $x\in \mathcal{H}$, 
	$$
	\|x\|=\|(\theta^{\ast}_{/\mathcal{R}(\theta^{\ast})})^{-1}\theta^{\ast}x\|\leq \|(\theta^{\ast}_{/\mathcal{R}(\theta^{\ast})})^{-1}\|\;\|\theta^{\ast}x\|.  
	$$
	Thus 
	\begin{equation}\label{eq12}
		\|(\theta^{\ast}_{/\mathcal{R}(\theta^{\ast})})^{-1}\|^{-2}\;\|x\|^{2}\leq \left\|\sum_{i\in I}\langle \Lambda_{i}x,\Lambda_{i}x\rangle \right\|.
	\end{equation}
	From \eqref{eq8} and \eqref{eq12}, $\{\Lambda_{i}\}_{i\in I}$ is a  $\ast$-$g$-frame for $\mathcal{H}$.
\end{proof}
\begin{theorem}
	Let $\{\Lambda_{i}\}_{i\in I}$ be a  $\ast$-$g$-frame for $\mathcal{H}$. If $\{\Gamma_{i}\}_{i\in I}$  is  a  $\ast$-$g$-Bessel sequence for $\mathcal{H}$ with respect to $\{V_{i}: {i\in I}\}$, and the operator $F:\mathcal{H}\rightarrow \mathcal{H}$, defined by $Fx=\sum_{i\in I}\Gamma_{i}^{\ast}\Lambda_{i}x$, is surjective, then $\{\Gamma_{i}\}_{i\in I}$ is a  $\ast$-$g$-frame for $\mathcal{H}$.
\end{theorem}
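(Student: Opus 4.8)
The plan is to verify the two $\ast$-$g$-frame inequalities for $\{\Gamma_{i}\}_{i\in I}$ separately, the upper one being free of charge. Indeed, since $\{\Gamma_{i}\}_{i\in I}$ is assumed to be a $\ast$-$g$-Bessel sequence, there is a strictly nonzero bound $D\in\mathcal{A}$ with $\sum_{i\in I}\langle \Gamma_{i}x,\Gamma_{i}x\rangle_{\mathcal{A}}\leq D\langle x,x\rangle_{\mathcal{A}}D^{\ast}$ for all $x\in\mathcal{H}$. Thus the entire task reduces to producing a strictly nonzero lower bound $C\in\mathcal{A}$.

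To this end I would first identify the adjoint of $F$. For $x,y\in\mathcal{H}$ one has $\langle Fx,y\rangle_{\mathcal{A}}=\sum_{i\in I}\langle \Gamma_{i}^{\ast}\Lambda_{i}x,y\rangle_{\mathcal{A}}=\sum_{i\in I}\langle \Lambda_{i}x,\Gamma_{i}y\rangle_{\mathcal{A}}=\langle x,\sum_{i\in I}\Lambda_{i}^{\ast}\Gamma_{i}y\rangle_{\mathcal{A}}$, so that $F^{\ast}y=\sum_{i\in I}\Lambda_{i}^{\ast}\Gamma_{i}y$. Because $F$ is surjective, Lemma \ref{sb} provides a constant $m'>0$ with $\langle F^{\ast}x,F^{\ast}x\rangle_{\mathcal{A}}\geq m'\langle x,x\rangle_{\mathcal{A}}$ for every $x\in\mathcal{H}$; this is exactly the lower estimate I intend to push onto $\{\Gamma_{i}\}_{i\in I}$.

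The key step is then to dominate $\langle F^{\ast}x,F^{\ast}x\rangle_{\mathcal{A}}$ by the $\Gamma$-coefficient sum. Let $\theta_{\Lambda}\colon\oplus_{i\in I}V_{i}\to\mathcal{H}$, $\theta_{\Lambda}(\{y_{i}\}_{i\in I})=\sum_{i\in I}\Lambda_{i}^{\ast}y_{i}$, be the synthesis operator of $\{\Lambda_{i}\}_{i\in I}$, which is bounded since $\{\Lambda_{i}\}_{i\in I}$ is in particular $\ast$-$g$-Bessel. As $F^{\ast}x=\theta_{\Lambda}(\{\Gamma_{i}x\}_{i\in I})$, the norm inequality of Lemma \ref{1} (valid for adjointable operators between Hilbert modules) gives $\langle F^{\ast}x,F^{\ast}x\rangle_{\mathcal{A}}\leq\|\theta_{\Lambda}\|^{2}\sum_{i\in I}\langle \Gamma_{i}x,\Gamma_{i}x\rangle_{\mathcal{A}}$. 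Combining this with the lower estimate from Lemma \ref{sb} yields $\dfrac{m'}{\|\theta_{\Lambda}\|^{2}}\langle x,x\rangle_{\mathcal{A}}\leq\sum_{i\in I}\langle \Gamma_{i}x,\Gamma_{i}x\rangle_{\mathcal{A}}$ for all $x\in\mathcal{H}$.

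Finally, since $\mathcal{A}$ is unital I would rewrite this scalar bound as $C\langle x,x\rangle_{\mathcal{A}}C^{\ast}$ with $C=\dfrac{\sqrt{m'}}{\|\theta_{\Lambda}\|}\,1_{\mathcal{A}}$, a strictly nonzero element of $\mathcal{A}$; together with the Bessel bound $D$ this exhibits $\{\Gamma_{i}\}_{i\in I}$ as a $\ast$-$g$-frame with bounds $C$ and $D$. The main obstacle is the upper estimate on $\langle F^{\ast}x,F^{\ast}x\rangle_{\mathcal{A}}$: one must ensure that $\theta_{\Lambda}$ is genuinely bounded (from the Bessel half of the hypothesis on $\{\Lambda_{i}\}_{i\in I}$) and that $\|\theta_{\Lambda}\|\neq 0$, so that the division is legitimate; the latter is automatic unless $\mathcal{H}=\{0\}$, in which case the statement is trivial.
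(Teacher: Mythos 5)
Your proof is correct, but it takes a genuinely different route from the paper's. The paper factors $F=KT$, where $T x=\{\Lambda_{i}x\}_{i\in I}$ is the analysis operator of $\{\Lambda_{i}\}_{i\in I}$ and $K(\{x_{i}\}_{i\in I})=\sum_{i\in I}\Gamma_{i}^{\ast}x_{i}$ is the synthesis operator of $\{\Gamma_{i}\}_{i\in I}$; it then observes that surjectivity of $F$ forces surjectivity of $K$, and concludes by citing the preceding result (Theorem \ref{theorem}: surjectivity of the synthesis operator implies the family is a $\ast$-$g$-frame). You never transfer surjectivity to $K$: instead you apply Lemma \ref{sb} directly to the surjective operator $F$ to get $\langle F^{\ast}x,F^{\ast}x\rangle_{\mathcal{A}}\geq m'\langle x,x\rangle_{\mathcal{A}}$, and then dominate $\langle F^{\ast}x,F^{\ast}x\rangle_{\mathcal{A}}$ by $\|\theta_{\Lambda}\|^{2}\sum_{i\in I}\langle \Gamma_{i}x,\Gamma_{i}x\rangle_{\mathcal{A}}$ via the Paschke inequality $\langle Tx,Tx\rangle\leq\|T\|^{2}\langle x,x\rangle$ applied to the $\Lambda$-synthesis operator, so the norm of the \emph{other} family's synthesis operator enters your bound, whereas the paper's route (through Theorem \ref{theorem}) produces a constant involving the pseudo-inverse of the $\Gamma$-synthesis operator. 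What each buys: the paper's argument is shorter and modular, reusing an already-proved theorem; yours is self-contained, yields explicit bounds $C=\bigl(\sqrt{m'}/\|\theta_{\Lambda}\|\bigr)1_{\mathcal{A}}$ and $D$, and delivers the $\mathcal{A}$-valued inequalities demanded by the definition of a $\ast$-$g$-frame directly, while the paper's Theorem \ref{theorem} actually concludes from norm-level inequalities and tacitly relies on a norm characterization to return to $\mathcal{A}$-valued bounds. Two small points you rightly flag and should keep: $\{\Gamma_{i}x\}_{i\in I}$ lies in $\oplus_{i\in I}V_{i}$ because of the Bessel hypothesis (so $\theta_{\Lambda}$ may be applied to it), and the adjointability of $F$, i.e.\ the convergence of $\sum_{i\in I}\Lambda_{i}^{\ast}\Gamma_{i}y$, follows from the two Bessel conditions; both are at the same level of rigor as the paper itself.
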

\begin{proof}
	Since $\{\Lambda_{i}\}_{i\in I}$ is  a  $\ast$-$g$-frame for $\mathcal{H}$, we have a  $\ast$-$g$-frame transform $T:\mathcal{H}\rightarrow \oplus_{i\in I}V_{i}$, defined by $Tx=\{\Lambda_{i}x\}_{i\in I}$.
	
	Now  the operator $K:\oplus_{i\in I}V_{w}\rightarrow \mathcal{H}$, defined  by $K(\{x_{i}\}_{i\in I})=\sum_{i\in I}\Gamma_{i}^{\ast}x_{i}$,  is well-defined,  since 
	\begin{align*}
		\left\|\sum_{i\in I}\Gamma_{i}^{\ast}x_{i}\right\|&=\sup_{\|y\|=1}\left\|\langle \sum_{i\in I}\Gamma_{i}^{\ast}x_{i}, y\rangle\right\|\\&=\sup_{\|y\|=1}\left\|\sum_{i\in I}\langle x_{i},\Gamma_{i}y\rangle \right\|\\&\leq\sup_{\|y\|=1}\left\|\sum_{i\in I}\langle x_{i},x_{i}\rangle  \right\|^{\frac{1}{2}}\left\|\sum_{i\in I}\langle \Gamma_{i}y,\Gamma_{i}y\rangle \right\|^{\frac{1}{2}}\\&\leq\sup_{\|y\|=1}\|\{x_{i}\}_{i\in I}\|\|\langle y,y\rangle \|^{\frac{1}{2}}= \|\{x_{i}\}_{i\in I}\|.
	\end{align*}
	We have for each $x\in \mathcal{H}$
	\begin{equation*}
		Fx=\sum_{i\in I}\Gamma_{i}^{\ast}\Lambda_{i}x=KTx.
	\end{equation*}
	Hence $F=KT$. Since $F$ is surjective,  for each $x\in \mathcal{H}$ there exists $y\in \mathcal{H}$ such that  $Fy=x$, which  implies $x=Fy=KTy$ and $Ty\in \oplus_{i\in I}V_{i}$ and so $K$ is surjective. From Theorem \ref{theorem},
	we conclude that $\{\Gamma_{i}\}_{i\in I}$ is a $\ast$-$g$-frame for $\mathcal{H}$.
\end{proof}

In the following we study continuous $\ast$-$g$-frames in two Hilbert $C^{\ast}$-modules with different $C^{\ast}$-algebras.

\begin{theorem}
	Let $(\mathcal{H},\mathcal{A}, \langle .,.\rangle_{\mathcal{A}})$ and $(\mathcal{H},\mathcal{B}, \langle .,.\rangle_{\mathcal{B}})$ be two Hilbert $C^{\ast}$-modules,  $\phi :\mathcal{A} \rightarrow \mathcal{B}$ be a $\ast$-homomorphism and $\theta$ be an adjointable map on $\mathcal{H}$ such that $\langle \theta x,\theta y\rangle_{\mathcal{B}}=\phi(\langle x,y\rangle_{\mathcal{A}})$ for all $x, y\in \mathcal{H}$. Also, suppose that $\{\Lambda_{i}\}_{i\in I}$ is a  $\ast$-$g$-frame for $(\mathcal{H}, \mathcal{A},\langle .,.\rangle_{\mathcal{A}})$ with  $\ast$-$g$-frame operator $S_{\mathcal{A}}$ and lower and upper bounds $A$, $B$ respectively. If $\theta$ is surjective and $\theta\Lambda_{i}=\Lambda_{i}\theta$ for all $i\in I$, then $\{\Lambda_{i}\}_{i\in I}$ is a  $\ast$-$g$-frame for $(\mathcal{H},\mathcal{B}, \langle .,.\rangle_{\mathcal{B}})$ with  $\ast$-$g$-frame operator $S_{\mathcal{B}}$ and lower and upper bounds $\phi(A)$ and $\phi(B)$, respectively, and $\langle S_{\mathcal{B}}\theta x,\theta y\rangle_{\mathcal{B}} =\phi(\langle S_{\mathcal{A}}x, y\rangle_{\mathcal{A}}).$ 
\end{theorem}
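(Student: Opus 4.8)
The plan is to transport the $\ast$-$g$-frame inequality from the $\mathcal{A}$-module to the $\mathcal{B}$-module by applying the $\ast$-homomorphism $\phi$ to the defining inequalities and then using the intertwining identities to rewrite everything in terms of the $\mathcal{B}$-valued inner product of elements of the form $\theta x$.

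First I would start from the hypothesis that $\{\Lambda_{i}\}_{i\in I}$ is a $\ast$-$g$-frame for $(\mathcal{H},\mathcal{A},\langle\cdot,\cdot\rangle_{\mathcal{A}})$, so that for every $x\in\mathcal{H}$,
$$A\langle x,x\rangle_{\mathcal{A}}A^{\ast}\leq\sum_{i\in I}\langle\Lambda_{i}x,\Lambda_{i}x\rangle_{\mathcal{A}}\leq B\langle x,x\rangle_{\mathcal{A}}B^{\ast}.$$
Because a $\ast$-homomorphism between $C^{\ast}$-algebras is positive (it sends $a^{\ast}a$ to $\phi(a)^{\ast}\phi(a)\geq 0$) and hence order preserving, I would apply $\phi$ to this chain of inequalities. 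Using multiplicativity and $\phi(a^{\ast})=\phi(a)^{\ast}$, the outer terms become $\phi(A)\phi(\langle x,x\rangle_{\mathcal{A}})\phi(A)^{\ast}$ and $\phi(B)\phi(\langle x,x\rangle_{\mathcal{A}})\phi(B)^{\ast}$, while continuity of $\phi$ lets me pass it through the (norm-convergent) middle sum term by term.

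Next I would invoke the two structural hypotheses. The intertwining relation $\langle\theta x,\theta y\rangle_{\mathcal{B}}=\phi(\langle x,y\rangle_{\mathcal{A}})$ converts $\phi(\langle x,x\rangle_{\mathcal{A}})$ into $\langle\theta x,\theta x\rangle_{\mathcal{B}}$, while combining it with the commutation $\theta\Lambda_{i}=\Lambda_{i}\theta$ gives $\phi(\langle\Lambda_{i}x,\Lambda_{i}x\rangle_{\mathcal{A}})=\langle\theta\Lambda_{i}x,\theta\Lambda_{i}x\rangle_{\mathcal{B}}=\langle\Lambda_{i}\theta x,\Lambda_{i}\theta x\rangle_{\mathcal{B}}$. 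Substituting these yields
$$\phi(A)\langle\theta x,\theta x\rangle_{\mathcal{B}}\phi(A)^{\ast}\leq\sum_{i\in I}\langle\Lambda_{i}\theta x,\Lambda_{i}\theta x\rangle_{\mathcal{B}}\leq\phi(B)\langle\theta x,\theta x\rangle_{\mathcal{B}}\phi(B)^{\ast}$$
for every $x\in\mathcal{H}$. Since $\theta$ is surjective, every element of $\mathcal{H}$ is of the form $\theta x$, so this inequality in fact holds with $\theta x$ replaced by an arbitrary $w\in\mathcal{H}$, which is precisely the $\ast$-$g$-frame condition for $(\mathcal{H},\mathcal{B},\langle\cdot,\cdot\rangle_{\mathcal{B}})$ with bounds $\phi(A)$ and $\phi(B)$.

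Finally, for the operator identity I would compute both sides and match them. Unfolding the $\mathcal{B}$-frame operator gives $\langle S_{\mathcal{B}}\theta x,\theta y\rangle_{\mathcal{B}}=\sum_{i\in I}\langle\Lambda_{i}\theta x,\Lambda_{i}\theta y\rangle_{\mathcal{B}}$; on the other hand $\langle S_{\mathcal{A}}x,y\rangle_{\mathcal{A}}=\sum_{i\in I}\langle\Lambda_{i}x,\Lambda_{i}y\rangle_{\mathcal{A}}$, to which I apply $\phi$ term by term and then use the intertwining and commutation relations exactly as above to obtain $\phi(\langle S_{\mathcal{A}}x,y\rangle_{\mathcal{A}})=\sum_{i\in I}\langle\Lambda_{i}\theta x,\Lambda_{i}\theta y\rangle_{\mathcal{B}}$; the two expressions coincide, giving the claimed identity. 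I expect the only genuine care points to be justifying that $\phi$ commutes with the infinite sums (handled by the continuity of $\ast$-homomorphisms together with the norm convergence of the standard $\ast$-$g$-frame series) and the order-preservation of $\phi$; the surjectivity step, though short, is the crucial device that upgrades the inequality from the elements $\theta x$ to all of $\mathcal{H}$.
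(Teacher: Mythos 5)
Your proposal is correct and follows essentially the same route as the paper's proof: apply $\phi$ to the $\mathcal{A}$-valued frame inequality, use the $\ast$-homomorphism properties together with the intertwining relation $\langle\theta x,\theta y\rangle_{\mathcal{B}}=\phi(\langle x,y\rangle_{\mathcal{A}})$ and the commutation $\theta\Lambda_{i}=\Lambda_{i}\theta$ to convert it into the $\mathcal{B}$-valued inequality for elements $\theta x$, invoke surjectivity of $\theta$ to cover all of $\mathcal{H}$, and verify the operator identity by expanding both sides term by term. The only difference is cosmetic (the paper fixes $y=\theta x$ at the outset rather than invoking surjectivity at the end), and your explicit remarks on order-preservation and continuity of $\phi$ fill in steps the paper leaves implicit.
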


\begin{proof}
	Let $y\in \mathcal{H}$. Since $\theta$ is surjective,  there exists $x\in \mathcal{H}$ such that $\theta x=y$, and we have 
	\begin{equation*}
		A\langle x,x\rangle_{\mathcal{A}} A^{\ast}\leq \sum_{i\in I}\langle \Lambda_{i}x,\Lambda_{i}x\rangle_{\mathcal{A}}\leq B\langle x,x\rangle_{\mathcal{A}} B^{\ast}.
	\end{equation*}
	Thus 
	\begin{equation*}
		\phi(A\langle x,x\rangle_{\mathcal{A}} A^{\ast})\leq \phi\big( \sum_{i\in I}\langle \Lambda_{i}x,\Lambda_{i}x\rangle_{\mathcal{A}}\big)\leq \phi(B\langle x,x\rangle_{\mathcal{A}} B^{\ast}).
	\end{equation*}
	By definition of $\ast$-homomorphism, we have 
	\begin{equation*}
		\phi(A)\phi(\langle x,x\rangle_{\mathcal{A}} )\phi(A^{\ast})\leq \sum_{i\in I}\phi\big(\langle \Lambda_{i}x,\Lambda_{i}x\rangle_{\mathcal{A}}\big)\leq\phi(B)\phi(\langle x,x\rangle_{\mathcal{A}})\phi(B^{\ast}).
	\end{equation*}
	By the relation betwen $\theta$ and $\phi$, we get 
	\begin{equation*}
		\phi(A)\langle y,y\rangle_{\mathcal{B}}\phi(A)^{\ast}\leq \sum_{i\in I}\langle \Lambda_{i}y,\Lambda_{i}y\rangle_{\mathcal{B}}\leq  \phi(B)\langle y,y\rangle_{\mathcal{B}}\phi(B)^{\ast}.
	\end{equation*}
	On the other hand, we have
	\begin{align*}
		\phi(\langle S_{\mathcal{A}}x, y\rangle_{\mathcal{A}})&=\phi(\langle \sum_{i\in I}\Lambda_{i}^{\ast}\Lambda_{i}x,y\rangle_{\mathcal{A}})\\&=\sum_{i\in I}\phi(\langle \Lambda_{i}x,\Lambda_{i}y\rangle_{\mathcal{A}})\\&=\sum_{i\in I}\langle \Lambda_{i}\theta x,\Lambda_{i}\theta y\rangle_{\mathcal{B}}\\&=\langle \sum_{i\in I}\Lambda_{i}^{\ast}\Lambda_{i}\theta x, \theta y\rangle_{\mathcal{B}}\\&=\langle S_{\mathcal{B}}\theta x,\theta y\rangle_{\mathcal{B}}.
	\end{align*}
	This completes the proof.
\end{proof}

\begin{theorem}
	Let $\{\Lambda_{i}\in End_{\mathcal{A}}^{\ast}(\mathcal{H},V_{i}): i\in I\}$ be a  $\ast$-$g$-frame for $\mathcal{H}$ with lower and upper bounds $A$ and $B$, respectively. Let $\theta\in End_{\mathcal{A}}^{\ast}(\mathcal{H})$ be injective and have a closed range. Then $\{\theta \Lambda_{i}\}_{i\in I}$ is a  $\ast$-$g$-frame for $\mathcal{H}$.
\end{theorem}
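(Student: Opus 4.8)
The family to analyze is the collection of compositions $\Lambda_i\theta\colon\mathcal{H}\to V_i$ (apply $\theta$ first, then $\Lambda_i$); each belongs to $End_{\mathcal{A}}^{\ast}(\mathcal{H},V_i)$ as a composition of adjointable maps, with $(\Lambda_i\theta)^{\ast}=\theta^{\ast}\Lambda_i^{\ast}$. The guiding idea is simply to feed $\theta x$ into the $\ast$-$g$-frame inequality for $\{\Lambda_i\}$: for every $x\in\mathcal{H}$,
\begin{equation*}
A\langle \theta x,\theta x\rangle_{\mathcal{A}}A^{\ast}\leq\sum_{i\in I}\langle \Lambda_i\theta x,\Lambda_i\theta x\rangle_{\mathcal{A}}\leq B\langle \theta x,\theta x\rangle_{\mathcal{A}}B^{\ast}.
\end{equation*}
Thus the whole proof reduces to bounding the $\mathcal{A}$-valued quantity $\langle\theta x,\theta x\rangle$ above and below by scalar multiples of $\langle x,x\rangle$, and then propagating those estimates through the maps $p\mapsto ApA^{\ast}$ and $p\mapsto BpB^{\ast}$.

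For the scalar bounds I would first observe that $\langle\theta x,\theta x\rangle=\langle\theta^{\ast}\theta x,x\rangle$. Since $\theta$ is injective with closed range, Lemma \ref{3}(i) applies to $\theta$ and gives that $\theta^{\ast}\theta$ is invertible with $\|(\theta^{\ast}\theta)^{-1}\|^{-1}\leq\theta^{\ast}\theta\leq\|\theta\|^{2}$ as operators in $End_{\mathcal{A}}^{\ast}(\mathcal{H})$. Evaluating this operator inequality at $x$ (the upper half is also immediate from Lemma \ref{1}) yields, writing $\mu:=\|(\theta^{\ast}\theta)^{-1}\|^{-1}>0$,
\begin{equation*}
\mu\,\langle x,x\rangle_{\mathcal{A}}\leq\langle\theta x,\theta x\rangle_{\mathcal{A}}\leq\|\theta\|^{2}\langle x,x\rangle_{\mathcal{A}},\qquad\forall x\in\mathcal{H}.
\end{equation*}

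Finally, I would sandwich. The map $p\mapsto ApA^{\ast}$ is positive and order-preserving on $\mathcal{A}^{+}$, so the lower scalar bound gives $\mu\,A\langle x,x\rangle A^{\ast}\leq A\langle\theta x,\theta x\rangle A^{\ast}$; likewise $B\langle\theta x,\theta x\rangle B^{\ast}\leq\|\theta\|^{2}B\langle x,x\rangle B^{\ast}$. Setting $A':=\mu^{1/2}A=\|(\theta^{\ast}\theta)^{-1}\|^{-1/2}A$ and $B':=\|\theta\|\,B$ (both strictly nonzero in $\mathcal{A}$, being positive-scalar multiples of the strictly nonzero $A$, $B$, with $A'^{\ast}=\mu^{1/2}A^{\ast}$ and $B'^{\ast}=\|\theta\|B^{\ast}$), these combine with the displayed frame inequality into
\begin{equation*}
A'\langle x,x\rangle_{\mathcal{A}}A'^{\ast}\leq\sum_{i\in I}\langle \Lambda_i\theta x,\Lambda_i\theta x\rangle_{\mathcal{A}}\leq B'\langle x,x\rangle_{\mathcal{A}}B'^{\ast},
\end{equation*}
which is exactly the $\ast$-$g$-frame condition. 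The only genuinely delicate point is the lower bound: one must use injectivity together with closedness of the range to invoke Lemma \ref{3}(i) and secure $\mu>0$ — without the closed-range hypothesis $\theta^{\ast}\theta$ need not be bounded below, and the lower frame bound would collapse. I would also remark that the sandwiching step quietly relies on the fact that conjugation $p\mapsto apa^{\ast}$ preserves the order on $\mathcal{A}^{+}$, which is where the $\mathcal{A}$-valued (as opposed to scalar) nature of the bounds is handled.
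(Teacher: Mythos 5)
Your argument is internally sound, but it proves a statement about the wrong family: you analyze the precompositions $\Lambda_{i}\theta$ (apply $\theta$ first), whereas the theorem and the paper's proof concern the postcompositions $\theta\Lambda_{i}$, i.e.\ the operators $x\mapsto\theta(\Lambda_{i}x)$. The paper's proof is unambiguous on this point: its two key steps are the termwise estimates
\begin{equation*}
\|(\theta^{\ast}\theta)^{-1}\|^{-1}\langle \Lambda_{i}x,\Lambda_{i}x\rangle \leq \langle \theta\Lambda_{i}x,\theta\Lambda_{i}x\rangle
\qquad\text{and}\qquad
\langle \theta\Lambda_{i}x,\theta\Lambda_{i}x\rangle \leq \|\theta\|^{2}\langle \Lambda_{i}x,\Lambda_{i}x\rangle,
\end{equation*}
which are Lemma \ref{3}(i) and the standard bound $\langle Tx,Tx\rangle\leq\|T\|^{2}\langle x,x\rangle$ applied to $\theta$ acting on the vectors $y=\Lambda_{i}x$; these are summed over $i$, and only then is the $\ast$-$g$-frame inequality for $\{\Lambda_{i}\}$ invoked at $x$. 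Your proof runs in the opposite order: you invoke the frame inequality at $\theta x$ and then bound $\langle\theta x,\theta x\rangle$ by scalar multiples of $\langle x,x\rangle$. That correctly establishes that $\{\Lambda_{i}\theta\}_{i\in I}$ is a $\ast$-$g$-frame with bounds $\|(\theta^{\ast}\theta)^{-1}\|^{-1/2}A$ and $\|\theta\|B$, but $\{\Lambda_{i}\theta\}$ and $\{\theta\Lambda_{i}\}$ are different operator families, so as a proof of the stated theorem this is a genuine mismatch, not a cosmetic one.

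The consolation is that the mismatch is a transposition rather than a missing idea. Both arguments rest on exactly the same two ingredients: Lemma \ref{3}(i), giving $\|(\theta^{\ast}\theta)^{-1}\|^{-1}\leq\theta^{\ast}\theta\leq\|\theta\|^{2}$ (this is where injectivity and closedness of the range enter, exactly as you observe), and the fact that conjugation $p\mapsto apa^{\ast}$ preserves the order on positive elements, which lets scalar constants be absorbed into the $\mathcal{A}$-valued bounds. Your proof becomes the paper's proof once you apply your two-sided estimate on $\langle\theta y,\theta y\rangle$ termwise with $y=\Lambda_{i}x\in V_{i}$ rather than once with $y=x$. In your defence, $\Lambda_{i}\theta$ is the only composition that typechecks literally, since $\theta\in End_{\mathcal{A}}^{\ast}(\mathcal{H})$ is not given as acting on the codomains $V_{i}$; the paper tacitly assumes it does (e.g.\ that each $V_{i}$ sits inside $\mathcal{H}$). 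But since the paper's proof makes the intended reading clear, you should either prove the statement for $\theta\Lambda_{i}$ under that implicit hypothesis, or explicitly flag that you are reinterpreting the statement.
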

\begin{proof}
	We have 
	\begin{equation*}
		A\langle x,x\rangle A^{\ast}\leq \sum_{i\in I}\langle \Lambda_{i}x,\Lambda_{i}x\rangle \leq B\langle x,x\rangle B^{\ast},\qquad\forall x\in\mathcal{H}.
	\end{equation*}
	Then for each $x\in\mathcal{H}$ 
	\begin{equation}\label{eq13}
		\sum_{i\in I}\langle \theta\Lambda_{i}x,\theta\Lambda_{i}x\rangle \leq ||\theta ||^{2}B\langle x,x\rangle B^{\ast}\leq (||\theta ||B)\langle x,x\rangle (||\theta||B)^{\ast}.
	\end{equation}
	By Lemma \ref{3}, we have for each $x\in\mathcal{H}$
	\begin{equation*}
		||(\theta^{\ast}\theta)^{-1}||^{-1}\langle \Lambda_{i}x,\Lambda_{i}x\rangle \leq \langle \theta\Lambda_{i}x,\theta\Lambda_{i}x\rangle
	\end{equation*}
	and $||\theta^{-1}||^{-2}\leq ||(\theta^{\ast}\theta)^{-1}||^{-1}$. Thus  
	\begin{equation}\label{eq14}
		||\theta^{-1}||^{-1}A\langle x,x\rangle (||\theta^{-1}||^{-1}A)^{\ast}\leq \sum_{i\in I}\langle \theta\Lambda_{i}x,\theta\Lambda_{i}x\rangle .
	\end{equation}
	From \eqref{eq13} and \eqref{eq14}, we have for each $x\in \mathcal{H}$
	\begin{eqnarray}
		||\theta^{-1}||^{-1}A\langle x,x\rangle (||\theta^{-1}||^{-1}A)^{\ast} & \leq & \sum_{i\in I}\langle \theta\Lambda_{i}x,\theta\Lambda_{i}x\rangle \\ & \leq &  ||\theta ||^{2}B\langle x,x\rangle B^{\ast}\\ &\leq & (||\theta ||B)\langle x,x\rangle (||\theta||B)^{\ast}.
	\end{eqnarray}
	We conclude that $\{\theta\Lambda_{i}\}_{i\in I}$ is a continuous $\ast$-$g$-frame for $\mathcal{H}$.
\end{proof}
\section{$K$-Frames}
\begin{definition} \cite{Gav}
	Let $K\in End_{\mathcal{A}}^{\ast}(\mathcal{H})$. A family $\{x_{i}\}_{i\in I}$ of elements of $\mathcal{H}$ is a $K$-frame for $ \mathcal{H} $, if there exist two positive constants $A$, $B$, such that for all $x\in\mathcal{H}$,
	\begin{equation}\label{11}
	A\langle K^{\ast}x,K^{\ast}x\rangle_{\mathcal{A}}\leq\sum_{i\in I}\langle x,x_{i}\rangle_{\mathcal{A}}\langle x_{i},x\rangle_{\mathcal{A}}\leq B\langle x,x\rangle_{\mathcal{A}}.
	\end{equation}
	The numbers $A$ and $B$ are called lower and upper bounds of the $K$-frame, respectively.
\end{definition}
The following theorem give a condition for getting a frame from a $K$-frame.
\begin{theorem}
	Let $\{x_{i}\}_{i\in I}$ be a $K$-frame for $\mathcal{H}$ with bounds $A, B>0$. If the operator $K$ is surjective, then $\{x_{i}\}_{i\in I}$ is a frame for $\mathcal{H}$. 
\end{theorem}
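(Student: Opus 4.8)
The plan is to notice that the upper frame inequality is already encoded in the definition of a $K$-frame, so the entire problem collapses to producing a \emph{lower} frame bound of the form $A'\langle x,x\rangle_{\mathcal{A}}\leq\sum_{i\in I}\langle x,x_{i}\rangle_{\mathcal{A}}\langle x_{i},x\rangle_{\mathcal{A}}$ with some scalar $A'>0$. The left-hand inequality of the $K$-frame definition already supplies $A\langle K^{\ast}x,K^{\ast}x\rangle_{\mathcal{A}}\leq\sum_{i\in I}\langle x,x_{i}\rangle_{\mathcal{A}}\langle x_{i},x\rangle_{\mathcal{A}}$, so it is enough to bound $\langle K^{\ast}x,K^{\ast}x\rangle_{\mathcal{A}}$ from below by a positive multiple of $\langle x,x\rangle_{\mathcal{A}}$.

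First I would exploit the surjectivity of $K$. Applying Lemma \ref{sb} with $T=K$, the equivalence of (i) and (iii) furnishes a constant $m'>0$ such that $\langle K^{\ast}x,K^{\ast}x\rangle_{\mathcal{A}}\geq m'\langle x,x\rangle_{\mathcal{A}}$ for every $x\in\mathcal{H}$. Since $A>0$ is a positive scalar and the displayed relation is an inequality between positive elements of $\mathcal{A}^{+}$, multiplying through by $A$ preserves the order and yields $A\langle K^{\ast}x,K^{\ast}x\rangle_{\mathcal{A}}\geq Am'\langle x,x\rangle_{\mathcal{A}}$.

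Chaining this with the two $K$-frame inequalities then gives, for all $x\in\mathcal{H}$,
\begin{equation*}
Am'\langle x,x\rangle_{\mathcal{A}}\leq A\langle K^{\ast}x,K^{\ast}x\rangle_{\mathcal{A}}\leq\sum_{i\in I}\langle x,x_{i}\rangle_{\mathcal{A}}\langle x_{i},x\rangle_{\mathcal{A}}\leq B\langle x,x\rangle_{\mathcal{A}}.
\end{equation*}
Setting $A'=Am'>0$ and keeping the upper bound $B$ unchanged, this is precisely the frame inequality, so $\{x_{i}\}_{i\in I}$ is a frame for $\mathcal{H}$ with bounds $Am'$ and $B$.

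There is no genuine obstacle here: the argument rests entirely on the single structural fact that surjectivity of $K$ forces $K^{\ast}$ to be bounded below in the inner-product sense, which is exactly Lemma \ref{sb}(iii). The only point deserving a moment's attention is checking that scaling the operator inequality by the positive scalar $A$ respects the partial order on $\mathcal{A}^{+}$, but this is immediate, and no new norm estimate or convergence argument is required beyond what the $K$-frame hypothesis already provides.
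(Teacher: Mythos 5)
Your proof is correct. The paper actually states this theorem without giving any proof at all, so there is no argument to compare against; your route — applying Lemma \ref{sb} (equivalence of surjectivity of $K$ with $K^{\ast}$ being bounded below in the inner-product sense) to obtain $\langle K^{\ast}x,K^{\ast}x\rangle_{\mathcal{A}}\geq m'\langle x,x\rangle_{\mathcal{A}}$, multiplying this positive-element inequality by the scalar $A>0$, and chaining with the two $K$-frame inequalities to get the frame bounds $Am'$ and $B$ — is exactly the natural argument that fills this gap, and every step is valid.
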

\begin{Prop}
	A Bessel sequence $\{x_{i}\}_{i\in I}$ of $\mathcal{H}$ is a $K$-frame with bounds $A, B>0$ if and only if $S\geq AKK^{\ast}$, where $S$ is the frame operator for $\{x_{i}\}_{i\in I}$.
\end{Prop}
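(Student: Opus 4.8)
The plan is to observe that both conditions in the statement are merely reformulations of one and the same inner-product inequality, translated through the standard correspondence between the operator order on $End_{\mathcal{A}}^{\ast}(\mathcal{H})$ and pointwise $\mathcal{A}$-valued inner products. First I would record the two identities that make the translation automatic. For the frame operator $S=T^{\ast}T$ one has, exactly as in the earlier computation, $\langle Sx,x\rangle_{\mathcal{A}}=\sum_{i\in I}\langle x,x_{i}\rangle_{\mathcal{A}}\langle x_{i},x\rangle_{\mathcal{A}}$ for every $x\in\mathcal{H}$. Likewise, by the defining property of the adjoint, $\langle K^{\ast}x,K^{\ast}x\rangle_{\mathcal{A}}=\langle KK^{\ast}x,x\rangle_{\mathcal{A}}$. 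With these, the middle term of the $K$-frame inequality \eqref{11} becomes $\langle Sx,x\rangle_{\mathcal{A}}$ and its lower bound becomes $A\langle KK^{\ast}x,x\rangle_{\mathcal{A}}$. Throughout, the Bessel hypothesis is what guarantees that $S$ is a well-defined bounded operator and simultaneously supplies the upper $K$-frame bound $\sum_{i\in I}\langle x,x_{i}\rangle_{\mathcal{A}}\langle x_{i},x\rangle_{\mathcal{A}}\leq B\langle x,x\rangle_{\mathcal{A}}$, so in both directions only the lower bound needs attention.

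For the forward implication, assuming $\{x_{i}\}_{i\in I}$ is a $K$-frame with bounds $A,B$, the lower $K$-frame inequality reads $A\langle KK^{\ast}x,x\rangle_{\mathcal{A}}\leq\langle Sx,x\rangle_{\mathcal{A}}$ for all $x$, which is precisely $\langle(S-AKK^{\ast})x,x\rangle_{\mathcal{A}}\geq 0$ for every $x\in\mathcal{H}$. Since $S-AKK^{\ast}$ is self-adjoint and adjointable, this says exactly that $S-AKK^{\ast}\geq 0$, that is $S\geq AKK^{\ast}$.

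Conversely, if $S\geq AKK^{\ast}$, then $\langle Sx,x\rangle_{\mathcal{A}}\geq A\langle KK^{\ast}x,x\rangle_{\mathcal{A}}=A\langle K^{\ast}x,K^{\ast}x\rangle_{\mathcal{A}}$ for every $x$; substituting the series expression for $\langle Sx,x\rangle_{\mathcal{A}}$ recovers the lower $K$-frame bound, while the Bessel assumption provides the upper bound with constant $B$. Hence $\{x_{i}\}_{i\in I}$ is a $K$-frame with bounds $A,B$.

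The only genuinely Hilbert $C^{\ast}$-module ingredient, and the step I expect to be the main obstacle, is the equivalence between the operator ordering $S\geq AKK^{\ast}$ and the pointwise inequality $\langle(S-AKK^{\ast})x,x\rangle_{\mathcal{A}}\geq 0$ for all $x$. I would invoke the standard fact that a self-adjoint $T\in End_{\mathcal{A}}^{\ast}(\mathcal{H})$ is positive as an element of the $C^{\ast}$-algebra $End_{\mathcal{A}}^{\ast}(\mathcal{H})$ if and only if $\langle Tx,x\rangle_{\mathcal{A}}\geq 0$ for every $x\in\mathcal{H}$, and apply it to $T=S-AKK^{\ast}$. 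Everything else is direct substitution; the care required is conceptual rather than computational, namely to route the argument through this positivity characterization instead of manipulating the $\mathcal{A}$-valued inequalities as if they were scalar.
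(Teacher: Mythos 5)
Your proof is correct, but there is no paper proof to compare it against: the proposition is stated in the $K$-frames section of the paper without any argument (it is one of the survey statements imported from the cited reference \cite{Gav}), so your write-up actually fills a gap rather than duplicating or deviating from an existing proof. On its own merits the argument is sound and is the standard one. The two translation identities $\langle Sx,x\rangle_{\mathcal{A}}=\sum_{i\in I}\langle x,x_{i}\rangle_{\mathcal{A}}\langle x_{i},x\rangle_{\mathcal{A}}$ and $\langle K^{\ast}x,K^{\ast}x\rangle_{\mathcal{A}}=\langle KK^{\ast}x,x\rangle_{\mathcal{A}}$ are immediate, the Bessel hypothesis does carry the upper bound in both directions, and you correctly isolate the one genuinely module-theoretic ingredient: the equivalence, for the adjointable operator $T=S-AKK^{\ast}$, between $\langle Tx,x\rangle_{\mathcal{A}}\geq 0$ for all $x\in\mathcal{H}$ and $T\geq 0$ as an element of the $C^{\ast}$-algebra $End_{\mathcal{A}}^{\ast}(\mathcal{H})$. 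That equivalence is a standard lemma in Lance's book \cite{Lan}, which the paper already cites, so invoking it is legitimate; in fact that lemma does not even require you to verify self-adjointness of $S-AKK^{\ast}$ separately, since $\langle Tx,x\rangle_{\mathcal{A}}\geq 0$ for all $x$ already forces $T=T^{\ast}$ (though your check is harmless: $S$ and $KK^{\ast}$ are self-adjoint and $A$ is a real scalar). The only reading-of-the-statement point worth making explicit is that $S\geq AKK^{\ast}$ must be interpreted as an order relation in $End_{\mathcal{A}}^{\ast}(\mathcal{H})$, which is exactly how you use it.
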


\section{$K$-g-Frames}
\begin{definition} \cite{Xiang}
	Let $K\in End_{\mathcal{A}}^{\ast}(\mathcal{H})$ and $\Lambda_{i}\in End_{\mathcal{A}}^{\ast}(\mathcal{H},V_{i})$ for all $i\in I$, then $\{\Lambda_{i}\}_{i\in I}$ is said to be a $K$-g-frame for $\mathcal{H}$ with respect to $\{V_{i}\}_{i\in I}$ if there exist two constants $C, D > 0$ such that
	\begin{equation}
	C\langle K^{\ast}x,K^{\ast}x\rangle_{\mathcal{A}}\leq\sum_{i\in I}\langle \Lambda_{i}x,\Lambda_{i}x\rangle_{\mathcal{A}}\leq D\langle x,x\rangle_{\mathcal{A}}, \forall x\in\mathcal{H}.
	\end{equation}
	The numbers $C$ and $D$ are called $K$-g-frame bounds. Particularly, if 
	\begin{equation}
	C\langle K^{\ast}x,K^{\ast}x\rangle =\sum_{i\in I}\langle \Lambda_{i}x,\Lambda_{i}x\rangle, \forall x\in\mathcal{H}.
	\end{equation}
	The $K$-g-frame is $C$-tight.
\end{definition}
\begin{Ex}
	Let $l^{\infty}$ be the set of all bounded complex-valued sequences. For any $u=\{u_{j}\}_{j\in\mathbb{N}}, v=\{v_{j}\}_{j\in\mathbb{N}}\in l^{\infty}$, we define$$
	uv=\{u_{j}v_{j}\}_{j\in\mathbb{N}}, u^{\ast}=\{\bar{u_{j}}\}_{j\in\mathbb{N}}, \|u\|=\sup_{j\in\mathbb{N}}|u_{j}|.$$
	Then $\mathcal{A}=\{l^{\infty}, \|.\|\}$ is a $\mathbb{C}^{\ast}$-algebra.
	
	Let $\mathcal{H}=C_{0}$ be the set of all sequences converging to zero. For any $u, v\in\mathcal{H}$ we define$$\langle u,v\rangle=uv^{\ast}=\{u_{j}\bar{u_{j}}\}_{j\in\mathbb{N}}.$$
	Then $\mathcal{H}$ is a Hilbert $\mathcal{A}$-module.
	
	Now let $\{e_{j}\}_{j\in\mathbb{N}}$ be the standard orthonormal basis of $\mathcal{H}$. For each $j\in\mathbb{N}$ define the adjointable operator$$\Lambda_{j}:\mathcal{H}\rightarrow\overline{span}\{e_{j}\}, \;\;\Lambda_{j}x=\langle x,e_{j}\rangle e_{j},$$
	then for every $x\in\mathcal{H}$ we have $\sum_{j\in\mathbb{N}}\langle \Lambda_{j}x,\Lambda_{j}x\rangle=\langle x,x\rangle.$
	Fix $N\in\mathbb{N}^{\ast}$ and define$$K:\mathcal{H}\rightarrow\mathcal{H}, \;\;Ke_{j}=\begin{cases}
	je_{j}&\text{if $j\leq N$,}\\0&\text{if $j>N$.}
	\end{cases}$$
	It is easy to check that $K$ is adjointable and satisfies
	$$K^{\ast}e_{j}=\begin{cases}
	je_{j}&\text{if $j\leq N$,}\\0&\text{if $j>N$.}
	\end{cases}$$
	For any $x\in\mathcal{H}$ we have
	$$\dfrac{1}{N^{2}}\langle K^{\ast}x,K^{\ast}x\rangle\leq\sum_{j\in\mathbb{N}}\langle \Lambda_{j}x,\Lambda_{j}x\rangle=\langle x,x\rangle.$$
	This shows that $\{\Lambda_{j}\}_{j\in\mathbb{N}}$ is a $K$-g-frame with bounds $\dfrac{1}{N^{2}}, 1$.
\end{Ex}
\begin{Rem}
	If $K\in End_{\mathcal{A}}^{\ast}(\mathcal{H})$ is a surjective operator, then every K-g-frame for $\mathcal{H}$ with respect to $\{V_{i}\}_{i\in I}$. is a g-frame.
\end{Rem}

\begin{theorem}
	Let  $\{\Lambda_{i}\}_{i\in I}$ be a $K-g$ frame in Hilbert $ \mathcal{A}$ module $ \mathcal{H}$. and $ T \in  End^{*}_\mathcal{A}(\mathcal{H})$ such that $\mathcal{R}(T)\subset \mathcal{R}(K) $.  Then  $\{\Lambda_{i}\}_{i\in I}$ is a $T-g$-frame in Hilbert $ \mathcal{A}$-module $ \mathcal{H}$. 
	\begin{proof}
		Suppose that $C$  is a lower frame bound of  $\{\Lambda_{i}\}_{i\in I}$. Using Lemma \ref{lemm1. 2},  there exists $\alpha> 0$ such that $TT^{*}\leq\alpha^{2}KK^{*}$.  Now,  for each $x\in\mathcal{H} $.\\ We have $\langle TT^{*}x, x\rangle_{\mathcal{A}}\leq\alpha^{2} \langle KK^{*}x, x\rangle_{\mathcal{A}}. $\\
		 So
		\begin{align*} 
			\dfrac{C}{\alpha^{2}}\langle T^{*} x,  T^{*} x \rangle_{\mathcal{A}} &\leq C\langle K^{*} x,  K^{*} x \rangle_{\mathcal{A}}\\ &\leq \sum_{i\in I}\langle\Lambda_{i}x,  \Lambda_{i}x\rangle_{\mathcal{A}}\\&\leq D\langle x,x \rangle_{\mathcal{A}}.  
		\end{align*}
	\end{proof}   
\end{theorem}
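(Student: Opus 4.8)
The plan is to separate the two frame inequalities and notice that the upper bound is free. Since $\{\Lambda_{i}\}_{i\in I}$ is a $K$-$g$-frame with bounds $C$ and $D$, the right-hand estimate $\sum_{i\in I}\langle \Lambda_{i}x,\Lambda_{i}x\rangle_{\mathcal{A}}\leq D\langle x,x\rangle_{\mathcal{A}}$ holds for all $x\in\mathcal{H}$ and involves neither $K$ nor $T$; it therefore serves verbatim as an upper $T$-$g$-frame bound. The entire content of the theorem thus reduces to producing a constant $C'>0$ satisfying $C'\langle T^{\ast}x,T^{\ast}x\rangle_{\mathcal{A}}\leq\sum_{i\in I}\langle \Lambda_{i}x,\Lambda_{i}x\rangle_{\mathcal{A}}$ for every $x\in\mathcal{H}$.

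The decisive step is to convert the hypothesis $\mathcal{R}(T)\subset\mathcal{R}(K)$ into an operator domination. I would invoke a Douglas-type factorization lemma for adjointable operators on Hilbert $C^{\ast}$-modules, which guarantees a scalar $\alpha>0$ with $TT^{\ast}\leq\alpha^{2}KK^{\ast}$. Testing this inequality against the $\mathcal{A}$-valued inner product gives $\langle TT^{\ast}x,x\rangle_{\mathcal{A}}\leq\alpha^{2}\langle KK^{\ast}x,x\rangle_{\mathcal{A}}$ for each $x\in\mathcal{H}$, and moving each adjoint across the inner product rewrites this as $\langle T^{\ast}x,T^{\ast}x\rangle_{\mathcal{A}}\leq\alpha^{2}\langle K^{\ast}x,K^{\ast}x\rangle_{\mathcal{A}}$.

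It then remains to chain this with the lower $K$-$g$-frame bound. Dividing by $\alpha^{2}$ and using $C\langle K^{\ast}x,K^{\ast}x\rangle_{\mathcal{A}}\leq\sum_{i\in I}\langle \Lambda_{i}x,\Lambda_{i}x\rangle_{\mathcal{A}}$, one obtains $\frac{C}{\alpha^{2}}\langle T^{\ast}x,T^{\ast}x\rangle_{\mathcal{A}}\leq C\langle K^{\ast}x,K^{\ast}x\rangle_{\mathcal{A}}\leq\sum_{i\in I}\langle \Lambda_{i}x,\Lambda_{i}x\rangle_{\mathcal{A}}$, so that $C'=C/\alpha^{2}$ is the sought lower bound. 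Combined with the upper bound $D$ inherited above, this exhibits $\{\Lambda_{i}\}_{i\in I}$ as a $T$-$g$-frame for $\mathcal{H}$.

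I expect the factorization lemma to be the only genuine obstacle. Over a Hilbert space the passage from range inclusion to $TT^{\ast}\leq\alpha^{2}KK^{\ast}$ is classical, but in the module setting it depends on adjointability and on the correct $C^{\ast}$-module analogue of Douglas's theorem; once that lemma is secured, the remaining manipulations are a single scalar division together with the two routine identities relating $\langle SS^{\ast}x,x\rangle_{\mathcal{A}}$ to $\langle S^{\ast}x,S^{\ast}x\rangle_{\mathcal{A}}$ for $S=T$ and $S=K$.
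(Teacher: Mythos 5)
Your proposal follows essentially the same route as the paper's own proof: both hinge on the Douglas-type majorization lemma turning $\mathcal{R}(T)\subset\mathcal{R}(K)$ into $TT^{\ast}\leq\alpha^{2}KK^{\ast}$ (the paper cites this as a lemma, though its reference is left dangling), then divide by $\alpha^{2}$ and chain with the lower $K$-$g$-frame bound, inheriting the upper bound $D$ unchanged. Your write-up is correct and, if anything, more explicit than the paper about which lemma carries the weight.
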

\begin{theorem}Let  $\{\Lambda_{i}\}_{i\in I}$ be a  $K-g$- frame in Hilbert $ \mathcal{A}$-module $ \mathcal{H}$. Assume that $K$ has  a closed range and $ T \in  End^{*}_\mathcal{A}(\mathcal{H})$ such that  $\mathcal{R} (T^{*})\subset \mathcal{R}(K) $   
	Then $\{{\Lambda_{i}T^{*}}\}_{i\in{I}} $ is a  $K-g$- frame for $\mathcal{R}(T)$  if and only if there exists $\delta> 0$ such that for each $ x\in\mathcal{R}(T)$,  
	$$\lVert T^{*}x\lVert \geqslant \delta\lVert K^{*}x\lVert. $$
\end{theorem}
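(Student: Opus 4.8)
The plan is to treat the two frame inequalities separately and to notice that the upper bound is automatic, so that the entire content of the equivalence sits in the lower bound. For the upper estimate I would apply the Bessel (upper) part of the hypothesis on $\{\Lambda_i\}_{i\in I}$ to the vector $T^{*}x$, getting $\sum_{i\in I}\langle\Lambda_i T^{*}x,\Lambda_i T^{*}x\rangle\leq D\langle T^{*}x,T^{*}x\rangle$, and then invoke Lemma \ref{1} to bound $\langle T^{*}x,T^{*}x\rangle\leq\|T\|^{2}\langle x,x\rangle$. Hence $\{\Lambda_i T^{*}\}_{i\in I}$ is automatically Bessel with bound $D\|T\|^{2}$, independently of the stated condition, and only the lower frame inequality is in play.

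For the direction $(\Rightarrow)$, I would assume $\{\Lambda_i T^{*}\}_{i\in I}$ is a $K$-$g$-frame for $\mathcal{R}(T)$ with lower bound $C'$ and combine its lower inequality with the Bessel estimate above: for $x\in\mathcal{R}(T)$ this gives $C'\langle K^{*}x,K^{*}x\rangle\leq\sum_{i\in I}\langle\Lambda_i T^{*}x,\Lambda_i T^{*}x\rangle\leq D\langle T^{*}x,T^{*}x\rangle$. Passing to norms (using that the $\mathcal{A}$-valued order is respected by the norm on positive elements) yields $C'\|K^{*}x\|^{2}\leq D\|T^{*}x\|^{2}$, i.e. $\|T^{*}x\|\geq\delta\|K^{*}x\|$ with $\delta=\sqrt{C'/D}$. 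This is the short half and is where I would begin.

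For the converse $(\Leftarrow)$, the crucial point is to convert the lower bound of $\{\Lambda_i\}$, which when applied to $T^{*}x$ naturally produces $K^{*}T^{*}x$, into a bound involving $T^{*}x$. First I would record that, $K$ having closed range, $\mathcal{R}(K)$ is orthogonally complemented and the corestriction $K:\mathcal{H}\to\mathcal{R}(K)$ is a surjective adjointable map; Lemma \ref{sb} then supplies $m>0$ with $\|K^{*}z\|\geq m\|z\|$ for all $z\in\mathcal{R}(K)$. Since $\mathcal{R}(T^{*})\subset\mathcal{R}(K)$, every vector $T^{*}x$ lies in $\mathcal{R}(K)$, so $\|K^{*}T^{*}x\|\geq m\|T^{*}x\|$; chaining this with the hypothesis $\|T^{*}x\|\geq\delta\|K^{*}x\|$ gives $\|K^{*}T^{*}x\|\geq m\delta\|K^{*}x\|$ for $x\in\mathcal{R}(T)$.

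Finally I would feed the lower inequality of $\{\Lambda_i\}$ applied to $T^{*}x$, namely $\sum_{i\in I}\langle\Lambda_i T^{*}x,\Lambda_i T^{*}x\rangle\geq C\langle K^{*}T^{*}x,K^{*}T^{*}x\rangle$, into the norm to obtain $\big\|\sum_{i\in I}\langle\Lambda_i T^{*}x,\Lambda_i T^{*}x\rangle\big\|\geq C\|K^{*}T^{*}x\|^{2}\geq Cm^{2}\delta^{2}\|K^{*}x\|^{2}$, which together with the automatic Bessel bound exhibits $\{\Lambda_i T^{*}\}_{i\in I}$ as a $K$-$g$-frame for $\mathcal{R}(T)$. \textbf{The main obstacle} is precisely the bridging step $\|K^{*}T^{*}x\|\geq m\|T^{*}x\|$: it is the only place where the closed-range hypothesis on $K$ and the inclusion $\mathcal{R}(T^{*})\subset\mathcal{R}(K)$ are genuinely used, and securing the orthogonal complementation of $\mathcal{R}(K)$ (so that Lemma \ref{sb} applies to the corestriction) is the delicate part. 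I would also flag that the lower bound here is obtained at the level of norms; to land inside the $\mathcal{A}$-valued definition one either reads the frame bounds in norm, as is done elsewhere in the paper, or strengthens the hypothesis to the inner-product inequality $\langle T^{*}x,T^{*}x\rangle\geq\delta^{2}\langle K^{*}x,K^{*}x\rangle$, in which case Lemma \ref{sb} is used in its inner-product form and the whole argument stays $\mathcal{A}$-valued.
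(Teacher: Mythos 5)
Your proof is correct, and its skeleton coincides with the paper's: the forward implication and the automatic upper bound are obtained exactly as in the paper (the lower bound of $\{\Lambda_i T^*\}_{i\in I}$ chained against the Bessel bound of $\{\Lambda_i\}_{i\in I}$ evaluated at $T^*x$, then passing to norms, giving $\delta=\sqrt{C'/D}$; and $\sum_{i}\langle \Lambda_i T^*x,\Lambda_i T^*x\rangle \leq D\langle T^*x,T^*x\rangle \leq D\|T\|^2\langle x,x\rangle$ for the Bessel part), and the converse hinges on the same bridging inequality $\|K^*T^*x\|\geq m\|T^*x\|$ for $T^*x\in\mathcal{R}(K)$. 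The one genuine difference is how that inequality is secured. The paper invokes the pseudo-inverse lemma of Xu and Sheng stated in the preliminaries: since $K$ has closed range there is $K^{\dagger}$ with $KK^{\dagger}z=z$ on $\mathcal{R}(K)$, whence $\|T^*x\|=\|(K^{\dagger})^*K^*T^*x\|\leq\|K^{\dagger}\|\,\|K^*T^*x\|$, so $m=\|K^{\dagger}\|^{-1}$ and the lower frame bound comes out as $E\delta^2\|K^{\dagger}\|^{-2}$. You instead apply Lemma \ref{sb} to the corestriction $K:\mathcal{H}\to\mathcal{R}(K)$, which forces you to justify that the closed submodule $\mathcal{R}(K)$ is orthogonally complemented so that this corestriction is an adjointable surjection --- precisely the delicate point you flag. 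The two routes are equivalent in substance (orthogonal complementation of the range is what underlies the existence of the pseudo-inverse), but the paper's is shorter given that the lemma is already on hand, while yours makes the underlying geometry explicit. Your closing remark about norm-level versus $\mathcal{A}$-valued bounds is also faithful to the paper: its own proof does pass to norms (indeed it mixes $\mathcal{A}$-valued and scalar quantities somewhat carelessly, e.g. comparing $E\|K^{\dagger}\|^{-2}\langle T^*x,T^*x\rangle$ with the scalar $E\|K^*T^*x\|^2$), so the conclusion is to be read with the frame inequalities in the norm sense, exactly as you suggest.
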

\begin{proof}
	Suppose that  $\{{\Lambda_{i}T^{*}}\}_{i\in{I}} $ is a  $K$-$g$-frame in Hilbert $ \mathcal{A}$ module $ \mathcal{H}$
	with a lower frame bound $E >0$.  If $F$  is an upper frame bound of  $\{\Lambda_{i}\}_{i\in I}$ then  for each $ x\in\mathcal{R}(T)$,  we have 
	$$ E\langle K^{*} x,  K^{*} x \rangle_{\mathcal{A}}\leq \sum_{i\in I}\langle\Lambda_{i}T^{*}x,  \Lambda_{i}T^{*}x\rangle_{\mathcal{A}}$$ 
	thus $$ E\langle K^{*} x,  K^{*} x \rangle_{\mathcal{A}}\leq \sum_{i\in I}\langle\Lambda_{i}T^{*} x,  \Lambda_{i}T^{*}x\rangle_{\mathcal{A}}\leq F\langle T^{*} x,   T^{*} x \rangle_{\mathcal{A}},  $$  
	Therefore $$ E \lVert \langle K^{*} x,  K^{*} x \rangle_{\mathcal{A}} \lVert \leq  \lVert \sum_{i\in I}\langle\Lambda_{i}T^{*} x,  \Lambda_{i}T^{*}x\rangle_{\mathcal{A}} \lVert \leq F \lVert \langle T^{*} x,T^{*} x \rangle_{\mathcal{A}} \lVert$$ thus $E \lVert  K^{*} x \lVert^{2} \leq F\lVert  T^{*} x \lVert^{2} $. 
	so $\sqrt{\frac{E}{F}} \lVert  K^{*} x \lVert \leq  \lVert  T^{*} x \lVert ,$
	for the opposite implication,   for each $x\in H$, we have $$ \lVert T^{*} x \lVert = \lVert( K^{\dagger})^{*}K^{*} T^{*}x \lVert \leq \lVert( K^{\dagger}) \lVert \lVert K^{*} U^{*}x \lVert. $$
	Therefore,   if $E$ is a lower frame bound of  $\{\Lambda_{i}\}_{i\in I}$,   we have 
	\begin{align*}
		E\delta^{2}\lVert K^{\dagger} \lVert^{-2} \langle  K^{*}x, K^{*}x\rangle &\leq E \lVert K^{\dagger} \lVert^{-2} \langle  T^{*}x,  T^{*}x \rangle \\
		& \leq  E \lVert K^{*}T^{*}x \lVert^{2}\\ 
		&\leq \sum_{i\in I}\langle\Lambda_{i}T^{*}x,  \Lambda_{i}T^{*}x\rangle_{\mathcal{A}}. 
	\end{align*} 
	For the upper bound,  it is clear that $$ \sum_{i\in I}\langle\Lambda_{i}T^{*}x,  \Lambda_{i}T^{*}x\rangle_{\mathcal{A}}  \leq F \langle T^{*} x,   T^{*}x \rangle_{\mathcal{A}} \leq F\lVert T \lVert^{2} \langle x,x \rangle_{\mathcal{A}} .$$
	So,   $({\Lambda_{i}T^{*}})_{i\in{I}} $ is a  $K-g$-frame in Hilbert $ \mathcal{A}$-module $ \mathcal{H}$ with frame bounds $E\delta^{2}\lVert K^{\dagger} \lVert^{-2}$ and $F \lVert T \lVert^{2}$ .
\end{proof}
\begin{theorem}
	Let  $\{\Lambda_{i}\}_{i\in I}$ be a  $K-g$-frame in Hilbert $ \mathcal{A}$-module $ \mathcal{H} $ and the operator $K$ has a dense rang.  Assume that  $ T\in  End^{*}_\mathcal{A}(\mathcal{H}) $ has a closed range and $ T$ and $T^{*}$ commute  .  
	If $\{{\Lambda_{i}T^{*}}\}_{i\in{I}} $ and $\{{\Lambda_{i}T}\}_{i\in{I}}$ are   $K-g$- frame in Hilbert $ \mathcal{A}$- module $\mathcal{H} $,  then $T$ is invertible.
	\begin{proof}
		Suppose that  $\{{\Lambda_{i}T^{*}}\}_{i\in{I}}$ is a  $K-g$-frame in Hilbert $ \mathcal{A}$ module $ \mathcal{H}$
		with a lower frame bound $A_{1}$,  and  $B_{1}$.  Then for each $ x\in\mathcal{H}$,    $$ A_{1}\langle K^{*} x,  K^{*} x \rangle_{\mathcal{A}}\leq \sum_{i\in I}\langle\Lambda_{i }T^{*} x,  \Lambda_{i}T^{*}x\rangle_{\mathcal{A}}\leq B_{1}\langle x,x \rangle_{\mathcal{A}} .$$ 
		We have
		\begin{equation}
			\lVert  A_{1}\langle K^{*}x,  K^{*}x \rangle_{\mathcal{A}}\lVert \leq \lVert \sum_{i\in I}\langle\Lambda_{i }T^{*}x,  \Lambda_{i}T^{*}x\rangle_{\mathcal{A}} \lVert \leq \lVert  B_{1}\langle x,   x \rangle_{\mathcal{A}} \lVert , 
		\end{equation}
		hence,    $$ A_{1} \lVert K^{*}x \lVert^{2} \leq \lVert \sum_{i\in I}\langle\Lambda_{i }T^{*}x,  \Lambda_{i}T^{*}x\rangle \lVert \leq B_{1} \lVert x \lVert^{2}  .$$
		Since $K$ has a dense range,   $ K^{*}$ is injective.  
		Moreover,  $ \mathcal{R}(T) =( \ker T^{*})^{\perp} = H  $ so $T$ is surjective. 
		Suppose that  $\{{\Lambda_{i}T^{*}}\}_{i\in{I}} $ is a  $K-g$-frame in Hilbert $ \mathcal{A}$ module $ \mathcal{H}$
		with a lower frame bound $A_{2}$  and  $B_{2}$. Then,  for each $x	\in \mathcal{H}$,   $$ A_{2}\langle K^{*} x,  K^{*} x \rangle_{\mathcal{A}}\leq \sum_{i\in I}\langle\Lambda_{i }T^{*}x,  \Lambda_{i}T^{*}x\rangle_{\mathcal{A}}\leq B_{2}\langle x,x \rangle_{\mathcal{A}} $$ $$\lVert  A_{2}\langle K^{*}x,  K^{*}x \rangle_{\mathcal{A}}\lVert \leq \lVert \sum_{i\in I}\langle\Lambda_{i }T^{*}x,  \Lambda_{i}T^{*}x\rangle_{\mathcal{A}} \lVert \leq \lVert  B_{2}\langle x,x \rangle_{\mathcal{A}} \lVert $$  $$ A_{2} \lVert K^{*}x\lVert^{2} \leq \lVert \sum_{i\in I}\langle\Lambda_{i }T^{*}x,  \Lambda_{i}T^{*}x\rangle_{\mathcal{A}} \lVert \leq B_{2} \lVert x \lVert^{2}  .$$  
		Therefore $T$ is injective,   since $\ker U \subseteq \ker K^{*} .$ 
		Thus,   $T$ is an invertible operator. 
	\end{proof}
\end{theorem}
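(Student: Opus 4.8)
The plan is to prove that $T$ is bijective; since $T$ is adjointable with closed range, bijectivity will automatically yield a bounded adjointable inverse, hence invertibility. The two hypotheses that $\{\Lambda_{i}T^{\ast}\}_{i\in I}$ and $\{\Lambda_{i}T\}_{i\in I}$ are $K$-$g$-frames will be exploited separately: the first to force injectivity of $T^{\ast}$ (and thence surjectivity of $T$), the second to force injectivity of $T$. The recurring auxiliary fact is that the dense range of $K$ makes $K^{\ast}$ injective, since $\ker K^{\ast}=(\mathcal{R}(K))^{\perp}=\{0\}$ when $\mathcal{R}(K)$ is dense.

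First I would record the two lower-bound inequalities. From the $K$-$g$-frame $\{\Lambda_{i}T^{\ast}\}_{i\in I}$ there is $A_{1}>0$ with $A_{1}\langle K^{\ast}x,K^{\ast}x\rangle_{\mathcal{A}}\leq\sum_{i\in I}\langle\Lambda_{i}T^{\ast}x,\Lambda_{i}T^{\ast}x\rangle_{\mathcal{A}}$ for all $x\in\mathcal{H}$, and from $\{\Lambda_{i}T\}_{i\in I}$ there is $A_{2}>0$ with $A_{2}\langle K^{\ast}x,K^{\ast}x\rangle_{\mathcal{A}}\leq\sum_{i\in I}\langle\Lambda_{i}Tx,\Lambda_{i}Tx\rangle_{\mathcal{A}}$. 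If $x\in\ker T^{\ast}$ then every summand $\langle\Lambda_{i}T^{\ast}x,\Lambda_{i}T^{\ast}x\rangle_{\mathcal{A}}$ vanishes, so the first inequality gives $A_{1}\langle K^{\ast}x,K^{\ast}x\rangle_{\mathcal{A}}\leq 0$, whence $K^{\ast}x=0$ and, by injectivity of $K^{\ast}$, $x=0$. Thus $\ker T^{\ast}\subseteq\ker K^{\ast}=\{0\}$, so $T^{\ast}$ is injective. The identical argument applied to the second inequality gives $\ker T\subseteq\ker K^{\ast}=\{0\}$, so $T$ is injective as well.

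It remains to upgrade injectivity of $T^{\ast}$ to surjectivity of $T$. Because $T\in End_{\mathcal{A}}^{\ast}(\mathcal{H})$ has closed range, the module splits orthogonally as $\mathcal{H}=\ker T^{\ast}\oplus\mathcal{R}(T)$, equivalently $\mathcal{R}(T)=(\ker T^{\ast})^{\perp}$; with $\ker T^{\ast}=\{0\}$ this forces $\mathcal{R}(T)=\mathcal{H}$, so $T$ is surjective. Combined with the injectivity already established, $T$ is a bijective adjointable operator, and a bijective operator in $End_{\mathcal{A}}^{\ast}(\mathcal{H})$ has bounded adjointable inverse; hence $T$ is invertible, as claimed.

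I expect the one genuinely delicate point to be the step $\mathcal{R}(T)=(\ker T^{\ast})^{\perp}$: unlike in Hilbert spaces, orthogonal complements in Hilbert $C^{\ast}$-modules need not be complemented, and this identity can fail for operators without closed range. The closed-range hypothesis on $T$ is exactly what rescues the argument, since a closed-range adjointable operator does admit the orthogonal splitting $\mathcal{H}=\ker T^{\ast}\oplus\mathcal{R}(T)$ (the same mechanism that underlies the pseudo-inverse recorded earlier in the preliminaries). I also note that the normality hypothesis $TT^{\ast}=T^{\ast}T$ renders the two frame assumptions partly redundant: for a normal closed-range operator $\ker T=\ker T^{\ast}$, so a single one of the two $K$-$g$-frame conditions already yields both injectivity and surjectivity. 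The proof above simply assigns one implication to each hypothesis, which keeps the bookkeeping entirely transparent.
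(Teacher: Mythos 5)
Your proof is correct and follows essentially the same route as the paper's: use the lower bound of $\{\Lambda_{i}T^{\ast}\}_{i\in I}$ together with the injectivity of $K^{\ast}$ (from the dense range of $K$) to get $\ker T^{\ast}=\{0\}$, invoke the closed-range splitting $\mathcal{R}(T)=(\ker T^{\ast})^{\perp}=\mathcal{H}$ for surjectivity, and use the lower bound of $\{\Lambda_{i}T\}_{i\in I}$ to get $\ker T\subseteq\ker K^{\ast}=\{0\}$ for injectivity. In fact your writeup is cleaner than the paper's (which leaves the kernel-inclusion step implicit and contains typos such as $\ker U$ and a repeated $\Lambda_{i}T^{\ast}$ where $\Lambda_{i}T$ is meant), and your closing remark that normality makes one of the two frame hypotheses redundant is a genuine observation the paper does not make.
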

\begin{theorem}
	Let $\{\Lambda_{i}\}_{i\in I}$ be a  $K-g$-frame in Hilbert $ \mathcal{A}$- module ${\mathcal{H}} $  and   $ T \in  End^{*}_\mathcal{A}(\mathcal{H})$ be a co-isometry (i.e. $ TT^{*}= Id_{H}$) such that  $TK=KT $  Then  $\{\Lambda_{i}T^{*}\}_{i\in{I}} $ is a  $K-g$-frame in Hilbert $ \mathcal{A}$-module ${\mathcal{H}} $. 	
\end{theorem}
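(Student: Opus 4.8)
The plan is to verify the two $K$-$g$-frame inequalities for $\{\Lambda_{i}T^{\ast}\}_{i\in I}$ directly, by feeding the vector $T^{\ast}x$ into the defining inequality that $\{\Lambda_{i}\}_{i\in I}$ already satisfies. Before doing that, I would isolate two elementary consequences of the hypotheses. First, since $T$ is a co-isometry, $T^{\ast}$ preserves the $\mathcal{A}$-valued inner product: for every $y\in\mathcal{H}$,
\begin{equation*}
\langle T^{\ast}y,T^{\ast}y\rangle_{\mathcal{A}}=\langle TT^{\ast}y,y\rangle_{\mathcal{A}}=\langle y,y\rangle_{\mathcal{A}}.
\end{equation*}
Second, taking adjoints in the commutation relation $TK=KT$ yields $K^{\ast}T^{\ast}=T^{\ast}K^{\ast}$.

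Next I would apply the $K$-$g$-frame inequality for $\{\Lambda_{i}\}_{i\in I}$, with lower and upper bounds $C$ and $D$, to the element $T^{\ast}x$:
\begin{equation*}
C\langle K^{\ast}T^{\ast}x,K^{\ast}T^{\ast}x\rangle_{\mathcal{A}}\leq\sum_{i\in I}\langle \Lambda_{i}T^{\ast}x,\Lambda_{i}T^{\ast}x\rangle_{\mathcal{A}}\leq D\langle T^{\ast}x,T^{\ast}x\rangle_{\mathcal{A}}.
\end{equation*}
For the upper estimate, the first consequence above gives $\langle T^{\ast}x,T^{\ast}x\rangle_{\mathcal{A}}=\langle x,x\rangle_{\mathcal{A}}$, so the right-hand side is exactly $D\langle x,x\rangle_{\mathcal{A}}$. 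For the lower estimate, the commutation $K^{\ast}T^{\ast}=T^{\ast}K^{\ast}$ lets me write $K^{\ast}T^{\ast}x=T^{\ast}(K^{\ast}x)$, and then the inner-product preservation applied to $y=K^{\ast}x$ gives
\begin{equation*}
\langle K^{\ast}T^{\ast}x,K^{\ast}T^{\ast}x\rangle_{\mathcal{A}}=\langle T^{\ast}K^{\ast}x,T^{\ast}K^{\ast}x\rangle_{\mathcal{A}}=\langle K^{\ast}x,K^{\ast}x\rangle_{\mathcal{A}}.
\end{equation*}
Substituting both computations back, I would conclude
\begin{equation*}
C\langle K^{\ast}x,K^{\ast}x\rangle_{\mathcal{A}}\leq\sum_{i\in I}\langle \Lambda_{i}T^{\ast}x,\Lambda_{i}T^{\ast}x\rangle_{\mathcal{A}}\leq D\langle x,x\rangle_{\mathcal{A}},\qquad\forall x\in\mathcal{H},
\end{equation*}
which is precisely the statement that $\{\Lambda_{i}T^{\ast}\}_{i\in I}$ is a $K$-$g$-frame, in fact with the very same bounds $C$ and $D$.

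I expect no serious obstacle here; unlike the preceding theorems, which needed Lemma \ref{3} or a pseudo-inverse to control the transformed bounds, the co-isometry hypothesis makes both relevant inner products \emph{exactly} preserved rather than merely comparable. The only point requiring care is the bookkeeping of adjoints—specifically, confirming that $TT^{\ast}=\mathrm{Id}_{\mathcal{H}}$ forces $\langle T^{\ast}y,T^{\ast}y\rangle_{\mathcal{A}}=\langle y,y\rangle_{\mathcal{A}}$ and that the commutation passes to the adjoints as $K^{\ast}T^{\ast}=T^{\ast}K^{\ast}$. Once these two identities are in place, the bounds transfer without any change.
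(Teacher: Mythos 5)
Your proof is correct and follows essentially the same route as the paper: substitute $T^{\ast}x$ into the $K$-$g$-frame inequality for $\{\Lambda_{i}\}_{i\in I}$, then use the adjointed commutation $K^{\ast}T^{\ast}=T^{\ast}K^{\ast}$ together with $TT^{\ast}=\mathrm{Id}_{\mathcal{H}}$ to recover the lower bound in terms of $\langle K^{\ast}x,K^{\ast}x\rangle_{\mathcal{A}}$. The only (minor) difference is in the upper estimate: the paper bounds $\langle T^{\ast}x,T^{\ast}x\rangle_{\mathcal{A}}\leq\|T^{\ast}\|^{2}\langle x,x\rangle_{\mathcal{A}}$ and so gets the upper bound $B_{1}\|T^{\ast}\|^{2}$, whereas you exploit the co-isometry once more via the exact identity $\langle T^{\ast}x,T^{\ast}x\rangle_{\mathcal{A}}=\langle TT^{\ast}x,x\rangle_{\mathcal{A}}=\langle x,x\rangle_{\mathcal{A}}$, which is sharper and shows the transformed family keeps the very same bounds.
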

\begin{proof}
	Suppose  $\{\Lambda_{i}\}_{i\in I}$ be a  $K-g$- frame in Hilbert $ \mathcal{A}$-module ${\mathcal{H}} $ with a lower frame bound $A_{1}$.  and  $B_{1}$  for each $ x\in\mathcal{H}$,   we have $$ \sum_{i\in I}\langle\Lambda_{i }T^{*}x,  \Lambda_{i}T^{*}x\rangle_\mathcal{A} \leq   B_{1} \langle T^{*}x, T^{*}x \rangle_{\mathcal{A}}$$ hence, $$ \sum_{i\in I}\langle\Lambda_{i }T^{*}x,  \Lambda_{i}T^{*}x\rangle_ \mathcal{A} \leq B_{1} \lVert T^{*} \lVert^{2} \langle x, x \rangle_{\mathcal{A}} .$$ 
	So, $\{{\Lambda_{i}T^{*}}\}_{i\in{I}} $ is a  $g$-Bessel sequence.\\ For the lower bound,   we can write
	\begin{align*}
		   \sum_{i\in I}\langle\Lambda_{i }T^{*} x,  \Lambda_{i}T^{*}x\rangle_\mathcal{A}
		&\geqslant  A_{1} \langle  K^{*}T^{*}x,   K^{*}T^{*}x  \rangle_\mathcal{A}\\
		&=A_{1} \langle (TK)^{*}x,  (TK)^{*}x  \rangle_\mathcal{A}\\
		&=A_{1} \langle (KT)^{*}x,  (KT)^{*}x  \rangle_\mathcal{A}\\
		&= A_{1} \langle  T^{*}K^{*}x,   T^{*}K^{*}x \rangle_\mathcal{A}\\&= A_{1} \langle TT^{*}K^{*}x,K^{*}x  \rangle_\mathcal{A}\\
		&=A_{1} \langle  K^{*}x,   K^{*}x  \rangle_\mathcal{A}.
	\end{align*}
\end{proof}
\begin{theorem}
	Let $ \Lambda:=\{  \Lambda_{i}\in {End^{*}_{\mathcal{A}}(\mathcal{H},\mathcal{V}_{i})}\}_{i\in I}$ and $ \circleddash:=\{ \circleddash_{i}\in{End^{*}_{\mathcal{A}}(\mathcal{H},\mathcal{V}_{i})}\}_{i\in I}$ be tow  $K-g$- Bessel sequences  in Hilbert $ \mathcal{A}$- module ${\mathcal{H}}$ with bounds $B_{\Lambda}$ and $B_{\circleddash}$  respectively. Suppose that $T_{\Lambda}$ and $T_{\circleddash}$ are their synthesis operators such that $T_{\circleddash, }T_{\Lambda}^{*}=K^{*} $.  Then $ \Lambda $ and $ \circleddash$ are  $K$ and $ K^{*}-g$-frames,  respectively.  
\end{theorem}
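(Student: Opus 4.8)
The plan is to treat the two required lower frame bounds symmetrically, deriving each from a factorization of $K$ (respectively $K^{*}$) through the two synthesis operators, combined with the norm inequality for adjointable maps. First I would record the elementary identities produced by the analysis operators: since $T_{\Lambda}$ and $T_{\circleddash}$ are the synthesis operators, their adjoints are the frame transforms $T_{\Lambda}^{*}x=\{\Lambda_{i}x\}_{i\in I}$ and $T_{\circleddash}^{*}x=\{\circleddash_{i}x\}_{i\in I}$, so that $\langle T_{\Lambda}^{*}x,T_{\Lambda}^{*}x\rangle_{\mathcal{A}}=\sum_{i\in I}\langle \Lambda_{i}x,\Lambda_{i}x\rangle_{\mathcal{A}}$ and likewise for $\circleddash$. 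Moreover, the $g$-Bessel bounds $B_{\Lambda}$, $B_{\circleddash}$ already give the two upper estimates $\sum_{i\in I}\langle \Lambda_{i}x,\Lambda_{i}x\rangle_{\mathcal{A}}\le B_{\Lambda}\langle x,x\rangle_{\mathcal{A}}$ and $\sum_{i\in I}\langle \circleddash_{i}x,\circleddash_{i}x\rangle_{\mathcal{A}}\le B_{\circleddash}\langle x,x\rangle_{\mathcal{A}}$, which serve verbatim as the upper bounds of a $K$-$g$-frame and a $K^{*}$-$g$-frame, the upper inequality being insensitive to the operator in front.

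Next I would take adjoints in the hypothesis. From $T_{\circleddash}T_{\Lambda}^{*}=K^{*}$ one gets $T_{\Lambda}T_{\circleddash}^{*}=(T_{\circleddash}T_{\Lambda}^{*})^{*}=K$. These two factorizations, $K^{*}=T_{\circleddash}\circ T_{\Lambda}^{*}$ and $K=T_{\Lambda}\circ T_{\circleddash}^{*}$, are the engine of the argument.

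For the lower bound of $\Lambda$, I would apply the inequality $\langle Ty,Ty\rangle_{\mathcal{A}}\le\|T\|^{2}\langle y,y\rangle_{\mathcal{A}}$ valid for adjointable operators (Lemma \ref{1}) to $T=T_{\circleddash}$ evaluated at $y=T_{\Lambda}^{*}x$:
\begin{equation*}
\langle K^{*}x,K^{*}x\rangle_{\mathcal{A}}=\langle T_{\circleddash}T_{\Lambda}^{*}x,T_{\circleddash}T_{\Lambda}^{*}x\rangle_{\mathcal{A}}\le \|T_{\circleddash}\|^{2}\langle T_{\Lambda}^{*}x,T_{\Lambda}^{*}x\rangle_{\mathcal{A}}=\|T_{\circleddash}\|^{2}\sum_{i\in I}\langle \Lambda_{i}x,\Lambda_{i}x\rangle_{\mathcal{A}}.
\end{equation*}
Dividing by $\|T_{\circleddash}\|^{2}$ yields $\|T_{\circleddash}\|^{-2}\langle K^{*}x,K^{*}x\rangle_{\mathcal{A}}\le\sum_{i\in I}\langle \Lambda_{i}x,\Lambda_{i}x\rangle_{\mathcal{A}}$, the desired lower $K$-$g$-frame inequality. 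Symmetrically, applying the same inequality to $T=T_{\Lambda}$ at $y=T_{\circleddash}^{*}x$ and using $K=T_{\Lambda}T_{\circleddash}^{*}$ gives $\|T_{\Lambda}\|^{-2}\langle Kx,Kx\rangle_{\mathcal{A}}\le\sum_{i\in I}\langle \circleddash_{i}x,\circleddash_{i}x\rangle_{\mathcal{A}}$, which, since $K=(K^{*})^{*}$, is exactly the lower $K^{*}$-$g$-frame inequality for $\circleddash$. Combined with the upper bounds from the first step, this shows $\Lambda$ is a $K$-$g$-frame with bounds $\big(\|T_{\circleddash}\|^{-2},B_{\Lambda}\big)$ and $\circleddash$ is a $K^{*}$-$g$-frame with bounds $\big(\|T_{\Lambda}\|^{-2},B_{\circleddash}\big)$.

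The one point needing care, and the only genuine obstacle, is the finiteness and nonvanishing of the synthesis-operator norms. Boundedness of $T_{\Lambda}$ and $T_{\circleddash}$ follows from the $g$-Bessel property by the same estimate used for the operator $Q$ in Theorem \ref{Lemma 2.5}, so $\|T_{\Lambda}\|,\|T_{\circleddash}\|$ are finite; the constants $\|T_{\circleddash}\|^{-2},\|T_{\Lambda}\|^{-2}$ are then well defined as soon as these operators are nonzero. If either synthesis operator vanished, then $K^{*}=T_{\circleddash}T_{\Lambda}^{*}=0$ and hence $K=0$, in which case both lower inequalities hold trivially for any positive constant and the statement is vacuous. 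Thus under the standing nondegeneracy $K\neq 0$ the displayed constants are strictly positive, and the conclusion follows.
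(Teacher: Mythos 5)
Your proposal is correct, and it completes the statement in a way the paper's own proof does not, but it takes a genuinely different route. The paper pairs the factorization against $K^{*}x$ inside the inner product, writing $\|K^{*}x\|^{2}=\|\langle T_{\circleddash}T_{\Lambda}^{*}x,K^{*}x\rangle_{\mathcal{A}}\|\leq\|T_{\Lambda}^{*}x\|\,\|T_{\circleddash}^{*}K^{*}x\|$ (module Cauchy--Schwarz), and then invokes the Bessel bound of $\circleddash$ on the vector $K^{*}x$; this produces only a \emph{norm} inequality of the form $\|\langle K^{*}x,K^{*}x\rangle_{\mathcal{A}}\|\lesssim B_{\circleddash}\,\|\sum_{i}\langle\Lambda_{i}x,\Lambda_{i}x\rangle_{\mathcal{A}}\|$ (the paper's write-up is moreover garbled, mixing algebra elements with norms), and the paper stops there: the second half of the claim, that $\circleddash$ is a $K^{*}$-$g$-frame, is never argued. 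You instead take adjoints to obtain the dual factorization $K=T_{\Lambda}T_{\circleddash}^{*}$ and apply Paschke's inequality (Lemma \ref{1}) to the synthesis operators themselves, which yields the $\mathcal{A}$-valued inequalities $\|T_{\circleddash}\|^{-2}\langle K^{*}x,K^{*}x\rangle_{\mathcal{A}}\leq\sum_{i}\langle\Lambda_{i}x,\Lambda_{i}x\rangle_{\mathcal{A}}$ and $\|T_{\Lambda}\|^{-2}\langle Kx,Kx\rangle_{\mathcal{A}}\leq\sum_{i}\langle\circleddash_{i}x,\circleddash_{i}x\rangle_{\mathcal{A}}$ directly and symmetrically. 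This buys you two things: the pointwise algebra-valued inequality is literally what the paper's definition of a $K$-$g$-frame demands (the paper's norm-inequality conclusion is not shown in this paper to be equivalent for $K$-$g$-frames; the equivalence result, Theorem \ref{*}, is stated only for ordinary $g$-frames), and your constants are at least as sharp, since $\|T_{\circleddash}\|^{2}\leq B_{\circleddash}$. Two small remarks: Lemma \ref{1} is stated in the paper only for $T\in End_{\mathcal{A}}^{\ast}(\mathcal{H})$, so you should note you are using its standard extension to adjointable maps between different Hilbert modules (same one-line proof via $\langle Ty,Ty\rangle_{\mathcal{A}}=\langle T^{\ast}Ty,y\rangle_{\mathcal{A}}\leq\|T^{\ast}T\|\langle y,y\rangle_{\mathcal{A}}$); and your handling of the degenerate case $T_{\Lambda}=0$ or $T_{\circleddash}=0$, which the paper ignores, is a welcome addition.
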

\begin{proof}  For each $ x\in\mathcal{H}$,   we have
	\begin{align*}
		\lVert K^{*}x \lVert^{2} & = \lVert \langle  K^{*}x,    K^{*}x \rangle_\mathcal{A}\lVert  \\
		&= \lVert \langle T_{\circleddash} T_{\Lambda}^{*}x ,    K^{*}x \rangle_\mathcal{A}\lVert \\ 
		&\leq  \lVert T_{\Lambda}^{*}x \lVert\lVert T_{\circleddash, }^{*}K^{*}x \lVert \\
		&\leq(\sum_{i\in I}\langle\Lambda_{i } x,  \Lambda_{i}x\rangle_\mathcal{A})^{1/2}B_{\circleddash} \lVert\langle  K^{*}x,   K^{*}x\rangle_\mathcal{A}\lVert.
	\end{align*}
	So, $$\lVert \langle  K^{*}x,    K^{*}x \rangle_\mathcal{A}\lVert \leq \sum_{i\in I}\langle\Lambda_{i }x,  \Lambda_{i}x\rangle_\mathcal{A}B_{\circleddash}  .$$
\end{proof}
\section{$\ast$-$K$-Frames}
\begin{definition} \cite{Dast}
	Let $K\in End_{\mathcal{A}}^{\ast}(\mathcal{H})$. A family $\{x_{i}\}_{i\in I}$ of elements of $\mathcal{H}$ is an $\ast$-K-frame for $ \mathcal{H} $, if there	exist strictly nonzero elements $A$, $B$ in $\mathcal{A}$, such that for all $x\in\mathcal{H}$,
	\begin{equation}
	A\langle K^{\ast}x,K^{\ast}x\rangle_{\mathcal{A}}A^{\ast}\leq\sum_{i\in I}\langle x,x_{i}\rangle_{\mathcal{A}}\langle x_{i},x\rangle_{\mathcal{A}}\leq B\langle x,x\rangle_{\mathcal{A}}B^{\ast}.
	\end{equation}
	The elements $A$ and $B$ are called lower and upper bound of the $\ast$-$K$-frame, respectively.
\end{definition}
\begin{Rem}
	Every $\ast$-frame is a $\ast$-$K$-frame.
\end{Rem}
\section{$\ast$-$K$-g-Frames in Hilbert A-modules}
\begin{definition} \label{our frame} \cite{Ros2}
	Let $K\in End_{\mathcal{A}}^{\ast}(\mathcal{H})$. We call a sequence $\{\Lambda_{i}\in End_{A}^{\ast}(\mathcal{H},\mathcal{H}_{i}):i\in I \}$ a $\ast$-K-g-frame in Hilbert $\mathcal{A}$-module $\mathcal{H}$ with respect to $\{\mathcal{H}_{i}:i\in I \}$ if there exist strictly nonzero elements $A$, $B$ in $\mathcal{A}$ such that 
	\begin{equation}
	A\langle K^{\ast}x,K^{\ast}x\rangle_{\mathcal{A}} A^{\ast}\leq\sum_{i\in I}\langle \Lambda_{i}x,\Lambda_{i}x\rangle_{\mathcal{A}}\leq B\langle x,x\rangle_{\mathcal{A}} B^{\ast}, \forall x\in\mathcal{H}.
	\end{equation}
	The numbers $A$ and $B$ are called lower and upper bounds of the $\ast$-K-g-frame, respectively. If 
	\begin{equation}
	A\langle K^{\ast}x,K^{\ast}x\rangle A^{\ast}=\sum_{i\in I}\langle \Lambda_{i}x,\Lambda_{i}x\rangle, \forall x\in\mathcal{H}.
	\end{equation}
	The $\ast$-K-g-frame is $A$-tight.
\end{definition}
\begin{Rem} \cite{Ros2} \label{rem3.2}
	\begin{enumerate}
		\item Every $\ast$-g-frame for $\mathcal{H}$ with respect to $\{\mathcal{H}_{i}:i\in I \}$ is an $\ast$-K-g-frame, for any $K\in End_{\mathcal{A}}^{\ast}(\mathcal{H})$: $K\neq0$.
		\item If $K\in End_{\mathcal{A}}^{\ast}(\mathcal{H})$ is a surjective operator, then every $\ast$-K-g-frame for $\mathcal{H}$ with respect to $\{\mathcal{H}_{i}:i\in I \}$ is a $\ast$-g-frame.
	\end{enumerate}
\end{Rem}
\begin{Ex} \cite{Ros2}
	Let $\mathcal{H}$ be a finitely or countably generated Hilbert $\mathcal{A}$-module. Let $K\in End_{\mathcal{A}}^{\ast}(\mathcal{H})$: $K\neq0$. Let $\mathcal{A}$ be a Hilbert $\mathcal{A}$-module over itself with the inner product $\langle a,b\rangle=ab^{\ast}$. 	Let $\{x_{i}\}_{i\in I}$ be an $\ast$-frame for $\mathcal{H}$ with bounds $A$ and $B$, respectively. For each $i\in I$, we define $\Lambda_{i}:\mathcal{H}\to\mathcal{A}$ by $\Lambda_{i}x=\langle x,x_{i}\rangle,\;\; \forall x\in\mathcal{H}$. $\Lambda_{i}$ is adjointable and $\Lambda_{i}^{\ast}a=ax_{i}$ for each $a\in\mathcal{A}$. And we have 
	\begin{equation*}
	A\langle x,x\rangle A^{\ast}\leq\sum_{i\in I}\langle x,x_{i}\rangle\langle x_{i},x\rangle\leq B\langle x,x\rangle B^{\ast}, \forall x\in\mathcal{H}.
	\end{equation*}
	Or \begin{displaymath}
	\langle K^{\ast}x,K^{\ast}x\rangle\leq\|K\|^{2}\langle x,x\rangle, \forall x\in\mathcal{H}. 
	\end{displaymath}
	Then
	\begin{equation*}
	\|K\|^{-1}A\langle K^{\ast}x, K^{\ast}x\rangle (\|K\|^{-1}A)^{\ast}\leq\sum_{i\in I}\langle \Lambda_{i}x,\Lambda_{i}x\rangle\leq B\langle x,x\rangle B^{\ast}, \forall x\in\mathcal{H}.
	\end{equation*}
	So $\{\Lambda_{i}\}_{i\in I}$ is an $\ast$-K-g-frame for $\mathcal{H}$ with bounds $\|K\|^{-1}A$ and $B$, respectively.
\end{Ex}
Let $\{\Lambda_{i}\}_{i\in I}$ be an $\ast$-K-g-frame in $\mathcal{H}$ with respect to $\{\mathcal{H}_{i}:i\in I \}$. Define an operator $T:\mathcal{H}\to\oplus_{i\in I}\mathcal{H}_{i}$ by $Tx=\{\Lambda_{i}x\}_{i}, \forall x\in\mathcal{H}$, then $T$ is called the analysis operator. So it's adjoint operator is $T^{\ast}:\oplus_{i\in I}\mathcal{H}_{i}\to\mathcal{H}$ given by $T^{\ast}(\{x_{i}\}_{i})=\sum_{i\in I}\Lambda_{i}^{\ast}x_{i}, \forall\{x_{i}\}_{i}\in\oplus_{i\in I}\mathcal{H}_{i}$. The operator $T^{\ast}$ is called the synthesis operator. By composing $T$ and $T^{\ast}$, the frame operator $S:\mathcal{H}\to\mathcal{H}$ is given by $Sx=T^{\ast}Tx=\sum_{i\in I}\Lambda_{i}^{\ast}\Lambda_{i}x$.

Note that $S$ need not be invertible in general. But under some condition $S$ will be invertible.
\begin{theorem} \cite{Ros2}
	Let $K\in End_{\mathcal{A}}^{\ast}(\mathcal{H})$ be a surjective operator. If $\{\Lambda_{i}\}_{i\in I}$ is an $\ast$-K-g-frame in $\mathcal{H}$ with respect to $\{\mathcal{H}_{i}:i\in I \}$, then the frame operator $S$ is positive, invertible and adjointable. Moreover we have the reconstruction formula, $x=\sum_{i\in I}\Lambda_{i}^{\ast}\Lambda_{i}S^{-1}x, \forall x\in\mathcal{H}$.
\end{theorem}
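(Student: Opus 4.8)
The plan is to establish the four assertions in the order self-adjointness and positivity, boundedness, invertibility, and finally the reconstruction formula, with the surjectivity of $K$ entering only at the invertibility step. First I would read everything off the defining formula $Sx=\sum_{i\in I}\Lambda_{i}^{\ast}\Lambda_{i}x$. For $x,y\in\mathcal{H}$ one has $\langle Sx,y\rangle_{\mathcal{A}}=\sum_{i\in I}\langle \Lambda_{i}x,\Lambda_{i}y\rangle_{\mathcal{A}}=\langle x,Sy\rangle_{\mathcal{A}}$, so $S$ is adjointable with $S^{\ast}=S$, while $\langle Sx,x\rangle_{\mathcal{A}}=\sum_{i\in I}\langle \Lambda_{i}x,\Lambda_{i}x\rangle_{\mathcal{A}}\geq 0$ gives positivity. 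Convergence of the defining series and boundedness follow from the upper bound: since $\langle Sx,x\rangle_{\mathcal{A}}\leq B\langle x,x\rangle_{\mathcal{A}}B^{\ast}$, passing to norms yields $\|S\|=\sup_{\|x\|=1}\|\langle Sx,x\rangle_{\mathcal{A}}\|\leq\|B\|^{2}$.

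The heart of the argument is invertibility, and this is where surjectivity of $K$ is indispensable. Since $K$ is surjective, Lemma \ref{3}(ii) shows that $KK^{\ast}$ is invertible with $\|(KK^{\ast})^{-1}\|^{-1}I_{\mathcal{H}}\leq KK^{\ast}$, whence $\langle K^{\ast}x,K^{\ast}x\rangle_{\mathcal{A}}=\langle KK^{\ast}x,x\rangle_{\mathcal{A}}\geq m\langle x,x\rangle_{\mathcal{A}}$ for all $x$, where $m=\|(KK^{\ast})^{-1}\|^{-1}>0$. Conjugating this inequality by the invertible lower bound $A$ and combining with the lower $\ast$-$K$-$g$-frame estimate gives $\langle Sx,x\rangle_{\mathcal{A}}\geq A\langle K^{\ast}x,K^{\ast}x\rangle_{\mathcal{A}}A^{\ast}\geq m\,A\langle x,x\rangle_{\mathcal{A}}A^{\ast}\geq 0$. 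Taking norms and using $\|A\langle x,x\rangle_{\mathcal{A}}A^{\ast}\|=\|A\langle x,x\rangle_{\mathcal{A}}^{1/2}\|^{2}\geq\|A^{-1}\|^{-2}\|\langle x,x\rangle_{\mathcal{A}}\|$ together with $\|\langle Sx,x\rangle_{\mathcal{A}}\|\leq\|Sx\|\,\|x\|$, I obtain the lower norm estimate $\|Sx\|\geq m\|A^{-1}\|^{-2}\|x\|$ for all $x\in\mathcal{H}$; that is, $S$ is bounded below.

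Finally I would promote ``bounded below'' to ``invertible.'' A bounded-below adjointable operator is injective with closed range, and for an adjointable operator with closed range $\mathcal{R}(S)$ is orthogonally complemented, with $\mathcal{H}=\mathcal{R}(S)\oplus\ker(S^{\ast})$; since $S=S^{\ast}$ and $\ker S=\{0\}$, this forces $\mathcal{R}(S)=\mathcal{H}$, so $S$ is bijective, hence invertible in $End_{\mathcal{A}}^{\ast}(\mathcal{H})$ with $S^{-1}$ automatically adjointable. The reconstruction formula is then immediate: applying $S$ to $S^{-1}x$ gives $x=S(S^{-1}x)=\sum_{i\in I}\Lambda_{i}^{\ast}\Lambda_{i}S^{-1}x$ for every $x\in\mathcal{H}$.

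The main obstacle I anticipate is precisely the invertibility step, namely converting the order-theoretic bound $\langle Sx,x\rangle_{\mathcal{A}}\geq m\,A\langle x,x\rangle_{\mathcal{A}}A^{\ast}$ into a usable lower estimate. One must \emph{not} write $A\langle x,x\rangle_{\mathcal{A}}A^{\ast}\geq\|A^{-1}\|^{-2}\langle x,x\rangle_{\mathcal{A}}$ as an $\mathcal{A}$-valued inequality, since conjugation by a non-normal invertible element generally destroys such a bound; the correct route is through the $C^{\ast}$-norm, where $\|AcA^{\ast}\|=\|Ac^{1/2}\|^{2}\geq\|A^{-1}\|^{-2}\|c\|$ holds for every positive $c$. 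This is exactly why only a norm lower bound on $S$, rather than an operator inequality $S\geq c\,I_{\mathcal{H}}$, is available, and it is the self-adjointness of $S$ that nevertheless lets us upgrade injectivity plus closed range to surjectivity.
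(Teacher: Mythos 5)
Your proof is correct, but it takes a different route from the paper's, which is a one-line reduction: since $K$ is surjective, the paper invokes its remark that every $\ast$-$K$-g-frame is then an ordinary $\ast$-g-frame (Lemma \ref{sb} yields $\langle K^{\ast}x,K^{\ast}x\rangle_{\mathcal{A}}\geq m\langle x,x\rangle_{\mathcal{A}}$, so the lower bound $A\langle K^{\ast}x,K^{\ast}x\rangle_{\mathcal{A}}A^{\ast}$ dominates $(\sqrt{m}A)\langle x,x\rangle_{\mathcal{A}}(\sqrt{m}A)^{\ast}$), and then it simply cites Theorem 3.8 of \cite{A} (the $\ast$-g-frame operator theorem, restated without proof in the $\ast$-g-frame section of this survey) for positivity, invertibility, adjointability and the reconstruction formula. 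You use exactly the same surjectivity input --- your appeal to the invertibility of $KK^{\ast}$ via the lemma quoted from \cite{Ali} is equivalent to part (iii) of Lemma \ref{sb} --- but rather than reducing to the $\ast$-g-frame case you reprove the cited theorem in full: self-adjointness and positivity from the defining series, $\|S\|\leq\|B\|^{2}$ from the upper bound, the norm lower bound $\|Sx\|\geq m\|A^{-1}\|^{-2}\|x\|$ via Cauchy--Schwarz, and invertibility by upgrading ``bounded below, self-adjoint, closed range'' to bijectivity through the complementation $\mathcal{H}=\mathcal{R}(S)\oplus\ker S^{\ast}$, which is legitimate for adjointable operators with closed range. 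This is the same argument pattern the survey itself uses for the g-frame operator theorem, so your proof effectively supplies the content the paper outsources to \cite{A}; its added value is the explicit warning that $A\langle x,x\rangle_{\mathcal{A}}A^{\ast}\geq\|A^{-1}\|^{-2}\langle x,x\rangle_{\mathcal{A}}$ is false in general as an $\mathcal{A}$-valued inequality, so the lower bound must travel through the norm estimate $\|AcA^{\ast}\|\geq\|A^{-1}\|^{-2}\|c\|$ for $c\geq 0$, which is the key subtlety of $\mathcal{A}$-valued bounds. One small point: convergence of $\sum_{i\in I}\Lambda_{i}^{\ast}\Lambda_{i}x$ does not quite ``follow from the upper bound'' as you assert; it needs the finite-partial-sum Cauchy estimate (as in the paper's g-frame theorem) or the paper's standing convention that such series converge in norm.
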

\begin{proof}
	Result of $(2)$ in Remark \ref{rem3.2} and Theorem $3.8$ in \cite{A}.
\end{proof}
Let $K\in End_{\mathcal{A}}^{\ast}(\mathcal{H})$, in the following theorem using an $\ast$-g-frame we constructed an $\ast$-K-g-frame.
\begin{theorem} \cite{Ros2} \label{Th3.5}
	Let $K\in End_{\mathcal{A}}^{\ast}(\mathcal{H})$ and $\{\Lambda_{i}\}_{i\in I}$ be an $\ast$-g-frame in $\mathcal{H}$ with respect to $\{\mathcal{H}_{i}:i\in I \}$ with bounds $A$, $B$. Then $\{\Lambda_{i}K\}_{i\in I}$ is an $\ast$-$K^{\ast}$-g-frame in $\mathcal{H}$ with respect to $\{\mathcal{H}_{i}:i\in I \}$ with bounds $A$, $\|K\|B$. The frame operator of $\{\Lambda_{i}K\}_{i\in I}$ is $S^{'}=K^{\ast}SK$, where $S$ is the frame operator of $\{\Lambda_{i}\}_{i\in I}$.
\end{theorem}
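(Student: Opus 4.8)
The plan is to deduce all three assertions by substituting the vector $Kx$ into the defining inequalities of the original $\ast$-$g$-frame $\{\Lambda_i\}_{i\in I}$, and then to compare $\langle Kx,Kx\rangle_{\mathcal{A}}$ with $\langle x,x\rangle_{\mathcal{A}}$ using Lemma \ref{1}. The key observation is that $(K^{\ast})^{\ast}=K$, so the lower $\ast$-$K^{\ast}$-$g$-frame inequality we must establish involves exactly the term $\langle Kx,Kx\rangle_{\mathcal{A}}$, which is produced automatically when $Kx$ is fed into the lower $\ast$-$g$-frame bound of $\{\Lambda_i\}$.

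For the lower bound I would fix $x\in\mathcal{H}$ and apply the lower $\ast$-$g$-frame inequality of $\{\Lambda_i\}$ to the element $Kx$, obtaining
\begin{equation*}
A\langle Kx,Kx\rangle_{\mathcal{A}} A^{\ast}\leq\sum_{i\in I}\langle \Lambda_{i}Kx,\Lambda_{i}Kx\rangle_{\mathcal{A}}.
\end{equation*}
Since $(K^{\ast})^{\ast}=K$, the left-hand side equals $A\langle (K^{\ast})^{\ast}x,(K^{\ast})^{\ast}x\rangle_{\mathcal{A}} A^{\ast}$, which is precisely the lower $\ast$-$K^{\ast}$-$g$-frame inequality with lower bound $A$.

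For the upper bound, applying the upper $\ast$-$g$-frame inequality of $\{\Lambda_i\}$ to $Kx$ yields $\sum_{i\in I}\langle \Lambda_{i}Kx,\Lambda_{i}Kx\rangle_{\mathcal{A}}\leq B\langle Kx,Kx\rangle_{\mathcal{A}} B^{\ast}$. By Lemma \ref{1} we have $\langle Kx,Kx\rangle_{\mathcal{A}}\leq\|K\|^{2}\langle x,x\rangle_{\mathcal{A}}$, and since conjugation $a\mapsto BaB^{\ast}$ is positive and linear, hence order-preserving on $\mathcal{A}^{+}$, it follows that $B\langle Kx,Kx\rangle_{\mathcal{A}} B^{\ast}\leq\|K\|^{2}B\langle x,x\rangle_{\mathcal{A}} B^{\ast}$. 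As $\|K\|$ is a positive real scalar, $(\|K\|B)^{\ast}=\|K\|B^{\ast}$, so $\|K\|^{2}B\langle x,x\rangle_{\mathcal{A}} B^{\ast}=(\|K\|B)\langle x,x\rangle_{\mathcal{A}}(\|K\|B)^{\ast}$, giving the claimed upper bound $\|K\|B$.

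Finally, for the frame operator I would note that each $\Lambda_iK$ is adjointable with $(\Lambda_iK)^{\ast}=K^{\ast}\Lambda_i^{\ast}$, and compute directly from the definition $S'x=\sum_{i\in I}(\Lambda_iK)^{\ast}(\Lambda_iK)x=\sum_{i\in I}K^{\ast}\Lambda_i^{\ast}\Lambda_i Kx=K^{\ast}\big(\sum_{i\in I}\Lambda_i^{\ast}\Lambda_i\big)Kx=K^{\ast}SKx$, whence $S'=K^{\ast}SK$. None of these steps presents a genuine difficulty; the only point requiring care is the order-theoretic manipulation in the upper bound, namely justifying that conjugating the inequality $\langle Kx,Kx\rangle_{\mathcal{A}}\leq\|K\|^{2}\langle x,x\rangle_{\mathcal{A}}$ by $B$ preserves the order in $\mathcal{A}$, together with the scalar bookkeeping that rewrites the resulting conjugation as $(\|K\|B)\langle x,x\rangle_{\mathcal{A}}(\|K\|B)^{\ast}$.
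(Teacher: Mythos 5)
Your proposal is correct and follows essentially the same route as the paper's proof: substitute $Kx$ into the defining $\ast$-$g$-frame inequalities to get the lower bound $A$ directly and the upper chain $\sum_{i\in I}\langle \Lambda_{i}Kx,\Lambda_{i}Kx\rangle_{\mathcal{A}}\leq B\langle Kx,Kx\rangle_{\mathcal{A}}B^{\ast}\leq \|K\|B\langle x,x\rangle_{\mathcal{A}}(\|K\|B)^{\ast}$, then compute $S'x=\sum_{i\in I}K^{\ast}\Lambda_{i}^{\ast}\Lambda_{i}Kx=K^{\ast}SKx$. Your write-up is in fact slightly more careful than the paper's, since you explicitly justify the order-preservation of conjugation by $B$ (via Lemma \ref{1}), a step the paper leaves implicit.
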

\begin{proof}
	Form
	\begin{equation*}
	A\langle x,x\rangle_{\mathcal{A}} A^{\ast}\leq\sum_{i\in I}\langle \Lambda_{i}x,\Lambda_{i}x\rangle_{\mathcal{A}}\leq B\langle x,x\rangle_{\mathcal{A}} B^{\ast}, \forall x\in\mathcal{H}.
	\end{equation*}
	We get for all $x\in\mathcal{H}$,
	\begin{equation*}
	A\langle Kx,Kx\rangle_{\mathcal{A}} A^{\ast}\leq\sum_{i\in I}\langle \Lambda_{i}Kx,\Lambda_{i}Kx\rangle_{\mathcal{A}}\leq B\langle Kx,Kx\rangle_{\mathcal{A}} B^{\ast}\leq \|K\|B\langle x,x\rangle_{\mathcal{A}} (\|K\|B)^{\ast}.
	\end{equation*}
	Then $\{\Lambda_{i}K\}_{i\in I}$ is an $\ast$-$K^{\ast}$-g-frame in $\mathcal{H}$ with respect to $\{\mathcal{H}_{i}:i\in I \}$ with bounds $A$, $\|K\|B$.
	
	By definition of $S$,we have $SKx=\sum_{i\in I}\Lambda_{i}^{\ast}\Lambda_{i}Kx$. Then $$K^{\ast}SKx=K^{\ast}\sum_{i\in I}\Lambda_{i}^{\ast}\Lambda_{i}Kx=\sum_{i\in I}K^{\ast}\Lambda_{i}^{\ast}\Lambda_{i}Kx.$$
	Hence $S^{'}=K^{\ast}SK$.
\end{proof}
\begin{corollary} \cite{Ros2}
	Let $K\in End_{\mathcal{A}}^{\ast}(\mathcal{H})$ and $\{\Lambda_{i}\}_{i\in I}$ be an $\ast$-g-frame. Then $\{\Lambda_{i}S^{-1}K\}_{i\in I}$ is an $\ast$-$K^{\ast}$-g-frame, where $S$ is the frame operator of $\{\Lambda_{i}\}_{i\in I}$.
\end{corollary}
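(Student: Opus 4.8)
The plan is to reduce the statement to Theorem \ref{Th3.5} by first recognising $\{\Lambda_{i}S^{-1}K\}_{i\in I}$ as the composition $\{(\Lambda_{i}S^{-1})K\}_{i\in I}$. Since $\{\Lambda_{i}\}_{i\in I}$ is an $\ast$-g-frame, its frame operator $S=\sum_{i\in I}\Lambda_{i}^{\ast}\Lambda_{i}$ is positive, self-adjoint and invertible by the $\ast$-g-frame operator theorem, so $S^{-1}\in End_{\mathcal{A}}^{\ast}(\mathcal{H})$ is again positive and self-adjoint. If I can show that the canonical dual system $\{\Lambda_{i}S^{-1}\}_{i\in I}$ is itself an $\ast$-g-frame, then applying Theorem \ref{Th3.5} to this $\ast$-g-frame with the same operator $K$ immediately gives that $\{(\Lambda_{i}S^{-1})K\}_{i\in I}=\{\Lambda_{i}S^{-1}K\}_{i\in I}$ is an $\ast$-$K^{\ast}$-g-frame, which is exactly the claim. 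So the whole argument hinges on the single auxiliary fact that $\{\Lambda_{i}S^{-1}\}_{i\in I}$ is an $\ast$-g-frame.

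To establish that fact, I would start from the defining inequality
\begin{equation*}
A\langle x,x\rangle_{\mathcal{A}}A^{\ast}\leq\sum_{i\in I}\langle\Lambda_{i}x,\Lambda_{i}x\rangle_{\mathcal{A}}\leq B\langle x,x\rangle_{\mathcal{A}}B^{\ast},\qquad\forall x\in\mathcal{H},
\end{equation*}
and substitute $S^{-1}x$ for $x$, which is legitimate since $S^{-1}$ is adjointable. The middle term then becomes $\sum_{i\in I}\langle\Lambda_{i}S^{-1}x,\Lambda_{i}S^{-1}x\rangle_{\mathcal{A}}$, precisely the quantity controlling the dual system, while the outer terms involve $\langle S^{-1}x,S^{-1}x\rangle_{\mathcal{A}}$. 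It then remains to sandwich $\langle S^{-1}x,S^{-1}x\rangle_{\mathcal{A}}$ between scalar multiples of $\langle x,x\rangle_{\mathcal{A}}$. For the upper estimate I would use the norm inequality $\langle Tx,Tx\rangle_{\mathcal{A}}\leq\|T\|^{2}\langle x,x\rangle_{\mathcal{A}}$ of Lemma \ref{1} to get $\langle S^{-1}x,S^{-1}x\rangle_{\mathcal{A}}\leq\|S^{-1}\|^{2}\langle x,x\rangle_{\mathcal{A}}$; for the lower one I would apply the same lemma to $S$ at the vector $S^{-1}x$, giving $\langle x,x\rangle_{\mathcal{A}}=\langle SS^{-1}x,SS^{-1}x\rangle_{\mathcal{A}}\leq\|S\|^{2}\langle S^{-1}x,S^{-1}x\rangle_{\mathcal{A}}$, i.e. $\|S\|^{-2}\langle x,x\rangle_{\mathcal{A}}\leq\langle S^{-1}x,S^{-1}x\rangle_{\mathcal{A}}$. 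Conjugating these scalar inequalities by $A$ and $B$ and using that the map $a\mapsto aPa^{\ast}$ preserves the order of positive elements, I would obtain
\begin{equation*}
\Big(\tfrac{1}{\|S\|}A\Big)\langle x,x\rangle_{\mathcal{A}}\Big(\tfrac{1}{\|S\|}A\Big)^{\ast}\leq\sum_{i\in I}\langle\Lambda_{i}S^{-1}x,\Lambda_{i}S^{-1}x\rangle_{\mathcal{A}}\leq\big(\|S^{-1}\|B\big)\langle x,x\rangle_{\mathcal{A}}\big(\|S^{-1}\|B\big)^{\ast},
\end{equation*}
so that $\{\Lambda_{i}S^{-1}\}_{i\in I}$ is an $\ast$-g-frame with bounds $\|S\|^{-1}A$ and $\|S^{-1}\|B$.

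The step I expect to require the most care is the passage from the operator-norm bounds on $S$ to the two-sided $\mathcal{A}$-valued sandwich, together with verifying that the resulting bounds $\|S\|^{-1}A$ and $\|S^{-1}\|B$ are still strictly nonzero elements of $\mathcal{A}$. The latter is clear because $A,B$ are strictly nonzero and $S$ is invertible, so $\|S\|,\|S^{-1}\|>0$; but one must be careful that scaling a strictly nonzero element by a positive scalar keeps it strictly nonzero, and that the conjugation $a\mapsto aPa^{\ast}$ genuinely respects the partial order $P\leq Q\Rightarrow aPa^{\ast}\leq aQa^{\ast}$ in the $C^{\ast}$-algebra. Once these order-theoretic points are settled, the final invocation of Theorem \ref{Th3.5} is purely formal and produces $\{\Lambda_{i}S^{-1}K\}_{i\in I}$ as an $\ast$-$K^{\ast}$-g-frame with bounds $\|S\|^{-1}A$ and $\|K\|\,\|S^{-1}\|B$, completing the argument.
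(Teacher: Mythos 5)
Your proposal takes essentially the same route as the paper: the paper's entire proof is the one-line reduction ``Result of Theorem \ref{Th3.5} for the $\ast$-g-frame $\{\Lambda_{i}S^{-1}\}_{i\in I}$,'' which is exactly your strategy. The only difference is that you explicitly verify the fact the paper leaves implicit, namely that the canonical dual $\{\Lambda_{i}S^{-1}\}_{i\in I}$ is itself an $\ast$-g-frame (with bounds $\|S\|^{-1}A$ and $\|S^{-1}\|B$, obtained via Lemma \ref{1} and conjugation-monotonicity of the order), and that verification is correct.
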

\begin{proof}
	Result of the Theorem \ref{Th3.5} for the $\ast$-g-frame $\{\Lambda_{i}S^{-1}\}_{i\in I}$.	
\end{proof}

\section{Some consequences}
\begin{itemize}
	\item If $A, B\in\mathbf{C}$ and $K=I$ in Definition \ref{our frame} we find the definition of the g-frame. 
	\item If $A, B\in\mathbf{C}$ in Definition \ref{our frame} we find the definition of the $K$-g-frame.
	\item
	For $K = I$ in Definition \ref{our frame} we find the definition of the $\ast$-g-frame. 
\end{itemize}
\section{Operator frame}
\begin{definition}
	A family of adjointable operators $\{T_{i}\}_{i\in\mathbb{J}}$ on a Hilbert $\mathcal{A}$-module $\mathcal{H}$ over a unital $C^{\ast}$-algebra is said to be an operator frame for $End_{\mathcal{A}}^{\ast}(\mathcal{H})$, if there exists positive constants $A, B > 0$ such that 
	\begin{equation}\label{eq3}
		A\langle x,x\rangle\leq\sum_{i\in\mathbb{J}}\langle T_{i}x,T_{i}x\rangle\leq B\langle x,x\rangle, \forall x\in\mathcal{H}.
	\end{equation}
	The numbers $A$ and $B$ are called lower and upper bounds of the operator frame, respectively. If $A=B=\lambda$, the operator frame is $\lambda$-tight. If $A = B = 1$, it is called a normalized tight operator frame or a Parseval operator frame. If only upper inequality of \eqref{eq3} hold, then $\{T_{i}\}_{i\in\mathbb{J}}$ is called an operator Bessel sequence for $End_{\mathcal{A}}^{\ast}(\mathcal{H})$.\\
	If the sum in the middle of \eqref{eq3} is convergent in norm, the operator frame is called standard. 
\end{definition}
Throughout the paper, series like \eqref{eq3} are assumed to be convergent in the norm sense.
\begin{Ex}
	Let $\mathcal{A}$ be a Hilbert $C^{\ast}$-module over itself with the inner product $\langle a,b\rangle=ab^{\ast}$.
	Let $\{x_{i}\}_{i\in I}$ be a frame for $\mathcal{A}$ with bounds $A$ and $B$, respectively. For each $i\in I$, we define $T_{i}:\mathcal{A}\to\mathcal{A}$ by $T_{i}x=\langle x,x_{i}\rangle,\;\; \forall x\in\mathcal{A}$. $T_{i}$ is adjointable and $T_{i}^{\ast}a=ax_{i}$ for each $a\in\mathcal{A}$. And we have 
	\begin{equation*}
		A\langle x,x\rangle\leq\sum_{i\in I}\langle x,x_{i}\rangle\langle x_{i},x\rangle\leq B\langle x,x\rangle, \forall x\in\mathcal{A}.
	\end{equation*}
	Then
	\begin{equation*}
		A\langle x,x\rangle\leq\sum_{i\in I}\langle T_{i}x,T_{i}x\rangle\leq B\langle x,x\rangle, \forall x\in\mathcal{A}.
	\end{equation*}
	So $\{T_{i}\}_{i\in I}$ is an operator frame in $\mathcal{A}$ with bounds $A$ and $B$, respectively.
\end{Ex}
\begin{Ex}
	Let $\mathcal{H}$ and $\mathcal{K}$ be separable Hilbert spaces and let $B(\mathcal{H,K)}$ be the set of all bounded linear operators from $\mathcal{H}$ into $\mathcal{K}$. $B(\mathcal{H,K)}$ is a Hilbert $B(\mathcal{K})$-module with a $B(\mathcal{K})$-valued inner product $\langle S, T\rangle= ST^{\ast}$ for all $S, T\in B(\mathcal{H,K)}$, and with a linear operation of $B(\mathcal{K})$ on $B(\mathcal{H,K)}$ by composition of operators.
	\\
	Let $\mathbb{J}=\mathbb{N}$ and fix $(a_{i})_{i\in\mathbb{N}}\in l^{2}(\mathbf{C})$. Define: $$T_{i}(X)=a_{i}X, \forall X\in B(\mathcal{H,K)}, \forall i\in\mathbb{N}.$$ 
	We have 
	$$\sum_{i\in\mathbb{N}}\langle T_{i}x,T_{i}x\rangle=\sum_{i\in\mathbb{N}}|a_{i}|^{2}\langle X, X\rangle, \forall X\in B(\mathcal{H,K)}.$$
	\\
	Then $\{T_{i}\}_{i\in\mathbb{N}}$ is $\sum_{i\in\mathbb{N}}|a_{i}|^{2}$-tight operator frame.
\end{Ex}
\begin{Ex}
	Let $\{W_{i}\}_{i\in\mathbb{J}}$ be a frame of submodules with respect to $ \{v_{i}\}_{i\in\mathbb{J}} $ for $\mathcal{H}$. Put $T_{i}=v_{i}\pi_{W_{i}}, \forall i\in\mathbb{J}$, then we get a sequence of operators $\{T_{i}\}_{i\in\mathbb{J}}$. Then there exist constants $A, B >0$ such that:
	\begin{displaymath}
		A\langle x,x\rangle\leq\sum_{i\in\mathbb{J}}v_{i}^{2}\langle \pi_{W_{i}}x,\pi_{W_{i}}x\rangle\leq B\langle x,x\rangle, \forall x\in\mathcal{H}.
	\end{displaymath}
	So we have:
	\begin{displaymath}
		A\langle x,x\rangle\leq\sum_{i\in\mathbb{J}}\langle T_{i}x,T_{i}x\rangle\leq B\langle x,x\rangle, \forall x\in\mathcal{H}.
	\end{displaymath}
	Thus, the sequence $\{T_{i}\}_{i\in\mathbb{J}}$ becomes an operator frame for $\mathcal{H}$.
\end{Ex}
With this example a frame of submodules can be viewed as a special case of operator frames.
\begin{theorem}
	Let $ \{T_i\}_{i \in I}$ be a operator frame for $End_{\mathcal{A}}^{\ast}(\mathcal{H})$ with bounds $\nu$ and $\delta$. If $\{R_i\}_{i \in I} \subset End_{\mathcal{A}}^{\ast}(\mathcal{H}) $ is a operator Bessel family with bound $\xi<\nu$, then $ \{T_i \mp R_i\}_{i \in I}$ is a  operator frame for $End_{\mathcal{A}}^{\ast}(\mathcal{H})$. 
	
\end{theorem}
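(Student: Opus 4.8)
The plan is to reduce everything to scalar norm estimates on the analysis operators and then invoke the norm characterization of (operator) frames. First, since every operator frame is in particular an operator Bessel family, both $\{T_i\}_{i\in I}$ and $\{R_i\}_{i\in I}$ are Bessel; hence the series $\sum_{i\in I}\langle (T_i\mp R_i)x,(T_i\mp R_i)x\rangle$ converges in norm for every $x\in\mathcal{H}$, so the candidate family is at least well defined. Writing $\Theta_T x=\{T_ix\}_{i\in I}$ and $\Theta_R x=\{R_ix\}_{i\in I}$ for the analysis operators into $\oplus_{i\in I}\mathcal{H}$, the analysis operator of $\{T_i\mp R_i\}_{i\in I}$ is exactly $\Theta_T\mp\Theta_R$, and for every $x$ one has $\|\Theta_T x\|^{2}=\|\sum_{i\in I}\langle T_ix,T_ix\rangle\|$, and similarly for $R$ and for $T\mp R$.

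Next I would pass from the module-valued frame inequalities to their norm form. Since $a\leq b$ implies $\|a\|\leq\|b\|$ for positive elements, the frame bounds of $\{T_i\}$ give $\sqrt{\nu}\,\|x\|\leq\|\Theta_T x\|\leq\sqrt{\delta}\,\|x\|$, while the Bessel bound of $\{R_i\}$ gives $\|\Theta_R x\|\leq\sqrt{\xi}\,\|x\|$, for all $x\in\mathcal{H}$. Now the two estimates follow from the triangle inequality in the Hilbert module $\oplus_{i\in I}\mathcal{H}$. For the upper bound,
\begin{equation*}
\|(\Theta_T\mp\Theta_R)x\|\leq\|\Theta_T x\|+\|\Theta_R x\|\leq(\sqrt{\delta}+\sqrt{\xi})\|x\|,
\end{equation*}
so that $\|\sum_{i\in I}\langle (T_i\mp R_i)x,(T_i\mp R_i)x\rangle\|\leq(\sqrt{\delta}+\sqrt{\xi})^{2}\|x\|^{2}$. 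For the lower bound, the reverse triangle inequality yields
\begin{equation*}
\|(\Theta_T\mp\Theta_R)x\|\geq\|\Theta_T x\|-\|\Theta_R x\|\geq(\sqrt{\nu}-\sqrt{\xi})\|x\|,
\end{equation*}
and here the hypothesis $\xi<\nu$ is precisely what makes $\sqrt{\nu}-\sqrt{\xi}>0$, giving $\|\sum_{i\in I}\langle (T_i\mp R_i)x,(T_i\mp R_i)x\rangle\|\geq(\sqrt{\nu}-\sqrt{\xi})^{2}\|x\|^{2}$.

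Finally, having a two-sided norm inequality with strictly positive constants $(\sqrt{\nu}-\sqrt{\xi})^{2}$ and $(\sqrt{\delta}+\sqrt{\xi})^{2}$, I would apply the norm characterization of frames (the operator-frame analogue of the characterization theorem established earlier for standard frames and for $g$-frames in Theorem \ref{*}) to conclude that $\{T_i\mp R_i\}_{i\in I}$ is an operator frame for $End_{\mathcal{A}}^{\ast}(\mathcal{H})$, with these quantities as frame bounds.

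The step I expect to be the genuine obstacle is the lower bound: a reverse triangle inequality has no counterpart in the module ordering, since for positive elements there is no bound controlling $a-b$ from below, so one cannot run the argument directly with the $\mathcal{A}$-valued frame inequality. Passing to the scalar norm through the characterization theorem is therefore not a convenience but the essential device, and it is the reason the hypothesis is stated as a comparison $\xi<\nu$ between the real bounds rather than as a module-valued condition.
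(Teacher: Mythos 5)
Your proposal is correct and follows essentially the same route as the paper: both pass to scalar norm estimates $\sqrt{\nu}\,\|x\|\leq\|\{T_ix\}\|\leq\sqrt{\delta}\,\|x\|$ and $\|\{R_ix\}\|\leq\sqrt{\xi}\,\|x\|$, apply the triangle and reverse triangle inequalities in the direct sum module, and obtain the same bounds $(\sqrt{\nu}-\sqrt{\xi})^{2}$ and $(\sqrt{\delta}+\sqrt{\xi})^{2}$. The only difference is one of care: you explicitly invoke the norm characterization theorem (the analogue of Theorem \ref{*}) to pass from the norm inequality back to the module-valued frame condition, a step the paper's proof leaves implicit.
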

\begin{proof}
	We just proof the case that $ \{T_w + R_w\}_{w \in \Omega}$ is a operator frame for $End_{\mathcal{A}}^{\ast}(\mathcal{H})$. \\
	On one hand, For each $x \in \mathcal{H}$, we have
	\begin{align*}
		\|\{(T_i + R_i)f\}_{i \in I}\| &=\|\sum_{i \in I}\langle (T_i + R_i)f,(T_i + R_i)f\rangle_{\mathcal{A}}\|^{\frac{1}{2}}\\
		&\leq \|\{T_if\}_{i \in I}\|+\|\{ R_if\}_{i \in I}\|\\
		&= \|\sum_{i \in I}\langle T_i f,T_i f\rangle_{\mathcal{A}} \|^{\frac{1}{2}} + \|\sum_{i \in I}\langle R_i f, R_i f\rangle_{\mathcal{A}}\|^{\frac{1}{2}}\\
		&\leq \sqrt{\delta}\|f\|+\sqrt{\xi}\|f\|.
	\end{align*}
	Hence
	\begin{equation}\label{haja1}
		\|\sum_{i \in I}\langle (T_i + R_i)f,(T_i + R_i)f\rangle_{\mathcal{A}} \|^{\frac{1}{2}}\leq (\sqrt{\delta}+\sqrt{\xi})\|f\|.
	\end{equation}
	One the other hand we have
	\begin{align*}
		\|\{(T_i + R_i)f\}_{i \in I}\|&=\|\sum_{i \in I}\langle (T_i + R_i)f,(T_i + R_i)f\rangle_{\mathcal{A}}\|^{\frac{1}{2}}\\
		&\geq \|\{T_if\}_{i \in I}\|-\|\{ R_if\}_{i \in I}\|\\
		&=\|\sum_{i \in I}\langle T_i f,T_i f\rangle_{\mathcal{A}} \|^{\frac{1}{2}}-\|\sum_{i \in I}\langle  R_{i}f, R_{i}f\rangle_{\mathcal{A}} \|^{\frac{1}{2}}\\
		&\geq \sqrt{\nu}\|f\|-\sqrt{\xi}\|f\|.
	\end{align*}
	Then
	\begin{equation}\label{haja2}
		\|\sum_{i \in I}\langle (T_i + R_i)f,(T_i + R_i)f\rangle_{\mathcal{A}} \|^{\frac{1}{2}}\geq  (\sqrt{\nu}-\sqrt{\xi})\|f\|.
	\end{equation}
	From (\ref{haja1}) and (\ref{haja2}) we get
	$$(\sqrt{\nu}-\sqrt{\xi})^2\|f\|^2\leq\|\sum_{i \in I}\langle (T_i + R_i)f,(T_i + R_i)f\rangle_{\mathcal{A}}\|\leq (\sqrt{\delta}+\sqrt{\xi})^2\|f\|^2 .$$
	Therefore $ \{T_i + R_i\}_{i \in I}$ is a operator frame for $End_{\mathcal{A}}^{\ast}(\mathcal{H})$.
\end{proof}
\begin{theorem}
	Let $ \{T_i \}_{i \in I}$ be a operator frame for $End_{\mathcal{A}}^{\ast}(\mathcal{H})$ with bounds $\nu$ and $\delta$ and let $ \{R_i\}_{i \in I} \subset End_{\mathcal{A}}^{\ast}(\mathcal{H})$. The following statements are equivalent:
	\begin{itemize}
		\item [(i)] $ \{R_i\}_{i \in I}$ is a operator frame for $End_{\mathcal{A}}^{\ast}(\mathcal{H})$ .
		\item [(ii)] There exists a constant $\xi>0$, such that for all x in $\mathcal{H} $
	\end{itemize}	
	we have 
	\begin{equation}\label{haja3}
		\scriptsize 
		\|\sum_{i \in I}\langle (T_i - R_i)f,(T_i - R_i)f\rangle_{\mathcal{A}}\|\leq \xi. min(\|\sum_{i \in I}\langle T_i f,T_i f\rangle_{\mathcal{A}}\|, \|\sum_{i \in I}\langle R_i f,R_i f\rangle_{\mathcal{A}} \|).
	\end{equation}
\end{theorem}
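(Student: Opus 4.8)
The plan is to reduce both implications to elementary estimates on the ``analysis energy'' of a family. For a family $S=\{S_i\}_{i\in I}$ of adjointable operators and $f\in\mathcal{H}$, I would write
$$
\Phi_S(f):=\left\|\sum_{i\in I}\langle S_i f,S_i f\rangle_{\mathcal{A}}\right\|^{1/2}=\left\|\{S_i f\}_{i\in I}\right\|_{\oplus_{i\in I}\mathcal{H}},
$$
so that $\Phi_S(f)$ is exactly the norm of the vector $\{S_i f\}_{i\in I}$ in the Hilbert module direct sum. In particular the triangle inequality $\Phi_{T-R}(f)\le\Phi_T(f)+\Phi_R(f)$ holds, just as in the proof of the preceding theorem. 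With this notation the operator-frame hypothesis on $\{T_i\}_{i\in I}$ reads $\sqrt{\nu}\,\|f\|\le \Phi_T(f)\le\sqrt{\delta}\,\|f\|$, and, after taking square roots of the minimum, condition (ii) becomes the two simultaneous estimates $\Phi_{T-R}(f)\le\sqrt{\xi}\,\Phi_T(f)$ and $\Phi_{T-R}(f)\le\sqrt{\xi}\,\Phi_R(f)$.

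For the implication (i)$\Rightarrow$(ii) I would assume $\{R_i\}_{i\in I}$ is an operator frame, say with $\sqrt{A}\,\|f\|\le\Phi_R(f)\le\sqrt{B}\,\|f\|$. The triangle inequality gives $\Phi_{T-R}(f)\le\Phi_T(f)+\Phi_R(f)\le(\sqrt{\delta}+\sqrt{B})\|f\|$, while the two lower frame bounds give $\min(\Phi_T(f),\Phi_R(f))\ge\min(\sqrt{\nu},\sqrt{A})\,\|f\|$. Squaring and combining yields
$$
\Phi_{T-R}(f)^2\le\frac{(\sqrt{\delta}+\sqrt{B})^2}{\min(\nu,A)}\,\min\bigl(\Phi_T(f)^2,\Phi_R(f)^2\bigr),
$$
so (ii) holds with $\xi=(\sqrt{\delta}+\sqrt{B})^2/\min(\nu,A)$.

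For (ii)$\Rightarrow$(i) I would establish the two operator-frame bounds for $\{R_i\}_{i\in I}$ separately. The upper bound is immediate from $\Phi_R(f)\le\Phi_{T-R}(f)+\Phi_T(f)\le(1+\sqrt{\xi})\Phi_T(f)\le(1+\sqrt{\xi})\sqrt{\delta}\,\|f\|$. For the lower bound I would write $\Phi_T(f)\le\Phi_{T-R}(f)+\Phi_R(f)$ and here crucially use the $\Phi_R$ half of condition (ii), namely $\Phi_{T-R}(f)\le\sqrt{\xi}\,\Phi_R(f)$, to obtain $\Phi_T(f)\le(1+\sqrt{\xi})\Phi_R(f)$ and hence $\Phi_R(f)\ge\frac{\sqrt{\nu}}{1+\sqrt{\xi}}\|f\|$. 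Squaring both bounds gives
$$
\frac{\nu}{(1+\sqrt{\xi})^2}\|f\|^2\le\Phi_R(f)^2\le(1+\sqrt{\xi})^2\delta\,\|f\|^2,
$$
and, exactly as in the preceding theorem, this norm inequality lets me conclude that $\{R_i\}_{i\in I}$ is an operator frame.

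The only delicate point, which I would treat as the main obstacle, is the lower bound in (ii)$\Rightarrow$(i). If one tried instead to bound $\Phi_{T-R}(f)$ by $\sqrt{\xi}\,\Phi_T(f)$ there, one would only get $\Phi_R(f)\ge(1-\sqrt{\xi})\Phi_T(f)$, which is useless unless $\xi<1$. It is precisely the presence of the minimum in (ii) --- allowing the comparison against $\Phi_R(f)$ rather than $\Phi_T(f)$ --- that forces the factor $1+\sqrt{\xi}$ into the denominator and removes any smallness hypothesis on $\xi$. Routing the estimate through the correct term of the minimum is the heart of the argument; everything else is the triangle inequality for the direct-sum norm together with the norm characterization of operator frames.
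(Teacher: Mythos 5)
Your proof is correct, and its core is the same as the paper's: the triangle inequality for the direct-sum norm $\Phi_S(f)=\|\sum_{i\in I}\langle S_if,S_if\rangle_{\mathcal{A}}\|^{1/2}$ together with the norm characterization of operator frames. For (ii)$\Rightarrow$(i) you reproduce the paper's argument exactly, including the key point you flag as delicate: the lower bound must be routed through the $\Phi_R$ branch of the minimum, giving $\Phi_T(f)\le(1+\sqrt{\xi})\Phi_R(f)$ and hence $\Phi_R(f)\ge\frac{\sqrt{\nu}}{1+\sqrt{\xi}}\|f\|$, which is precisely how the paper obtains its inequality. Where you genuinely differ is (i)$\Rightarrow$(ii). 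The paper derives two \emph{relative} estimates, $\Phi_{T-R}(f)\le(1+\sqrt{\rho/\nu})\,\Phi_T(f)$ and $\Phi_{T-R}(f)\le(1+\sqrt{\delta/\eta})\,\Phi_R(f)$ (with $\eta,\rho$ the frame bounds of $\{R_i\}_{i\in I}$), by converting $\sqrt{\rho}\,\|f\|$ back into a multiple of $\Phi_T(f)$ via the lower bound of $\{T_i\}_{i\in I}$, and then sets $\xi=\min\bigl(1+\sqrt{\delta/\eta},\,1+\sqrt{\rho/\nu}\bigr)$; strictly speaking, covering both branches of the minimum in condition (ii) requires the \emph{maximum} of the two constants, squared, so the paper's stated $\xi$ is off (harmlessly, since only the existence of some $\xi$ matters). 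Your route --- bounding $\Phi_{T-R}(f)\le(\sqrt{\delta}+\sqrt{B})\|f\|$ from above and $\min\bigl(\Phi_T(f),\Phi_R(f)\bigr)\ge\min(\sqrt{\nu},\sqrt{A})\,\|f\|$ from below, then dividing --- produces a single constant $\xi=(\sqrt{\delta}+\sqrt{B})^2/\min(\nu,A)$ that dominates both branches of the minimum simultaneously, so no case analysis is needed and the slip is avoided. The price is a slightly larger (less sharp) constant; the gain is a cleaner and fully correct derivation.
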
	
\begin{proof}
	Suppose that $ \{R_i\}_{i \in I}$  is a operator frame for $End_{\mathcal{A}}^{\ast}(\mathcal{H})$ with bound $\eta$ and $\rho$. Then for all $f \in \mathcal{H} $ we have 
	\begin{align*}
		\|\{(T_i - R_i)f\}_{i \in I}\|&=\|\sum_{i \in I}\langle (T_i - R_i)f,(T_i - R_i)f\rangle_{\mathcal{A}}\|^{\frac{1}{2}}\\
		&\leq \|\{T_if\}_{i \in I}\|+\|\{ R_if\}_{i \in I}\|\\
		&=\|\langle T_i f,T_i f\rangle_{\mathcal{A}}\|^{\frac{1}{2}}+\|\sum_{i \in I}\langle R_if, R_if\rangle_{\mathcal{A}}\|^{\frac{1}{2}}\\
		&\leq \|\sum_{i \in I}\langle T_i f,T_i f\rangle_{\mathcal{A}} \|^{\frac{1}{2}}+ \sqrt{\rho}\|f\|\\
		&\leq  \|\sum_{i \in I}\langle T_i f,T_i f\rangle_{\mathcal{A}} \|^{\frac{1}{2}}+\sqrt{\frac{\rho}{\nu}}  \|\sum_{i \in I}\langle T_i f,T_i f\rangle_{\mathcal{A}} \|^{\frac{1}{2}}\\
		&= (1+\sqrt{\frac{\rho}{\nu}}) \| \sum_{i \in I}\langle T_i f,T_i f\rangle_{\mathcal{A}} \|^{\frac{1}{2}}.
	\end{align*}
	In the same way we have 
	$$\|\sum_{i \in I}\langle (T_i - R_i)f,(T_i - R_i)f\rangle_{\mathcal{A}}\|^{\frac{1}{2}}\leq \left(1+\sqrt{\frac{\delta}{\eta}}\right) \|\sum_{i \in I}\langle R_i f,R_i f\rangle_{\mathcal{A}} \|^{\frac{1}{2}}.$$
	For (\ref{haja3}), we take $\xi= min (1+\sqrt{\frac{\delta}{\eta}} ,1+\sqrt{\frac{\rho}{\nu}} )$.\\
	Now we assume that (\ref{haja3}) holds. For each $f \in \mathcal{H} $, we have: \\
	From ( \ref{haja3}), we have 
	$$	\|\sum_{i \in I}\langle (T_i - R_i)f,(T_i - R_i)f\rangle_{\mathcal{A}}\|\leq \xi \|\sum_{i \in I}\langle R_i f,R_i f\rangle_{\mathcal{A}}\|.$$
	Then
	$$\|\sum_{i \in I}\langle T_i f,T_i f\rangle_{\mathcal{A}}\|^{\frac{1}{2}} \leq \sqrt{\xi}\|\sum_{i \in I}\langle R_i f,R_i f\rangle_{\mathcal{A}}\|^{\frac{1}{2}}+\| \sum_{i \in I}\langle R_i f,R_i f\rangle_{\mathcal{A}} \|^{\frac{1}{2}}. $$
	Hence 
	\begin{equation} \label{haja4}
		\sqrt{\nu}\|f\| \leq\|\sum_{i \in I}\langle T_i f,T_i f\rangle_{\mathcal{A}} \|^{\frac{1}{2}}\leq (1+\sqrt{\xi})\| \sum_{i \in I}\langle R_i f,R_i f\rangle_{\mathcal{A}}\|^{\frac{1}{2}}.
	\end{equation}
	Also, we have 
	\begin{align*}
		\|\{ R_if\}_{i \in I}\|&=\|\sum_{i \in I}\langle R_i f,R_i f\rangle_{\mathcal{A}} \|^{\frac{1}{2}}\\
		&= \|\{ (R_if - T_if) + T_if\}_{i \in I}\|\\
		&\leq \|\{(T_i - R_i)f\}_{i \in I}\|+\|\{ T_if\}_{i \in I}\|\\  
		&= \|\sum_{i \in I}\langle (T_i - R_i)f,(T_i - R_i)f\rangle_{\mathcal{A}} \|^{\frac{1}{2}}+ \|\sum_{i \in I}\langle T_i f,T_if\rangle_{\mathcal{A}} \|^{\frac{1}{2}}\\    
	\end{align*}
	From (\ref{haja3}), we have 
	
	$$	\|\sum_{i \in I}\langle (T_i - R_i)f,(T_i - R_i)f\rangle_{\mathcal{A}}\|\leq \xi \|\sum_{i \in I}\langle T_i f,T_i f\rangle_{\mathcal{A}}\|.$$
	Then 
	
	$$\|\sum_{i \in I}\langle R_i f,R_i f\rangle_{\mathcal{A}}\|^{\frac{1}{2}} \leq (1+\sqrt{\xi})\|\sum_{i \in I}\langle T_i f,T_i f\rangle_{\mathcal{A}} \|^{\frac{1}{2}}.$$
	So, 
	\begin{equation}\label{haja5}
		\|\sum_{i \in I}\langle R_i f,R_i f\rangle_{\mathcal{A}}\|^{\frac{1}{2}} \leq (1+\sqrt{\xi})\sqrt{\delta} \|f\|.
	\end{equation}
	From (\ref{haja4})  and (\ref{haja5}) we give that 
	$$\frac{\nu}{(1+\sqrt{\xi})^2} \|f\|^2 \leq \|\sum_{i \in I}\langle R_i f,R_i f\rangle_{\mathcal{A}}\|\leq \delta (1+\sqrt{\xi})^2\|f\|^2.$$
	Therefore $ \{R_i\}_{i \in I}$ is a operator frame for $End_{\mathcal{A}}^{\ast}(\mathcal{H})$. 
\end{proof}
\begin{theorem}
	For $k=1,2,...,n$, let $\{T_{k,i}\} _{i \in I} \subset End_{\mathcal{A}}^{\ast}(\mathcal{H})$ be a operator frames with bounds  $A_k$ and  $B_k$. Let $\{R_{k,i}\} _{i \in I}\subset End_{\mathcal{A}}^{\ast}(\mathcal{H})$ and $L: l^2(\mathcal{H})\longrightarrow l^2(\mathcal{H}) $ be a bounded linear operator such that:
	\begin{equation*}
		L(\{\sum_{k=1}^{n}R_{k,i}x\} _{i \in I})=\{T_{p,i}x\} _{i \in I} \quad for some \quad p \in \{1,2,..,n\}.
	\end{equation*}
	If there exists a constant $\lambda >0$ such that for each $x\in \mathcal{H}$ and $ k=1,..,n$ we have,
	
	\begin{equation*}
		\| \sum_{i\in I}\langle (T_{k,i}-R_{k,i}) x,(T_{k,i}-R_{k,i}) x\rangle_{\mathcal{A}} \| \leq \lambda \|\sum_{i\in I}\langle T_{k,i} x,T_{k,i} x\rangle_{\mathcal{A}} \|.
	\end{equation*}
	Then $\{\sum_{k=1}^{n}R_{k,i}\} _{i \in I}$ is a operator frame for $End_{\mathcal{A}}^{\ast}(\mathcal{H})$.
\end{theorem}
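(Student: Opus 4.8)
The plan is to exhibit two constants $0<\alpha\le\beta$ with
$$\alpha\|x\|^{2}\le\Big\|\sum_{i\in I}\langle S_{i}x,S_{i}x\rangle_{\mathcal{A}}\Big\|\le\beta\|x\|^{2},\qquad\forall x\in\mathcal{H},$$
where $S_{i}:=\sum_{k=1}^{n}R_{k,i}$; exactly as in the two preceding operator-frame theorems, such a two-sided norm inequality is what certifies that $\{S_{i}\}_{i\in I}$ is an operator frame for $End_{\mathcal{A}}^{\ast}(\mathcal{H})$. Throughout I would work in $l^{2}(\mathcal{H})$ and use repeatedly the identity $\big\|\sum_{i}\langle y_{i},y_{i}\rangle_{\mathcal{A}}\big\|^{1/2}=\|\{y_{i}\}_{i}\|$, so that the frame hypotheses, the perturbation bound, and the action of $L$ can all be read off as statements about $l^{2}(\mathcal{H})$-norms.

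For the upper (Bessel) bound I would first rewrite the given perturbation hypothesis, after taking square roots, as $\|\{(T_{k,i}-R_{k,i})x\}_{i}\|\le\sqrt{\lambda}\,\|\{T_{k,i}x\}_{i}\|$ for each $k$. Combining this with the triangle inequality and the upper frame bound $B_{k}$ of $\{T_{k,i}\}_{i}$ gives $\|\{R_{k,i}x\}_{i}\|\le\|\{T_{k,i}x\}_{i}\|+\|\{(T_{k,i}-R_{k,i})x\}_{i}\|\le(1+\sqrt{\lambda})\sqrt{B_{k}}\,\|x\|$. Since $S_{i}x=\sum_{k=1}^{n}R_{k,i}x$, a further application of the triangle inequality in $l^{2}(\mathcal{H})$ yields $\|\{S_{i}x\}_{i}\|\le\sum_{k=1}^{n}\|\{R_{k,i}x\}_{i}\|\le(1+\sqrt{\lambda})\big(\sum_{k=1}^{n}\sqrt{B_{k}}\big)\|x\|$, which is the desired Bessel estimate with $\beta=\big[(1+\sqrt{\lambda})\sum_{k=1}^{n}\sqrt{B_{k}}\big]^{2}$.

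For the lower bound I would bring in the operator $L$. Its defining relation reads $L(\{S_{i}x\}_{i})=\{T_{p,i}x\}_{i}$, whence $\|\{T_{p,i}x\}_{i}\|=\|L(\{S_{i}x\}_{i})\|\le\|L\|\,\|\{S_{i}x\}_{i}\|$. On the other hand, taking norms in the lower frame inequality $A_{p}\langle x,x\rangle_{\mathcal{A}}\le\sum_{i}\langle T_{p,i}x,T_{p,i}x\rangle_{\mathcal{A}}$ (using that $A_{p}$ is a positive scalar) gives $\sqrt{A_{p}}\,\|x\|\le\|\{T_{p,i}x\}_{i}\|$. Note that $\|L\|\neq0$, since $\|L\|=0$ would force $\{T_{p,i}x\}_{i}=0$ for all $x$, contradicting $A_{p}>0$. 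Chaining the two estimates produces $\|\{S_{i}x\}_{i}\|\ge\frac{\sqrt{A_{p}}}{\|L\|}\|x\|$, i.e. the lower norm inequality with $\alpha=A_{p}/\|L\|^{2}$.

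Putting the two halves together yields the displayed two-sided norm inequality, and hence $\{\sum_{k=1}^{n}R_{k,i}\}_{i\in I}$ is an operator frame with explicit bounds $A_{p}/\|L\|^{2}$ and $\big[(1+\sqrt{\lambda})\sum_{k=1}^{n}\sqrt{B_{k}}\big]^{2}$. The only genuinely delicate point is the lower bound: the whole argument there rests on using the boundedness of $L$ together with the relation $L(\{S_{i}x\}_{i})=\{T_{p,i}x\}_{i}$ precisely to transport the lower frame bound of the \emph{known} frame $\{T_{p,i}\}_{i}$ back to $\{S_{i}\}_{i}$, so I would take care that $\|L\|\neq0$ is justified and that $\{S_{i}x\}_{i}\in l^{2}(\mathcal{H})$ (which follows from the Bessel estimate already proved). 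The upper bound, by contrast, is a routine triangle-inequality perturbation estimate that uses the hypothesis only for each fixed $k$.
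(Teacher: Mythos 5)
Your proposal is correct and follows essentially the same route as the paper's own proof: the upper bound via the triangle inequality in $l^{2}(\mathcal{H})$ combined with the perturbation hypothesis and the Bessel bounds $B_{k}$, and the lower bound by transporting $\sqrt{A_{p}}\,\|x\|\leq\|\{T_{p,i}x\}_{i}\|$ through the relation $L(\{\textstyle\sum_{k}R_{k,i}x\}_{i})=\{T_{p,i}x\}_{i}$ and the boundedness of $L$, yielding the same bounds $A_{p}/\|L\|^{2}$ and $\bigl[(1+\sqrt{\lambda})\sum_{k=1}^{n}\sqrt{B_{k}}\bigr]^{2}$. Your added remarks that $\|L\|\neq 0$ and that $\{S_{i}x\}_{i}\in l^{2}(\mathcal{H})$ must be justified are small refinements the paper leaves implicit, but they do not change the argument.
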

\begin{proof}
	For all $x \in \mathcal{H}$, we have
	\begin{align*}
		\|\{\sum_{k=1}^{n}R_{k,i}x\} _{i \in I}\|&\leq \sum_{k=1}^{n}\|\{R_{k,i}x\} _{i \in I}\|\\
		&\leq \sum_{k=1}^{n}(\|\{T_{k,i}-R_{k,i}x\}_{i \in I}\|+\|\{T_{k,i}x\}_{i \in I}\|)\\
		&\leq (1+\sqrt{\lambda}) \|\sum_{k=1}^{n} \|\{T_{k,i}x\} _{i \in I}\|\\
		&\leq  (1+\sqrt{\lambda})(\sum_{k=1}^{n}\sqrt{B_k}) \|\langle  x, x\rangle_{\mathcal{A}}\|^{\frac{1}{2}}.
	\end{align*}
	Since, for any $x\in \mathcal{H}$, we have 
	$$\|L(\{\sum_{k=1}^{n}R_{k,i}x\} _{i \in I})\|=\|\{T_{p,i}x\} _{i \in I}\|.$$
	Then
	\begin{align*}
		\sqrt{A_p}\|\langle  x, x\rangle_{\mathcal{A}}\|^{\frac{1}{2}}&\leq \|\{T_{p,i}x\} _{i \in I}\|\\
		&= \|L(\{\sum_{k=1}^{n}R_{k,i}x\} _{i \in I})\|\\
		&\leq \|L\| \|\{\sum_{k=1}^{n}R_{k,i}x\} _{i \in I}\|.
	\end{align*}
	Hence
	$$\frac{\sqrt{A_p}}{\|L\|}\|\langle  x, x\rangle_{\mathcal{A}}\|^{\frac{1}{2}}\leq \|\{\sum_{k=1}^{n}R_{k,i}x\} _{i \in I}\|,\qquad  x\in \mathcal{H}.$$
	Therefore 
	$$\frac{\sqrt{A_p}}{\|L\|}\|\langle  x, x\rangle_{\mathcal{A}}\|^{\frac{1}{2}}\leq \|\{\sum_{k=1}^{n}R_{k,i}\} _{i \in I}\|\leq (1+\sqrt{\lambda})(\sum_{k=1}^{n}\sqrt{B_k}) \|\langle  x, x\rangle_{\mathcal{A}}\|^{\frac{1}{2}}.$$
	This give that $\{\sum_{k=1}^{n} R_{k,i}\} _{i \in I}$ is a  operator frame for $End_{\mathcal{A}}^{\ast}(\mathcal{H})$.
\end{proof}

\section{$K$-operator frame}

\begin{definition}
	Let $K\in End_{\mathcal{A}}^{\ast}(\mathcal{H})$. A family of adjointable operators $\{T_{i}\}_{i\in\mathbb{J}}$ on a Hilbert $\mathcal{A}$-module $\mathcal{H}$ over a unital $C^{\ast}$-algebra is said to be a $K$-operator frame for $End_{\mathcal{A}}^{\ast}(\mathcal{H})$, if there exists positive constants $A, B > 0$ such that 
	\begin{equation}\label{eq6}
		A\langle K^{\ast}x,K^{\ast}x\rangle\leq\sum_{i\in\mathbb{J}}\langle T_{i}x,T_{i}x\rangle\leq B\langle x,x\rangle, \forall x\in\mathcal{H}.
	\end{equation}
	The numbers $A$ and $B$ are called lower and upper bound of the $K$-operator frame, respectively. If $$A\langle K^{\ast}x,K^{\ast}x\rangle=\sum_{i\in\mathbb{J}}\langle T_{i}x,T_{i}x\rangle,$$ the $K$-operator frame is $A$-tight. If $A=1$, it is called a normalized tight $K$-operator frame or a Parseval $K$-operator frame. 
\end{definition}
\begin{Ex}
	Let $l^{\infty}$ be the set of all bounded complex-valued sequences. For any $u=\{u_{j}\}_{j\in\mathbb{N}}, v=\{v_{j}\}_{j\in\mathbb{N}}\in l^{\infty}$, we define$$
	uv=\{u_{j}v_{j}\}_{j\in\mathbb{N}}, u^{\ast}=\{\bar{u_{j}}\}_{j\in\mathbb{N}}, \|u\|=\sup_{j\in\mathbb{N}}|u_{j}|.$$
	Then $\mathcal{A}=\{l^{\infty}, \|.\|\}$ is a $\mathbb{C}^{\ast}$-algebra.
	
	Let $\mathcal{H}=C_{0}$ be the set of all sequences converging to zero. For any $u, v\in\mathcal{H}$ we define$$\langle u,v\rangle=uv^{\ast}=\{u_{j}\bar{u_{j}}\}_{j\in\mathbb{N}}.$$
	Then $\mathcal{H}$ is a Hilbert $\mathcal{A}$-module.
	
	Now let $\{e_{j}\}_{j\in\mathbb{N}}$ be the standard orthonormal basis of $\mathcal{H}$. For each $j\in\mathbb{N}$ define the adjointable operator$$T_{j}:\mathcal{H}\rightarrow\mathcal{H}, \;\;T_{j}x=\langle x,e_{j}\rangle e_{j},$$
	then for every $x\in\mathcal{H}$ we have$$\sum_{j\in\mathbb{N}}\langle T_{j}x,T_{j}x\rangle=\langle x,x\rangle.$$
	Fix $N\in\mathbb{N}^{\ast}$ and define$$K:\mathcal{H}\rightarrow\mathcal{H}, \;\;Ke_{j}=\begin{cases}
		je_{j}&\text{if $j\leq N$,}\\0&\text{if $j>N$.}
	\end{cases}$$
	It is easy to check that $K$ is adjointable and satisfies
	$$K^{\ast}e_{j}=\begin{cases}
		je_{j}&\text{if $j\leq N$,}\\0&\text{if $j>N$.}
	\end{cases}$$
	For any $x\in\mathcal{H}$ we have
	$$\dfrac{1}{N^{2}}\langle K^{\ast}x,K^{\ast}x\rangle\leq\sum_{j\in\mathbb{N}}\langle T_{j}x,T_{j}x\rangle=\langle x,x\rangle.$$
	This shows that $\{T_{j}\}_{j\in\mathbb{N}}$ is a $K$-operator frame with bounds $\dfrac{1}{N^{2}}, 1$.
\end{Ex}
One may ask for the class of operators $K$ which can guarantee the existence of $K$-operator frame for $End_{\mathcal{A}}^{\ast}(\mathcal{H})$. The following remark and proposition answer this query.
\begin{Rem}
	Every operator frame is a $K$-operator frame, for any $K\in End_{\mathcal{A}}^{\ast}(\mathcal{H})$: $K\neq0$. Indeed for any $K\in End_{\mathcal{A}}^{\ast}(\mathcal{H})$ the inequality 
	\begin{displaymath}
		\langle K^{\ast}x,K^{\ast}x\rangle\leq\|K\|^{2}\langle x,x\rangle, \forall x\in\mathcal{H} 
	\end{displaymath}
	holds. Now if $\{T_{i}\}_{i\in\mathbb{J}}$ is a operator frame with bounds $A$ and $B$ then
	\begin{displaymath}
		A\|K\|^{-2}\langle K^{\ast}x,K^{\ast}x\rangle\leq A\langle x,x\rangle\leq\sum_{i\in\mathbb{J}}\langle T_{i}x,T_{i}x\rangle\leq B\langle x,x\rangle, \forall x\in\mathcal{H}.
	\end{displaymath}
	Therefore $\{T_{i}\}_{i\in\mathbb{J}}$ is a $K$-operator frame with bounds $A\|K\|^{-2}$ and $B$.
\end{Rem}

\section{$K$-operator frame in tensor products of Hilbert $C^{\ast}$-modules}

Suppose that $\mathcal{A} , \mathcal{B}$ are $C^{\ast}$-algebras and we take $\mathcal{A}\otimes\mathcal{B}$ as the completion of $\mathcal{A}\otimes_{alg}\mathcal{B}$ with the spatial norm. $\mathcal{A}\otimes\mathcal{B}$ is the spatial tensor product of $\mathcal{A}$ and $\mathcal{B}$, also suppose that $\mathcal{H}$ is a Hilbert $\mathcal{A}$-module and $\mathcal{K}$ is a Hilbert $\mathcal{B}$-module. We want to define $\mathcal{H}\otimes\mathcal{K}$ as a Hilbert $(\mathcal{A}\otimes\mathcal{B})$-module.\\
Start by forming the algebraic tensor product $\mathcal{H}\otimes_{alg}\mathcal{K}$ of the vector spaces $\mathcal{H}$, $\mathcal{K}$ (over $\mathbb{C}$). 
This is a left module over $(\mathcal{A}\otimes_{alg}\mathcal{B})$ (the module action being given by 
$$(a\otimes b)(x\otimes y)=ax\otimes by (a\in\mathcal{A},b\in\mathcal{B},x\in\mathcal{H},y\in\mathcal{K})). $$
For $(x_{1},x_{2}\in\mathcal{H},y_{1},y_{2}\in\mathcal{K})$, we define 
$$ \langle x_{1}\otimes y_{1},x_{2}\otimes y_{2}\rangle_{\mathcal{A}\otimes\mathcal{B}}=\langle x_{1},x_{2}\rangle_{\mathcal{A}}\otimes\langle y_{1},y_{2}\rangle_{\mathcal{B}} . $$
We also know that for $ z=\sum_{i=1}^{n}x_{i}\otimes y_{i} $ in $\mathcal{H}\otimes_{alg}\mathcal{K}$, we have 
$$ \langle z,z\rangle_{\mathcal{A}\otimes\mathcal{B}}=\sum_{i,j}\langle x_{i},x_{j}\rangle_{\mathcal{A}}\otimes\langle y_{i},y_{j}\rangle_{\mathcal{B}}\geq0 $$  and $ \langle z,z\rangle_{\mathcal{A}\otimes\mathcal{B}}=0 $ iff $z=0$.
This extends by linearity to an $(\mathcal{A}\otimes_{alg}\mathcal{B})$-valued sesquilinear form on $\mathcal{H}\otimes_{alg}\mathcal{K}$, which makes $\mathcal{H}\otimes_{alg}\mathcal{K}$ into a semi-inner-product module over the pre-$\mathcal{C}^{\ast}$-algebra $(\mathcal{A}\otimes_{alg}\mathcal{B})$.
The semi-inner-product on $\mathcal{H}\otimes_{alg}\mathcal{K}$ is actually an inner product, see \cite{Lan}. Then $\mathcal{H}\otimes_{alg}\mathcal{K}$ is an inner-product module over the pre-$\mathcal{C}^{\ast}$-algebra $(\mathcal{A}\otimes_{alg}\mathcal{B})$, and we can perform the double completion discussed in chapter 1 of \cite{Lan} to conclude that the completion $\mathcal{H}\otimes\mathcal{K}$ of $\mathcal{H}\otimes_{alg}\mathcal{K}$ is a Hilbert $(\mathcal{A}\otimes\mathcal{B})$-module. We call $ \mathcal{H}\otimes\mathcal{K} $ the exterior tensor product of $\mathcal{H}$ and $\mathcal{K}$. 
With $\mathcal{H}$ ,$\mathcal{K}$ as above, we wish to investigate the adjointable operators on $ \mathcal{H}\otimes\mathcal{K} $. 
Suppose that $S\in End_{\mathcal{A}}^{\ast}( \mathcal{H})$ and $T\in End_{\mathcal{B}}^{\ast}(\mathcal{K})$. We define a linear operator $S\otimes T$ on $ \mathcal{H}\otimes\mathcal{K} $ by $S\otimes T(x\otimes y)=Sx\otimes Ty  (x\in\mathcal{H} ,y\in\mathcal{K})$. It is a routine verification that  is $S^{\ast}\otimes T^{\ast}$ is the adjoint of $S\otimes T$ , so in fact $S\otimes T\in End_{\mathcal{A\otimes B}}^{\ast}(\mathcal{H}\otimes\mathcal{K})$. For more details see \cite{Dav,Lan}. 
We note that if $a\in\mathcal{A}^{+}$ and $b\in\mathcal{B}^{+}$ , then $a\otimes b\in(\mathcal{A}\otimes\mathcal{B})^{+}$. 
Plainly if $a$ , $b$ are Hermitian elements of $\mathcal{A}$ and $a\geq b$ , then for every positive element $x$ of $\mathcal{B}$, we have $a\otimes x\geq b\otimes x$.

\begin{theorem}
	Let $\mathcal{H}$ and $\mathcal{K}$ be two Hilbert $C^{\ast}$-modules over unital $C^{\ast}$-algebras $\mathcal{A}$ and $\mathcal{B}$, respectively. Let $\{\Lambda_{i}\}_{i\in I}\subset End_{\mathcal{A}}^{\ast}(\mathcal{H})$ be a $K_{1}$-operator frame for $\mathcal{H}$ and $\{\Gamma_{j}\}_{j\in J}\subset End_{\mathcal{B}}^{\ast}(\mathcal{K})$ be a $K_{2}$-operator frame for $\mathcal{K}$ with frame operators $S_{\Lambda}$ and $S_{\Gamma}$ and operator frame bounds $(A,B)$ and $(C,D)$ respectively. Then $\{\Lambda_{i}\otimes\Gamma_{j}\}_{i\in I,j\in J}  $ is a $K_{1}\otimes K_{2}$-operator frame for Hibert $\mathcal{A}\otimes\mathcal{B}$-module $\mathcal{H}\otimes\mathcal{K}$ with frame operator $ S_{\Lambda}\otimes S_{\Gamma}$ and lower and upper operator frame bounds $AC$ and $BD$, respectively.
\end{theorem}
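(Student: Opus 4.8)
The plan is to reformulate the $K$-operator frame condition as a pair of operator inequalities satisfied by the frame operator, to compute the frame operator of the tensor family explicitly, and then to propagate the two given inequalities through the tensor product using the order-preservation properties of the spatial tensor product recorded just before the statement. First I would record the equivalent operator form of the hypotheses: writing $\langle K^{\ast}x,K^{\ast}x\rangle=\langle KK^{\ast}x,x\rangle$ and $\sum_{i}\langle T_{i}x,T_{i}x\rangle=\langle Sx,x\rangle$ with $S=\sum_{i}T_{i}^{\ast}T_{i}$, the defining inequalities of a $K$-operator frame are equivalent to
\[
A\,KK^{\ast}\leq S\leq B\,I.
\]
Thus the hypotheses read $A\,K_{1}K_{1}^{\ast}\leq S_{\Lambda}\leq B\,I_{\mathcal{H}}$ and $C\,K_{2}K_{2}^{\ast}\leq S_{\Gamma}\leq D\,I_{\mathcal{K}}$, and the desired conclusion is equivalent to $AC\,(K_{1}\otimes K_{2})(K_{1}\otimes K_{2})^{\ast}\leq S_{\Lambda}\otimes S_{\Gamma}\leq BD\,I_{\mathcal{H}\otimes\mathcal{K}}$, because the frame inequality for every $z\in\mathcal{H}\otimes\mathcal{K}$ is exactly the corresponding operator inequality tested on $z$.

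Next I would identify the frame operator of $\{\Lambda_{i}\otimes\Gamma_{j}\}$. Using $(\Lambda_{i}\otimes\Gamma_{j})^{\ast}=\Lambda_{i}^{\ast}\otimes\Gamma_{j}^{\ast}$ and the multiplicativity $(S\otimes T)(S'\otimes T')=SS'\otimes TT'$, one obtains $(\Lambda_{i}\otimes\Gamma_{j})^{\ast}(\Lambda_{i}\otimes\Gamma_{j})=(\Lambda_{i}^{\ast}\Lambda_{i})\otimes(\Gamma_{j}^{\ast}\Gamma_{j})$, so that
\[
\sum_{i\in I,\,j\in J}(\Lambda_{i}\otimes\Gamma_{j})^{\ast}(\Lambda_{i}\otimes\Gamma_{j})=\Big(\sum_{i\in I}\Lambda_{i}^{\ast}\Lambda_{i}\Big)\otimes\Big(\sum_{j\in J}\Gamma_{j}^{\ast}\Gamma_{j}\Big)=S_{\Lambda}\otimes S_{\Gamma}.
\]
The factorization of the double sum is justified by the norm convergence supplied by the upper Bessel bounds on each factor. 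This already yields the asserted frame operator $S_{\Lambda}\otimes S_{\Gamma}$.

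Finally I would push the two scalar bounds through one tensor leg at a time. For the upper bound, $S_{\Lambda}\leq B\,I_{\mathcal{H}}$ together with $S_{\Gamma}\geq 0$ gives $S_{\Lambda}\otimes S_{\Gamma}\leq B\,(I_{\mathcal{H}}\otimes S_{\Gamma})$, and then $S_{\Gamma}\leq D\,I_{\mathcal{K}}$ with $I_{\mathcal{H}}\geq 0$ gives $B\,(I_{\mathcal{H}}\otimes S_{\Gamma})\leq BD\,(I_{\mathcal{H}}\otimes I_{\mathcal{K}})=BD\,I_{\mathcal{H}\otimes\mathcal{K}}$. For the lower bound, the same two-step argument applied to $A\,K_{1}K_{1}^{\ast}\leq S_{\Lambda}$ and $C\,K_{2}K_{2}^{\ast}\leq S_{\Gamma}$ yields $AC\,(K_{1}K_{1}^{\ast})\otimes(K_{2}K_{2}^{\ast})\leq S_{\Lambda}\otimes S_{\Gamma}$, and the identity $(K_{1}K_{1}^{\ast})\otimes(K_{2}K_{2}^{\ast})=(K_{1}\otimes K_{2})(K_{1}\otimes K_{2})^{\ast}$ completes the chain.

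The main obstacle is precisely this monotonicity step: one cannot simply multiply the two operator inequalities, so each must be tensored against a fixed positive operator, which requires that the spatial tensor product preserve positivity and order for adjointable operators. Concretely, for $P\geq 0$ and $Q\geq 0$ one has $P\otimes Q=(P^{1/2}\otimes Q^{1/2})^{\ast}(P^{1/2}\otimes Q^{1/2})\geq 0$, whence $P_{1}\leq P_{2}$ and $Q\geq 0$ force $P_{1}\otimes Q\leq P_{2}\otimes Q$ by bilinearity. This is the operator analogue of the elementwise order property recorded just before the theorem, and its verification in the present Hilbert $C^{\ast}$-module setting is the only genuinely delicate point; once it is in hand, steps one through three assemble the result immediately.
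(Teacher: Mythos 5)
Your proposal is correct, and it takes a genuinely different route from the paper's own proof. The paper works at the level of the $C^{\ast}$-algebra-valued inner products: it tensors the two frame inequalities element-wise in $\mathcal{A}\otimes\mathcal{B}$ (invoking the order fact recorded just before the theorem, that $a\geq b$ Hermitian in $\mathcal{A}$ and $x\geq 0$ in $\mathcal{B}$ give $a\otimes x\geq b\otimes x$), rewrites $\langle\Lambda_{i}x,\Lambda_{i}x\rangle_{\mathcal{A}}\otimes\langle\Gamma_{j}y,\Gamma_{j}y\rangle_{\mathcal{B}}$ as $\langle(\Lambda_{i}\otimes\Gamma_{j})(x\otimes y),(\Lambda_{i}\otimes\Gamma_{j})(x\otimes y)\rangle_{\mathcal{A}\otimes\mathcal{B}}$, obtains the frame inequality on elementary tensors, and then extends to all of $\mathcal{H}\otimes\mathcal{K}$ by linearity and density; the identity $S_{\Lambda\otimes\Gamma}=S_{\Lambda}\otimes S_{\Gamma}$ is likewise checked on elementary tensors and extended by uniqueness of the frame operator. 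You instead lift the hypotheses to operator inequalities $AK_{1}K_{1}^{\ast}\leq S_{\Lambda}\leq BI_{\mathcal{H}}$ and $CK_{2}K_{2}^{\ast}\leq S_{\Gamma}\leq DI_{\mathcal{K}}$ inside $End_{\mathcal{A}}^{\ast}(\mathcal{H})$ and $End_{\mathcal{B}}^{\ast}(\mathcal{K})$ (legitimate, since a self-adjoint adjointable operator $T$ is positive if and only if $\langle Tz,z\rangle\geq 0$ for all $z$), and propagate them one tensor leg at a time using positivity of $P\otimes Q$ for positive adjointable $P,Q$, which you prove via the factorization $P\otimes Q=(P^{1/2}\otimes Q^{1/2})^{\ast}(P^{1/2}\otimes Q^{1/2})$. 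What your approach buys: the frame inequalities hold for arbitrary $z\in\mathcal{H}\otimes\mathcal{K}$ in one stroke, with no separate passage from elementary tensors to finite sums to the completion, and the key positivity lemma is proved rather than quoted. What the paper's approach buys: it stays entirely within the order structure of the spatial tensor product of the coefficient algebras, i.e.\ exactly the fact it had recorded beforehand, at the cost of a density extension that is asserted rather than carried out. Both arguments share the same convergence bookkeeping: the identification $\sum_{i\in I,j\in J}(\Lambda_{i}\otimes\Gamma_{j})^{\ast}(\Lambda_{i}\otimes\Gamma_{j})=S_{\Lambda}\otimes S_{\Gamma}$ rests on norm convergence over the algebraic tensor product plus a density and uniform-boundedness argument, which you acknowledge and the paper glosses over, so your proof is at least as rigorous as the original.
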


\begin{proof}
	By the definition of $K_{1}$-operator frame $\{\Lambda_{i}\}_{i\in I} $ and $K_{2}$-operator frame $\{\Gamma_{j}\}_{j\in J}$ we have 
	$$A\langle K_{1}^{\ast}x,K_{1}^{\ast}x\rangle_{\mathcal{A}}\leq\sum_{i\in I}\langle \Lambda_{i}x,\Lambda_{i}x\rangle_{\mathcal{A}}\leq B\langle x,x\rangle_{\mathcal{A}} , \forall x\in\mathcal{H}.$$
	$$C\langle K_{2}^{\ast}y,K_{2}^{\ast}y\rangle_{\mathcal{B}}\leq\sum_{j\in J}\langle \Gamma_{j}y,\Gamma_{j}y\rangle_{\mathcal{B}}\leq D\langle y,y\rangle_{\mathcal{B}} , \forall y\in\mathcal{K}.$$
	Therefore
	\begin{align*}
		(A\langle K_{1}^{\ast}x,K_{1}^{\ast}x\rangle_{\mathcal{A}})\otimes (C\langle K_{2}^{\ast}y,K_{2}^{\ast}y\rangle_{\mathcal{B}})&\leq\sum_{i\in I}\langle \Lambda_{i}x,\Lambda_{i}x\rangle_{\mathcal{A}}\otimes\sum_{j\in J}\langle \Gamma_{j}y,\Gamma_{j}y\rangle_{\mathcal{B}}\\
		&\leq (B\langle x,x\rangle_{\mathcal{A}})\otimes (D\langle y,y\rangle_{\mathcal{B}}) , \forall x\in\mathcal{H} ,\forall y\in\mathcal{K}.
	\end{align*}
	Then
	\begin{align*}
			AC(\langle K_{1}^{\ast}x,K_{1}^{\ast}x\rangle_{\mathcal{A}}\otimes\langle K_{2}^{\ast}y,K_{2}^{\ast}y\rangle_{\mathcal{B}}) &\leq\sum_{i\in I,j\in J}\langle \Lambda_{i}x,\Lambda_{i}x\rangle_{\mathcal{A}}\otimes\langle \Gamma_{j}y,\Gamma_{j}y\rangle_{\mathcal{B}}\\
		&\leq BD(\langle x,x\rangle_{\mathcal{A}}\otimes\langle y,y\rangle_{\mathcal{B}}) , \forall x\in\mathcal{H} ,\forall y\in\mathcal{K}.
	\end{align*}
	Consequently we have
	\begin{align*}
		AC\langle K_{1}^{\ast}x\otimes K_{2}^{\ast}y,K_{1}^{\ast}x\otimes K_{2}^{\ast}y\rangle_{\mathcal{A}\otimes\mathcal{B}} &\leq\sum_{i\in I,j\in J}\langle\Lambda_{i}x\otimes\Gamma_{j}y,\Lambda_{i}x\otimes\Gamma_{j}y\rangle_{\mathcal{A}\otimes\mathcal{B}} \\
		&\leq BD\langle x\otimes y,x\otimes y\rangle_{\mathcal{A}\otimes\mathcal{B}}, \forall x\in\mathcal{H}, \forall y\in\mathcal{K}.
	\end{align*}
	Then for all $x\otimes y$ in $\mathcal{H\otimes K}$ we have
	\begin{multline*}
		AC\langle(K_{1}\otimes K_{2})^{\ast}(x\otimes y),(K_{1}\otimes K_{2})^{\ast}(x\otimes y)\rangle_{\mathcal{A}\otimes\mathcal{B}} \\\leq\sum_{i\in I,j\in J}\langle(\Lambda_{i}\otimes\Gamma_{j})(x\otimes y),(\Lambda_{i}\otimes\Gamma_{j})(x\otimes y)\rangle_{\mathcal{A}\otimes\mathcal{B}} \\
		\leq BD\langle x\otimes y,x\otimes y\rangle_{\mathcal{A}\otimes\mathcal{B}}.
	\end{multline*}
	The last inequality is satisfied for every finite sum of elements in $\mathcal{H}\otimes_{alg}\mathcal{K}$ and then it's satisfied for all $z\in\mathcal{H\otimes K}$. It shows that $\{\Lambda_{i}\otimes\Gamma_{j}\}_{i\in I,j\in J}  $ is a $K_{1}\otimes K_{2}$-operator frame for Hilbert $\mathcal{A}\otimes\mathcal{B}$-module $\mathcal{H}\otimes\mathcal{K}$ with lower and upper operator frame bounds $AC$ and $ BD $, respectively.\\
	By the definition of frame operator $S_{\Lambda}$ and $S_{\Gamma}$, we have
	$$S_{\Lambda}x=\sum_{i\in I}\Lambda_{i}^{\ast}\Lambda_{i}x, \forall x\in\mathcal{H},$$
	and
	$$S_{\Gamma}y=\sum_{j\in J}\Gamma_{j}^{\ast}\Gamma_{j}y, \forall y\in\mathcal{K}.$$
	Therefore
	$$
	\aligned
	(S_{\Lambda}\otimes S_{\Gamma})(x\otimes y)&=S_{\Lambda}x\otimes S_{\Gamma}y\\
	&=\sum_{i\in I}\Lambda_{i}^{\ast}\Lambda_{i}x\otimes\sum_{j\in J}\Gamma_{j}^{\ast}\Gamma_{j}y\\
	&=\sum_{i\in I,j\in J}\Lambda_{i}^{\ast}\Lambda_{i}x\otimes\Gamma_{j}^{\ast}\Gamma_{j}y\\
	&=\sum_{i\in I,j\in J}(\Lambda_{i}^{\ast}\otimes\Gamma_{j}^{\ast})(\Lambda_{i}x\otimes\Gamma_{j}y)\\
	&=\sum_{i\in I,j\in J}(\Lambda_{i}^{\ast}\otimes\Gamma_{j}^{\ast})(\Lambda_{i}\otimes\Gamma_{j})(x\otimes y)\\
	&=\sum_{i\in I,j\in J}(\Lambda_{i}\otimes\Gamma_{j})^{\ast})(\Lambda_{i}\otimes\Gamma_{j})(x\otimes y).
	\endaligned
	$$
	Now by the uniqueness of frame operator, the last expression is equal to $S_{\Lambda\otimes\Gamma}(x\otimes y)$. Consequently we have $ (S_{\Lambda}\otimes S_{\Gamma})(x\otimes y)=S_{\Lambda\otimes\Gamma}(x\otimes y)$. The last equality is satisfied for every finite sum of elements in $\mathcal{H}\otimes_{alg}\mathcal{K}$ and then it's satisfied for all $z\in\mathcal{H\otimes K}$. It shows that $ (S_{\Lambda}\otimes S_{\Gamma})(z)=S_{\Lambda\otimes\Gamma}(z)$. So $S_{\Lambda\otimes\Gamma}=S_{\Lambda}\otimes S_{\Gamma}$.
\end{proof}

\begin{theorem}
	Assume that $Q\in End_{\mathcal{A}}^{\ast}(\mathcal{H})$ is invertible and $\{\Lambda_{i}\}_{i\in I}\subset End_{\mathcal{A\otimes B}}^{\ast}(\mathcal{H\otimes K})$ is a $K$-operator frame for $\mathcal{H\otimes K}$ with lower and upper  operator frame bounds $A$ and $B$ respectively and frame operator $S$. If $K$ commute with $Q\otimes I$, then $\{\Lambda_{i}(Q^{\ast}\otimes I)\}_{i\in I} $ is a $K$-operator frame for $\mathcal{H\otimes K}$ with lower and upper operator frame bounds $\|Q^{\ast-1}\|^{-2}A$ and $\|Q\|^{2}B$ respectively and frame operator $(Q\otimes I)S(Q^{\ast}\otimes I)$.
\end{theorem}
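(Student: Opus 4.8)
The plan is to collapse the entire problem onto the single operator $P := Q^{\ast}\otimes I \in End_{\mathcal{A}\otimes\mathcal{B}}^{\ast}(\mathcal{H}\otimes\mathcal{K})$, whose adjoint is $P^{\ast}=Q\otimes I$. Since $Q$ is invertible on $\mathcal{H}$, the operator $P$ is invertible on $\mathcal{H}\otimes\mathcal{K}$ with $P^{-1}=(Q^{-1})^{\ast}\otimes I$, and the norms split across the tensor factors as $\|P\|=\|Q\|$ and $\|P^{-1}\|=\|Q^{\ast-1}\|$ (using $\|I\|=1$). The new family is literally $\{\Lambda_i P\}_{i\in I}$, so the whole argument reduces to bounding $\sum_{i\in I}\langle \Lambda_i Pz,\Lambda_i Pz\rangle$ above and below for every $z\in\mathcal{H}\otimes\mathcal{K}$, and then computing its frame operator.

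For the upper bound I would feed $Pz$ into the right-hand $K$-operator frame inequality of $\{\Lambda_i\}$, giving $\sum_{i}\langle\Lambda_i Pz,\Lambda_i Pz\rangle\le B\langle Pz,Pz\rangle$, and then apply the Paschke-type estimate (Lemma \ref{1}), $\langle Pz,Pz\rangle\le\|P\|^2\langle z,z\rangle=\|Q\|^2\langle z,z\rangle$. Composing these two inequalities yields the claimed upper bound $\|Q\|^2 B\langle z,z\rangle$.

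The lower bound is the step that genuinely exploits the hypothesis that $K$ commutes with $Q\otimes I=P^{\ast}$, and I expect it to be the main obstacle, mostly through careful bookkeeping. Taking adjoints of $KP^{\ast}=P^{\ast}K$ gives $PK^{\ast}=K^{\ast}P$, i.e.\ $K^{\ast}$ commutes with $P$. Applying the left-hand frame inequality to $Pz$ and using this commutation, I get $\sum_{i}\langle\Lambda_i Pz,\Lambda_i Pz\rangle\ge A\langle K^{\ast}Pz,K^{\ast}Pz\rangle=A\langle PK^{\ast}z,PK^{\ast}z\rangle$. It then suffices to bound $\langle Pw,Pw\rangle$ below by a multiple of $\langle w,w\rangle$ for $w=K^{\ast}z$; writing $w=P^{-1}(Pw)$ and applying Lemma \ref{1} to $P^{-1}$ gives $\langle w,w\rangle\le\|P^{-1}\|^2\langle Pw,Pw\rangle$, hence $\langle Pw,Pw\rangle\ge\|Q^{\ast-1}\|^{-2}\langle w,w\rangle$. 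Chaining everything produces the lower bound $\|Q^{\ast-1}\|^{-2}A\,\langle K^{\ast}z,K^{\ast}z\rangle$, as required.

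Finally, the frame operator is a direct computation: since $(\Lambda_i P)^{\ast}=P^{\ast}\Lambda_i^{\ast}$, the frame operator $S'$ of $\{\Lambda_i P\}$ sends $z$ to $\sum_{i}P^{\ast}\Lambda_i^{\ast}\Lambda_i Pz=P^{\ast}\bigl(\sum_{i}\Lambda_i^{\ast}\Lambda_i\bigr)Pz=P^{\ast}SPz=(Q\otimes I)S(Q^{\ast}\otimes I)z$, which is exactly the asserted form. Everything apart from the commutation manipulation and the norm identities $\|Q^{\ast}\otimes I\|=\|Q\|$ and $\|(Q^{\ast}\otimes I)^{-1}\|=\|Q^{\ast-1}\|$ is routine.
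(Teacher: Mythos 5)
Your proposal is correct and follows essentially the same route as the paper's proof: feeding $(Q^{\ast}\otimes I)z$ into the frame inequalities, passing $K^{\ast}$ through $Q^{\ast}\otimes I$ via the adjoint of the commutation hypothesis, bounding with $\|Q\|$ and $\|Q^{\ast-1}\|^{-1}$, and computing the frame operator as $(Q\otimes I)S(Q^{\ast}\otimes I)$. The only difference is cosmetic: you justify the lower norm estimate $\langle Pw,Pw\rangle\geq\|Q^{\ast-1}\|^{-2}\langle w,w\rangle$ explicitly via Lemma \ref{1} applied to $P^{-1}$, whereas the paper simply asserts the two-sided bound.
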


\begin{proof}
	Since $Q\in End_{\mathcal{A}}^{\ast}(\mathcal{H})$, $Q\otimes I\in End_{\mathcal{A\otimes B}}^{\ast}(\mathcal{H\otimes K})$ with inverse $Q^{-1}\otimes I$. It is obvious that the adjoint of $Q\otimes I$ is $Q^{\ast}\otimes I$. An easy calculation shows that for every elementary tensor $x\otimes y$,
	$$
	\aligned
	\|(Q\otimes I)(x\otimes y)\|^{2}&=\|Q(x)\otimes y\|^{2}\\
	&=\|Q(x)\|^{2}\|y\|^{2}\\
	&\leq\|Q\|^{2}\|x\|^{2}\|y\|^{2}\\
	&=\|Q\|^{2}\|x\otimes y\|^{2}.
	\endaligned
	$$
	So $Q\otimes I$ is bounded, and therefore it can be extended to $\mathcal{H\otimes K}$. Similarly for $Q^{\ast}\otimes I$, hence $Q\otimes I$ is $\mathcal{A\otimes B}$-linear, adjointable with adjoint $Q^{\ast}\otimes I$. Hence for every $z\in\mathcal{H\otimes K}$ we have $$\|Q^{\ast-1}\|^{-1}.|z|\leq|(Q^{\ast}\otimes I)z|\leq\|Q\|.|z|.$$
	By the definition of $K$-operator frames we have 
	$$A\langle K^{\ast}z,K^{\ast}z\rangle_{\mathcal{A\otimes B}}\leq\sum_{i\in I}\langle \Lambda_{i}z,\Lambda_{i}z\rangle_{\mathcal{A\otimes B}}\leq B\langle z,z\rangle_{\mathcal{A\otimes B}}.$$
	Then 
	$$
	\aligned
	A\langle K^{\ast}(Q^{\ast}\otimes I)z,K^{\ast}(Q^{\ast}\otimes I)z\rangle_{\mathcal{A\otimes B}} &\leq\sum_{i\in I}\langle \Lambda_{i}(Q^{\ast}\otimes I)z,\Lambda_{i}(Q^{\ast}\otimes I)z\rangle_{\mathcal{A\otimes B}}\\
	&\leq B\langle (Q^{\ast}\otimes I)z,(Q^{\ast}\otimes I)z\rangle_{\mathcal{A\otimes B}}\\
	&\leq \|Q\|^{2}B\langle z,z\rangle_{\mathcal{A\otimes B}}.
	\endaligned
	$$
	Or
	$$
	\aligned
	A\langle K^{\ast}(Q^{\ast}\otimes I)z,K^{\ast}(Q^{\ast}\otimes I)z\rangle_{\mathcal{A\otimes B}} &=A\langle (Q^{\ast}\otimes I)K^{\ast}z,(Q^{\ast}\otimes I)K^{\ast}z\rangle_{\mathcal{A\otimes B}}\\
	&\geq \|Q^{\ast-1}\|^{-2}A\langle K^{\ast}z,K^{\ast}z\rangle_{\mathcal{A\otimes B}}.
	\endaligned
	$$
	So we have$$\|Q^{\ast-1}\|^{-2}A\langle K^{\ast}z,K^{\ast}z\rangle_{\mathcal{A\otimes B}}\leq\sum_{i\in I}\langle \Lambda_{i}(Q^{\ast}\otimes I)z,\Lambda_{i}(Q^{\ast}\otimes I)z\rangle_{\mathcal{A\otimes B}}\leq\|Q\|^{2}B\langle z,z\rangle_{\mathcal{A\otimes B}}.$$
	Now
	$$
	\aligned
	(Q\otimes I)S(Q^{\ast}\otimes I)&=(Q\otimes I)(\sum_{i\in I}\Lambda_{i}^{\ast}\Lambda_{i})(Q^{\ast}\otimes I)\\
	&=\sum_{i\in I}(Q\otimes I)\Lambda_{i}^{\ast}\Lambda_{i}(Q^{\ast}\otimes I)\\
	&=\sum_{i\in I}(\Lambda_{i}(Q^{\ast}\otimes I))^{\ast}\Lambda_{i}(Q^{\ast}\otimes I).
	\endaligned
	$$
	Which completes the proof.
\end{proof}

\begin{theorem}
	Assume that $Q\in End_{\mathcal{B}}^{\ast}(\mathcal{K})$ is invertible and $\{\Lambda_{i}\}_{i\in I}\subset End_{\mathcal{A\otimes B}}^{\ast}(\mathcal{H\otimes K})$ is a $K$-operator frame for $\mathcal{H\otimes K}$ with lower and upper  operator frame bounds $A$ and $B$ respectively and frame operator $S$. If $K$ commute with $I\otimes Q$, then $\{\Lambda_{i}(I\otimes Q^{\ast})\}_{i\in I} $ is a $K$-operator frame for $\mathcal{H\otimes K}$ with lower and upper operator frame bounds $\|Q^{\ast-1}\|^{-2}A$ and $\|Q\|^{2}B$ respectively and frame operator $(I\otimes Q)S(I\otimes Q^{\ast})$.
\end{theorem}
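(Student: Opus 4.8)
The plan is to mirror the proof of the preceding theorem essentially verbatim, interchanging the two tensor factors so that $Q\otimes I$ is replaced throughout by $I\otimes Q$. First I would record the structural facts about $I\otimes Q$: since $Q\in End_{\mathcal{B}}^{\ast}(\mathcal{K})$ is invertible, the operator $I\otimes Q\in End_{\mathcal{A\otimes B}}^{\ast}(\mathcal{H\otimes K})$ is invertible with inverse $I\otimes Q^{-1}$ and adjoint $I\otimes Q^{\ast}$; this is exactly the content of the discussion preceding the tensor-product theorems, where it is shown that $S\otimes T$ is adjointable with adjoint $S^{\ast}\otimes T^{\ast}$ and that the tensor of invertibles is invertible.

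Next I would establish the two-sided norm estimate $\|Q^{\ast-1}\|^{-1}|z|\leq|(I\otimes Q^{\ast})z|\leq\|Q\| |z|$ for all $z\in\mathcal{H\otimes K}$. As before, I would verify it first on an elementary tensor $x\otimes y$, where $\|(I\otimes Q^{\ast})(x\otimes y)\|=\|x\otimes Q^{\ast}y\|=\|x\| \|Q^{\ast}y\|\leq\|Q\| \|x\otimes y\|$ (using that the spatial norm is a cross norm and $\|Q^{\ast}\|=\|Q\|$), together with the analogous lower bound from $\|Q^{\ast}y\|\geq\|Q^{\ast-1}\|^{-1}\|y\|$, and then extend by boundedness and by density of $\mathcal{H}\otimes_{alg}\mathcal{K}$ in $\mathcal{H\otimes K}$.

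With these in hand, I would apply the defining $K$-operator frame inequality of $\{\Lambda_{i}\}_{i\in I}$ to the vector $(I\otimes Q^{\ast})z$. The upper bound is immediate: combining $\sum_{i\in I}\langle \Lambda_{i}(I\otimes Q^{\ast})z,\Lambda_{i}(I\otimes Q^{\ast})z\rangle_{\mathcal{A\otimes B}}\leq B\langle (I\otimes Q^{\ast})z,(I\otimes Q^{\ast})z\rangle_{\mathcal{A\otimes B}}$ with $|(I\otimes Q^{\ast})z|\leq\|Q\| |z|$ yields the bound $\|Q\|^{2}B$. For the lower bound I would use the hypothesis that $K$ commutes with $I\otimes Q$; passing to adjoints gives $K^{\ast}(I\otimes Q^{\ast})=(I\otimes Q^{\ast})K^{\ast}$, so that $\langle K^{\ast}(I\otimes Q^{\ast})z,K^{\ast}(I\otimes Q^{\ast})z\rangle_{\mathcal{A\otimes B}}=\langle (I\otimes Q^{\ast})K^{\ast}z,(I\otimes Q^{\ast})K^{\ast}z\rangle_{\mathcal{A\otimes B}}\geq\|Q^{\ast-1}\|^{-2}\langle K^{\ast}z,K^{\ast}z\rangle_{\mathcal{A\otimes B}}$, producing the lower bound $\|Q^{\ast-1}\|^{-2}A$.

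Finally, the frame-operator identity is a direct computation: $(I\otimes Q)S(I\otimes Q^{\ast})=(I\otimes Q)\big(\sum_{i\in I}\Lambda_{i}^{\ast}\Lambda_{i}\big)(I\otimes Q^{\ast})=\sum_{i\in I}\big(\Lambda_{i}(I\otimes Q^{\ast})\big)^{\ast}\Lambda_{i}(I\otimes Q^{\ast})$, which is precisely the frame operator of $\{\Lambda_{i}(I\otimes Q^{\ast})\}_{i\in I}$. I do not expect a genuine obstacle here, as the argument is parallel to the previous theorem; the one step deserving care is the commutation used for the lower bound, since it is the sole place where the hypothesis $K(I\otimes Q)=(I\otimes Q)K$ enters and one must transfer it correctly to the adjoints.
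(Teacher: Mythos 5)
Your proposal is correct and is exactly what the paper intends: the paper's own proof of this theorem consists solely of the remark ``Similar to the proof of the previous theorem,'' and you have carried out precisely that adaptation, replacing $Q\otimes I$ by $I\otimes Q$ throughout, with the norm estimate on elementary tensors, the transfer of the commutation hypothesis to adjoints ($K^{\ast}(I\otimes Q^{\ast})=(I\otimes Q^{\ast})K^{\ast}$) for the lower bound, and the direct computation of the frame operator. No gaps; your flagged point of care (the adjoint of the commutation relation) is handled correctly.
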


\begin{proof}
	Similar to the proof of the previous theorem.
\end{proof}

\section{$K$-operator frame in Hilbert $C^{\ast}$-modules with Different $C^{\ast}$-algebras}
Studying operator frame in Hilbert $C^{\ast}$-modules with different $C^{\ast}$-algebras is interesting and important. In the following theorem we study this situation.
\begin{theorem}
	Let $(\mathcal{H},\mathcal{A},\langle.,.\rangle_{\mathcal{A}})$ and $(\mathcal{H},\mathcal{B},\langle.,.\rangle_{\mathcal{B}})$ be two Hilbert $\mathcal{C^{\ast}}$-modules and let $\varphi :\mathcal{A}\longrightarrow \mathcal{B}$ be a $\ast$-homomorphism and $\theta$ be a map on $\mathcal{H}$ such that $\langle \theta x,\theta y\rangle_{\mathcal{B}}=\varphi(\langle x, y\rangle_{\mathcal{A}})$ for all $x,y\in\mathcal{H}$. Also, suppose that $\{\Lambda_{i}\}_{i\in I}\subset End_{\mathcal{A}}^{\ast}(\mathcal{H})$ is a $K$-operator frame for $(\mathcal{H},\mathcal{A},\langle.,.\rangle_{\mathcal{A}})$ with frame operator $S_{\mathcal{A}} $ and lower and upper operator frame bounds $A$, $B$  respectively. If $\theta$ is surjective, $\theta K^{\ast}=K^{\ast}\theta$, $\theta\Lambda_{i}=\Lambda_{i}\theta$ and $\theta\Lambda_{i}^{\ast}=\Lambda_{i}^{\ast}\theta$ for each $i$ in $I$, then $\{\Lambda_{i}\}_{i\in I}$ is a $K$-operator frame for $(\mathcal{H},\mathcal{B},\langle.,.\rangle_{\mathcal{B}})$ with frame operator $S_{\mathcal{B}} $ and lower and upper operator frame bounds $A$, $B$ respectively, and $\langle S_{\mathcal{B}} \theta x,\theta y\rangle_{\mathcal{B}}=\varphi(\langle S_{\mathcal{A}}x, y\rangle_{\mathcal{A}})$.
\end{theorem}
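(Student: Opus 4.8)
The plan is to exploit the surjectivity of $\theta$ to transport the frame inequality from the $\mathcal{A}$-valued setting to the $\mathcal{B}$-valued one through the $\ast$-homomorphism $\varphi$, mirroring the argument already carried out for $\ast$-$g$-frames over different $C^{\ast}$-algebras. First I would fix $y\in\mathcal{H}$ and, since $\theta$ is onto, write $y=\theta x$ for some $x\in\mathcal{H}$. Starting from the $K$-operator frame inequality
\begin{equation*}
A\langle K^{\ast}x,K^{\ast}x\rangle_{\mathcal{A}}\leq\sum_{i\in I}\langle \Lambda_{i}x,\Lambda_{i}x\rangle_{\mathcal{A}}\leq B\langle x,x\rangle_{\mathcal{A}},
\end{equation*}
I would apply $\varphi$. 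Because a $\ast$-homomorphism between $C^{\ast}$-algebras is positive and order preserving the two inequalities survive; because it is $\mathbb{C}$-linear the scalars $A$, $B$ pass through unchanged; and because it is contractive, hence norm continuous, it commutes with the norm-convergent sum, so that $\varphi\big(\sum_{i\in I}\langle\Lambda_i x,\Lambda_i x\rangle_{\mathcal{A}}\big)=\sum_{i\in I}\varphi(\langle\Lambda_i x,\Lambda_i x\rangle_{\mathcal{A}})$ and the resulting series converges in $\mathcal{B}$.

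Next I would rewrite each term using the intertwining identity $\langle\theta u,\theta v\rangle_{\mathcal{B}}=\varphi(\langle u,v\rangle_{\mathcal{A}})$ together with the commutation hypotheses. Thus $\varphi(\langle K^{\ast}x,K^{\ast}x\rangle_{\mathcal{A}})=\langle\theta K^{\ast}x,\theta K^{\ast}x\rangle_{\mathcal{B}}=\langle K^{\ast}\theta x,K^{\ast}\theta x\rangle_{\mathcal{B}}$ by $\theta K^{\ast}=K^{\ast}\theta$, likewise $\varphi(\langle\Lambda_i x,\Lambda_i x\rangle_{\mathcal{A}})=\langle\Lambda_i\theta x,\Lambda_i\theta x\rangle_{\mathcal{B}}$ by $\theta\Lambda_i=\Lambda_i\theta$, and $\varphi(\langle x,x\rangle_{\mathcal{A}})=\langle\theta x,\theta x\rangle_{\mathcal{B}}$. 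Substituting $y=\theta x$ yields
\begin{equation*}
A\langle K^{\ast}y,K^{\ast}y\rangle_{\mathcal{B}}\leq\sum_{i\in I}\langle\Lambda_i y,\Lambda_i y\rangle_{\mathcal{B}}\leq B\langle y,y\rangle_{\mathcal{B}},
\end{equation*}
and since every $y\in\mathcal{H}$ is of the form $\theta x$, this shows that $\{\Lambda_i\}_{i\in I}$ is a $K$-operator frame for $(\mathcal{H},\mathcal{B},\langle.,.\rangle_{\mathcal{B}})$ with the same bounds $A$ and $B$.

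For the frame-operator identity I would first verify that $\Lambda_i$ stays adjointable for the $\mathcal{B}$-inner product with the same adjoint $\Lambda_i^{\ast}$: writing arbitrary vectors as $\theta a,\theta b$ and using both $\theta\Lambda_i=\Lambda_i\theta$ and $\theta\Lambda_i^{\ast}=\Lambda_i^{\ast}\theta$ gives $\langle\Lambda_i\theta a,\theta b\rangle_{\mathcal{B}}=\varphi(\langle\Lambda_i a,b\rangle_{\mathcal{A}})=\varphi(\langle a,\Lambda_i^{\ast}b\rangle_{\mathcal{A}})=\langle\theta a,\Lambda_i^{\ast}\theta b\rangle_{\mathcal{B}}$, so that $S_{\mathcal{B}}=\sum_{i\in I}\Lambda_i^{\ast}\Lambda_i$ is a genuine $\mathcal{B}$-frame operator. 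The claimed equality then follows from the chain
\begin{align*}
\varphi(\langle S_{\mathcal{A}}x,y\rangle_{\mathcal{A}}) &=\sum_{i\in I}\varphi(\langle\Lambda_i x,\Lambda_i y\rangle_{\mathcal{A}})=\sum_{i\in I}\langle\Lambda_i\theta x,\Lambda_i\theta y\rangle_{\mathcal{B}}\\
&=\Big\langle\sum_{i\in I}\Lambda_i^{\ast}\Lambda_i\theta x,\theta y\Big\rangle_{\mathcal{B}}=\langle S_{\mathcal{B}}\theta x,\theta y\rangle_{\mathcal{B}}.
\end{align*}
I expect the main obstacle to be the bookkeeping around adjointability and convergence in the new module: the identity $\langle S_{\mathcal{B}}\theta x,\theta y\rangle_{\mathcal{B}}=\varphi(\langle S_{\mathcal{A}}x,y\rangle_{\mathcal{A}})$ only closes once one knows that $\Lambda_i^{\ast}$ is really the $\mathcal{B}$-adjoint of $\Lambda_i$, which is exactly the role of the otherwise redundant-looking hypothesis $\theta\Lambda_i^{\ast}=\Lambda_i^{\ast}\theta$; everything else is a direct transcription of the $\ast$-homomorphism and intertwining properties.
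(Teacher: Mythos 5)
Your proposal is correct and follows essentially the same route as the paper's proof: write $y=\theta x$ by surjectivity, push the frame inequality through $\varphi$ using its positivity and linearity, convert via $\langle\theta u,\theta v\rangle_{\mathcal{B}}=\varphi(\langle u,v\rangle_{\mathcal{A}})$ and the commutation relations, and then establish $\langle S_{\mathcal{B}}\theta x,\theta y\rangle_{\mathcal{B}}=\varphi(\langle S_{\mathcal{A}}x,y\rangle_{\mathcal{A}})$ by the identical chain of equalities. Your one addition — explicitly checking via $\theta\Lambda_i^{\ast}=\Lambda_i^{\ast}\theta$ that $\Lambda_i^{\ast}$ is still the adjoint of $\Lambda_i$ for the $\mathcal{B}$-valued inner product — is a point the paper uses silently, so your write-up is if anything slightly more complete.
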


\begin{proof} Let $y\in\mathcal{H}$ then there exists $x\in\mathcal{H}$ such that $\theta x=y$ ($\theta$ is surjective). By the definition of $K$-operator frames, we have
	$$A\langle K^{\ast}x,K^{\ast}x\rangle_{\mathcal{A}}\leq\sum_{i\in I}\langle \Lambda_{i}x,\Lambda_{i}x\rangle_{\mathcal{A}}\leq B\langle x,x\rangle_{\mathcal{A}}.$$
	We have
	$$\varphi(A\langle K^{\ast}x,K^{\ast}x\rangle_{\mathcal{A}})\leq\varphi(\sum_{i\in I}\langle \Lambda_{i}x,\Lambda_{i}x\rangle_{\mathcal{A}})\leq\varphi( B\langle x,x\rangle_{\mathcal{A}}).$$
	By the definition of $\ast$-homomorphism, we have
	$$A\varphi(\langle K^{\ast}x,K^{\ast}x\rangle_{\mathcal{A}}) \leq\sum_{i\in I}\varphi(\langle \Lambda_{i}x,\Lambda_{i}x\rangle_{\mathcal{A}})\leq B\varphi(\langle x,x\rangle_{\mathcal{A}}).$$
	By the relation betwen $\theta$ and $\varphi$, we get
	$$A\langle \theta K^{\ast}x,\theta K^{\ast}x\rangle_{\mathcal{B}} \leq\sum_{i\in I}\langle \theta\Lambda_{i}x,\theta\Lambda_{i}x\rangle_{\mathcal{B}}\leq B\langle\theta x,\theta x\rangle_{\mathcal{B}}.$$
	By the relation betwen $\theta$, $K^{\ast}$ and $\Lambda_{i}$, we have
	$$A\langle  K^{\ast}\theta x, K^{\ast}\theta x\rangle_{\mathcal{B}} \leq\sum_{i\in I}\langle \Lambda_{i}\theta x,\Lambda_{i}\theta x\rangle_{\mathcal{B}}\leq B\langle\theta x,\theta x\rangle_{\mathcal{B}}.$$
	Then
	$$A\langle  K^{\ast}y, K^{\ast}y\rangle_{\mathcal{B}} \leq\sum_{i\in I}\langle \Lambda_{i}y,\Lambda_{i}y\rangle_{\mathcal{B}}\leq B\langle y,y\rangle_{\mathcal{B}}, \forall y\in\mathcal{H}.$$
	On the other hand, we have
	$$
	\aligned
	\varphi(\langle S_{\mathcal{A}}x, y\rangle_{\mathcal{A}})&=\varphi(\langle\sum_{i\in I}\Lambda_{i}^{\ast}\Lambda_{i}x,y\rangle_{\mathcal{A}})\\
	&=\sum_{i\in I}\varphi(\langle\Lambda_{i}x,\Lambda_{i}y\rangle_{\mathcal{A}})\\
	&=\sum_{i\in I}\langle\theta\Lambda_{i}x,\theta\Lambda_{i}y\rangle_{\mathcal{B}}
	\\
	&=\sum_{i\in I}\langle\Lambda_{i}\theta x,\Lambda_{i}\theta y\rangle_{\mathcal{B}}\\
	&=\langle\sum_{i\in I}\Lambda_{i}^{\ast}\Lambda_{i}\theta x,\theta y\rangle_{\mathcal{B}}\\
	&=\langle S_{\mathcal{B}}\theta x,\theta y\rangle_{\mathcal{B}}.
	\endaligned
	$$
	Which completes the proof.
\end{proof}

\section{Duals of $K$-operator frame}
In the following we define the Dual $K$-operator frame and we give some properties
\begin{definition}
	
	Let $K \in End_{\mathcal{A}}^{\ast}(\mathcal{H})$ and $\{ \Lambda_{i} \in End_{\mathcal{A}}^{\ast}(\mathcal{H}), i\in I \}$ be a K-operator frame for the Hilbert $\mathcal{A}$-module $H$. An operator Bessel sequences $\{ \Gamma_{i} \in End_{\mathcal{A}}^{\ast}(\mathcal{H}), i\in I \}$ is called a K-duals operator frame for $\{ \Lambda_{i}\}_{i \in I}$ if 
	$Kf=\sum_{i\in I}\Lambda_{i}^{\ast}\Gamma_{i}f$ for all $f\in H$. 	
\end{definition}
\begin{Ex}
	
	Let $K {\in End_\mathcal{A}}^{\ast}(\mathcal{H})$ be a surjective operator and $\{\Lambda_{i} \in End_{\mathcal{A}}^{\ast}(\mathcal{H}), i\in I\}$ be a K-operator frame for $\mathcal{H}$ with K-frame operator $S$, then $S$ is invertible.\\
	For all $f \in \mathcal{H}$, we have :
	$$Sf=\sum_{i\in I}\Lambda_{i}^{\ast}\Lambda_{i}f.$$
	So	$$Kf=\sum_{i\in I}\Lambda_{i}^{\ast}\Lambda_{i}S^{-1}Kf.$$
	Then the sequence  $\{ \Lambda_{i}S^{-1}K \in End_{\mathcal{A}}^{\ast}(\mathcal{H}), i\in I \}$ is a dual K-operator frame of  $\{ \Lambda_{i} \in End_{\mathcal{A}}^{\ast}(\mathcal{H}), i\in I \}$ 
\end{Ex}
\begin{theorem}
	
	Let $\{\Lambda_{i}\}_{i\in I} $ and $\{\Gamma_{j}\}_{j\in J} $ are K-operator frame and L-operator frame respectively in $\mathcal{H}$ and $\mathcal{K}$, with duals $\{\tilde{\Lambda_{i}}\}_{i\in I}$ and  $\{\tilde{\Gamma_{j}}\}_{j\in J} $ respectively, then $\{\tilde{\Lambda_{i}}\otimes\tilde{\Gamma_{j}}\}_{i,j\in I,J}$ is a dual of $\{\Lambda_{i}\otimes\Gamma_{j}\}_{i,j\in I,J}$.
\end{theorem}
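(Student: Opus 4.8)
The plan is to verify directly the two requirements in the definition of a dual $K$-operator frame for the tensor family: namely that $\{\tilde{\Lambda_i}\otimes\tilde{\Gamma_j}\}_{i,j}$ is an operator Bessel sequence and that it satisfies the reconstruction identity $(K\otimes L)z=\sum_{i,j}(\Lambda_i\otimes\Gamma_j)^{\ast}(\tilde{\Lambda_i}\otimes\tilde{\Gamma_j})z$ for every $z\in\mathcal{H}\otimes\mathcal{K}$. Recall first that, since $\{\tilde{\Lambda_i}\}_{i\in I}$ is a dual of the $K$-operator frame $\{\Lambda_i\}_{i\in I}$ and $\{\tilde{\Gamma_j}\}_{j\in J}$ a dual of the $L$-operator frame $\{\Gamma_j\}_{j\in J}$, we have the two reconstruction formulas $Kx=\sum_{i\in I}\Lambda_i^{\ast}\tilde{\Lambda_i}x$ for all $x\in\mathcal{H}$ and $Ly=\sum_{j\in J}\Gamma_j^{\ast}\tilde{\Gamma_j}y$ for all $y\in\mathcal{K}$. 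By the tensor-product theorem above, $\{\Lambda_i\otimes\Gamma_j\}_{i,j}$ is a $K\otimes L$-operator frame for $\mathcal{H}\otimes\mathcal{K}$, so $K\otimes L$ is precisely the operator for which a dual must be produced.

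For the Bessel requirement I would argue exactly as in the upper-bound part of that tensor-product theorem: since the duals $\{\tilde{\Lambda_i}\}$ and $\{\tilde{\Gamma_j}\}$ are operator Bessel sequences with bounds $B_1$ and $B_2$ respectively, one tensors the two Bessel inequalities, uses the identity $\langle\tilde{\Lambda_i}x\otimes\tilde{\Gamma_j}y,\tilde{\Lambda_i}x\otimes\tilde{\Gamma_j}y\rangle_{\mathcal{A}\otimes\mathcal{B}}=\langle\tilde{\Lambda_i}x,\tilde{\Lambda_i}x\rangle_{\mathcal{A}}\otimes\langle\tilde{\Gamma_j}y,\tilde{\Gamma_j}y\rangle_{\mathcal{B}}$, and concludes $\sum_{i,j}\langle(\tilde{\Lambda_i}\otimes\tilde{\Gamma_j})z,(\tilde{\Lambda_i}\otimes\tilde{\Gamma_j})z\rangle\leq B_1B_2\langle z,z\rangle$ first on elementary tensors and then, by the density of $\mathcal{H}\otimes_{alg}\mathcal{K}$, for all $z\in\mathcal{H}\otimes\mathcal{K}$. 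This shows $\{\tilde{\Lambda_i}\otimes\tilde{\Gamma_j}\}_{i,j}$ is an operator Bessel sequence for $End_{\mathcal{A}\otimes\mathcal{B}}^{\ast}(\mathcal{H}\otimes\mathcal{K})$.

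The heart of the argument is the reconstruction identity, which I would check first on an elementary tensor $x\otimes y$. Using $(\Lambda_i\otimes\Gamma_j)^{\ast}=\Lambda_i^{\ast}\otimes\Gamma_j^{\ast}$ and the multiplicativity of the tensor product of operators, one computes
\begin{align*}
\sum_{i,j}(\Lambda_i\otimes\Gamma_j)^{\ast}(\tilde{\Lambda_i}\otimes\tilde{\Gamma_j})(x\otimes y)
&=\sum_{i,j}(\Lambda_i^{\ast}\otimes\Gamma_j^{\ast})(\tilde{\Lambda_i}x\otimes\tilde{\Gamma_j}y)\\
&=\sum_{i,j}\Lambda_i^{\ast}\tilde{\Lambda_i}x\otimes\Gamma_j^{\ast}\tilde{\Gamma_j}y\\
&=\Big(\sum_{i\in I}\Lambda_i^{\ast}\tilde{\Lambda_i}x\Big)\otimes\Big(\sum_{j\in J}\Gamma_j^{\ast}\tilde{\Gamma_j}y\Big)\\
&=Kx\otimes Ly=(K\otimes L)(x\otimes y).
\end{align*}
The identity then holds on every finite sum of elementary tensors, hence on all of $\mathcal{H}\otimes_{alg}\mathcal{K}$ by linearity, and finally on all $z\in\mathcal{H}\otimes\mathcal{K}$ by continuity, since $\mathcal{H}\otimes_{alg}\mathcal{K}$ is dense and the operators involved are bounded and adjointable. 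Together with the Bessel property, this shows $\{\tilde{\Lambda_i}\otimes\tilde{\Gamma_j}\}_{i,j}$ is a dual of $\{\Lambda_i\otimes\Gamma_j\}_{i,j}$.

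The step I expect to require the most care is the passage from the factored double sum to the product of the two single sums in the third line of the display: one must justify that the unordered double series $\sum_{i,j}\Lambda_i^{\ast}\tilde{\Lambda_i}x\otimes\Gamma_j^{\ast}\tilde{\Gamma_j}y$ converges and may be separated into the tensor of the two independently convergent series. This rests on the norm-convergence guaranteed by the Bessel bounds established above, together with the continuity of the bilinear map $(u,v)\mapsto u\otimes v$ on the completed tensor product.
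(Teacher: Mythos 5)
Your proof is correct and follows essentially the same route as the paper's: both verify the reconstruction identity $(K\otimes L)(x\otimes y)=\sum_{i,j}(\Lambda_{i}\otimes\Gamma_{j})^{\ast}(\tilde{\Lambda_{i}}\otimes\tilde{\Gamma_{j}})(x\otimes y)$ on elementary tensors by factoring the double sum into the tensor product of the two single-variable reconstruction formulas $Kx=\sum_{i}\Lambda_{i}^{\ast}\tilde{\Lambda_{i}}x$ and $Ly=\sum_{j}\Gamma_{j}^{\ast}\tilde{\Gamma_{j}}y$. The only difference is that you also check the operator Bessel condition for $\{\tilde{\Lambda_{i}}\otimes\tilde{\Gamma_{j}}\}_{i,j}$ and the extension from $\mathcal{H}\otimes_{alg}\mathcal{K}$ to all of $\mathcal{H}\otimes\mathcal{K}$ by linearity, density and boundedness, steps which the definition of a dual requires but which the paper's proof leaves implicit.
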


\begin{proof}
	By definition, $\forall x \in \mathcal{H}$ and $\forall y \in \mathcal{K}$ we have:
	$$
		\sum_{i\in I}\Lambda_{i}^{\ast}\tilde{\Lambda_{i}}x=Kx  
	\text{ and }
		\sum_{j\in J}\Gamma_{j}^{\ast}\tilde{\Gamma_{j}}y=Ly  
 	$$
	then : 
	\begin{equation*}
		(K\otimes L)(x\otimes y)= Kx\otimes Ly = \sum_{i\in I}\Lambda_{i}^{\ast}\tilde{\Lambda_{i}}x\otimes\sum_{j\in J}\Gamma_{j}^{\ast}\tilde{\Gamma_{j}}y
	\end{equation*}
	\begin{equation*}
		\sum_{i\in I}\Lambda_{i}^{\ast}\tilde{\Lambda_{i}}x\otimes\sum_{j\in J}\Gamma_{j}^{\ast}\tilde{\Gamma_{j}}y=\sum_{i,j\in I,J}\Lambda_{i}^{\ast}\tilde{\Lambda_{i}}x\otimes\Gamma_{j}^{\ast}\tilde{\Gamma_{j}}y
	\end{equation*}
	
	\begin{equation*}	
		\sum_{i\in I}\Lambda_{i}^{\ast}\tilde{\Lambda_{i}}x\otimes\sum_{j\in J}\Gamma_{j}^{\ast}\tilde{\Gamma_{j}}y=\sum_{i,j\in I,J}(\Lambda_{i}^{\ast}\otimes\Gamma_{j}^{\ast}).(\tilde{\Lambda_{i}}x\otimes\tilde{\Gamma_{j}}y)
	\end{equation*}
	\begin{equation*}
		\sum_{i\in I}\Lambda_{i}^{\ast}\tilde{\Lambda_{i}}x\otimes\sum_{j\in J}\Gamma_{j}^{\ast}\tilde{\Gamma_{j}}y=\sum_{i,j\in I,J}(\Lambda_{i}\otimes\Gamma_{j})^{\ast}.(\tilde{\Lambda_{i}}\otimes\tilde{\Gamma_{j}}).(x\otimes y)
	\end{equation*}
	then $\{\tilde{\Lambda_{i}}\otimes\tilde{\Gamma_{j}}\}_{i,j \in I,J}$ is a dual of $\{\Lambda_{i}\otimes\Gamma_{j}\}_{i,j \in I,J}$.
\end{proof}

\begin{corollary}
	
	Let $\{\Lambda_{i,j}\}_{0\leq i\leq n; j\in J}$ be a family of $K_{i}$-operator frames, such $ 0 \leq i \leq n $ and $\{\tilde{\Lambda}_{i,j}\}_{0\leq i\leq n; j\in J}$ their dual, then $\{\tilde{\Lambda}_{0,j}\otimes \tilde{\Lambda}_{1,j}\otimes......\otimes\tilde{\Lambda}_{n,j}\}_{j\in J}$ is a dual of $\{\Lambda_{0,j}\otimes \Lambda_{1,j}\otimes......\otimes\Lambda_{n,j}\}_{j\in J}$.
\end{corollary}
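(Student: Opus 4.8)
The plan is to proceed by induction on $n$, with the preceding theorem serving both as the base case and as the inductive engine; here the tensor-product family is indexed, exactly as in that theorem, by the product of the individual index sets, the indices of distinct factors running independently. For $n=1$ the assertion is precisely that theorem applied to the $K_{0}$-operator frame $\{\Lambda_{0,j}\}$ with dual $\{\tilde\Lambda_{0,j}\}$ and the $K_{1}$-operator frame $\{\Lambda_{1,k}\}$ with dual $\{\tilde\Lambda_{1,k}\}$, which yields that $\{\tilde\Lambda_{0,j}\otimes\tilde\Lambda_{1,k}\}$ is a dual of $\{\Lambda_{0,j}\otimes\Lambda_{1,k}\}$ relative to the operator $K_{0}\otimes K_{1}$.

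For the inductive step I would group the first $n$ legs. Writing $\Psi_{\alpha}=\Lambda_{0,\alpha_{0}}\otimes\cdots\otimes\Lambda_{n-1,\alpha_{n-1}}$ and $\tilde\Psi_{\alpha}=\tilde\Lambda_{0,\alpha_{0}}\otimes\cdots\otimes\tilde\Lambda_{n-1,\alpha_{n-1}}$, with $\alpha$ ranging over the product of the first $n$ index sets, I would first invoke the earlier tensor-product frame theorem, iterated $n-1$ times, to conclude that $\{\Psi_{\alpha}\}$ is a $(K_{0}\otimes\cdots\otimes K_{n-1})$-operator frame on $\mathcal{H}_{0}\otimes\cdots\otimes\mathcal{H}_{n-1}$; the induction hypothesis then gives that $\{\tilde\Psi_{\alpha}\}$ is a dual of $\{\Psi_{\alpha}\}$, that is $\sum_{\alpha}\Psi_{\alpha}^{\ast}\tilde\Psi_{\alpha}=K_{0}\otimes\cdots\otimes K_{n-1}$.

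I would then apply the preceding theorem to the two-factor system consisting of $\{\Psi_{\alpha}\}$ (a $(K_{0}\otimes\cdots\otimes K_{n-1})$-operator frame with dual $\{\tilde\Psi_{\alpha}\}$) and $\{\Lambda_{n,j}\}$ (a $K_{n}$-operator frame with dual $\{\tilde\Lambda_{n,j}\}$). This produces that $\{\tilde\Psi_{\alpha}\otimes\tilde\Lambda_{n,j}\}$ is a dual of $\{\Psi_{\alpha}\otimes\Lambda_{n,j}\}$ relative to $(K_{0}\otimes\cdots\otimes K_{n-1})\otimes K_{n}$. Associativity of the exterior tensor product then identifies $\Psi_{\alpha}\otimes\Lambda_{n,j}$ with $\Lambda_{0,\alpha_{0}}\otimes\cdots\otimes\Lambda_{n,j}$, identifies $\tilde\Psi_{\alpha}\otimes\tilde\Lambda_{n,j}$ with the corresponding product of duals, and identifies the operator with $K_{0}\otimes\cdots\otimes K_{n}$, which closes the induction.

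The one point requiring care is the bookkeeping inside the inductive step: since the preceding theorem is phrased for exactly two tensor factors, reusing it forces me to treat the $n$ grouped legs as a single factor and to confirm that its dual is precisely the grouped product $\tilde\Psi_{\alpha}$ of the individual duals, which is exactly what the induction hypothesis supplies. Everything else reduces, via the functoriality $(\Psi_{\alpha}\otimes\Lambda_{n,j})^{\ast}=\Psi_{\alpha}^{\ast}\otimes\Lambda_{n,j}^{\ast}$ on adjointable operators and the distributivity of $\otimes$ over the defining sums, to the single identity $\sum(\Lambda_{0,\alpha_{0}}\otimes\cdots\otimes\Lambda_{n,j})^{\ast}(\tilde\Lambda_{0,\alpha_{0}}\otimes\cdots\otimes\tilde\Lambda_{n,j})=K_{0}\otimes\cdots\otimes K_{n}$.
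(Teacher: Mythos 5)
Your induction on $n$, with the preceding two-factor dual theorem serving as base case and inductive engine (and the earlier tensor-product theorem guaranteeing that the grouped factor $\Psi_{\alpha}=\Lambda_{0,\alpha_{0}}\otimes\cdots\otimes\Lambda_{n-1,\alpha_{n-1}}$ is itself a $(K_{0}\otimes\cdots\otimes K_{n-1})$-operator frame, so the theorem's hypotheses are met), is correct and is essentially the paper's own route: the corollary is presented there without proof, as an immediate iteration of that theorem. Your extra bookkeeping about the grouped factor's frame property and the independence of the indices across factors only makes explicit what the paper leaves implicit.
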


\section{$\ast$-operator frame}

\begin{definition}
	A family of adjointable operators $\{T_{i}\}_{i\in I}$ on a Hilbert $\mathcal{A}$-module $\mathcal{H}$ over a unital $C^{\ast}$-algebra is said to be an $\ast$-operator frame for $End_{\mathcal{A}}^{\ast}(\mathcal{H})$, if there exists two strictly nonzero elements $A$ and $B$ in $\mathcal{A}$ such that 
	\begin{equation}\label{eqq33}
		A\langle x,x\rangle A^{\ast}\leq\sum_{i\in I}\langle T_{i}x,T_{i}x\rangle\leq B\langle x,x\rangle B^{\ast}, \forall x\in\mathcal{H}.
	\end{equation}
	The elements $A$ and $B$ are called lower and upper bounds of the $\ast$-operator frame, respectively. If $A=B=\lambda$, the $\ast$-operator frame is $\lambda$-tight. If $A = B = 1_{\mathcal{A}}$, it is called a normalized tight $\ast$-operator frame or a Parseval $\ast$-operator frame. If only upper inequality of \eqref{eqq33} hold, then $\{T_{i}\}_{i\in i}$ is called an $\ast$-operator Bessel sequence for $End_{\mathcal{A}}^{\ast}(\mathcal{H})$.
\end{definition}
We mentioned that the set of all of operator frames for $End_{\mathcal{A}}^{\ast}(\mathcal{H})$ can be considered
as a subset of $\ast$-operator frame. To illustrate this, let $\{T_{j}\}_{i\in I}$ be an operator frame for Hilbert $\mathcal{A}$-module $\mathcal{H}$
with operator frame real bounds $A$ and $B$. Note that for $x\in\mathcal{H}$,
\begin{equation*}
	(\sqrt{A})1_{\mathcal{A}}\langle x,x\rangle_{\mathcal{A}}(\sqrt{A})1_{\mathcal{A}}\leq\sum_{i\in I}\langle T_{i}x,T_{i}x\rangle\leq(\sqrt{B})1_{\mathcal{A}}\langle x,x\rangle_{\mathcal{A}}(\sqrt{B})1_{\mathcal{A}}.
\end{equation*}
Therefore, every operator frame for $End_{\mathcal{A}}^{\ast}(\mathcal{H})$ with real bounds $A$ and $B$ is an $\ast$-operator frame for $End_{\mathcal{A}}^{\ast}(\mathcal{H})$ with $\mathcal{A}$-valued $\ast$-operator frame bounds $(\sqrt{A})1_{\mathcal{A}}$ and $(\sqrt{B})1_{\mathcal{B}}$.
\begin{Ex}
	Let $\mathcal{A}$ be a Hilbert $C^{\ast}$-module over itself with the inner product $\langle a,b\rangle=ab^{\ast}$.
	Let $\{x_{i}\}_{i\in I}$ be an $\ast$-frame for $\mathcal{A}$ with bounds $A$ and $B$, respectively. For each $i\in I$, we define $T_{i}:\mathcal{A}\to\mathcal{A}$ by $T_{i}x=\langle x,x_{i}\rangle,\;\; \forall x\in\mathcal{A}$. $T_{i}$ is adjointable and $T_{i}^{\ast}a=ax_{i}$ for each $a\in\mathcal{A}$. And we have 
	\begin{equation*}
		A\langle x,x\rangle A^{\ast}\leq\sum_{i\in I}\langle x,x_{i}\rangle\langle x_{i},x\rangle\leq B\langle x,x\rangle B^{\ast}, \forall x\in\mathcal{A}.
	\end{equation*}
	Then
	\begin{equation*}
		A\langle x,x\rangle A^{\ast}\leq\sum_{i\in I}\langle T_{i}x,T_{i}x\rangle\leq B\langle x,x\rangle B^{\ast}, \forall x\in\mathcal{A}.
	\end{equation*}
	So $\{T_{i}\}_{i\in I}$ is an $\ast$-operator frame in $\mathcal{A}$ with bounds $A$ and $B$, respectively.
\end{Ex}
\begin{Ex}
	Let $\{W_{i}\}_{i\in\mathbb{J}}$ be a $\ast$-frame of submodules with respect to $ \{v_{i}\}_{i\in\mathbb{J}} $ for $\mathcal{H}$. Put $T_{i}=v_{i}\pi_{W_{i}}, \forall i\in\mathbb{J}$, then we get a sequence of operators $\{T_{i}\}_{i\in\mathbb{J}}$. Then there exist  $A, B\in\mathcal{A}$ such that:
	\begin{displaymath}
		A\langle x,x\rangle A^{\ast}\leq\sum_{i\in\mathbb{J}}v_{i}^{2}\langle \pi_{W_{i}}x,\pi_{W_{i}}x\rangle\leq B\langle x,x\rangle B^{\ast}, \forall x\in\mathcal{H}.
	\end{displaymath}
	So we have:
	\begin{displaymath}
		A\langle x,x\rangle A^{\ast}\leq\sum_{i\in\mathbb{J}}\langle T_{i}x,T_{i}x\rangle\leq B\langle x,x\rangle B^{\ast}, \forall x\in\mathcal{H}.
	\end{displaymath}
	Thus, the sequence $\{T_{i}\}_{i\in\mathbb{J}}$ becomes a $\ast$-operator frame for $\mathcal{H}$.
\end{Ex}
With this example a $\ast$-frame of submodules can be viewed as a special case of $\ast$-operator frames.
\begin{Rem}
	The examples $3.3$ and $3.4$ in \cite{A} are examples of $\ast$-operator frame.
\end{Rem}

\section{$\ast$-$K$-operator frame}

Now we define the $\ast$-$K$-operator frame for $End_{\mathcal{A}}^{\ast}(\mathcal{H})$.
\begin{definition}
	Let $K\in End_{\mathcal{A}}^{\ast}(\mathcal{H})$. A family of adjointable operators $\{T_{i}\}_{i\in I}$, on a Hilbert $\mathcal{A}$-module $\mathcal{H}$ over a unital $C^{\ast}$-algebra, is said an $\ast$-$K$-operator frame for $End_{\mathcal{A}}^{\ast}(\mathcal{H})$, if there exists two nonzero elements $A$ and $B$ in $\mathcal{A}$ such that 
	\begin{equation}\label{eqqq33}
		A\langle K^{\ast}x,K^{\ast}x\rangle A^{\ast}\leq\sum_{i\in I}\langle T_{i}x,T_{i}x\rangle\leq B\langle x,x\rangle B^{\ast}, \forall x\in\mathcal{H}.
	\end{equation}
	The elements $A$ and $B$ are called lower and upper bounds of the $\ast$-$K$-operator frame, respectively. If $$A\langle K^{\ast}x,K^{\ast}x\rangle_{\mathcal{A}}A^{\ast}=\sum_{i\in\mathbb{J}}\langle T_{i}x,T_{i}x\rangle_{\mathcal{A}},$$ the $\ast$-$K$-operator frame is an $A$-tight. If $A=1$, it is called a normalized tight $\ast$-$K$-operator frame or a Parseval $\ast$-$K$-operator frame. 
\end{definition}
\begin{Ex}
	Let $l^{\infty}$ be the set of all bounded complex-valued sequences. For any $u=\{u_{j}\}_{j\in\mathbb{N}}, v=\{v_{j}\}_{j\in\mathbb{N}}\in l^{\infty}$, we define
	\begin{equation*}
		uv=\{u_{j}v_{j}\}_{j\in\mathbb{N}}, u^{\ast}=\{\bar{u_{j}}\}_{j\in\mathbb{N}}, \|u\|=\sup_{j\in\mathbb{N}}|u_{j}|.
	\end{equation*}
	Then $\mathcal{A}=\{l^{\infty}, \|.\|\}$ is a $\mathbb{C}^{\ast}$-algebra.
	
	Let $\mathcal{H}=C_{0}$ be the set of all null sequences. For any $u, v\in\mathcal{H}$ we define
	\begin{equation*}
		\langle u,v\rangle=uv^{\ast}=\{u_{j}\bar{u_{j}}\}_{j\in\mathbb{N}}.
	\end{equation*}
	Then $\mathcal{H}$ is a Hilbert $\mathcal{A}$-module.
	\\
	Define $f_{j}=\{f_{i}^{j}\}_{i\in\mathbb{N}^{\ast}}$ by $f_{i}^{j}=\frac{1}{2}+\frac{1}{i}$ if $i=j$ and $f_{i}^{j}=0$ if $i\neq j$ $\forall j\in\mathbb{N}^{\ast}$.
	\\
	Now, define the adjointable operator $T_{j}: \mathcal{H}\to\mathcal{H},\;\; T_{j}\{(x_{i})_{i}\}=(x_{i}f_{i}^{j})_{i}$.
	\\
	Then for every $x\in\mathcal{H}$ we have
	\begin{equation*}
		\sum_{j\in\mathbb{N}}\langle T_{j}x,T_{j}x\rangle=\{\dfrac{1}{2}+\dfrac{1}{i}\}_{i\in\mathbb{N}^{\ast}}\langle x,x\rangle\{\dfrac{1}{2}+\dfrac{1}{i}\}_{i\in\mathbb{N}^{\ast}}.
	\end{equation*} 
	So $\{T_{j}\}_{j}$ is a $\{\frac{1}{2}+\frac{1}{i}\}_{i\in\mathbb{N}^{\ast}}$-tight $\ast$-operator frame.
	\\
	Let $K:\mathcal{H}\to\mathcal{H}$ defined by $Kx=\{\frac{x_{i}}{i}\}_{i\in\mathbb{N}^{\ast}}$.
	\\
	Then for every $x\in\mathcal{H}$ we have 
	\begin{equation*}
		\langle K^{\ast}x,K^{\ast}x\rangle_{\mathcal{A}}\leq\sum_{j\in\mathbb{N}}\langle T_{j}x,T_{j}x\rangle=\{\frac{1}{2}+\frac{1}{i}\}_{i\in\mathbb{N}^{\ast}}\langle x,x\rangle\{\frac{1}{2}+\frac{1}{i}\}_{i\in\mathbb{N}^{\ast}}.
	\end{equation*}
	This shows that $\{T_{j}\}_{j\in\mathbb{N}}$ is an $\ast$-$K$-operator frame with bounds $1, \{\frac{1}{2}+\frac{1}{i}\}_{i\in\mathbb{N}^{\ast}}$.
\end{Ex}
\begin{Rem} \label{rem3.2}
	\begin{enumerate}
		\item Every $\ast$-operator frame for $End_{\mathcal{A}}^{\ast}(\mathcal{H})$ is an $\ast$-K-operator frame, for any $K\in End_{\mathcal{A}}^{\ast}(\mathcal{H})$: $K\neq0$.
		\item If $K\in End_{\mathcal{A}}^{\ast}(\mathcal{H})$ is a surjective operator, then every $\ast$-K-operator frame for $End_{\mathcal{A}}^{\ast}(\mathcal{H})$ is an $\ast$-operator frame.
	\end{enumerate}
\end{Rem}

\noindent
{\bf Acknowledgements.}
We would like to thank the anonymous referees for their careful reading and we appreciate their valuable comments.

\end{document}